\newtheorem{theorem}{Theorem}[section]
\numberwithin{equation}{theorem}
\newtheorem{lemma}[theorem]{Lemma}
\newtheorem{proposition}[theorem]{Proposition}
\newtheorem{corollary}[theorem]{Corollary}
\theoremstyle{definition}
\newtheorem{definition}[theorem]{Definition}
\newtheorem{example}[theorem]{Example}
\newtheorem{remark}[theorem]{Remark}
\theoremstyle{conjecture}
\newcommand{\Ker}{\operatorname{Ker}}
\newcommand{\Coker}{\operatorname{Coker}}
\newcommand{\im}{\operatorname{Im}}
\newcommand{\Cone}{\operatorname{Cone}}
\newcommand{\Ext}{\operatorname{Ext}}
\newcommand{\Tor}{\operatorname{Tor}}
\newcommand{\Hom}{\operatorname{Hom}}
\newcommand{\Arr}{\operatorname{Arr}}
\newcommand{\Obj}{\operatorname{Obj}}
\newcommand{\Mor}{\operatorname{Mor}}
\newcommand{\LLP}{\operatorname{LLP}}
\newcommand{\RLP}{\operatorname{RLP}}
\newcommand{\op}{\operatorname{op}}
\newcommand{\Nor}{\operatorname{Nor}}
\newcommand{\DK}{\operatorname{DK}}
\newcommand{\Map}{\operatorname{Map}}
\newcommand{\Sh}{\operatorname{Sh}}
\newcommand{\sgn}{\operatorname{sgn}}
\newcommand{\nrv}{\operatorname{nrv}}
\newcommand{\Cyl}{\operatorname{Cyl}}
\newcommand{\Path}{\operatorname{Path}}
\newcommand{\Sym}{\operatorname{Sym}}
\newcommand{\Ho}{\operatorname{Ho}}
\newcommand{\Fun}{\operatorname{Fun}}
\newcommand{\Der}{\operatorname{Der}}
\newcommand{\LKan}{\operatorname{LKan}}
\newcommand{\RKan}{\operatorname{RKan}}
\newcommand{\suchthat}{\;\ifnum\currentgrouptype=16 \middle\fi|\;}
\newenvironment{prf}[1][Proof]{\begin{proof}[\bf #1]}{\end{proof}}
\newcommand{\hocolim@}[2]{%
  \vtop{\m@th\ialign{##\cr
    \hfil$#1\operator@font holim$\hfil\cr
    \noalign{\nointerlineskip\kern1.5\ex@}#2\cr
    \noalign{\nointerlineskip\kern-\ex@}\cr}}%
}
\newcommand{\hocolim}{%
  \mathop{\mathpalette\hocolim@{\rightarrowfill@\textstyle}}\nmlimits@
}
\newcommand{\holim@}[2]{%
  \vtop{\m@th\ialign{##\cr
    \hfil$#1\operator@font holim$\hfil\cr
    \noalign{\nointerlineskip\kern1.5\ex@}#2\cr
    \noalign{\nointerlineskip\kern-\ex@}\cr}}%
}
\newcommand{\holim}{%
  \mathop{\mathpalette\holim@{\leftarrowfill@\textstyle}}\nmlimits@
}
\def\@secnumfont{\bfseries}
\def\section{\@startsection{section}{1}%
  \z@{.7\linespacing\@plus\linespacing}{.5\linespacing}%
  {\normalfont\Large\bfseries\filcenter}}
\def\subsection{\@startsection{subsection}{2}%
  \z@{.5\linespacing\@plus.7\linespacing}{-.5em}%
  {\normalfont\large\bfseries}}
\DeclareFontFamily{OT1}{pzc}{}
\DeclareFontShape{OT1}{pzc}{m}{it}{<-> s * [1.20] pzcmi7t}{}
\DeclareMathAlphabet{\mathpzc}{OT1}{pzc}{m}{it}
\def\moverlay{\mathpalette\mov@rlay}
\def\mov@rlay#1#2{\leavevmode\vtop{%
   \baselineskip\z@skip \lineskiplimit-\maxdimen
   \ialign{\hfil$\m@th#1##$\hfil\cr#2\crcr}}}
\newcommand{\charfusion}[3][\mathord]{
    #1{\ifx#1\mathop\vphantom{#2}\fi
        \mathpalette\mov@rlay{#2\cr#3}
      }
    \ifx#1\mathop\expandafter\displaylimits\fi}
\newcommand{\cupdot}{\charfusion[\mathbin]{\cup}{\cdot}}
\providecommand{\bigsqcap}{%
  \mathop{%
    \mathpalette\@updown\bigsqcup
  }%
}
\newcommand*{\@updown}[2]{%
  \rotatebox[origin=c]{180}{$\m@th#1#2$}%
}
\begin{document}

\author[Hossein Faridian]{Hossein Faridian}

\title[Model Category Structure on Simplicial Algebras via Dold-Kan Correspondence]
{Model Category Structure on Simplicial Algebras \\ via Dold-Kan Correspondence}

\address{Hossein Faridian, School of Mathematical and Statistical Sciences, Clemson University, SC 29634, USA.}
\email{hfaridi@g.clemson.edu}

\subjclass[2010]{18N40; 18G31; 18G50; 18N50.}

\keywords {model category; connective chain complex; simplicial module; simplicial algebra}

\begin{abstract}
This expository article sets forth a self-contained and purely algebraic proof of a deep result of Quillen stating that the category of simplicial commutative algebras over a commutative ring is a model category. This is accomplished by starting from the model structure on the category of connective chain complexes, transferring it to the category of simplicial modules via Dold-Kan Correspondence, and further transferring it to the category of simplicial commutative algebras through Quillen-Kan Transfer Machine. The subtlety of overcoming the acyclicity condition is addressed by introducing and studying the shuffle product of connective chain complexes, establishing a variant of Eilenberg-Zilber Theorem, and carefully scrutinizing the subtle structures under study.
\end{abstract}

\maketitle

\sloppy
\raggedbottom

\tableofcontents

\section{Introduction}

Homological ideas grew out of attempts to investigate deeper properties of topological spaces by means of studying certain algebraic objects, namely homology and cohomology groups. These ideas took abstract forms later and planted the early seeds of homological algebra in the framework of category theory. In the grand scheme of things, the main goal of homological algebra is to construct derived functors of a given functor which are in some sense the best approximations of the functor at hand.

To get a flavor of derived functors in a classical setting, let $R$ and $S$ be two rings, and let $\mathcal{F}:\mathcal{M}\mathpzc{od}(R) \rightarrow \mathcal{M}\mathpzc{od}(S)$ be an additive covariant functor between module categories. For any $i\geq 0$, the $i$th left derived functor of $\mathcal{F}$ is a functor $\textrm{L}_{i}\mathcal{F}:\mathcal{M}\mathpzc{od}(R) \rightarrow \mathcal{M}\mathpzc{od}(S)$, whose effect on a left $R$-module $M$ is given by $\textrm{L}_{i}\mathcal{F}(M)=H_{i}\left(\mathcal{F}(P)\right)$ where $P$ is a chosen projective resolution of $M$, and the $i$th right derived functor of $\mathcal{F}$ is a functor $\textrm{R}^{i}\mathcal{F}:\mathcal{M}\mathpzc{od}(R) \rightarrow \mathcal{M}\mathpzc{od}(S)$, whose effect on a left $R$-module $M$ is given by $\textrm{R}^{i}\mathcal{F}(M)=H_{-i}\left(\mathcal{F}(I)\right)$ where $I$ is a chosen injective resolution of $M$. To go one step further, one can assemble all these derived functors into a single total derived functor. More specifically, let $\mathcal{F}:\mathcal{C}(R) \rightarrow \mathcal{C}(S)$ be a covariant functor between categories of chain complexes that preserves homotopy. Then the total left derived functor of $\mathcal{F}$ is a functor $\textrm{L}\mathcal{F}:\mathcal{D}(R) \rightarrow \mathcal{D}(S)$ between derived categories, whose effect on an $R$-complex $X$ is given by $\textrm{L}\mathcal{F}(X)=\mathcal{F}(P)$ where $P$ is a chosen semi-projective resolution of $X$, and the total right derived functor of $\mathcal{F}$ is a functor $\textrm{R}\mathcal{F}:\mathcal{D}(R) \rightarrow \mathcal{D}(S)$, whose effect on an $R$-complex $X$ is given by $\textrm{R}\mathcal{F}(X)=\mathcal{F}(I)$ where $I$ is a chosen semi-injective resolution of $X$. This machinery can be developed in abelian categories and their corresponding derived categories. However, there are certain contexts in which one requires to have some sort of derived functors in a non-abelian setting where the notion of a chain complex is missing.

One such situation arises in the context of commutative algebras over commutative rings. Let $R$ be a commutative ring, $A$ a commutative $R$-algebra, and $M$ an $A$-module. A derivation of $A$ over $R$ with coefficients in $M$ is an $R$-homomorphism $D:A\rightarrow M$ that satisfies the Leibniz rule, i.e. $D(ab)=aD(b)+ bD(a)$ for every $a,b \in A$. The set of all derivations of $A$ over $R$ with coefficients in $M$, denoted by $\Der_{R}(A,M)$, is an $A$-submodule of $\Hom_{R}(A,M)$. In addition, the induced functor $\Der_{R}(A,-):\mathcal{M}\mathpzc{od}(A)\rightarrow \mathcal{M}\mathpzc{od}(A)$ is representable, i.e. there exists an $A$-module $\Omega_{A|R}$, the so-called module of differentials of $A$ over $R$, and a universal derivation $d_{A|R}:A \rightarrow \Omega_{A|R}$ that induces a natural $A$-isomorphism $\Der_{R}(A,M) \cong \Hom_{A}\left(\Omega_{A|R},M \right)$. The module of differentials $\Omega_{A|R}$ can be concretely described as $\Omega_{A|R}=\mathfrak{k}/\mathfrak{k}^{2}$ where $\mathfrak{k}=\Ker(\mu)$ in which $\mu: A\otimes_{R}A \rightarrow A$ is given by $\mu(a\otimes b)=ab$ for every $a,b\in A$. In addition, the universal derivation $d_{A|R}:A \rightarrow \Omega_{A|R}$ is given by $d_{A|R}(a)=(1\otimes a - a\otimes 1) + \mathfrak{k}^{2}$ for every $a\in A$. Now assume that $R \rightarrow S \rightarrow T$ are two homomorphisms of commutative rings, and $M$ is a $T$-module. Then one has the following exact sequences:
$$\Omega_{S|R}\otimes_{S}M \rightarrow \Omega_{T|R}\otimes_{T}M \rightarrow \Omega_{T|S}\otimes_{T}M \rightarrow 0$$
and
$$0 \rightarrow \Der_{S}(T,M) \rightarrow \Der_{R}(T,M) \rightarrow \Der_{R}(S,M)$$
Historically speaking, people grew interested in extending these sequences beyond the above-displayed three terms. Viewing derivations and differentials as functors of commutative algebras, one requires to have some sort of derived functors in order to extend the sequences. However, the category of commutative algebras is not abelian; it fails to have a zero object or a zero morphism. As a consequence, the machinery of homological algebra as we know it fails in this situation.

The solution comes from Quillen's model category theory. The ideas behind the theory stem from homotopy theory in algebraic topology that are suitably adapted to the language of category theory. This has led to the emergence of non-abelian homological algebra. A model category is a locally small bicomplete category equipped with three distinguished classes of morphisms called fibrations, cofibrations, and weak equivalences that satisfy certain axioms. A model category supports a notion of replacement and a notion of homotopy that allow one to construct derived functors in a more general setting than the derived categories of abelian categories. We outline the general theory of derived functors as follows. If $\mathcal{F}:\mathcal{C}\rightarrow \mathcal{D}$ and $\mathcal{G}:\mathcal{C}\rightarrow \mathcal{E}$ are two functors between arbitrary categories, and $\mathcal{G}^{\ast}:\mathcal{D}^{\mathcal{E}}\rightarrow \mathcal{D}^{\mathcal{C}}$ denotes the right composition functor, then a left Kan extension of $\mathcal{F}$ along $\mathcal{G}$ is a functor $\LKan_{\mathcal{G}}(\mathcal{F}):\mathcal{E}\rightarrow \mathcal{D}$ together with a natural transformation $\sigma: \mathcal{F} \rightarrow \LKan_{\mathcal{G}}(\mathcal{F})\mathcal{G}$ such that the pair $\left(\LKan_{\mathcal{G}}(\mathcal{F}),\sigma\right)$ is an initial object in the comma category $\left(\mathcal{F}\downarrow \mathcal{G}^{\ast}\right)$, and a right Kan extension of $\mathcal{F}$ along $\mathcal{G}$ is a functor $\RKan_{\mathcal{G}}(\mathcal{F}):\mathcal{E}\rightarrow \mathcal{D}$ together with a natural transformation $\varsigma: \RKan_{\mathcal{G}}(\mathcal{F})\mathcal{G} \rightarrow \mathcal{F}$ such that the pair $\left(\RKan_{\mathcal{G}}(\mathcal{F}),\varsigma\right)$ is a terminal object in the comma category $\left(\mathcal{G}^{\ast}\downarrow \mathcal{F}\right)$. Now if $\mathcal{F}:\mathcal{C}\rightarrow \mathcal{D}$ is a functor between model categories, then the left derived functor of $\mathcal{F}$ is $\textrm{L}\mathcal{F}=\RKan_{\mathcal{L}_{\mathcal{C}}}(\mathcal{\mathcal{L}_{\mathcal{D}}F}):\Ho(\mathcal{C})\rightarrow \Ho(\mathcal{D})$, and the right derived functor of $\mathcal{F}$ is $\textrm{R}\mathcal{F}=\LKan_{\mathcal{L}_{\mathcal{C}}}(\mathcal{\mathcal{L}_{\mathcal{D}}F}):\Ho(\mathcal{C})\rightarrow \Ho(\mathcal{D})$, where $\mathcal{L}_{\mathcal{C}}:\mathcal{C}\rightarrow \Ho(\mathcal{C})$ and $\mathcal{L}_{\mathcal{D}}:\mathcal{D}\rightarrow \Ho(\mathcal{D})$ are localization functors. Derived functors at this level of generality can be shown to exist under mild conditions by leveraging fibrant and cofibrant replacements which in turn can be thought of as generalizations of resolutions. It is also worth noting that if $R$ is a ring, then the category $\mathcal{C}(R)$ of $R$-complexes has two model structures under any of which, one has $\Ho\left(\mathcal{C}(R)\right)\cong \mathcal{D}(R)$, and the general theory of derived functors reduces to the special abelian case described before.

In order to effectively apply Quillen's theory to the situation of interest, i.e. commutative algebras over commutative rings, one needs a non-abelian substitute for chain complexes. The Dold-Kan Correspondence is inspiring in this direction. It states that the category of connective chain complexes over a ring is equivalent to the category of simplicial modules over the ring. Although chain complexes are not available in non-abelian categories, simplicial objects are always at our disposal in any category. The punch line is that simplicial objects can serve as the non-abelian analogues of chain complexes. In particular, one should consider the category of simplicial commutative algebras in this situation and equip it with a model structure. Unfortunately, this task is by far harder than it seems. In addition, all the existing arguments in the literature rely one way or another on the model structure of simplicial sets; see for example \cite[Chapter II, Section 3]{Qu2}. Simplicial sets can be regarded as combinatorial models for topological spaces up to homotopy. Both topological spaces and simplicial sets are primordial prototypes of model categories. Let us look at the classical model structure of simplicial sets. Fibrations are Kan fibrations, cofibrations are injective morphisms, and weak equivalences are morphisms whose geometric realizations induce isomorphisms on topological homotopy groups. As it stands, the weak equivalences lack a purely algebraic description, so the proof of model structure of simplicial sets and hence simplicial commutative algebras is based partly on topological foundations. From an algebraist's point of view, it can be reasonable to look for a purely algebraic proof of the model structure of simplicial commutative algebras. This is in fact the motivation of writing this article.

Back to the commutative algebra setting, we observe that after equipping the category of simplicial commutative algebras with a model structure, one can apply the general theory of derived functors to construct the cotangent complex $\mathbb{L}^{A|R}\in \Obj\left(\mathcal{D}(A)\right)$ as the image of the identity map $1^{A}:A\rightarrow A$ under the left derived functor of the abelianization functor followed by the right derived functor of the normalization functor. The cotangent complex is in turn used to define the Andr\'{e}-Quillen homology and cohomology modules as $H_{i}^{AQ}\left(A|R;M \right)= \Tor_{i}^{A}\left(\mathbb{L}^{A|R},M\right)$ and $H_{AQ}^{i}\left(A|R;M \right)= \Ext_{A}^{i}\left(\mathbb{L}^{A|R},M\right)$ for every $i\geq 0$. These modules enable one to extend the above-mentioned sequences to the following exact sequences:
\begin{gather*}
\cdots \rightarrow H_{2}^{AQ}\left(S|R;M \right)\rightarrow H_{2}^{AQ}\left(T|R;M \right)\rightarrow H_{2}^{AQ}\left(T|S;M \right)\rightarrow H_{1}^{AQ}\left(S|R;M \right)\rightarrow \\
H_{1}^{AQ}\left(T|R;M \right) \rightarrow
H_{1}^{AQ}\left(T|S;M \right)\rightarrow \Omega_{S|R}\otimes_{S}M \rightarrow \Omega_{T|R}\otimes_{T}M \rightarrow \Omega_{T|S}\otimes_{T}M \rightarrow 0
\end{gather*}
and
\begin{gather*}
  0 \rightarrow \Der_{S}(T,M) \rightarrow \Der_{R}(T,M) \rightarrow \Der_{R}(S,M) \rightarrow H_{AQ}^{1}\left(T|S;M\right) \rightarrow H_{AQ}^{1}\left(T|R;M\right) \rightarrow \\
H_{AQ}^{1}\left(S|R;M\right) \rightarrow H_{AQ}^{2}\left(T|S;M\right) \rightarrow H_{AQ}^{2}\left(T|R;M\right) \rightarrow H_{AQ}^{2}\left(S|R;M\right) \rightarrow \cdots
\end{gather*}

\noindent
As it turns out, Andr\'{e}-Quillen homology and cohomology modules are powerful tools in commutative algebra that can characterize various classes of rings and algebras such as regular rings, complete intersection rings, smooth algebras, \'{e}tale algebras, etc. They have numerous applications in number theory and deformation theory as well; see for example \cite{An}, \cite{Av}, \cite{BI}, \cite{BIK}, \cite{DI}, \cite{Fa}, \cite{Il}, \cite{Iy}, \cite{MR}, \cite{Qu1}, \cite{Qu2}, and \cite{Qu3}. Despite being a desirable object of study for algebraists, Andr\'{e}-Quillen homology and cohomology modules have not received the kind of attention they merit from the algebra community. This could be mostly due to its foundation that has not been properly developed in an algebraic language. The surprising fact is that the foundation of the theory, i.e. the model structure of simplicial commutative algebras, is rooted in topology, while its applications, i.e. the characterization of various classes of rings and algebras, fit completely within the realm of commutative algebra. In this article, we strive to provide a purely algebraic proof of the model structure of simplicial commutative algebras that does not rely on the model structure of simplicial sets or topological spaces.

The article is designed as follows. In section two, we collect all we need from the theory of model categories with suitable references in order to set up the language and notation. We further prove a few lemmas that will be used later. In section three, we provide a proof of the model structure of connective chain complexes. The direct proof of this fact is only outlined in the literature in a disperse and incomplete manner; see for example \cite[Theorem 1.5]{GS} or \cite[Theorem 7.2]{DS}. We give a neat and complete proof for the sake of reference. This model structure is described completely algebraically and serves as our archetypal example of a model category. Furthermore, we show that this model structure is cofibrantly generated which allows us to transfer it along adjunctions. In section four, we review simplicial objects, the normalization and Dold-Kan functors, and the Dold-Kan Correspondence. In addition, we transfer the model structure of connective chain complexes to simplicial modules. In section five, we take advantage of the Dold-Kan functor to introduce and study the shuffle product of connective chain complexes. The shuffle product of elements of chain complexes has been studied in the literature; see \cite[Theorem 8.8]{Mc2} and \cite[Proposition 2.5.7.1 and Notation 2.5.7.2]{Lu}. However, the shuffle product of connective chain complexes is a new perspective. We then prove a variant of the Eilenberg-Zilber Theorem. The contents of this section are of grave significance in the sequel. In section six, we deploy the materials of section five and the Quillen-Kan Transfer Machine to transport the model structure of connective chain complexes to simplicial commutative algebras. One should note that \cite{GS} adopts the same approach, but at the most crucial step, i.e. the establishment of the acyclicity condition, it resorts to the model structure of simplicial sets; see \cite[Proposition 4.12, Corollary 4.14, and Theorem 4.17]{GS} and \cite[Proposition 11.5, Axiom 3.1, Theorem 4.1, and Example 5.2]{GJ}. We mainly follow the ideas of \cite{GS}, but leverage the shuffle product to avoid the model structure of simplicial sets altogether. As it happens, we wind up with an organized, self-contained, and completely algebraic proof with no reliance on topology.

\section{Preliminaries on Model Categories}

Quillen's theory of model categories provides a general framework to practice homological algebra far beyond the realm of abelian categories and their corresponding derived categories. In this section, we collect some basic definitions and facts from the theory of model categories and also prove a few lemmas that will be needed later. For general references on model categories, refer to \cite{Ci}, \cite{DS}, \cite{GJ}, \cite{Hi}, \cite{Ho}, \cite{Qu2}, and \cite{Ri}.

\begin{definition} \label{2.1}
A \textit{model category} is a locally small bicomplete category $\mathcal{C}$ endowed with three classes of morphisms called \textit{fibrations}, \textit{cofibrations}, and \textit{weak equivalences}. A morphism that is a fibration as well as a weak equivalence is called a \textit{trivial fibration}, and a morphism that is a cofibration as well as a weak equivalence is called a \textit{trivial cofibration}. Moreover, these morphisms should satisfy the following conditions:

\begin{enumerate}
\item[(i)] \textit{Lifting}: Any commutative diagram on the left in which $f$ is a trivial cofibration and $g$ is a fibration, or $f$ is a cofibration and $g$ is a trivial fibration, can be completed to the commutative diagram on the right:
\[
 \begin{tikzcd}
  A \arrow{r} \arrow{d}[swap]{f}
  & C \arrow{d}{g}
  \\
  B \arrow{r}
  & D
\end{tikzcd}
\quad \Longrightarrow \quad
 \begin{tikzcd}
  A \arrow{r} \arrow{d}[swap]{f}
  & C \arrow{d}{g}
  \\
 B \arrow{ru} \arrow{r} & D
\end{tikzcd}
\]

\item[(ii)] \textit{Retract}: If $f$ is a retract of $g$ in the arrow category $\Arr(\mathcal{C})$, i.e. there is a commutative diagram
\begin{equation*}
  \begin{tikzcd}
  A \arrow{r}{h} \arrow{d}{f}
  & C \arrow{d}{g} \arrow{r}{l}
  & A \arrow{d}{f}
  \\
  B \arrow{r}{p}
  & D \arrow{r}{q}
  & B
\end{tikzcd}
\end{equation*}

\noindent
in which $lh=1^{A}$ and $qp=1^{B}$, and if $g$ is a fibration (cofibration, or weak equivalence), then $f$ is a fibration (cofibration, or weak equivalence).

\item[(iii)] \textit{Two-Out-of-Three}: If $f=gh$, and any two of $f$, $g$, or $h$ are weak equivalences, then so is the third.

\item[(iv)] \textit{Factorization}: For any morphism $f$, there is a factorization $f=gh$ for a fibration $g$ and a trivial cofibration $h$, and also a factorization $f=g'h'$ for a trivial fibration $g'$ and a cofibration $h'$.
\end{enumerate}
\end{definition}

A model category supports a notion of homotopy which is the bedrock of constructing derived functors. In order to define this notion, we need the definitions of cylinder and path objects which could be shown to exist in every model category.

\begin{definition} \label{2.2}
Let $\mathcal{C}$ be a model category, and $A\in \Obj(\mathcal{C})$. Then:
\begin{enumerate}
\item[(i)] A \textit{cylinder object} for $A$ is an object $\Cyl(A)$ through which the codiagonal morphism $\nabla_{A}:A\sqcup A \rightarrow A$ factors as
\begin{equation*}
  \begin{tikzcd}
  A\sqcup A \arrow{r}{\nabla_{A}}\arrow{d}[swap]{\kappa_{A}}
  & A
  \\
  \Cyl(A) \arrow{ru}[swap]{\xi_{A}}
\end{tikzcd}
\end{equation*}

\noindent
in which $\kappa_{A}$ is a cofibration and $\xi_{A}$ is a weak equivalence.

\item[(ii)] A \textit{path object} for $A$ is an object $\Path(A)$ through which the diagonal morphism $\Delta_{A}:A\rightarrow A\sqcap A$ factors as
\begin{equation*}
  \begin{tikzcd}
  A \arrow{r}{\Delta_{A}}\arrow{d}[swap]{\zeta_{A}}
  & A\sqcap A
  \\
  \Path(A) \arrow{ru}[swap]{\nu_{A}}
\end{tikzcd}
\end{equation*}

\noindent
in which $\nu_{A}$ is a fibration and $\zeta_{A}$ is a weak equivalence.
\end{enumerate}
\end{definition}

\begin{definition} \label{2.3}
Let $\mathcal{C}$ be a model category, and $f,g:A \rightarrow B$ two morphisms in $\mathcal{C}$. Then:
\begin{enumerate}
\item[(i)] $f$ is said to be \textit{left homotopic} to $g$, denoted by $f\sim_{l}g$, if there is a cylinder object $\Cyl(A)$ for $A$, as in Definition \ref{2.2}, such that $f$ and $g$ factor through $\Cyl(A)$ as
\begin{equation*}
  \begin{tikzcd}
  A \arrow{r}{f} \arrow{rd}[swap]{\kappa_{A}\iota_{1}^{A}}
  & B
  & A \arrow{l}[swap]{g} \arrow{ld}{\kappa_{A}\iota_{2}^{A}}
  \\
  & \Cyl(A) \arrow{u}{\phi}
\end{tikzcd}
\end{equation*}

\noindent
in which $\iota_{1}^{A}:A\rightarrow A\sqcup A$ and $\iota_{2}^{A}:A\rightarrow A\sqcup A$ are canonical injections. Moreover in this case, $\phi$ is said to be a \textit{left homotopy} from $f$ to $g$.

\item[(ii)] $f$ is said to be \textit{right homotopic} to $g$, denoted by $f\sim_{r}g$, if there is a path object $\Path(B)$ for $B$, as in Definition \ref{2.2}, such that $f$ and $g$ factor through $\Path(B)$ as
\begin{equation*}
  \begin{tikzcd}
  B
  & A \arrow{l}[swap]{f} \arrow{r}{g} \arrow{d}{\psi}
  & B
  \\
  & \Path(B) \arrow{lu}{\pi_{1}^{B}\nu_{B}} \arrow{ru}[swap]{\pi_{2}^{B}\nu_{B}}
\end{tikzcd}
\end{equation*}

\noindent
in which $\pi_{1}^{B}:B\sqcap B \rightarrow B$ and $\pi_{2}^{B}:B\sqcap B \rightarrow B$ are canonical projections. Moreover in this case, $\psi$ is said to be a \textit{right homotopy} from $f$ to $g$.

\item[(iii)] $f$ is said to be \textit{homotopic} to $g$, denoted by $f\sim g$, if $f\sim_{l}g$ and $f\sim_{r}g$.
\end{enumerate}
\end{definition}

Recall that a model category is bicomplete, i.e. it has all small direct and inverse limits. In particular, a model category has initial and terminal objects. If $\mathcal{C}$ is a model category, $\circ$ is its initial object, and $\ast$ is its terminal object, then an object $A$ in $\mathcal{C}$ is said to be \textit{fibrant} if the unique morphism $A \rightarrow \ast$ is a fibration, and is said to be \textit{cofibrant} if the unique morphism $\circ \rightarrow A$ is a cofibration. Now we can see that homotopy is an equivalence relation on morphisms.

\begin{proposition} \label{2.4}
Let $\mathcal{C}$ be a model category, and $f,g:A \rightarrow B$ two morphisms in $\mathcal{C}$. Then the following assertions hold:
\begin{enumerate}
\item[(i)] If $f\sim_{l}g$ and $A$ is cofibrant, then $f\sim_{r}g$ and $\sim_{l}$ is an equivalence relation on $\Mor_{\mathcal{C}}(A,B)$.
\item[(ii)] If $f\sim_{r}g$ and $B$ is fibrant, then $f\sim_{l}g$ and $\sim_{r}$ is an equivalence relation on $\Mor_{\mathcal{C}}(A,B)$.
\item[(iii)] If $A$ is cofibrant and $B$ is fibrant, then $f\sim_{l}g$ if and only if $f\sim_{r}g$, so $\sim$ is an equivalence relation on $\Mor_{\mathcal{C}}(A,B)$.
\end{enumerate}
\end{proposition}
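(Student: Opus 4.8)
The plan is to establish (i) and (ii) by mutually dual arguments and then read off (iii). Throughout I would freely use the standard closure properties of the four classes of maps that follow formally from the axioms of Definition~\ref{2.1} and that I assume are among the lemmas of this section: the characterization of (trivial) cofibrations and (trivial) fibrations by lifting properties, and their stability under composition, retracts, arbitrary pushouts/pullbacks, and coproducts/products.

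For (i), the first thing I would record is that if $A$ is cofibrant and $\Cyl(A)$ is any cylinder object as in Definition~\ref{2.2}(i), then each composite $A\xrightarrow{\iota_j^A}A\sqcup A\xrightarrow{\kappa_A}\Cyl(A)$ ($j=1,2$) is a trivial cofibration: $\iota_j^A$ is a pushout of the cofibration $\circ\to A$ and hence a cofibration, so $\kappa_A\iota_j^A$ is a cofibration, and since $\xi_A\kappa_A\iota_j^A=\nabla_A\iota_j^A=1^A$ with $\xi_A$ a weak equivalence, Two-Out-of-Three makes $\kappa_A\iota_j^A$ a weak equivalence. Granting this, reflexivity of $\sim_l$ needs no hypothesis (take $\phi=f\xi_A$), and symmetry follows by swapping the two coproduct injections (which turns a cylinder object into another one). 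Transitivity is where cofibrancy enters: given left homotopies $\phi\colon\Cyl(A)\to B$ from $f$ to $g$ and $\phi'\colon\Cyl'(A)\to B$ from $g$ to $h$, I would form the pushout $\Cyl''(A):=\Cyl(A)\sqcup_A\Cyl'(A)$ that glues $\kappa_A\iota_2^A$ to $\kappa'_A\iota_1^A$; since these are trivial cofibrations the two legs into $\Cyl''(A)$ are trivial cofibrations, the outer map $A\sqcup A\to\Cyl''(A)$ is a cofibration (it factors as a coproduct inclusion followed by a pushout of $\kappa_A$), and $\xi_A$, $\xi'_A$ glue to a weak equivalence $\Cyl''(A)\to A$ by Two-Out-of-Three; thus $\Cyl''(A)$ is a cylinder object, and $\phi$, $\phi'$ glue to a left homotopy from $f$ to $h$.

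Still inside (i), to pass from $f\sim_l g$ to $f\sim_r g$ I would run a single lifting. Fix a path object $\Path(B)$ and a left homotopy $\phi\colon\Cyl(A)\to B$ from $f$ to $g$, and consider the square with top $\zeta_Bf\colon A\to\Path(B)$, left $\kappa_A\iota_1^A$, bottom $(f\xi_A,\phi)\colon\Cyl(A)\to B\sqcap B$, right $\nu_B$; both composites restrict to $\Delta_Bf=(f,f)$ on $A$, so it commutes, the left map is a trivial cofibration by the observation above, and $\nu_B$ is a fibration, so there is a lift $H\colon\Cyl(A)\to\Path(B)$ with $\nu_BH=(f\xi_A,\phi)$. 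Then $\psi:=H\kappa_A\iota_2^A$ satisfies $\pi_1^B\nu_B\psi=f\xi_A\kappa_A\iota_2^A=f$ and $\pi_2^B\nu_B\psi=\phi\kappa_A\iota_2^A=g$, i.e. $\psi$ is a right homotopy from $f$ to $g$. Part (ii) is then proved by dualizing verbatim: cylinder objects become path objects, the maps $\kappa_A\iota_j^A$ become the projections $\pi_j^B\nu_B\colon\Path(B)\to B$ (which are trivial fibrations when $B$ is fibrant, since then $\pi_j^B$ is a pullback of the fibration $B\to\ast$ and $\pi_j^B\nu_B\zeta_B=1^B$ forces a weak equivalence by Two-Out-of-Three), pushouts of cylinders become pullbacks of path objects, and the concluding lift is taken against a cylinder object for $A$ instead of a path object for $B$. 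Finally (iii) is immediate: with $A$ cofibrant and $B$ fibrant, (i) gives $f\sim_l g\Rightarrow f\sim_r g$ and (ii) gives the converse, so $\sim_l$, $\sim_r$, $\sim$ coincide on $\Mor_{\mathcal{C}}(A,B)$ and are equivalence relations by (i).

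The main obstacle is concentrated in the preliminary lemma and the pushout construction of $\Cyl''(A)$ (dually: the pullback construction of a composite path object): one must check carefully that gluing two cylinder objects along a common face again yields a cylinder object, which hinges on the structure maps out of a cofibrant object being trivial cofibrations and on closure of cofibrations under coproducts and pushouts. Everything after that is a mechanical application of the Lifting and Two-Out-of-Three axioms.
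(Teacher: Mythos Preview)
The paper does not actually prove this statement; it simply refers the reader to \cite[Proposition 7.4.5]{Hi}. Your argument is correct and is precisely the standard one given in that reference and elsewhere (e.g.\ Dwyer--Spali\'nski, Hovey): the trivial-cofibrancy of $\kappa_A\iota_j^A$ when $A$ is cofibrant, the pushout gluing $\Cyl(A)\sqcup_A\Cyl'(A)$ for transitivity, and the single lift against $\nu_B$ that converts a left homotopy into a right homotopy are exactly the ingredients used there, so there is nothing to compare.
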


\begin{proof}
See \cite[Proposition 7.4.5]{Hi}.
\end{proof}

A model category enjoys a concept of replacement which can be thought of as a generalization as well as a non-abelian analogue of a resolution in abelian homological algebra. If $\mathcal{C}$ is a model category, and $A$ is an object in $\mathcal{C}$, then a \textit{fibrant replacement} of $A$ is a trivial cofibration $\varrho_{A}:A\rightarrow A_{{\textrm{f}}}$ in which $A_{{\textrm{f}}}$ is fibrant, and a \textit{cofibrant replacement} of $A$ is a trivial fibration $\rho_{A}:A_{{\textrm{c}}}\rightarrow A$ in which $A_{{\textrm{c}}}$ is cofibrant. Now we have a comparison lemma for replacements as expected.

\begin{proposition} \label{2.5}
Let $\mathcal{C}$ be a model category. Then the following assertions hold:
\begin{enumerate}
\item[(i)] Every object $A$ of $\mathcal{C}$ has a fibrant replacement $\varrho_{A}:A\rightarrow A_{\emph{f}}$. Choosing a fibrant replacement for each object of $\mathcal{C}$, if $f:A \rightarrow B$ is a morphism in $\mathcal{C}$, then there is a morphism $f_{\emph{f}}:A_{\emph{f}} \rightarrow B_{\emph{f}}$ which is unique up to homotopy and makes the following diagram commutative:
\begin{equation*}
  \begin{tikzcd}
  A \arrow{r}{\varrho_{A}} \arrow{d}[swap]{f}
  & A_{\emph{f}} \arrow{d}{f_{\emph{f}}}
  \\
  B \arrow{r}{\varrho_{B}}
  & B_{\emph{f}}
\end{tikzcd}
\end{equation*}

\noindent
Moreover, if $g:B \rightarrow C$ is another morphism in $\mathcal{C}$, then $(gf)_{\emph{f}}\sim g_{\emph{f}}f_{\emph{f}}$, and $(1^{A})_{\emph{f}}\sim 1^{A_{\emph{f}}}$.

\item[(ii)] Every object $A$ of $\mathcal{C}$ has a cofibrant replacement $\rho_{A}:A_{\emph{c}}\rightarrow A$. Choosing a cofibrant replacement for each object of $\mathcal{C}$, if $f:A \rightarrow B$ is a morphism in $\mathcal{C}$, then there is a morphism $f_{\emph{c}}:A_{\emph{c}} \rightarrow B_{\emph{c}}$ which is unique up to homotopy and makes the following diagram commutative:
\begin{equation*}
  \begin{tikzcd}
  A_{\emph{c}} \arrow{r}{\rho_{A}} \arrow{d}[swap]{f_{\emph{c}}}
  & A \arrow{d}{f}
  \\
  B_{\emph{c}} \arrow{r}{\rho_{B}}
  & B
\end{tikzcd}
\end{equation*}

\noindent
Moreover, if $g:B \rightarrow C$ is another morphism in $\mathcal{C}$, then $(gf)_{\emph{c}}\sim g_{\emph{c}}f_{\emph{c}}$, and $(1^{A})_{\emph{c}}\sim 1^{A_{\emph{c}}}$.
\end{enumerate}
\end{proposition}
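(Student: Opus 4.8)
The plan is to prove parts (i) and (ii) by dual arguments, carrying out (i) in detail and obtaining (ii) by reversing every arrow. For the existence of a fibrant replacement, I would apply the factorization axiom (Definition \ref{2.1}(iv)) to the unique morphism $A\rightarrow \ast$, writing it as $A\xrightarrow{\varrho_{A}}A_{\mathrm{f}}\rightarrow \ast$ with $\varrho_{A}$ a trivial cofibration and $A_{\mathrm{f}}\rightarrow \ast$ a fibration; then $A_{\mathrm{f}}$ is fibrant and $\varrho_{A}$ is the sought replacement. Having fixed such a replacement for every object, and given $f:A\rightarrow B$, I would obtain $f_{\mathrm{f}}$ as a solution to the lifting problem whose left leg is the trivial cofibration $\varrho_{A}$, whose right leg is the fibration $B_{\mathrm{f}}\rightarrow \ast$, whose top edge is $\varrho_{B}f$, and whose bottom edge is $A_{\mathrm{f}}\rightarrow \ast$; the lift $f_{\mathrm{f}}:A_{\mathrm{f}}\rightarrow B_{\mathrm{f}}$ then satisfies $f_{\mathrm{f}}\varrho_{A}=\varrho_{B}f$, which is the required commutativity.

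The essential point is uniqueness up to homotopy, which I would isolate as the following sublemma: if $u,v:A_{\mathrm{f}}\rightarrow B_{\mathrm{f}}$ satisfy $u\varrho_{A}=v\varrho_{A}$, then $u\sim v$. To prove this, choose a path object $B_{\mathrm{f}}\xrightarrow{\zeta_{B_{\mathrm{f}}}}\Path(B_{\mathrm{f}})\xrightarrow{\nu_{B_{\mathrm{f}}}}B_{\mathrm{f}}\sqcap B_{\mathrm{f}}$ as in Definition \ref{2.2}, and solve the lifting problem with left leg the trivial cofibration $\varrho_{A}$, right leg the fibration $\nu_{B_{\mathrm{f}}}$, top edge $\zeta_{B_{\mathrm{f}}}(u\varrho_{A})$, and bottom edge $(u,v):A_{\mathrm{f}}\rightarrow B_{\mathrm{f}}\sqcap B_{\mathrm{f}}$; this square commutes because $\nu_{B_{\mathrm{f}}}\zeta_{B_{\mathrm{f}}}=\Delta_{B_{\mathrm{f}}}$ and $u\varrho_{A}=v\varrho_{A}$. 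The resulting lift is a right homotopy from $u$ to $v$ in the sense of Definition \ref{2.3}(ii), and since $B_{\mathrm{f}}$ is fibrant, Proposition \ref{2.4}(ii) upgrades $u\sim_{r}v$ to $u\sim v$. The remaining assertions in (i) now follow instantly: any two lifts $f_{\mathrm{f}}$ coincide after precomposition with $\varrho_{A}$; both $(gf)_{\mathrm{f}}$ and $g_{\mathrm{f}}f_{\mathrm{f}}$ become $\varrho_{C}gf$ after precomposition with $\varrho_{A}$; and both $(1^{A})_{\mathrm{f}}$ and $1^{A_{\mathrm{f}}}$ become $\varrho_{A}$ after precomposition with $\varrho_{A}$.

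For (ii) I would dualize throughout: factor $\circ\rightarrow A$ as $\circ\rightarrow A_{\mathrm{c}}\xrightarrow{\rho_{A}}A$ with $\circ\rightarrow A_{\mathrm{c}}$ a cofibration and $\rho_{A}$ a trivial fibration, so that $A_{\mathrm{c}}$ is cofibrant; construct $f_{\mathrm{c}}$ by lifting the cofibration $\circ\rightarrow A_{\mathrm{c}}$ against the trivial fibration $\rho_{B}$ with bottom edge $f\rho_{A}$; and establish the dual sublemma --- if $\rho_{B}u=\rho_{B}v$ then $u\sim v$ --- by choosing a cylinder object $A_{\mathrm{c}}\sqcup A_{\mathrm{c}}\xrightarrow{\kappa_{A_{\mathrm{c}}}}\Cyl(A_{\mathrm{c}})\xrightarrow{\xi_{A_{\mathrm{c}}}}A_{\mathrm{c}}$ and solving the lifting problem with left leg the cofibration $\kappa_{A_{\mathrm{c}}}$, right leg the trivial fibration $\rho_{B}$, top edge $(u,v)$, and bottom edge $(\rho_{B}u)\xi_{A_{\mathrm{c}}}$ to obtain a left homotopy, then invoking Proposition \ref{2.4}(i) (legitimate since $A_{\mathrm{c}}$ is cofibrant) to conclude $u\sim v$; the compatibility statements then transcribe verbatim from (i). I do not expect a serious obstacle here. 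The one point that requires attention is the homotopy bookkeeping: $A_{\mathrm{f}}$ need not be cofibrant and $B_{\mathrm{c}}$ need not be fibrant, so Proposition \ref{2.4} may be invoked only on the side where the relevant fibrancy or cofibrancy hypothesis genuinely holds --- which is exactly why the path object must be taken on the target in (i) and the cylinder object on the source in (ii).
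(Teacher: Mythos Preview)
Your proposal is correct and is the standard argument for this result. The paper itself does not give a proof at all; it simply cites \cite[Corollary 8.1.8]{Hi}, so there is no approach to compare against --- what you have written is essentially what one finds in Hirschhorn.
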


\begin{proof}
See \cite[Corollary 8.1.8]{Hi}.
\end{proof}

The notions of fibrant and cofibrant replacements also allow us to define the homotopy category of a model category which can be thought of as a generalization as well as a non-abelian analogue of the derived category of an abelian category.

\begin{definition} \label{2.6}
Let $\mathcal{C}$ be a model category. Choose a fibrant replacement $\varrho_{A}:A\rightarrow A_{{\textrm{f}}}$ as well as a cofibrant replacement $\rho_{A}:A_{{\textrm{c}}}\rightarrow A$ for every $A\in \Obj(\mathcal{C})$. Then the \textit{homotopy category} $\Ho(\mathcal{C})$ of $\mathcal{C}$ is defined as follows:
\begin{enumerate}
\item[(i)] $\Obj\left({\textrm{Ho}}(\mathcal{C})\right):=\Obj(\mathcal{C})$.
\item[(ii)] $\Mor_{{\textrm{Ho}}(\mathcal{C})}(A,B):=\Mor_{\mathcal{C}}(A_{{\textrm{fc}}},B_{{\textrm{fc}}})/ \sim$ for every $A,B\in \Obj\left({\textrm{Ho}}(\mathcal{C})\right)$.
\end{enumerate}
\end{definition}

We next have the universal property of the homotopy category which in turn shows that the homotopy category is the localization of the model category with respect to the class of weak equivalences.

\begin{proposition} \label{2.7}
Let $\mathcal{C}$ be a model category. Choose a fibrant replacement $\varrho_{A}:A\rightarrow A_{\emph{f}}$ as well as a cofibrant replacement $\rho_{A}:A_{\emph{c}}\rightarrow A$ for every $A\in \Obj(\mathcal{C})$. Then there is a covariant functor $\mathcal{L}_{\mathcal{C}}:\mathcal{C}\rightarrow \Ho(\mathcal{C})$ defined as $\mathcal{L}_{\mathcal{C}}(A):=A$ for every $A\in \Obj(\mathcal{C})$, and $\mathcal{L}_{\mathcal{C}}(f):=[f_{\emph{fc}}]$ for every $f\in \Mor_{\mathcal{C}}(A,B)$. Moreover, $\mathcal{L}_{\mathcal{C}}$ has the following properties:
\begin{enumerate}
\item[(i)] A morphism $f$ in $\mathcal{C}$ is a weak equivalence if and only if $\mathcal{L}_{\mathcal{C}}(f)$ is an isomorphism in $\Ho(\mathcal{C})$.
\item[(ii)] If $\mathcal{F}:\mathcal{C}\rightarrow \mathcal{D}$ is a functor that maps weak equivalences in $\mathcal{C}$ to isomorphisms in $\mathcal{D}$, then $\mathcal{F}$ factors uniquely through $\mathcal{L}_{\mathcal{C}}$, i.e. there is a unique functor $\mathcal{\check{F}}:\Ho(\mathcal{C})\rightarrow \mathcal{D}$ that makes the following diagram commutative:
\begin{equation*}
  \begin{tikzcd}
 \mathcal{C} \arrow{r}{\mathcal{F}}\arrow{d}[swap]{\mathcal{L}_{\mathcal{C}}}
  & \mathcal{D}
  \\
  \Ho(\mathcal{C}) \arrow{ru}[swap]{\mathcal{\check{F}}}
\end{tikzcd}
\end{equation*}
\end{enumerate}
\end{proposition}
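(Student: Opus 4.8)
The plan is to reduce the whole statement to facts about the full subcategory $\mathcal{C}_{\mathrm{cf}}\subseteq\mathcal{C}$ of fibrant--cofibrant objects, where homotopy is best behaved. I first record the inputs I will borrow from \cite{Hi}: on $\mathcal{C}_{\mathrm{cf}}$ the relations $\sim_{l}$ and $\sim_{r}$ agree, are an equivalence relation, and are compatible with composition on either side, so $\mathcal{C}_{\mathrm{cf}}/\!\sim$ is a genuine category; and Whitehead's theorem holds, i.e. a morphism between fibrant--cofibrant objects is a weak equivalence if and only if it admits a homotopy inverse. Observe that each $A_{\mathrm{fc}}$ lies in $\mathcal{C}_{\mathrm{cf}}$: it is cofibrant by construction, and $A_{\mathrm{fc}}\to A_{\mathrm{f}}\to\ast$ is a composite of fibrations. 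Hence by Proposition \ref{2.4} the morphism sets in Definition \ref{2.6} are well posed and $\Ho(\mathcal{C})$ is a category whose composition is inherited from $\mathcal{C}_{\mathrm{cf}}$. For the functor $\mathcal{L}_{\mathcal{C}}$: by Proposition \ref{2.5}(i) the map $f_{\mathrm{f}}$ is unique up to homotopy, and applying the ``cofibrant replacement respects homotopy'' lemma of \cite{Hi} underlying Proposition \ref{2.5}(ii), the map $f_{\mathrm{fc}}$ is unique up to homotopy, so $[f_{\mathrm{fc}}]$ is unambiguous; the same two steps applied to $(gf)_{\mathrm{f}}\sim g_{\mathrm{f}}f_{\mathrm{f}}$ and $(1^{A})_{\mathrm{f}}\sim 1^{A_{\mathrm{f}}}$ give $(gf)_{\mathrm{fc}}\sim g_{\mathrm{fc}}f_{\mathrm{fc}}$ and $(1^{A})_{\mathrm{fc}}\sim 1^{A_{\mathrm{fc}}}$, i.e. functoriality of $\mathcal{L}_{\mathcal{C}}$.

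Part (i). The squares of Proposition \ref{2.5} commute on the nose and relate $f$, $f_{\mathrm{f}}$, $f_{\mathrm{fc}}$ through the maps $\varrho_{A},\varrho_{B}$ (trivial cofibrations) and $\rho_{A_{\mathrm{f}}},\rho_{B_{\mathrm{f}}}$ (trivial fibrations), all of which are weak equivalences; so two applications of the two-out-of-three axiom yield ``$f$ is a weak equivalence $\iff$ $f_{\mathrm{fc}}$ is''. By Whitehead's theorem the latter is equivalent to $f_{\mathrm{fc}}$ being a homotopy equivalence, and since the composition of $\Ho(\mathcal{C})$ is that of $\mathcal{C}_{\mathrm{cf}}/\!\sim$, this is in turn equivalent to $[f_{\mathrm{fc}}]=\mathcal{L}_{\mathcal{C}}(f)$ being an isomorphism in $\Ho(\mathcal{C})$.

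Part (ii). The key observation is that any functor $\mathcal{F}$ inverting weak equivalences identifies homotopic maps. If $f\sim_{l}g\colon A\to B$ via a cylinder object with $\kappa_{A}\colon A\sqcup A\to\Cyl(A)$, with $\xi_{A}\colon\Cyl(A)\to A$ a weak equivalence, and $\phi\colon\Cyl(A)\to B$, then from $\xi_{A}\kappa_{A}=\nabla_{A}$ and $\nabla_{A}\iota_{1}^{A}=\nabla_{A}\iota_{2}^{A}=1^{A}$ we get $\mathcal{F}(\kappa_{A}\iota_{1}^{A})=\mathcal{F}(\xi_{A})^{-1}=\mathcal{F}(\kappa_{A}\iota_{2}^{A})$, hence $\mathcal{F}(f)=\mathcal{F}(\phi)\mathcal{F}(\kappa_{A}\iota_{1}^{A})=\mathcal{F}(\phi)\mathcal{F}(\kappa_{A}\iota_{2}^{A})=\mathcal{F}(g)$; the path-object argument is dual, so $\mathcal{F}$ kills $\sim$ and in particular sends homotopy equivalences to isomorphisms. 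Now put $\check{\mathcal{F}}(A):=\mathcal{F}(A)$, fix for each $A$ the isomorphism $\theta_{A}:=\mathcal{F}(\varrho_{A})^{-1}\mathcal{F}(\rho_{A_{\mathrm{f}}})\colon\mathcal{F}(A_{\mathrm{fc}})\xrightarrow{\sim}\mathcal{F}(A)$, and define $\check{\mathcal{F}}([g]):=\theta_{B}\,\mathcal{F}(g)\,\theta_{A}^{-1}$ for $g\colon A_{\mathrm{fc}}\to B_{\mathrm{fc}}$. This is independent of the representative by the previous observation, is functorial since composition in $\Ho(\mathcal{C})$ satisfies $[g'][g]=[g'g]$, and satisfies $\check{\mathcal{F}}\mathcal{L}_{\mathcal{C}}=\mathcal{F}$: substituting $\mathcal{F}(f_{\mathrm{f}})=\mathcal{F}(\varrho_{B})\mathcal{F}(f)\mathcal{F}(\varrho_{A})^{-1}$ and $\mathcal{F}(f_{\mathrm{fc}})=\mathcal{F}(\rho_{B_{\mathrm{f}}})^{-1}\mathcal{F}(f_{\mathrm{f}})\mathcal{F}(\rho_{A_{\mathrm{f}}})$ into $\theta_{B}\mathcal{F}(f_{\mathrm{fc}})\theta_{A}^{-1}$, the chain of $\mathcal{F}$-images of replacement maps telescopes to $\mathcal{F}(f)$. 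Finally, uniqueness follows because $\Ho(\mathcal{C})$ is generated under composition by the morphisms $\mathcal{L}_{\mathcal{C}}(h)$ together with the inverses $\mathcal{L}_{\mathcal{C}}(w)^{-1}$ of weak equivalences $w$ (using the zig-zag $A\xrightarrow{\varrho_{A}}A_{\mathrm{f}}\xleftarrow{\rho_{A_{\mathrm{f}}}}A_{\mathrm{fc}}$, whose backward arrow becomes invertible by part (i), and that $\mathcal{L}_{\mathcal{C}}$ restricted to $\mathcal{C}_{\mathrm{cf}}$ is, up to canonical isomorphism, the quotient by homotopy); on such generators the condition $\check{\mathcal{F}}\mathcal{L}_{\mathcal{C}}=\mathcal{F}$ forces every value.

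I expect the genuine difficulty to lie entirely in the imported structural facts rather than in this bookkeeping: namely, that $\sim$ is a congruence on $\mathcal{C}_{\mathrm{cf}}$ (which is what makes $\Ho(\mathcal{C})$ a category and $\mathcal{L}_{\mathcal{C}}$ a functor at all) and Whitehead's theorem (which powers part (i)); both are nontrivial consequences of the lifting and factorization axioms and are established in \cite{Hi}. Granting those, the remaining steps --- in particular the well-definedness of $\check{\mathcal{F}}$, exactly where one might fear trouble --- are short and formal thanks to the cylinder/path-object computation above.
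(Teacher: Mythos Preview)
The paper does not actually prove Proposition \ref{2.7}; its entire proof reads ``See \cite[Proposition 8.3.7 and Theorem 8.3.10]{Hi}.'' Your proposal is a faithful outline of the standard argument one finds in that reference: reduce to the fibrant--cofibrant subcategory, use that $\sim$ is a congruence there (so $\Ho(\mathcal{C})$ is a category and $\mathcal{L}_{\mathcal{C}}$ is a functor), invoke Whitehead for part (i), and for part (ii) first show via the cylinder/path computation that any weak-equivalence-inverting functor identifies homotopic maps, then define $\check{\mathcal{F}}$ by conjugating with the replacement isomorphisms $\theta_{A}$. All of this is correct and is essentially Hirschhorn's line of reasoning, so there is no genuine divergence to compare --- you have simply written out what the paper leaves as a citation.

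One small point worth tightening: your uniqueness argument asserts that $\Ho(\mathcal{C})$ is generated by the $\mathcal{L}_{\mathcal{C}}(h)$ together with inverses of $\mathcal{L}_{\mathcal{C}}(w)$. This is true, but the cleanest way to see it in your setup is to observe that for any $[g]\in\Mor_{\Ho(\mathcal{C})}(A,B)$ with representative $g\colon A_{\mathrm{fc}}\to B_{\mathrm{fc}}$, one has $[g]=\mathcal{L}_{\mathcal{C}}(\varrho_{B})^{-1}\mathcal{L}_{\mathcal{C}}(\rho_{B_{\mathrm{f}}})\,\mathcal{L}_{\mathcal{C}}(g)\,\mathcal{L}_{\mathcal{C}}(\rho_{A_{\mathrm{f}}})^{-1}\mathcal{L}_{\mathcal{C}}(\varrho_{A})$, which follows from the same telescoping you used for existence. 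Making this explicit removes the slight hand-wave in the parenthetical.
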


\begin{proof}
See \cite[Proposition 8.3.7 and Theorem 8.3.10]{Hi}.
\end{proof}

We next focus on cofibrantly generated model categories whose importance stems from the transferability of their model structure along adjunctions.

Let $\mathcal{C}$ be a category, and $f:A\rightarrow B$ and $g:C\rightarrow D$ two morphisms in $\mathcal{C}$. If any commutative diagram as the one on the left can be completed to the commutative diagram on the right
\[
 \begin{tikzcd}
  A \arrow{r} \arrow{d}[swap]{f}
  & C \arrow{d}{g}
  \\
  B \arrow{r}
  & D
\end{tikzcd}
\quad \Longrightarrow \quad
 \begin{tikzcd}
  A \arrow{r} \arrow{d}[swap]{f}
  & C \arrow{d}{g}
  \\
 B \arrow{ru} \arrow{r} & D
\end{tikzcd}
\]

\noindent
then we say that $f$ has the \textit{left lifting property against} $g$ and $g$ has the \textit{right lifting property against} $f$. Given a class $\mathcal{X}$ of morphisms in $\mathcal{C}$, let $\LLP(\mathcal{X})$ denote the class of morphisms in $\mathcal{C}$ that have the left lifting property against all morphisms in $\mathcal{X}$, and $\RLP(\mathcal{X})$ denote the class of morphisms in $\mathcal{C}$ that have the right lifting property against all morphisms in $\mathcal{X}$. If $\mathcal{C}$ is a model category, then by \cite[Proposition 7.2.3]{Hi}, $g$ is a fibration if and only if it has the right lifting property against all trivial cofibrations; $g$ is a trivial fibration if and only if it has the right lifting property against all cofibrations; $f$ is a cofibration if and only if it has the left lifting property against all trivial fibrations; and $f$ is a trivial cofibration if and only if it has the left lifting property against all fibrations.

Recall that if $\mathcal{C}$ is a locally small cocomplete category and $\mathcal{X}$ is a class of morphisms in $\mathcal{C}$, then an object $A$ of $\mathcal{C}$ is said to be \textit{small} with respect to $\mathcal{X}$ if the functor $\Mor_{\mathcal{C}}(A,-):\mathcal{C}\rightarrow \mathcal{S}\mathpzc{et}$ preserves sequential direct limits of direct systems in $\mathcal{X}$, i.e. for any sequence $\left\{g_{n}:B_{n}\rightarrow B_{n+1}\right\}_{n\geq 1}$ of morphisms in $\mathcal{X}$, the natural morphism
$$\underset{n\geq 1}{\varinjlim}\Mor_{\mathcal{C}}(A,B_{n})\rightarrow \Mor_{\mathcal{C}}\left(A,\underset{n\geq 1}{\varinjlim}B_{n}\right)$$
is an isomorphism.

Quillen devised a method to transfer model structures along adjunctions which is primarily based on his well-known "small object argument". A set $\mathcal{X}$ of morphisms in a locally small cocomplete category $\mathcal{C}$ is said to \textit{permit the small object argument} if the source of any morphism in $\mathcal{X}$ is small with respect to $\LLP\left(\RLP(\mathcal{X})\right)$. This condition is necessary in order to be able to run the small object argument. We now recall the notion of a cofibrantly generated model category.

\begin{definition} \label{2.8}
A model category $\mathcal{C}$ is said to be \textit{cofibrantly generated} if there exist two sets $\mathcal{X}$ and $\mathcal{Y}$ of morphisms in $C$ that satisfy the following conditions:
\begin{enumerate}
\item[(i)] $\mathcal{X}$ and $\mathcal{Y}$ permit the small object argument.
\item[(ii)] $\RLP(\mathcal{X})$ is the class of trivial fibrations.
\item[(iii)] $\RLP(\mathcal{Y})$ is the class of fibrations.
\end{enumerate}
\end{definition}

The next theorem is known as Quillen-Kan Transfer Machine.

\begin{theorem} \label{2.9}
Let $\mathcal{C}$ be a cofibrantly generated model category with the corresponding sets of morphisms $\mathcal{X}$ and $\mathcal{Y}$, $\mathcal{D}$ a locally small bicomplete category, and $(\mathcal{F},\mathcal{G}):\mathcal{C} \leftrightarrows \mathcal{D}$ an adjoint pair of functors. Then:
\begin{enumerate}
\item[(i)] Let a morphism $f:A\rightarrow B$ in $\mathcal{D}$ be a fibration if $\mathcal{G}(f):\mathcal{G}(A)\rightarrow \mathcal{G}(B)$ is a fibration in $\mathcal{C}$.
\item[(ii)] Let a morphism $f:A\rightarrow B$ in $\mathcal{D}$ be a weak equivalence if $\mathcal{G}(f):\mathcal{G}(A)\rightarrow \mathcal{G}(B)$ is a weak equivalence in $\mathcal{C}$.
\item[(iii)] Let a morphism $f:A\rightarrow B$ in $\mathcal{D}$ be a cofibration if it has the left lifting property against trivial fibrations defined in (i) and (ii).
\end{enumerate}
With the above definitions of fibrations, cofibrations, and weak equivalences in place, if the sets of morphisms $\mathcal{F}(\mathcal{X})$ and $\mathcal{F}(\mathcal{Y})$ permit the small object argument, and any cofibration in $\mathcal{D}$ that has the left lifting property against fibrations is also a weak equivalence, then $\mathcal{D}$ becomes a cofibrantly generated model category.
\end{theorem}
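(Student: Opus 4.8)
The plan is to apply the standard recognition machinery for cofibrantly generated model categories, taking $\mathcal{F}(\mathcal{X})$ and $\mathcal{F}(\mathcal{Y})$ as the candidate sets of generating cofibrations and generating trivial cofibrations; the acyclicity hypothesis will be the one genuinely non-formal ingredient, and everything else is manipulation of lifting properties and two applications of the small object argument. The keystone is the elementary adjunction lifting lemma: for a morphism $i$ in $\mathcal{C}$ and a morphism $p$ in $\mathcal{D}$, the morphism $\mathcal{F}(i)$ has the left lifting property against $p$ if and only if $i$ has the left lifting property against $\mathcal{G}(p)$, which is immediate from the hom-set bijection of the adjunction applied to a lifting square. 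Since $\mathcal{C}$ is cofibrantly generated, $\RLP(\mathcal{X})$ is its class of trivial fibrations and $\RLP(\mathcal{Y})$ is its class of fibrations; the lemma then gives $p \in \RLP(\mathcal{F}(\mathcal{X}))$ iff $\mathcal{G}(p) \in \RLP(\mathcal{X})$ iff $\mathcal{G}(p)$ is both a fibration and a weak equivalence in $\mathcal{C}$ iff $p$ is a trivial fibration in $\mathcal{D}$ in the sense of (i)--(ii), and likewise $\RLP(\mathcal{F}(\mathcal{Y}))$ is exactly the class of fibrations of $\mathcal{D}$. In particular the cofibrations of $\mathcal{D}$, defined in (iii) as the left lifting class of the trivial fibrations, coincide with $\LLP\!\left(\RLP(\mathcal{F}(\mathcal{X}))\right)$, and conditions (ii)--(iii) of Definition \ref{2.8} will hold once the model axioms are checked.

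The formal axioms come first. Local smallness and bicompleteness of $\mathcal{D}$ are hypotheses. Two-out-of-three for weak equivalences transports verbatim from $\mathcal{C}$ because $\mathcal{G}$ is a functor. For the retract axiom, $\mathcal{G}$ sends a retract diagram in $\Arr(\mathcal{D})$ to one in $\Arr(\mathcal{C})$, so fibrations and weak equivalences of $\mathcal{D}$ are stable under retracts, while cofibrations are stable under retracts because any class defined by a left lifting property is. The lifting of a cofibration against a trivial fibration holds by the definition of cofibration.

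For the factorization axiom I would run the small object argument twice, which is licit because $\mathcal{D}$ is cocomplete and $\mathcal{F}(\mathcal{X})$, $\mathcal{F}(\mathcal{Y})$ permit the small object argument. Applied to $\mathcal{F}(\mathcal{X})$, it factors an arbitrary $f$ as $f = p\,i$ with $i$ a relative $\mathcal{F}(\mathcal{X})$-cell complex, hence $i \in \LLP\!\left(\RLP(\mathcal{F}(\mathcal{X}))\right)$ is a cofibration, and $p \in \RLP(\mathcal{F}(\mathcal{X}))$ is a trivial fibration. Applied to $\mathcal{F}(\mathcal{Y})$, it factors $f$ as $f = q\,j$ with $j \in \LLP\!\left(\RLP(\mathcal{F}(\mathcal{Y}))\right)$ --- that is, $j$ has the left lifting property against every fibration --- and $q \in \RLP(\mathcal{F}(\mathcal{Y}))$ a fibration. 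Here $j$ is a cofibration since a trivial fibration is a fibration, so the left lifting class of fibrations is contained in that of trivial fibrations; and $j$ is a weak equivalence precisely by the acyclicity hypothesis. Thus $f = q\,j$ is a factorization into a trivial cofibration followed by a fibration. \textbf{This is the step where the hypothesis does all the work, and the only real obstacle in the argument:} without the assumption that cofibrations with the left lifting property against all fibrations are weak equivalences, one cannot certify that the maps produced from $\mathcal{F}(\mathcal{Y})$ are weak equivalences, and the model structure does not get off the ground --- which is exactly why the bulk of the paper is devoted to verifying this condition in the algebraic setting of interest.

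Finally, the lifting of a trivial cofibration $h$ against a fibration $g$ follows by the retract argument. Factor $h = q\,j$ as above, with $j$ a trivial cofibration and $q$ a fibration; since $h$ and $j$ are weak equivalences, two-out-of-three makes $q$ a trivial fibration. As $h$ is a cofibration and $q$ a trivial fibration, the square with top edge $j$, left edge $h$, right edge $q$, and bottom edge the identity admits a lift $r$ with $rh = j$ and $qr$ the identity, which exhibits $h$ as a retract of $j$ in $\Arr(\mathcal{D})$; since $j$ has the left lifting property against all fibrations and this class is retract-closed, so does $h$. In particular the class of trivial cofibrations of $\mathcal{D}$ is precisely $\LLP\!\left(\RLP(\mathcal{F}(\mathcal{Y}))\right)$. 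With all the conditions of Definition \ref{2.1} verified, $\mathcal{D}$ is a model category, and the identifications of $\RLP(\mathcal{F}(\mathcal{X}))$ and $\RLP(\mathcal{F}(\mathcal{Y}))$ together with the small object hypotheses show that it is cofibrantly generated, with $\mathcal{X}$ and $\mathcal{Y}$ replaced by $\mathcal{F}(\mathcal{X})$ and $\mathcal{F}(\mathcal{Y})$.
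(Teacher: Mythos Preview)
Your proof is correct and follows the standard recognition argument for transferred model structures. The paper itself does not prove this theorem; it simply cites \cite[Theorem 11.3.2]{Hi}, so there is no paper-internal argument to compare against. What you have written is essentially the content of that reference: identify $\RLP(\mathcal{F}(\mathcal{X}))$ and $\RLP(\mathcal{F}(\mathcal{Y}))$ via the adjunction lifting lemma (Lemma~\ref{2.13} in the paper), run the small object argument twice for the factorizations, invoke the acyclicity hypothesis to make the $\mathcal{F}(\mathcal{Y})$-factorization yield a trivial cofibration, and close with the retract argument for the remaining lifting axiom. Your emphasis that the acyclicity condition is the only non-formal input is exactly right and matches the paper's narrative, which devotes Sections~5 and~6 to verifying precisely this condition for simplicial commutative algebras.
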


\begin{proof}
See \cite[Theorem 11.3.2]{Hi}.
\end{proof}

The last condition in Theorem \ref{2.9} is commonly known as the \enquote{acyclicity condition} which is in practice notoriously hard to establish. We next prove a lemma that facilitates the permission of small object argument in our cases of interest.

\begin{lemma} \label{2.10}
Let $\mathcal{C}$ and $\mathcal{D}$ be two locally small cocomplete categories, and $(\mathcal{F},\mathcal{G}):\mathcal{C} \leftrightarrows \mathcal{D}$ an adjoint pair of functors in which $\mathcal{G}$ preserves sequential direct limits. Suppose that $\mathcal{X}$ is a set of morphisms in $\mathcal{C}$ such that the source of any morphism in $\mathcal{X}$ is small with respect to $\mathcal{G}\left(\LLP\left(\RLP\left(\mathcal{F}(\mathcal{X})\right)\right)\right)$. Then $\mathcal{F}(\mathcal{X})=\left\{F(f) \suchthat f\in \mathcal{X}\right\}$ permits the small object argument.
\end{lemma}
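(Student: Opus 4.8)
The plan is to reduce everything to the adjunction isomorphism $\Mor_{\mathcal{D}}(\mathcal{F}(-),-) \cong \Mor_{\mathcal{C}}(-,\mathcal{G}(-))$ together with the hypothesis that $\mathcal{G}$ commutes with sequential direct limits. Write $\mathcal{Z}:=\LLP\left(\RLP\left(\mathcal{F}(\mathcal{X})\right)\right)$. Every morphism of $\mathcal{F}(\mathcal{X})$ has the form $\mathcal{F}(f)$ for some $f:A\to B$ in $\mathcal{X}$, and its source is $\mathcal{F}(A)$; hence it suffices to show that $\mathcal{F}(A)$ is small with respect to $\mathcal{Z}$ whenever $A$ is the source of a morphism in $\mathcal{X}$.

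First I would fix such an $f:A\to B$ and an arbitrary sequence $\left\{g_{n}:D_{n}\to D_{n+1}\right\}_{n\geq 1}$ of morphisms in $\mathcal{Z}$, with sequential colimit $D:=\varinjlim_{n}D_{n}$ in $\mathcal{D}$ (which exists since $\mathcal{D}$ is cocomplete), and aim to prove that the canonical comparison morphism $\varinjlim_{n}\Mor_{\mathcal{D}}(\mathcal{F}(A),D_{n}) \to \Mor_{\mathcal{D}}(\mathcal{F}(A),D)$ is an isomorphism in $\mathcal{S}\mathpzc{et}$.

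Next I would transport the whole picture across the adjunction. Naturality of the adjunction bijection in its second variable identifies the direct system $\left\{\Mor_{\mathcal{D}}(\mathcal{F}(A),D_{n})\right\}$ with the direct system $\left\{\Mor_{\mathcal{C}}(A,\mathcal{G}(D_{n}))\right\}$, whose transition maps are post-composition with the morphisms $\mathcal{G}(g_{n})$, and identifies the target $\Mor_{\mathcal{D}}(\mathcal{F}(A),D)$ with $\Mor_{\mathcal{C}}(A,\mathcal{G}(D))$. Since $\mathcal{G}$ preserves sequential direct limits, the canonical morphism $\varinjlim_{n}\mathcal{G}(D_{n})\to\mathcal{G}(D)$ is an isomorphism in $\mathcal{C}$; and since each $g_{n}$ lies in $\mathcal{Z}$, each $\mathcal{G}(g_{n})$ lies in $\mathcal{G}(\mathcal{Z})$, so $\left\{\mathcal{G}(g_{n})\right\}$ is a sequence of morphisms in $\mathcal{G}(\mathcal{Z})$. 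The hypothesis that the source $A$ of $f$ is small with respect to $\mathcal{G}(\mathcal{Z})$ then says precisely that $\varinjlim_{n}\Mor_{\mathcal{C}}(A,\mathcal{G}(D_{n}))\to\Mor_{\mathcal{C}}\!\left(A,\varinjlim_{n}\mathcal{G}(D_{n})\right)$ is an isomorphism; composing with the isomorphism induced by $\varinjlim_{n}\mathcal{G}(D_{n})\cong\mathcal{G}(D)$ shows that the comparison morphism $\varinjlim_{n}\Mor_{\mathcal{C}}(A,\mathcal{G}(D_{n}))\to\Mor_{\mathcal{C}}(A,\mathcal{G}(D))$ is an isomorphism.

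Finally I would invoke naturality once more: the adjunction bijections assemble into a commutative square with vertical isomorphisms, the top edge being the comparison morphism for $\mathcal{F}(A)$ and the bottom edge the one just shown to be an isomorphism for $A$; hence the top edge is an isomorphism. Since $f\in\mathcal{X}$ was arbitrary, $\mathcal{F}(\mathcal{X})$ permits the small object argument. I expect no genuine obstacle here; the one point requiring care is verifying that the chain of adjunction and colimit identifications is compatible with the canonical colimit-comparison morphisms, so that being an isomorphism on the bottom edge genuinely transfers to the top edge — a routine naturality check. Everything else is formal.
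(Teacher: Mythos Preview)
Your proposal is correct and follows essentially the same approach as the paper: both transport the comparison map for $\mathcal{F}(A)$ across the adjunction, use that $\mathcal{G}$ preserves sequential colimits to identify $\mathcal{G}(\varinjlim D_n)$ with $\varinjlim \mathcal{G}(D_n)$, apply the smallness hypothesis on $A$ to the sequence $\{\mathcal{G}(g_n)\}\subseteq\mathcal{G}(\mathcal{Z})$, and then assemble the naturality square to conclude. The paper merely makes each of your compressed steps explicit by drawing out the universal-property diagrams for $\eta$, $\theta$, $\vartheta$, and $\lambda$ and verifying the commutativity $\vartheta=\Mor_{\mathcal{C}}(A,\eta)\theta$ that you flag as the ``routine naturality check.''
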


\begin{proof}
Let $f:A\rightarrow B$ be a morphism in $\mathcal{X}$, and consider the morphism $\mathcal{F}(f):\mathcal{F}(A)\rightarrow \mathcal{F}(B)$ in $\mathcal{F}(\mathcal{X})$. We need to show that $\mathcal{F}(A)$ is small with respect to $\LLP\left(\RLP\left(\mathcal{F}(\mathcal{X})\right)\right)$. To this end, let $\left\{h_{n}:C_{n}\rightarrow C_{n+1}\right\}_{n\geq 1}$ be a sequence of morphisms in $\LLP\left(\RLP\left(\mathcal{F}(\mathcal{X})\right)\right)$. Let $\phi_{m}:C_{m}\rightarrow \underset{n\geq 1}{\varinjlim}C_{n}$ be the canonical injection for every $m\geq 1$. Then there is a unique morphism $\eta:\underset{n\geq 1}{\varinjlim}\mathcal{G}(C_{n}) \rightarrow \mathcal{G}\left(\underset{n\geq 1}{\varinjlim}C_{n}\right)$ that makes the following diagram commutative for every $m\geq 1$:
\begin{equation*}
\begin{tikzcd}
  \mathcal{G}(C_{m}) \arrow{rr}{\mathcal{G}(h_{m})} \arrow{dr}[swap]{\psi_{m}} \arrow[bend right, swap]{ddr}{\mathcal{G}(\phi_{m})} &  & \mathcal{G}(C_{m+1}) \arrow{dl}{\psi_{m+1}} \arrow[bend left]{ddl}{\mathcal{G}(\phi_{m+1})} \\
   & \underset{n\geq 1}{\varinjlim}\mathcal{G}(C_{n}) \arrow{d}{\eta} & \\
   & \mathcal{G}\left(\underset{n\geq 1}{\varinjlim}C_{n}\right) &
\end{tikzcd}
\end{equation*}

\noindent
Since $\mathcal{G}$ preserves sequential direct limits, $\eta$ is an isomorphism. Moreover, there is a unique morphism $\theta:\underset{n\geq 1}{\varinjlim}\Mor_{\mathcal{C}}\left(A,\mathcal{G}(C_{n})\right) \rightarrow \Mor_{\mathcal{C}}\left(A,\underset{n\geq 1}{\varinjlim}\mathcal{G}(C_{n})\right)$ that makes the following diagram commutative for every $m\geq 1$:
\begin{equation*}
\begin{tikzcd}
  \Mor_{\mathcal{C}}\left(A,\mathcal{G}(C_{m})\right) \arrow{rr}{\Mor_{\mathcal{C}}\left(A,\mathcal{G}(h_{m})\right)} \arrow{dr}[swap]{\chi_{m}} \arrow[bend right, swap]{ddr}{\Mor_{\mathcal{C}}(A,\psi_{m})} &  & \Mor_{\mathcal{C}}\left(A,\mathcal{G}(C_{m+1})\right) \arrow{dl}{\chi_{m+1}} \arrow[bend left]{ddl}{\Mor_{\mathcal{C}}(A,\psi_{m+1})} \\
   & \underset{n\geq 1}{\varinjlim}\Mor_{\mathcal{C}}\left(A,\mathcal{G}(C_{n})\right) \arrow{d}{\theta} & \\
   & \Mor_{\mathcal{C}}\left(A,\underset{n\geq 1}{\varinjlim}\mathcal{G}(C_{n})\right) &
\end{tikzcd}
\end{equation*}

\noindent
Also, there is a unique morphism $\vartheta:\underset{n\geq 1}{\varinjlim}\Mor_{\mathcal{C}}\left(A,\mathcal{G}(C_{n})\right) \rightarrow \Mor_{\mathcal{C}}\left(A,\mathcal{G}\left(\underset{n\geq 1}{\varinjlim}C_{n}\right)\right)$ that makes the following diagram commutative for every $m\geq 1$:
\begin{equation*}
\begin{tikzcd}
  \Mor_{\mathcal{C}}\left(A,\mathcal{G}(C_{m})\right) \arrow{rr}{\Mor_{\mathcal{C}}\left(A,\mathcal{G}(h_{m})\right)} \arrow{dr}[swap]{\chi_{m}} \arrow[bend right, swap]{ddr}{\Mor_{\mathcal{C}}\left(A,\mathcal{G}(\phi_{m})\right)} &  & \Mor_{\mathcal{C}}\left(A,\mathcal{G}(C_{m+1})\right) \arrow{dl}{\chi_{m+1}} \arrow[bend left]{ddl}{\Mor_{\mathcal{C}}\left(A,\mathcal{G}(\phi_{m+1})\right)} \\
   & \underset{n\geq 1}{\varinjlim}\Mor_{\mathcal{C}}\left(A,\mathcal{G}(C_{n})\right) \arrow{d}{\vartheta} & \\
   & \Mor_{\mathcal{C}}\left(A,\mathcal{G}\left(\underset{n\geq 1}{\varinjlim}C_{n}\right)\right) &
\end{tikzcd}
\end{equation*}

\noindent
Now we see that the following diagram is commutative:
\begin{equation*}
\begin{tikzcd}
  \underset{n\geq 1}{\varinjlim}\Mor_{\mathcal{C}}\left(A,\mathcal{G}(C_{n})\right) \arrow{r}{\vartheta} \arrow{d}[swap]{\theta}
  & \Mor_{\mathcal{C}}\left(A,\mathcal{G}\left(\underset{n\geq 1}{\varinjlim}C_{n}\right)\right)
  \\
  \Mor_{\mathcal{C}}\left(A,\underset{n\geq 1}{\varinjlim}\mathcal{G}(C_{n})\right) \arrow{ru}[swap]{\Mor_{\mathcal{C}}(A,\eta)} &
\end{tikzcd}
\end{equation*}

\noindent
Indeed, we have for every $m\geq 1$:
$$\Mor_{\mathcal{C}}(A,\eta)\theta\chi_{m}= \Mor_{\mathcal{C}}(A,\eta)\Mor_{\mathcal{C}}(A,\psi_{m})= \Mor_{\mathcal{C}}(A,\eta\psi_{m})= \Mor_{\mathcal{C}}\left(A,\mathcal{G}(\phi_{m})\right)= \vartheta\chi_{m}$$
It follows that $\Mor_{\mathcal{C}}(A,\eta)\theta=\vartheta$ as desired. We next note that for any $n\geq 1$, we have $h_{n}\in \LLP\left(\RLP\left(\mathcal{F}(\mathcal{X})\right)\right)$, so $\mathcal{G}(h_{n})\in \mathcal{G}\left(\LLP\left(\RLP\left(\mathcal{F}(\mathcal{X})\right)\right)\right)$. By the hypothesis, $A$ is small with respect to $\mathcal{G}\left(\LLP\left(\RLP\left(\mathcal{F}(\mathcal{X})\right)\right)\right)$, so we conclude that $\theta$ is an isomorphism. We had also observed above that $\eta$ was an isomorphism, so $\Mor_{\mathcal{C}}(A,\eta)$ is an isomorphism. Therefore, $\vartheta=\Mor_{\mathcal{C}}(A,\eta)\theta$ is an isomorphism. On the other hand, there is a unique morphism $\lambda:\underset{n\geq 1}{\varinjlim}\Mor_{\mathcal{D}}\left(\mathcal{F}(A),C_{n}\right) \rightarrow \Mor_{\mathcal{D}}\left(\mathcal{F}(A),\underset{n\geq 1}{\varinjlim}C_{n}\right)$ that makes the following diagram commutative for every $m\geq 1$:
\begin{equation*}
\begin{tikzcd}
  \Mor_{\mathcal{D}}\left(\mathcal{F}(A),C_{m}\right) \arrow{rr}{\Mor_{\mathcal{D}}\left(\mathcal{F}(A),h_{m}\right)} \arrow{dr}[swap]{\rho_{m}} \arrow[bend right, swap]{ddr}{\Mor_{\mathcal{D}}\left(\mathcal{F}(A),\phi_{m}\right)} &  & \Mor_{\mathcal{D}}\left(\mathcal{F}(A),C_{m+1}\right) \arrow{dl}{\rho_{m+1}} \arrow[bend left]{ddl}{\Mor_{\mathcal{D}}\left(\mathcal{F}(A),\phi_{m+1}\right)} \\
   & \underset{n\geq 1}{\varinjlim}\Mor_{\mathcal{D}}\left(\mathcal{F}(A),C_{n}\right) \arrow{d}{\lambda} & \\
   & \Mor_{\mathcal{D}}\left(\mathcal{F}(A),\underset{n\geq 1}{\varinjlim}C_{n}\right) &
\end{tikzcd}
\end{equation*}

\noindent
Let $\sigma_{U,V}:\Mor_{\mathcal{D}}\left(\mathcal{F}(U),V\right)\rightarrow \Mor_{\mathcal{C}}\left(U,\mathcal{G}(V)\right)$ be the natural bijection of the adjunction for every $U\in \Obj(\mathcal{C})$ and $V\in \Obj(\mathcal{D})$. Then the following diagram is commutative:
\begin{equation*}
\begin{tikzcd}
  \underset{n\geq 1}{\varinjlim}\Mor_{\mathcal{D}}\left(\mathcal{F}(A),C_{n}\right) \arrow{r}{\underset{n\geq 1}{\varinjlim}\sigma_{A,C_{n}}} \arrow{d}[swap]{\lambda}
  & [2em] \underset{n\geq 1}{\varinjlim}\Mor_{\mathcal{C}}\left(A,\mathcal{G}(C_{n})\right) \arrow{d}{\vartheta}
  \\
  \Mor_{\mathcal{D}}\left(\mathcal{F}(A),\underset{n\geq 1}{\varinjlim}C_{n}\right) \arrow{r}{\sigma_{A,\underset{n\geq 1}{\varinjlim}C_{n}}}
  & \Mor_{\mathcal{C}}\left(A,\mathcal{G}\left(\underset{n\geq 1}{\varinjlim}C_{n}\right)\right)
\end{tikzcd}
\end{equation*}

\noindent
Indeed, the commutativity of the above diagram follows from the naturality of the vertical morphisms applied to the natural transformation $\sigma_{A,-}:\Mor_{\mathcal{D}}\left(\mathcal{F}(A),-\right)\rightarrow \Mor_{\mathcal{C}}\left(A,\mathcal{G}(-)\right)$. Now since all the morphisms in the above diagram are isomorphisms except possibly for $\lambda$, we infer that $\lambda$ must also be an isomorphism. This means that $\mathcal{F}(A)$ is small with respect to $\LLP\left(\RLP\left(\mathcal{F}(\mathcal{X})\right)\right)$. Therefore, $\mathcal{F}(\mathcal{X})$ permits the small object argument.
\end{proof}

\begin{corollary} \label{2.11}
Let $\mathcal{C}$ and $\mathcal{D}$ be two locally small cocomplete categories, and $(\mathcal{F},\mathcal{G}):\mathcal{C} \leftrightarrows \mathcal{D}$ an adjoint pair of functors in which $\mathcal{G}$ preserves sequential direct limits. Suppose that $\mathcal{X}$ is a set of morphisms in $\mathcal{C}$ such that the source of any morphism in $\mathcal{X}$ is small with respect to $\Mor(\mathcal{C})$. Then the source of any morphism in $\mathcal{F}(\mathcal{X})$ is small with respect to $\Mor(\mathcal{D})$. In particular, $\mathcal{F}(\mathcal{X})$ permits the small object argument.
\end{corollary}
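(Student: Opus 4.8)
The plan is to deduce this directly from Lemma \ref{2.10} after a small strengthening of its proof. First I would note that the hypothesis of Lemma \ref{2.10} is automatically met: since the source of any morphism in $\mathcal{X}$ is small with respect to the whole class $\Mor(\mathcal{C})$, and since $\mathcal{G}\left(\LLP\left(\RLP\left(\mathcal{F}(\mathcal{X})\right)\right)\right)$ is a subclass of $\Mor(\mathcal{C})$, that source is in particular small with respect to $\mathcal{G}\left(\LLP\left(\RLP\left(\mathcal{F}(\mathcal{X})\right)\right)\right)$. Lemma \ref{2.10} therefore already yields that $\mathcal{F}(\mathcal{X})$ permits the small object argument, which is the ``in particular'' clause.

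For the sharper assertion that the source $\mathcal{F}(A)$ of any morphism $\mathcal{F}(f):\mathcal{F}(A)\rightarrow\mathcal{F}(B)$ in $\mathcal{F}(\mathcal{X})$ is small with respect to all of $\Mor(\mathcal{D})$, I would rerun the argument of Lemma \ref{2.10}, replacing the class $\LLP\left(\RLP\left(\mathcal{F}(\mathcal{X})\right)\right)$ by $\Mor(\mathcal{D})$ everywhere it occurs. Thus, given an arbitrary sequence $\left\{h_{n}:C_{n}\rightarrow C_{n+1}\right\}_{n\geq 1}$ of morphisms in $\mathcal{D}$ with canonical injections $\phi_{m}:C_{m}\rightarrow\underset{n\geq 1}{\varinjlim}C_{n}$, the morphism $\eta:\underset{n\geq 1}{\varinjlim}\mathcal{G}(C_{n})\rightarrow\mathcal{G}\left(\underset{n\geq 1}{\varinjlim}C_{n}\right)$ is an isomorphism because $\mathcal{G}$ preserves sequential direct limits, and the morphism $\theta:\underset{n\geq 1}{\varinjlim}\Mor_{\mathcal{C}}\left(A,\mathcal{G}(C_{n})\right)\rightarrow\Mor_{\mathcal{C}}\left(A,\underset{n\geq 1}{\varinjlim}\mathcal{G}(C_{n})\right)$ is an isomorphism because $A$ is small with respect to $\Mor(\mathcal{C})$ and each $\mathcal{G}(h_{n})$ lies in $\Mor(\mathcal{C})$. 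As in the proof of Lemma \ref{2.10}, it follows that $\vartheta=\Mor_{\mathcal{C}}(A,\eta)\theta$ is an isomorphism, and then the naturality square for the adjunction bijections $\sigma_{A,-}:\Mor_{\mathcal{D}}\left(\mathcal{F}(A),-\right)\rightarrow\Mor_{\mathcal{C}}\left(A,\mathcal{G}(-)\right)$ forces the comparison morphism $\lambda:\underset{n\geq 1}{\varinjlim}\Mor_{\mathcal{D}}\left(\mathcal{F}(A),C_{n}\right)\rightarrow\Mor_{\mathcal{D}}\left(\mathcal{F}(A),\underset{n\geq 1}{\varinjlim}C_{n}\right)$ to be an isomorphism as well. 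This is precisely the statement that $\mathcal{F}(A)$ is small with respect to $\Mor(\mathcal{D})$.

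I do not expect a genuine obstacle here: the entire content is that the argument of Lemma \ref{2.10} never used the restriction $h_{n}\in\LLP\left(\RLP\left(\mathcal{F}(\mathcal{X})\right)\right)$ for anything other than placing $\mathcal{G}(h_{n})$ in a class over which $A$ is small, so enlarging that class to $\Mor(\mathcal{C})$ --- which the hypothesis here permits --- costs nothing. The only mild point to verify is the containment $\LLP\left(\RLP\left(\mathcal{F}(\mathcal{X})\right)\right)\subseteq\Mor(\mathcal{D})$, which then gives the ``in particular'' clause once more, now as a consequence of the stronger smallness just established.
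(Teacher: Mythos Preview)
Your proposal is correct and matches the paper's approach exactly: the paper's proof reads simply ``Follows from the proof of Lemma \ref{2.10},'' and your writeup unpacks precisely what that means --- rerunning the argument of Lemma \ref{2.10} with the class $\LLP\left(\RLP\left(\mathcal{F}(\mathcal{X})\right)\right)$ replaced by all of $\Mor(\mathcal{D})$, which works because the only role that class played was to ensure each $\mathcal{G}(h_{n})$ lies in a class over which $A$ is small.
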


\begin{proof}
Follows from the proof of Lemma \ref{2.10}.
\end{proof}

The following lemma might be of independent interest.

\begin{lemma} \label{2.12}
Let $\mathcal{C}$ and $\mathcal{D}$ be two locally small categories, and $(\mathcal{F},\mathcal{G}):\mathcal{C} \leftrightarrows \mathcal{D}$ an adjoint pair of functors. Let $\sigma_{A,B}:\Mor_{\mathcal{D}}\left(\mathcal{F}(A),B\right)\rightarrow \Mor_{\mathcal{C}}\left(A,\mathcal{G}(B)\right)$ be the natural bijection of the adjunction for every $A\in \Obj(\mathcal{C})$ and $B\in \Obj(\mathcal{D})$. Then the following assertions hold:
\begin{enumerate}
\item[(i)] For any $f\in \Mor_{\mathcal{D}}\left(\mathcal{F}(A),B\right)$ and $g\in \Mor_{\mathcal{D}}(B,B')$, we have $\sigma_{A,B'}(gf)=\mathcal{G}(g)\sigma_{A,B}(f)$.
\item[(ii)] For any $f\in \Mor_{\mathcal{C}}\left(A,\mathcal{G}(B)\right)$ and $g\in \Mor_{\mathcal{D}}(B,B')$, we have $\sigma_{A,B'}^{-1}\left(\mathcal{G}(g)f\right)=g\sigma_{A,B}^{-1}(f)$.
\item[(iii)] For any $g\in \Mor_{\mathcal{C}}\left(A,\mathcal{G}(B)\right)$ and $f\in \Mor_{\mathcal{C}}(A',A)$, we have $\sigma_{A',B}^{-1}(gf)=\sigma_{A,B}^{-1}(g)\mathcal{F}(f)$.
\item[(iv)] For any $g\in \Mor_{\mathcal{D}}\left(\mathcal{F}(A),B\right)$ and $f\in \Mor_{\mathcal{C}}(A',A)$, we have $\sigma_{A',B}\left(g\mathcal{F}(f)\right)=\sigma_{A,B}(g)f$.
\end{enumerate}
\end{lemma}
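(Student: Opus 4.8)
The statement to prove is Lemma 2.12, which asserts four naturality-type identities for the adjunction bijection $\sigma_{A,B}\colon\Mor_{\mathcal D}(\mathcal F(A),B)\to\Mor_{\mathcal C}(A,\mathcal G(B))$. The plan is to extract these from the \emph{naturality of $\sigma$ in each variable}, which is precisely what it means for $(\mathcal F,\mathcal G)$ to be an adjoint pair. Concretely, for every $A\in\Obj(\mathcal C)$ and $B\in\Obj(\mathcal D)$ one has a bijection $\sigma_{A,B}$, and for morphisms $f\colon A'\to A$ in $\mathcal C$ and $g\colon B\to B'$ in $\mathcal D$ the square relating $\sigma_{A,B}$, $\sigma_{A',B'}$ and the induced maps on $\Mor$-sets commutes. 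I would state this once at the outset and then read off (i)--(iv) as special cases.

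First I would record naturality in the second variable: fixing $A$, the collection $\{\sigma_{A,B}\}_B$ is a natural transformation $\Mor_{\mathcal D}(\mathcal F(A),-)\Rightarrow\Mor_{\mathcal C}(A,\mathcal G(-))$. Applied to $g\colon B\to B'$, naturality says $\sigma_{A,B'}\circ g_{*}=\mathcal G(g)_{*}\circ\sigma_{A,B}$ as maps $\Mor_{\mathcal D}(\mathcal F(A),B)\to\Mor_{\mathcal C}(A,\mathcal G(B'))$, where $g_{*}$ is post-composition with $g$ and $\mathcal G(g)_{*}$ is post-composition with $\mathcal G(g)$. Evaluating both sides at $f\in\Mor_{\mathcal D}(\mathcal F(A),B)$ gives $\sigma_{A,B'}(gf)=\mathcal G(g)\,\sigma_{A,B}(f)$, which is (i). For (ii), I would apply $\sigma_{A,B'}^{-1}$ to both sides of (i), or equivalently invoke the same naturality square for the inverse transformation $\{\sigma_{A,B}^{-1}\}_B$: starting from $f\in\Mor_{\mathcal C}(A,\mathcal G(B))$, naturality yields $\sigma_{A,B'}^{-1}\!\big(\mathcal G(g)f\big)=g\,\sigma_{A,B}^{-1}(f)$. (One can also derive (ii) from (i) purely formally by setting $f'=\sigma_{A,B}^{-1}(f)$ and substituting.)

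Next I would record naturality in the first variable: fixing $B$, the collection $\{\sigma_{A,B}\}_A$ is a natural transformation $\Mor_{\mathcal D}(\mathcal F(-),B)\Rightarrow\Mor_{\mathcal C}(-,\mathcal G(B))$ of contravariant functors on $\mathcal C$. Applied to $f\colon A'\to A$, naturality gives $\sigma_{A',B}\circ\mathcal F(f)^{*}=f^{*}\circ\sigma_{A,B}$, where $\mathcal F(f)^{*}$ is pre-composition with $\mathcal F(f)$ and $f^{*}$ is pre-composition with $f$. Evaluating at $g\in\Mor_{\mathcal D}(\mathcal F(A),B)$ gives $\sigma_{A',B}\big(g\,\mathcal F(f)\big)=\sigma_{A,B}(g)\,f$, which is (iv); and evaluating the analogous naturality square for $\{\sigma_{A,B}^{-1}\}_A$ at $g\in\Mor_{\mathcal C}(A,\mathcal G(B))$ gives $\sigma_{A',B}^{-1}(gf)=\sigma_{A,B}^{-1}(g)\,\mathcal F(f)$, which is (iii). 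Alternatively (iii) follows from (iv) by the same substitution trick.

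There is essentially no obstacle here: the lemma is a repackaging of the bifunctorial naturality of an adjunction isomorphism, and the only thing to be careful about is bookkeeping---keeping track of which variable is held fixed, that the relevant $\Mor$-functor is contravariant in the first slot, and that post-/pre-composition land in the correct hom-sets. If one prefers a unit/counit formulation, an equally short route is to write $\sigma_{A,B}(h)=\mathcal G(h)\circ\eta_A$ and $\sigma_{A,B}^{-1}(k)=\varepsilon_B\circ\mathcal F(k)$ in terms of the unit $\eta$ and counit $\varepsilon$, whereupon each of (i)--(iv) reduces to associativity of composition together with functoriality of $\mathcal F$ or $\mathcal G$; for instance $\sigma_{A,B'}(gf)=\mathcal G(gf)\circ\eta_A=\mathcal G(g)\circ\mathcal G(f)\circ\eta_A=\mathcal G(g)\circ\sigma_{A,B}(f)$. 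I would present the naturality-square argument as the main proof and perhaps remark on the unit/counit computation as an alternative.
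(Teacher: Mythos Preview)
Your proposal is correct. The paper's proof takes as primary the unit/counit route you mention as an alternative: it writes $\sigma_{A,B}(f)=\mathcal{G}(f)\circ u_A$ (citing Mac Lane) and deduces (i) by functoriality of $\mathcal{G}$, then obtains (ii) from (i) by the substitution trick you describe, with (iii) and (iv) handled dually via the counit. Your primary argument instead reads (i)--(iv) directly off the naturality squares for $\sigma$ in each variable, which is arguably cleaner since it invokes only the definition of an adjunction as a natural isomorphism of hom-functors, without needing to unpack the unit and counit; the paper's approach has the minor advantage of giving an explicit formula for $\sigma$ that one can compute with. Either way the content is the same, and since you already sketch the unit/counit computation at the end, your write-up subsumes the paper's.
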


\begin{proof}
(i): Let $f\in \Mor_{\mathcal{D}}\left(\mathcal{F}(A),B\right)$ and $g\in \Mor_{\mathcal{D}}(B,B')$. Let $u_{A}:A\rightarrow \mathcal{G}\left(\mathcal{F}(A)\right)$ be the unit morphism of $A$. Then using \cite[Page 82, Eq. (5)]{Mc1}, we have $\sigma_{A,B'}(gf)=\mathcal{G}(gf)u_{A}=\mathcal{G}(g)\mathcal{G}(f)u_{A}=\mathcal{G}(g)\sigma_{A,B}(f)$.

(ii): Let $f\in \Mor_{\mathcal{C}}\left(A,\mathcal{G}(B)\right)$ and $g\in \Mor_{\mathcal{D}}(B,B')$. Set $h=\sigma_{A,B'}^{-1}\left(\mathcal{G}(g)f\right)$ and $l=\sigma_{A,B}^{-1}(f)$. Using (i), we get $\sigma_{A,B'}(gl)=\mathcal{G}(g)\sigma_{A,B}(l)=\mathcal{G}(g)f=\sigma_{A,B'}(h)$. Since $\sigma_{A,B'}$ is bijective, we conclude that $ g\sigma_{A,B}^{-1}(f)=gl=h=\sigma_{A,B'}^{-1}\left(\mathcal{G}(g)f\right)$.

(iii): Similar to (i) using the counit morphism.

(iv): Similar to (ii).
\end{proof}

We can now compare the lifting properties along an adjunction. Note that the next lemma is in \cite[Theorem 7.2.17]{Hi}, but the proof there skips some crucial steps which we include here for clarity.

\begin{lemma} \label{2.13}
Let $\mathcal{C}$ and $\mathcal{D}$ be two locally small categories, and $(\mathcal{F},\mathcal{G}):\mathcal{C} \leftrightarrows \mathcal{D}$ an adjoint pair of functors. Let $f:A\rightarrow B$ be a morphism in $\mathcal{C}$, and $g:C\rightarrow D$ a morphism in $\mathcal{D}$. Then $\mathcal{F}(f)$ has the left lifting property against $g$ if and only if $f$ has the left lifting property against $\mathcal{G}(g)$.
\end{lemma}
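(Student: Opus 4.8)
The plan is to use the natural bijection $\sigma$ of the adjunction to set up a bijection between lifting problems for $\mathcal{F}(f)$ against $g$ and lifting problems for $f$ against $\mathcal{G}(g)$, under which solutions correspond to solutions. Concretely, a lifting problem for $\mathcal{F}(f)$ against $g$ is a pair of morphisms $u:\mathcal{F}(A)\to C$ and $v:\mathcal{F}(B)\to D$ with $gu=v\mathcal{F}(f)$; I would transport it to the pair $\widetilde{u}=\sigma_{A,C}(u):A\to\mathcal{G}(C)$ and $\widetilde{v}=\sigma_{B,D}(v):B\to\mathcal{G}(D)$. Using Lemma \ref{2.12}(i) on $gu$ and Lemma \ref{2.12}(iv) on $v\mathcal{F}(f)$, I get $\sigma_{A,D}(gu)=\mathcal{G}(g)\widetilde{u}$ and $\sigma_{A,D}\bigl(v\mathcal{F}(f)\bigr)=\widetilde{v}f$; since $\sigma_{A,D}$ is injective, the square commutes for $\mathcal{F}(f)$ and $g$ if and only if $\mathcal{G}(g)\widetilde{u}=\widetilde{v}f$, i.e. the transported square commutes for $f$ and $\mathcal{G}(g)$. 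As $\sigma$ is a bijection, every lifting problem on one side arises from a unique lifting problem on the other (apply $\sigma^{-1}$ together with Lemma \ref{2.12}(ii),(iii) in the same manner).

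Next I would match the solutions. A solution of the problem for $\mathcal{F}(f)$ against $g$ is a morphism $w:\mathcal{F}(B)\to C$ with $w\mathcal{F}(f)=u$ and $gw=v$; set $\widetilde{w}=\sigma_{B,C}(w):B\to\mathcal{G}(C)$. By Lemma \ref{2.12}(iv), $\sigma_{A,C}\bigl(w\mathcal{F}(f)\bigr)=\widetilde{w}f$, so $w\mathcal{F}(f)=u$ if and only if $\widetilde{w}f=\widetilde{u}$; by Lemma \ref{2.12}(i), $\sigma_{B,D}(gw)=\mathcal{G}(g)\widetilde{w}$, so $gw=v$ if and only if $\mathcal{G}(g)\widetilde{w}=\widetilde{v}$. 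Hence $w$ solves the original lifting problem if and only if $\widetilde{w}$ solves the transported one, and since $w\mapsto\widetilde{w}$ is a bijection $\Mor_{\mathcal{D}}\bigl(\mathcal{F}(B),C\bigr)\to\Mor_{\mathcal{C}}\bigl(B,\mathcal{G}(C)\bigr)$, the original problem admits a solution if and only if the transported one does.

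Combining the two steps: $\mathcal{F}(f)$ has the left lifting property against $g$ — that is, every lifting problem for the pair $\bigl(\mathcal{F}(f),g\bigr)$ has a solution — if and only if every lifting problem for the pair $\bigl(f,\mathcal{G}(g)\bigr)$ has a solution, i.e. $f$ has the left lifting property against $\mathcal{G}(g)$. I do not anticipate any genuine obstacle here; the whole argument is a diagram chase through the adjunction bijection, and the only points requiring care are invoking the correct clause of Lemma \ref{2.12} for each composite and using that $\sigma$ and $\sigma^{-1}$ are bijective, which is precisely what upgrades the correspondence of commuting squares to a correspondence of lifting problems.
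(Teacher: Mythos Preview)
Your proposal is correct and follows essentially the same approach as the paper: both arguments use the adjunction bijection $\sigma$ together with the four clauses of Lemma \ref{2.12} to transport lifting squares and their diagonals between the two sides. The only cosmetic difference is that the paper proves one direction explicitly (starting from a square for $f$ against $\mathcal{G}(g)$ and transporting via $\sigma^{-1}$) and says ``the converse is similarly established,'' whereas you set up the bijection of lifting problems and solutions symmetrically from the outset.
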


\begin{proof}
Let $\sigma_{U,V}:\Mor_{\mathcal{D}}\left(\mathcal{F}(U),V\right)\rightarrow \Mor_{\mathcal{C}}\left(U,\mathcal{G}(V)\right)$ be the natural bijection of the adjunction for every $U\in \Obj(\mathcal{C})$ and $V\in \Obj(\mathcal{D})$. Suppose that $\mathcal{F}(f)$ has the left lifting property against $g$. Consider a commutative diagram as follows:
\begin{equation*}
\begin{tikzcd}
  A \arrow{r}{h} \arrow{d}[swap]{f}
  & \mathcal{G}(C) \arrow{d}{\mathcal{G}(g)}
  \\
  B \arrow{r}{l}
  & \mathcal{G}(D)
\end{tikzcd}
\end{equation*}

\noindent
Then the following diagram is commutative:
\begin{equation*}
\begin{tikzcd}
  \mathcal{F}(A) \arrow{r}{\sigma_{A,C}^{-1}(h)} \arrow{d}[swap]{\mathcal{F}(f)}
  & [1.5em] C \arrow{d}{g}
  \\
  \mathcal{F}(B) \arrow{r}{\sigma_{B,D}^{-1}(l)}
  & D
\end{tikzcd}
\end{equation*}

\noindent
Indeed, using Lemma \ref{2.12} (ii) and (iii), we have: $$\sigma_{B,D}^{-1}(l)\mathcal{F}(f)=\sigma_{A,D}^{-1}(lf)=\sigma_{A,D}^{-1}\left(\mathcal{G}(g)h\right)=g\sigma_{A,C}^{-1}(h)$$
By the assumption, there is a morphism $u:\mathcal{F}(B)\rightarrow C$ that makes the following diagram commutative:
\begin{equation*}
\begin{tikzcd}
  \mathcal{F}(A) \arrow{r}{\sigma_{A,C}^{-1}(h)} \arrow{d}[swap]{\mathcal{F}(f)}
  & [1.5em] C \arrow{d}{g}
  \\
  \mathcal{F}(B) \arrow{r}{\sigma_{B,D}^{-1}(l)} \arrow{ru}{u}
  & D
\end{tikzcd}
\end{equation*}

\noindent
Then the following diagram is commutative:
\begin{equation*}
\begin{tikzcd}
  A \arrow{r}{h} \arrow{d}[swap]{f}
  & [4em] \mathcal{G}(C) \arrow{d}{\mathcal{G}(g)}
  \\
  B \arrow{r}{l} \arrow{ru}{\sigma_{B,C}(u)}
  & \mathcal{G}(D)
\end{tikzcd}
\end{equation*}

\noindent
Indeed, using Lemma \ref{2.12} (i) and (iv), we have:
$$\sigma_{B,C}(u)f=\sigma_{A,C}\left(u\mathcal{F}(f)\right)=\sigma_{A,C}\left(\sigma_{A,C}^{-1}(h)\right)=h$$
and
$$\mathcal{G}(g)\sigma_{B,C}(u)=\sigma_{B,D}(gu)=\sigma_{B,D}\left(\sigma_{B,D}^{-1}(l)\right)=l$$
This shows that $f$ has the left lifting property against $\mathcal{G}(g)$. The converse is similarly established.
\end{proof}

\begin{corollary} \label{2.14}
Let $\mathcal{C}$ and $\mathcal{D}$ be two locally small categories, and $(\mathcal{F},\mathcal{G}):\mathcal{C} \leftrightarrows \mathcal{D}$ an adjoint pair of functors. Let $\mathcal{X}$ be a class of morphisms in $\mathcal{C}$, and $\mathcal{Y}$ a class of morphisms in $\mathcal{D}$. Then the following assertions hold:
\begin{enumerate}
\item[(i)]	For any morphism $f$ in $\mathcal{C}$, we have $\mathcal{F}(f)\in \LLP(\mathcal{Y})$ if and only if $f\in \LLP\left(\mathcal{G}(\mathcal{Y})\right)$.
\item[(ii)] For any morphism $g$ in $\mathcal{D}$, we have $g\in \RLP\left(\mathcal{F}(\mathcal{X})\right)$ if and only if $\mathcal{G}(g)\in \RLP\left(\mathcal{X}\right)$.
\end{enumerate}
\end{corollary}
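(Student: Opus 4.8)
The plan is to deduce both assertions directly from Lemma~\ref{2.13} by threading the lifting equivalence through the universal quantifier that defines $\LLP$ and $\RLP$, one morphism at a time.

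For part (i), I would begin by unwinding the definition of $\LLP$: saying that $\mathcal{F}(f)\in \LLP(\mathcal{Y})$ means precisely that $\mathcal{F}(f)$ has the left lifting property against every morphism $g\in \mathcal{Y}$. By Lemma~\ref{2.13} applied to the single morphisms $f\in \Mor(\mathcal{C})$ and $g\in \Mor(\mathcal{D})$, this holds for a given $g$ if and only if $f$ has the left lifting property against $\mathcal{G}(g)$. Letting $g$ range over all of $\mathcal{Y}$, and recalling that $\mathcal{G}(\mathcal{Y})=\left\{\mathcal{G}(g)\suchthat g\in \mathcal{Y}\right\}$, this says exactly that $f$ has the left lifting property against every morphism in $\mathcal{G}(\mathcal{Y})$, i.e. $f\in \LLP\left(\mathcal{G}(\mathcal{Y})\right)$. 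This settles (i).

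For part (ii), I would again unwind definitions, this time observing that by definition $g$ has the right lifting property against $\mathcal{F}(f)$ if and only if $\mathcal{F}(f)$ has the left lifting property against $g$. Thus $g\in \RLP\left(\mathcal{F}(\mathcal{X})\right)$ means that for every $f\in \mathcal{X}$, the morphism $\mathcal{F}(f)$ has the left lifting property against $g$. Invoking Lemma~\ref{2.13} once more, for each fixed $f$ this is equivalent to $f$ having the left lifting property against $\mathcal{G}(g)$, which in turn is the same as $\mathcal{G}(g)$ having the right lifting property against $f$. Since this must hold for all $f\in \mathcal{X}$, it is equivalent to $\mathcal{G}(g)\in \RLP(\mathcal{X})$, establishing (ii).

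There is no genuine obstacle here: once Lemma~\ref{2.13} is in hand, the corollary is a formal consequence obtained purely by bookkeeping. The only point demanding minor care is keeping track of directions in (ii), namely remembering that \enquote{$g$ has the right lifting property against $\mathcal{F}(f)$} and \enquote{$\mathcal{F}(f)$ has the left lifting property against $g$} are two names for the same condition, so that Lemma~\ref{2.13} applies verbatim; and similarly that \enquote{$\mathcal{G}(g)$ has the right lifting property against $f$} is just \enquote{$f$ has the left lifting property against $\mathcal{G}(g)$} read in reverse.
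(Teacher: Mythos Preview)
Your proposal is correct and essentially identical to the paper's own proof: both parts are obtained by unwinding the definitions of $\LLP$ and $\RLP$ and applying Lemma~\ref{2.13} morphism by morphism, with (ii) handled symmetrically to (i). There is nothing to add.
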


\begin{proof}
(i): Let $f$ be a morphism in $\mathcal{C}$. By Lemma \ref{2.13}, $\mathcal{F}(f)\in \LLP(\mathcal{Y})$ if and only if $\mathcal{F}(f)$ has the left lifting property against every $g\in \mathcal{Y}$ if and only if $f$ has the left lifting property against every $\mathcal{G}(g)\in \mathcal{G}(\mathcal{Y})$ if and only if $f\in \LLP\left(\mathcal{G}(\mathcal{Y})\right)$.

(ii): Similar to (i).
\end{proof}

The next proposition transfers model structures along equivalences.

\begin{proposition} \label{2.15}
Let $\mathcal{C}$ be a model category, $\mathcal{D}$ a category, and $\mathcal{G}:\mathcal{D}\rightarrow \mathcal{C}$ a covariant equivalence of categories. Then $\mathcal{D}$ becomes a model category by letting fibrations (cofibrations, or weak equivalences) be morphisms $f:A\rightarrow B$ in $\mathcal{D}$ such that $\mathcal{G}(f):\mathcal{G}(A)\rightarrow \mathcal{G}(B)$ is a fibration (cofibration, or weak equivalence) in $\mathcal{C}$, respectively.
\end{proposition}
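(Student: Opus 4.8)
The plan is to verify the four axioms of Definition \ref{2.1} for $\mathcal{D}$ equipped with the stated classes, together with the standing requirement that $\mathcal{D}$ be locally small and bicomplete. Fix a quasi-inverse $\mathcal{H}:\mathcal{C}\rightarrow\mathcal{D}$ of $\mathcal{G}$ with natural isomorphisms $\eta:1_{\mathcal{D}}\xrightarrow{\cong}\mathcal{H}\mathcal{G}$ and $\varepsilon:\mathcal{G}\mathcal{H}\xrightarrow{\cong}1_{\mathcal{C}}$, and recall that $\mathcal{G}$ is fully faithful and essentially surjective. Local smallness of $\mathcal{D}$ is immediate, since $\mathcal{G}$ induces a bijection $\Mor_{\mathcal{D}}(A,B)\cong\Mor_{\mathcal{C}}\left(\mathcal{G}(A),\mathcal{G}(B)\right)$ for all $A,B\in\Obj(\mathcal{D})$ and $\mathcal{C}$ is locally small; and $\mathcal{D}$ is bicomplete because it is equivalent to the bicomplete category $\mathcal{C}$, one transporting a diagram in $\mathcal{D}$ to $\mathcal{C}$, forming its limit or colimit there, and pulling the result back through $\mathcal{H}$, with $\varepsilon$ and $\eta$ used to check the universal property. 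I will also use throughout the standard fact that in any model category every isomorphism is simultaneously a trivial cofibration and a trivial fibration, so that in $\mathcal{C}$ the classes of cofibrations, fibrations, trivial cofibrations, trivial fibrations, and weak equivalences are each closed under composition with isomorphisms on either side.

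Two-out-of-three and the retract axiom are purely formal. For (iii), if $f=gh$ in $\mathcal{D}$ then $\mathcal{G}(f)=\mathcal{G}(g)\mathcal{G}(h)$ in $\mathcal{C}$, and the claim follows from two-out-of-three in $\mathcal{C}$ and the definition of weak equivalence in $\mathcal{D}$. For (ii), a functor preserves commutative diagrams and identities, so a retract diagram exhibiting $f$ as a retract of $g$ in $\Arr(\mathcal{D})$ maps under $\mathcal{G}$ to a retract diagram exhibiting $\mathcal{G}(f)$ as a retract of $\mathcal{G}(g)$ in $\Arr(\mathcal{C})$; hence if $g$ is a fibration (cofibration, weak equivalence) in $\mathcal{D}$, then $\mathcal{G}(g)$ is one in $\mathcal{C}$, so $\mathcal{G}(f)$ is one in $\mathcal{C}$ by the retract axiom there, so $f$ is one in $\mathcal{D}$.

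For the lifting axiom (i), take a commutative square in $\mathcal{D}$ whose left edge $f$ is a trivial cofibration (resp. cofibration) and whose right edge $g$ is a fibration (resp. trivial fibration). Applying $\mathcal{G}$ gives a commutative square in $\mathcal{C}$ with left edge $\mathcal{G}(f)$ a trivial cofibration (resp. cofibration) and right edge $\mathcal{G}(g)$ a fibration (resp. trivial fibration), so the lifting axiom in $\mathcal{C}$ supplies a diagonal filler $w$. Since $\mathcal{G}$ is full, $w=\mathcal{G}(\widetilde{w})$ for some $\widetilde{w}$ in $\mathcal{D}$, and since $\mathcal{G}$ is faithful the two triangle identities for $w$ descend to the corresponding identities for $\widetilde{w}$; hence $\widetilde{w}$ is the desired lift. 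For the factorization axiom (iv), given $f:A\rightarrow B$ in $\mathcal{D}$, factor $\mathcal{G}(f)=p\,i$ in $\mathcal{C}$ with $i$ a trivial cofibration and $p$ a fibration, through an object $X$; set $Y:=\mathcal{H}(X)$ and, using full faithfulness, define $i':A\rightarrow Y$ and $p':Y\rightarrow B$ by $\mathcal{G}(i')=\varepsilon_{X}^{-1}i$ and $\mathcal{G}(p')=p\,\varepsilon_{X}$. Then $\mathcal{G}(p'i')=p\,\varepsilon_{X}\varepsilon_{X}^{-1}i=p\,i=\mathcal{G}(f)$, so $p'i'=f$ by faithfulness, while $\mathcal{G}(i')$ and $\mathcal{G}(p')$ differ from $i$ and $p$ only by the isomorphism $\varepsilon_{X}$, hence remain a trivial cofibration and a fibration in $\mathcal{C}$; thus $i'$ is a trivial cofibration and $p'$ a fibration in $\mathcal{D}$. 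The other factorization (a trivial fibration after a cofibration) is handled identically.

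Most of this is bookkeeping; the one step requiring genuine care is the factorization axiom, because the intermediate object $X$ of a factorization produced in $\mathcal{C}$ need not lie in the essential image of a preassigned subcategory, so one must actually invoke essential surjectivity (via $\mathcal{H}$ and $\varepsilon$) to relocate it inside $\mathcal{D}$, and then use closure of the distinguished classes under composition with isomorphisms to see that the transported maps retain their type. I expect this to be the main, though still modest, obstacle.
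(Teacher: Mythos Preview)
Your proof is correct and takes exactly the approach the paper indicates: the paper's own proof is the single sentence ``One can directly check the conditions bearing in mind that $\mathcal{G}$ is fully faithful,'' and you have carried out precisely that direct check. Your handling of the factorization axiom via the quasi-inverse $\mathcal{H}$ and the natural isomorphism $\varepsilon$ is the standard way to fill in the step the paper omits.
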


\begin{proof}
One can directly check the conditions bearing in mind that $\mathcal{G}$ is fully faithful.
\end{proof}

\begin{corollary} \label{2.16}
Let $\mathcal{C}$ be a cofibrantly generated model category, $\mathcal{D}$ a category, and $(\mathcal{F},\mathcal{G}):\mathcal{C} \leftrightarrows \mathcal{D}$ an adjoint equivalence of categories. Then $\mathcal{D}$ becomes a cofibrantly generated model category by letting fibrations (cofibrations, or weak equivalences) be morphisms $f:A\rightarrow B$ in $\mathcal{D}$ such that $\mathcal{G}(f):\mathcal{G}(A)\rightarrow \mathcal{G}(B)$ is a fibration (cofibration, or weak equivalence) in $\mathcal{C}$, respectively.
\end{corollary}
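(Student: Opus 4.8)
The plan is to derive the model structure on $\mathcal{D}$ from Proposition \ref{2.15} applied to the equivalence $\mathcal{G}$, and then to show that this model structure is cofibrantly generated in the sense of Definition \ref{2.8} with distinguished sets $\mathcal{F}(\mathcal{X})$ and $\mathcal{F}(\mathcal{Y})$. Some preliminaries are immediate: $\mathcal{C}$, being a model category, is locally small and bicomplete, and since $\mathcal{G}$ is an equivalence the same is true of $\mathcal{D}$; moreover $\mathcal{G}$, being one half of an adjoint equivalence, is simultaneously a left and a right adjoint, hence preserves all small limits and colimits, in particular sequential direct limits, and the unit and counit of $(\mathcal{F},\mathcal{G})$ are natural isomorphisms. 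Proposition \ref{2.15} then equips $\mathcal{D}$ with a model structure in which a morphism $f$ is a fibration, a weak equivalence, or a trivial fibration precisely when $\mathcal{G}(f)$ is one in $\mathcal{C}$.

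Next I would verify conditions (ii) and (iii) of Definition \ref{2.8}. For a morphism $g$ in $\mathcal{D}$, Corollary \ref{2.14}(ii) gives $g\in\RLP\bigl(\mathcal{F}(\mathcal{X})\bigr)$ if and only if $\mathcal{G}(g)\in\RLP(\mathcal{X})$; since $\mathcal{C}$ is cofibrantly generated, the latter says exactly that $\mathcal{G}(g)$ is a trivial fibration in $\mathcal{C}$, which by the description of the model structure on $\mathcal{D}$ is equivalent to $g$ being a trivial fibration in $\mathcal{D}$. Hence $\RLP\bigl(\mathcal{F}(\mathcal{X})\bigr)$ is the class of trivial fibrations of $\mathcal{D}$. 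Running the same argument with $\mathcal{Y}$ and ``fibration'' in place of $\mathcal{X}$ and ``trivial fibration'' shows that $\RLP\bigl(\mathcal{F}(\mathcal{Y})\bigr)$ is the class of fibrations of $\mathcal{D}$.

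The real work is condition (i): $\mathcal{F}(\mathcal{X})$ and $\mathcal{F}(\mathcal{Y})$ must permit the small object argument. Here I would apply Lemma \ref{2.10} to the pair $(\mathcal{F},\mathcal{G})$, which reduces the claim for $\mathcal{F}(\mathcal{X})$ to showing that the source of each morphism of $\mathcal{X}$ is small with respect to $\mathcal{G}\bigl(\LLP\bigl(\RLP\bigl(\mathcal{F}(\mathcal{X})\bigr)\bigr)\bigr)$. Because $\mathcal{X}$ permits the small object argument in the cofibrantly generated model category $\mathcal{C}$, each such source is small with respect to $\LLP\bigl(\RLP(\mathcal{X})\bigr)$, so it is enough to establish the inclusion $\mathcal{G}\bigl(\LLP\bigl(\RLP\bigl(\mathcal{F}(\mathcal{X})\bigr)\bigr)\bigr)\subseteq\LLP\bigl(\RLP(\mathcal{X})\bigr)$, smallness with respect to a subclass being automatic. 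To prove the inclusion, take $h\in\LLP\bigl(\RLP\bigl(\mathcal{F}(\mathcal{X})\bigr)\bigr)$ and an arbitrary $p\in\RLP(\mathcal{X})$; from the unit isomorphism $\mathcal{G}\mathcal{F}(p)\cong p$ and the isomorphism-invariance of lifting classes one gets $\mathcal{G}\mathcal{F}(p)\in\RLP(\mathcal{X})$, hence $\mathcal{F}(p)\in\RLP\bigl(\mathcal{F}(\mathcal{X})\bigr)$ by Corollary \ref{2.14}(ii), so $h$ has the left lifting property against $\mathcal{F}(p)$. Using the counit isomorphism $h\cong\mathcal{F}\mathcal{G}(h)$, Lemma \ref{2.13}, and $\mathcal{G}\mathcal{F}(p)\cong p$ once more, this transfers to the statement that $\mathcal{G}(h)$ has the left lifting property against $p$; as $p$ was arbitrary, $\mathcal{G}(h)\in\LLP\bigl(\RLP(\mathcal{X})\bigr)$. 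The same reasoning handles $\mathcal{F}(\mathcal{Y})$, and with (i)--(iii) in place $\mathcal{D}$ is cofibrantly generated.

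I expect the main obstacle to be exactly this last inclusion: it requires carefully threading the two lifting classes through $\mathcal{F}$ and $\mathcal{G}$, using that the unit and counit are isomorphisms so that every morphism of $\mathcal{D}$ is, up to isomorphism, of the form $\mathcal{F}(-)$ and every morphism of $\mathcal{C}$ is, up to isomorphism, of the form $\mathcal{G}(-)$. Everything else --- Proposition \ref{2.15}, the identification of the (trivial) fibrations of $\mathcal{D}$, and the reduction via Lemma \ref{2.10} --- is formal once this bookkeeping is set up.
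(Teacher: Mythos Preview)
Your proposal is correct and follows the same overall skeleton as the paper: Proposition~\ref{2.15} for the model structure, Corollary~\ref{2.14}(ii) to identify $\RLP(\mathcal{F}(\mathcal{X}))$ and $\RLP(\mathcal{F}(\mathcal{Y}))$ with the trivial fibrations and fibrations of $\mathcal{D}$, and Lemma~\ref{2.10} for the small object argument, reduced to the inclusion $\mathcal{G}\bigl(\LLP(\RLP(\mathcal{F}(\mathcal{X})))\bigr)\subseteq\LLP(\RLP(\mathcal{X}))$.

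The one point of divergence is how that inclusion is established. You chase lifting properties directly through the unit and counit isomorphisms via Lemma~\ref{2.13} and Corollary~\ref{2.14}(ii); this is correct but a bit laborious. The paper instead observes that, since $\mathcal{D}$ is already a model category and $\RLP(\mathcal{F}(\mathcal{X}))$ has been identified with its trivial fibrations, \cite[Proposition~7.2.3]{Hi} gives $\LLP(\RLP(\mathcal{F}(\mathcal{X})))=\{\text{cofibrations of }\mathcal{D}\}$ and likewise $\LLP(\RLP(\mathcal{X}))=\{\text{cofibrations of }\mathcal{C}\}$; the inclusion then drops out of the \emph{definition} of cofibrations in $\mathcal{D}$ (namely $f$ is a cofibration iff $\mathcal{G}(f)$ is). This route is shorter and avoids the unit/counit bookkeeping you flagged as the main obstacle.
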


\begin{proof}
By Proposition \ref{2.15}, $\mathcal{D}$ is a model category with the specified classes of fibrations, cofibrations, and weak equivalences. Moreover, since $\mathcal{G}$ is a covariant equivalence of categories, $\mathcal{G}$ preserves direct limits. Now let $\mathcal{X}$ and $\mathcal{Y}$ be sets of morphisms as in Definition \ref{2.8}. Then $\RLP(\mathcal{X})$ is the class of trivial fibrations and $\RLP(\mathcal{Y})$ is the class of fibrations in $\mathcal{C}$. Since $\mathcal{C}$ is a model category, \cite[Proposition 7.2.3]{Hi} implies that $\LLP\left(\RLP(\mathcal{X})\right)$ is the class of cofibrations and $\LLP\left(\RLP(\mathcal{Y})\right)$ is the class of trivial cofibrations in $\mathcal{C}$. By Corollary \ref{2.14} (ii), for a given morphism $f$ in $\mathcal{D}$, we have $f\in \RLP\left(\mathcal{F}(\mathcal{X})\right)$ if and only if $\mathcal{G}(f)\in \RLP(\mathcal{X})$ if and only if $\mathcal{G}(f)$ is a trivial fibration in $C$ if and only if $f$ is a trivial fibration in $\mathcal{D}$. As a result, $\RLP\left(\mathcal{F}(\mathcal{X})\right)$ is the class of trivial fibrations in $\mathcal{D}$. A similar argument shows that $\RLP\left(\mathcal{F}(\mathcal{Y})\right)$ is the class of fibrations in $\mathcal{D}$. As $\mathcal{D}$ is a model category, $\LLP\left(\RLP\left(\mathcal{F}(\mathcal{X})\right)\right)$ is the class of cofibrations and $\LLP\left(\RLP\left(\mathcal{F}(\mathcal{Y})\right)\right)$ is the class of trivial cofibrations in $\mathcal{D}$. By definition, if $f$ is a cofibration in $\mathcal{D}$, then $\mathcal{G}(f)$ is a cofibration in $\mathcal{C}$, meaning that $\mathcal{G}\left(\LLP\left(\RLP\left(\mathcal{F}(\mathcal{X})\right)\right)\right) \subseteq \LLP\left(\RLP\left(\mathcal{X}\right)\right)$. But $\mathcal{C}$ is cofibrantly generated, so $\mathcal{X}$ permits the small object argument, whence the source of any morphism in $\mathcal{X}$ is small with respect to $\LLP\left(\RLP\left(\mathcal{X}\right)\right)$, hence with respect to $\mathcal{G}\left(\LLP\left(\RLP\left(\mathcal{F}(\mathcal{X})\right)\right)\right)$. By Lemma \ref{2.10}, $\mathcal{F}(\mathcal{X})$ permits the small object argument. A similar argument shows that $\mathcal{F}(\mathcal{Y})$ permits the small object argument. Therefore, $\mathcal{D}$ is cofibrantly generated with the corresponding sets $\mathcal{F}(\mathcal{X})$ and $\mathcal{F}(\mathcal{Y})$.
\end{proof}

\section{Model Structure on Connective Chain Complexes}

In this section, we use some ideas from \cite[Chapter 2, Section 2.3]{Ho} and \cite[Section 7]{DS} to cook up a straightforward and clear-cut proof of the model structure of connective chain complexes.

Let $R$ be a ring. Recall that an $R$-complex is a $\mathbb{Z}$-indexed sequence
$$X: \cdots \rightarrow X_{i+1} \xrightarrow{\partial_{i+1}^{X}} X_{i} \xrightarrow{\partial_{i}^{X}} X_{i-1} \rightarrow \cdots$$
of left $R$-modules and $R$-homomorphisms such that $\partial_{i}^{X}\partial_{i+1}^{X}=0$ for every $i\in \mathbb{Z}$. Given an $R$-complex $X$, we utilize the conventional notations $Z_{i}(X)= \Ker\left(\partial_{i}^{X}\right)$ and $B_{i}(X)= \im\left(\partial_{i+1}^{X}\right)$ for every $i\in \mathbb{Z}$. An $R$-complex $X$ is said to be \textit{connective} if $X_{i}=0$ for every $i<0$. The category of connective $R$-complexes is denoted by $\mathcal{C}_{\geq 0}(R)$ which is a full subcategory of the category $\mathcal{C}(R)$ of unbounded $R$-complexes. In this section, we directly show that $\mathcal{C}_{\geq 0}(R)$ is a cofibrantly generated model category. The cofibrant generation property of this model structure allows us to transfer it along adjunctions.

We first recall the Eilenberg-MacLane complexes as follows. For any $n\in \mathbb{Z}$, we set $S(n):=\Sigma^{n}R$, i.e. the $R$-complex
$$\cdots \rightarrow 0 \rightarrow R \rightarrow 0 \rightarrow \cdots$$
where $R$ is located in degree $n$. Then it is easy to verify that if $X$ is an $R$-complex, then the map $\phi_{n}^{X}: \Mor_{\mathcal{C}(R)}\left(S(n),X\right)\rightarrow Z_{n}(X)$, given by $f\mapsto f_{n}(1)$, is a natural isomorphism. Similarly, for any $n\in \mathbb{Z}$, we set $D(n):=\Sigma^{n-1}\Cone\left(1^{R}\right)$, i.e. the $R$-complex
$$\cdots \rightarrow 0 \rightarrow R \xrightarrow{1^{R}} R \rightarrow 0 \rightarrow \cdots$$
where $R$ is located in degrees $n$ and $n-1$. Then it is easy to verify that if $X$ is an $R$-complex, then the map $\psi^{X}_{n}: \Mor_{\mathcal{C}(R)}\left(D(n),X\right)\rightarrow X_{n}$, given by $f\mapsto f_{n}(1)$, is a natural isomorphism. We make repeated use of these facts in the proof of the following theorem.

\begin{theorem} \label{3.1}
Let $R$ be a ring. Then $\mathcal{C}_{\geq0}(R)$ is a cofibrantly generated model category with the model structure given as follows:
\begin{enumerate}
\item[(i)] Fibrations are morphisms $f:X\rightarrow Y$ in $\mathcal{C}_{\geq0}(R)$ such that $f_{i}:X_{i}\rightarrow Y_{i}$ is surjective for every $i\geq1$.
\item[(ii)] Cofibrations are morphisms $f:X\rightarrow Y$ in $\mathcal{C}_{\geq0}(R)$ such that $f_{i}:X_{i}\rightarrow Y_{i}$ is injective and $\Coker(f_{i})$ is a projective left $R$-module for every $i\geq 0$.
\item[(iii)] Weak equivalences are quasi-isomorphisms, i.e. morphisms $f:X\rightarrow Y$ in $\mathcal{C}_{\geq0}(R)$ such that $H_{i}(f):H_{i}(X)\rightarrow H_{i}(Y)$ is an isomorphism for every $i\geq 0$.
\end{enumerate}
\end{theorem}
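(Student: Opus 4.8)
The plan is to realize this as the \emph{projective} model structure, present it as cofibrantly generated, and deduce all the axioms from the small object argument together with one hands-on lifting lemma. First I would fix the candidate generating sets
$$\mathcal{I}=\{0\to S(0)\}\cup\{S(n-1)\hookrightarrow D(n)\mid n\geq 1\},\qquad \mathcal{J}=\{0\to D(n)\mid n\geq 1\},$$
where $S(n-1)\hookrightarrow D(n)$ is the identity in degree $n-1$. Using the natural isomorphisms $\phi^{X}_{n}$ and $\psi^{X}_{n}$ recalled before the statement, a diagram chase identifies $\RLP(\mathcal{J})$ with the morphisms that are surjective in every degree $\geq 1$, i.e.\ with the proposed fibrations; and it identifies $\RLP(\mathcal{I})$ with the morphisms $g:C\to D$ that are surjective in degree $0$ and such that, for every $n\geq 1$, whenever $z\in Z_{n-1}(C)$ and $w\in D_{n}$ satisfy $g(z)=\partial^{D}(w)$ there is $c\in C_{n}$ with $\partial^{C}(c)=z$ and $g(c)=w$. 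Taking $w=0$ shows $\Ker g$ is acyclic, and a short induction on degree then shows $g$ is surjective in every degree; conversely the long exact homology sequence shows that any degreewise surjection with acyclic kernel lies in $\RLP(\mathcal{I})$. Since a quasi-isomorphism that is surjective in degrees $\geq 1$ is automatically surjective in degree $0$, this proves $\RLP(\mathcal{J})$ is the class of fibrations and $\RLP(\mathcal{I})$ is the class of trivial fibrations.

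Next I would note that $\mathcal{C}_{\geq 0}(R)$ is locally small and bicomplete, with limits and colimits formed degreewise, and that the sources $0$, $S(n)$, $D(n)$ of the maps in $\mathcal{I}\cup\mathcal{J}$ are small with respect to all morphisms, since colimits are degreewise while $\Mor_{\mathcal{C}_{\geq 0}(R)}(0,-)=\ast$, $\Mor_{\mathcal{C}_{\geq 0}(R)}(S(n),-)=Z_{n}(-)$ is a finite limit of degreewise evaluations, and $\Mor_{\mathcal{C}_{\geq 0}(R)}(D(n),-)=(-)_{n}$. Hence the small object argument applies and furnishes, for every morphism, a factorization into a relative $\mathcal{I}$-cell complex followed by a map in $\RLP(\mathcal{I})$, and one into a relative $\mathcal{J}$-cell complex followed by a map in $\RLP(\mathcal{J})$. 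A relative $\mathcal{I}$-cell complex is a degreewise split injection whose cokernel is free in each degree, hence a cofibration in the proposed sense; a relative $\mathcal{J}$-cell complex has the form $X\hookrightarrow X\oplus E$ with $E$ a direct sum of the acyclic complexes $D(n)$, hence is both a cofibration and a quasi-isomorphism. By the retract argument it follows that $\LLP(\RLP(\mathcal{I}))$ and $\LLP(\RLP(\mathcal{J}))$ consist of retracts of such cell complexes, so $\LLP(\RLP(\mathcal{I}))\subseteq\{\text{cofibrations}\}$ and $\LLP(\RLP(\mathcal{J}))\subseteq\{\text{cofibrations}\}\cap\{\text{weak equivalences}\}$. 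The two-out-of-three and retract axioms for weak equivalences, and the retract axiom for fibrations and cofibrations, are immediate from functoriality of homology and retract-stability of surjectivity, injectivity, and projectivity.

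The crux, and the step I expect to be the main obstacle, is the reverse inclusion: every degreewise injection $f:X\to Y$ with degreewise projective cokernel has the left lifting property against every trivial fibration $g:C\to D$, so that the proposed cofibrations coincide with $\LLP(\RLP(\mathcal{I}))$. This is where the acyclicity of $\Ker g$ enters. Given a commutative square with $\alpha:X\to C$ over $\beta:Y\to D$, choose module splittings $s_{n}$ of $Y_{n}\twoheadrightarrow\Coker(f_{n})$ and build the lift $\gamma:Y\to C$ by induction on degrees with $\gamma_{n}|_{X_{n}}=\alpha_{n}$. At stage $n$, the element $u_{n}:=\gamma_{n-1}\partial^{Y}s_{n}$ lies in $Z_{n-1}(C)$, so $(\beta_{n}s_{n},u_{n})$ defines a morphism from the projective module $\Coker(f_{n})$ into the pullback $\{(d,z)\in D_{n}\times Z_{n-1}(C)\mid \partial^{D}(d)=g(z)\}$; the map $c\mapsto(g(c),\partial^{C}(c))$ from $C_{n}$ onto that pullback is surjective precisely because $\Ker g$ is acyclic, and projectivity of $\Coker(f_{n})$ then lifts $(\beta_{n}s_{n},u_{n})$ through it, which defines $\gamma_{n}$ on the chosen complement of $X_{n}$. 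A direct check confirms that $\gamma$ is a chain map over $g$ extending $\alpha$, completing the lemma.

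With this lemma the remaining axioms are formal. Given a trivial cofibration $f$, factoring it as a relative $\mathcal{J}$-cell complex $j$ followed by a fibration $p$ and applying two-out-of-three forces $p$ to be a trivial fibration; the lemma supplies a lift exhibiting $f$ as a retract of $j$, so $f\in\LLP(\RLP(\mathcal{J}))$, whence the proposed trivial cofibrations coincide with $\LLP(\RLP(\mathcal{J}))$. The two lifting axioms and the two factorization axioms of Definition \ref{2.1} are now exactly the outputs of the small object argument together with the identifications $\RLP(\mathcal{I})=\{\text{trivial fibrations}\}$, $\RLP(\mathcal{J})=\{\text{fibrations}\}$, $\LLP(\RLP(\mathcal{I}))=\{\text{cofibrations}\}$, $\LLP(\RLP(\mathcal{J}))=\{\text{trivial cofibrations}\}$; and since conditions (i)--(iii) of Definition \ref{2.8} have been checked en route, $\mathcal{C}_{\geq 0}(R)$ is cofibrantly generated with generating cofibrations $\mathcal{I}$ and generating trivial cofibrations $\mathcal{J}$.
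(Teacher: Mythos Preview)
Your proposal is correct. You and the paper share the same generating sets $\mathcal{I}=\mathcal{X}$, $\mathcal{J}=\mathcal{Y}$, the same identification of $\RLP(\mathcal{I})$ and $\RLP(\mathcal{J})$ with trivial fibrations and fibrations, and essentially the same hands-on lifting lemma (a cofibration has the left lifting property against a trivial fibration, proved by an inductive lift through the degrees using projectivity of $\Coker(f_n)$ and acyclicity of $\Ker g$); your packaging of that step as a single lift through the pullback $D_n\times_{Z_{n-1}(D)}Z_{n-1}(C)$ is a clean variant of the paper's explicit correction-term argument. Where you diverge is in how the factorization axioms are obtained: the paper constructs both factorizations explicitly by hand---the (trivial cofibration, fibration) one as $X\hookrightarrow X\oplus\bigoplus D(n)\twoheadrightarrow Y$, and the (cofibration, trivial fibration) one via a fairly involved degree-by-degree pullback construction---whereas you invoke the small object argument once and then read off the cell structure of relative $\mathcal{I}$- and $\mathcal{J}$-complexes. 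Your route is the standard modern one and is shorter; the paper's route is more self-contained in that it never actually runs the small object argument, and it makes the factorizations completely concrete. Both reach the same cofibrantly generated structure, and the paper's explicit (trivial cofibration, fibration) factorization is in fact exactly the output of one step of the small object argument for $\mathcal{J}$.
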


\begin{prf}
We first note that $\mathcal{C}_{\geq 0}(R)$ is a locally small bicomplete category. Let
$$\mathcal{X}=\left\{\iota^{n}:S(n-1)\rightarrow D(n)\suchthat n\geq 1 \right\} \cup \left\{\lambda:0\rightarrow S(0)\right\}$$
where $\iota^{n}$ is the inclusion morphism for every $n\geq 1$. Also, let
$$\mathcal{Y}=\left\{\kappa^{n}:0\rightarrow D(n)\suchthat n\geq 1\right\}.$$
We show that $\mathcal{C}_{\geq 0}(R)$ is a cofibrantly generated model category with the corresponding sets $\mathcal{X}$ and $\mathcal{Y}$ of morphisms as in Definition \ref{2.8}. For any $n\geq 0$, we have $\Mor_{\mathcal{C}_{\geq 0}(R)}\left(S(n),-\right)\cong Z_{n}(-)$, so since the functor $Z_{n}(-)$ preserves direct limits of direct systems, we conclude that the functor $\Mor_{\mathcal{C}_{\geq 0}(R)}\left(S(n),-\right)$ preserves direct limits of direct systems as well, thereby $S(n)$ is small with respect to $\Mor\left(\mathcal{C}_{\geq0}(R)\right)$. Also, the source of $\lambda$ is $0$, so it is clearly small with respect to $\Mor\left(\mathcal{C}_{\geq0}(R)\right)$. In particular, the sources of morphisms in $\mathcal{X}$ are small with respect to $\LLP\left(\RLP(\mathcal{X})\right)$, so $\mathcal{X}$ permits the small object argument. Similarly, since the source of any morphism in $\mathcal{Y}$ is $0$, we see that $\mathcal{Y}$ permits the small object argument.

We now show that $\RLP(\mathcal{X})$ is the class of trivial fibrations in $\mathcal{C}_{\geq0}(R)$. Let $f:X\rightarrow Y$ be a morphism in $\mathcal{C}_{\geq0}(R)$. Suppose that $f\in \RLP(\mathcal{X})$. We show that $f$ is a trivial fibration, i.e. $f$ is a quasi-isomorphism and $f_{n}$ is surjective for every $n\geq 1$. Let $n\geq 0$. We first show that $H_{n}(f):H_{n}(X)\rightarrow H_{n}(Y)$ is an isomorphism. To see that $H_{n}(f)$ is injective, let $x\in Z_{n}(X)$ be such that $H_{n}(f)\left(x+B_{n}(X)\right)=f_{n}(x)+B_{n}(Y)=0$. Then $f_{n}(x)\in B_{n}(Y)$, so $f_{n}(x)=\partial^{Y}_{n+1}(y)$ for some $y\in Y_{n+1}$. Consider the natural isomorphisms $\phi_{n}^{X}:\Mor_{\mathcal{C}_{\geq 0}(R)}\left(S(n),X\right)\rightarrow Z_{n}(X)$ and $\psi^{Y}_{n+1}:\Mor_{\mathcal{C}_{\geq 0}(R)}\left(D(n+1),Y\right)\rightarrow Y_{n+1}$. There is a $g\in \Mor_{\mathcal{C}_{\geq 0}(R)}\left(S(n),X\right)$ such that $\phi_{n}^{X}(g)=g_{n}(1)=x$, and an $h\in \Mor_{\mathcal{C}_{\geq 0}(R)}\left(D(n+1),Y\right)$ such that $\psi^{Y}_{n+1}(h)=h_{n+1}(1)=y$. Then the following diagram is commutative:
\begin{equation*}
\begin{tikzcd}
  S(n) \arrow{r}{g} \arrow{d}[swap]{\iota^{n+1}}
  & X \arrow{d}{f}
  \\
  D(n+1) \arrow{r}{h}
  & Y
\end{tikzcd}
\end{equation*}

\noindent
Indeed, it is clear from the structure of $S(n)$ that $\iota^{n+1}_{i}=0=g_{i}$ for every $i\neq n$. Moreover, we have:
$$h_{n}\left(\iota_{n}^{n+1}(1)\right)=h_{n}(1)=\partial^{Y}_{n+1}\left(h_{n+1}(1)\right)=\partial^{Y}_{n+1}(y)=f_{n}(x)=f_{n}\left(g_{n}(1)\right)$$
By the choice of $f$, the above diagram can be completed to the following commutative diagram:
\begin{equation*}
  \begin{tikzcd}
  S(n) \arrow{r}{g} \arrow{d}[swap]{\iota^{n+1}}
  & X \arrow{d}{f}
  \\
  D(n+1) \arrow{r}{h} \arrow{ru}{l}
  & Y
\end{tikzcd}
\end{equation*}

\noindent
Thus $\partial_{n+1}^{X}\left(l_{n+1}(1)\right)=l_{n}(1)=l_{n}\left(\iota_{n}^{n+1}(1)\right)=g_{n}(1)=x$, so $x\in B_{n}(X)$. This means that $H_{n}(f)$ is injective. To see that $H_{n}(f)$ is surjective, let $z\in Z_{n}(Y)$. If $n=0$, then consider the isomorphism $\phi_{0}^{Y}: \Mor_{\mathcal{C}_{\geq 0}(R)}\left(S(0),Y\right)\rightarrow Z_{0}(Y)$. There is an $r\in \Mor_{\mathcal{C}_{\geq 0}(R)}\left(S(0),Y\right)$ such that $\phi_{0}^{Y}(r)=r_{0}(1)=z$. Then it is clear that the following diagram is commutative:
\begin{equation*}
  \begin{tikzcd}
  0 \arrow{r} \arrow{d}[swap]{\lambda}
  & X \arrow{d}{f}
  \\
  S(0) \arrow{r}{r}
  & Y
\end{tikzcd}
\end{equation*}

\noindent
By the choice of $f$, the above diagram can be completed to the following commutative diagram:
\begin{equation*}
  \begin{tikzcd}
  0 \arrow{r} \arrow{d}[swap]{\lambda}
  & X \arrow{d}{f}
  \\
  S(0) \arrow{r}{r}\arrow{ru}{s}
  & Y
\end{tikzcd}
\end{equation*}

\noindent
Set $w=s_{0}(1)\in X_{0}=Z_{0}(X)$. Then $f_{0}(w)=f_{0}\left(s_{0}(1)\right)=r_{0}(1)=z$. If $n\geq 1$, then consider the isomorphism $\psi^{Y}_{n}:\Mor_{\mathcal{C}_{\geq 0}(R)}\left(D(n),Y\right)\rightarrow Y_{n}$. There is an $r\in \Mor_{\mathcal{C}_{\geq 0}(R)}\left(D(n),Y\right)$ such that $\psi^{Y}_{n}(r)=r_{n}(1)=z$. Then the following diagram is commutative:
\begin{equation*}
  \begin{tikzcd}
  S(n-1) \arrow{r}{0} \arrow{d}[swap]{\iota^{n}}
  & X \arrow{d}{f}
  \\
  D(n) \arrow{r}{r}
  & Y
\end{tikzcd}
\end{equation*}

\noindent
Indeed, it is clear that $\iota_{i}^{n}=0$ for every $i\neq n-1$. Moreover, since $z\in Z_{n}(Y)$, we have $r_{n-1}\left(\iota_{n-1}^{n}(1)\right)=r_{n-1}(1)=\partial_{n}^{Y}\left(r_{n}(1)\right)=\partial_{n}^{Y}(z)=0$. By the choice of $f$, the above diagram can be completed to the following commutative diagram:
\begin{equation*}
  \begin{tikzcd}
  S(n-1) \arrow{r}{0} \arrow{d}[swap]{\iota^{n}}
  & X \arrow{d}{f}
  \\
  D(n) \arrow{r}{r} \arrow{ru}{s}
  & Y
\end{tikzcd}
\end{equation*}

\noindent
Set $w=s_{n}(1)\in X_{n}$. Then $f_{n}(w)=f_{n}\left(s_{n}(1)\right)=r_{n}(1)=z$. Moreover, $\partial_{n}^{X}(w)=\partial_{n}^{X}\left(s_{n}(1)\right)=s_{n-1}(1)=s_{n-1}\left(\iota_{n-1}^{n}(1)\right)=0$, so $w\in Z_{n}(X)$. In other words, for any $n\geq 0$ and $z\in Z_{n}(Y)$, we have found an element $w\in Z_{n}(X)$ such that $f_{n}(w)=z$. In particular, we have $H_{n}(f)\left(w+B_{n}(X)\right)=f_{n}(w)+B_{n}(Y)=z+B_{n}(Y)$. This shows that $H_{n}(f)$ is surjective. Therefore, $H_{n}(f)$ is an isomorphism. Next let $n\geq 1$. We show that $f_{n}$ is surjective. Let $v\in Y_{n}$. Then $\partial_{n}^{Y}(v)\in B_{n-1}(Y)\subseteq Z_{n-1}(Y)$, so as noted above, there is an element $u\in Z_{n-1}(X)$ such that $f_{n-1}(u)=\partial_{n}^{Y}(v)$. Consider the isomorphisms $\phi_{n-1}^{X}:\Mor_{\mathcal{C}_{\geq 0}(R)}\left(S(n-1),X\right)\rightarrow Z_{n-1}(X)$ and $\psi^{Y}_{n}:\Mor_{\mathcal{C}_{\geq 0}(R)}\left(D(n),Y\right)\rightarrow Y_{n}$. There is a $p\in \Mor_{\mathcal{C}_{\geq 0}(R)}\left(S(n-1),X\right)$ such that $\phi_{n-1}^{X}(p)=p_{n-1}(1)=u$, and a $q\in \Mor_{\mathcal{C}_{\geq 0}(R)}\left(D(n),Y\right)$ such that $\psi^{Y}_{n}(q)=q_{n}(1)=v$. Then the following diagram is commutative:
\begin{equation*}
  \begin{tikzcd}
  S(n-1) \arrow{r}{p} \arrow{d}[swap]{\iota^{n}}
  & X \arrow{d}{f}
  \\
  D(n) \arrow{r}{q}
  & Y
\end{tikzcd}
\end{equation*}

\noindent
Indeed, it is clear that $\iota_{i}^{n}=0=p_{i}$ for every $i\neq n-1$. Moreover, we have:
$$q_{n-1}\left(\iota_{n-1}^{n}(1)\right)=q_{n-1}(1)=\partial^{Y}_{n}\left(q_{n}(1)\right)=\partial^{Y}_{n}(v)=f_{n-1}(u)=f_{n-1}\left(p_{n-1}(1)\right)$$
By the choice of $f$, the above diagram can be completed to the following commutative diagram:
\begin{equation*}
  \begin{tikzcd}
  S(n-1) \arrow{r}{p} \arrow{d}[swap]{\iota^{n}}
  & X \arrow{d}{f}
  \\
  D(n) \arrow{r}{q} \arrow{ru}{t}
  & Y
\end{tikzcd}
\end{equation*}

\noindent
Then $f_{n}\left(t_{n}(1)\right)=q_{n}(1)=v$. Thus $f_{n}$ is surjective. As a result, $f$ is a trivial fibration.

Conversely, suppose that $f$ is a trivial fibration, i.e. $f$ is a quasi-isomorphism and $f_{n}$ is surjective for every $n\geq 1$. We first show that $f_{0}$ is also surjective. Consider the following commutative diagram:
\begin{equation*}
  \begin{tikzcd}
  X_{1} \arrow{r}{\overline{\partial_{1}^{X}}} \arrow{d}[swap]{f_{1}}
  & B_{0}(X) \arrow{d}{B_{0}(f)}
  \\
  Y_{1} \arrow{r}{\overline{\partial_{1}^{Y}}}
  & B_{0}(Y)
\end{tikzcd}
\end{equation*}

\noindent
As $f_{1}$ is surjective, the above diagram implies that $B_{0}(f)$ is surjective. Now consider the following commutative diagram:
\begin{equation*}
  \begin{tikzcd}
  0 \arrow{r}
  & B_{0}(X) \arrow{r} \arrow{d}{B_{0}(f)}
  & X_{0} \arrow{r} \arrow{d}{f_{0}}
  & H_{0}(X) \arrow{r} \arrow{d}{H_{0}(f)}
  & 0
  \\
  0 \arrow{r}
  & B_{0}(Y) \arrow{r}
  & Y_{0} \arrow{r}
  & H_{0}(Y) \arrow{r}
  & 0
\end{tikzcd}
\end{equation*}

\noindent
Since $B_{0}(f)$ is surjective and $H_{0}(f)$ is an isomorphism, the Short Five Lemma implies that $f_{0}$ is surjective. Therefore, $f$ is a surjective quasi-isomorphism. We next show that $f\in \RLP(\mathcal{X})$. Let $n\geq 1$, and consider a commutative diagram as follows:
\begin{equation*}
  \begin{tikzcd}
  S(n-1) \arrow{r}{p} \arrow{d}[swap]{\iota^{n}}
  & X \arrow{d}{f}
  \\
  D(n) \arrow{r}{q}
  & Y
\end{tikzcd}
\end{equation*}

\noindent
Let $x=p_{n-1}(1)\in X_{n-1}$ and $y=q_{n}(1)\in Y_{n}$. Then we have:
$$f_{n-1}(x)=f_{n-1}\left(p_{n-1}(1)\right)=q_{n-1}\left(\iota_{n-1}^{n}(1)\right)=q_{n-1}(1)=\partial_{n}^{Y}\left(q_{n}(1)\right)=\partial_{n}^{Y}(y)$$
Moreover, $\partial_{n-1}^{X}(x)=\partial_{n-1}^{X}\left(p_{n-1}(1)\right)=0$, so $x\in Z_{n-1}(X)$. But then $H_{n-1}(f)\left(x+B_{n-1}(X)\right)=f_{n-1}(x)+B_{n-1}(Y)=\partial_{n}^{Y}(y)+B_{n-1}(Y)=0$, so since $H_{n-1}(f)$ is an isomorphism, we get $x+B_{n-1}(X)=0$, i.e. $x\in B_{n-1}(X)$, so $x=\partial_{n}^{X}(u)$ for some $u\in X_{n}$. As $f$ is a morphism of $R$-complexes, we have $\partial_{n}^{Y}(y)=f_{n-1}(x)=f_{n-1}\left(\partial_{n}^{X}(u)\right)=\partial_{n}^{Y}\left(f_{n}(u)\right)$, so $y-f_{n}(u)\in Z_{n}(Y)$. But $f$ is a surjective quasi-isomorphism, so it is easy to see that $Z(f):Z(X)\rightarrow Z(Y)$ is surjective. In particular, there is an element $v\in Z_{n}(X)$ such that $f_{n}(v)=y-f_{n}(u)$. Set $z=u+v\in X_{n}$. Then $\partial_{n}^{X}(z)=\partial_{n}^{X}(u)+\partial_{n}^{X}(v)=\partial_{n}^{X}(u)=x$ and $f_{n}(z)=f_{n}(u)+f_{n}(v)=y$. Consider the isomorphism $\psi^{X}_{n}:\Mor_{\mathcal{C}_{\geq 0}(R)}\left(D(n),X\right)\rightarrow X_{n}$. There is an $l\in \Mor_{\mathcal{C}_{\geq 0}(R)}\left(D(n),X\right)$ such that $\psi^{X}_{n}(l)=l_{n}(1)=z$. Then the following diagram is commutative:
\begin{equation*}
  \begin{tikzcd}
  S(n-1) \arrow{r}{p} \arrow{d}[swap]{\iota^{n}}
  & X \arrow{d}{f}
  \\
  D(n) \arrow{r}{q} \arrow{ru}{l}
  & Y
\end{tikzcd}
\end{equation*}

\noindent
Indeed, $\iota_{i}^{n}=0$ for every $i\neq n-1$. Furthermore, we have:
$$l_{n-1}\left(\iota_{n-1}^{n}(1)\right)=l_{n-1}(1)=\partial_{n}^{X}\left(l_{n}(1)\right)=\partial_{n}^{X}(z)=x=p_{n-1}(1)$$
On the other hand, $q_{i}=0=l_{i}$ for every $i\neq n,n-1$. Moreover, we have $f_{n}\left(l_{n}(1)\right)=f_{n}(z)=y=q_{n}(1)$ and
$$f_{n-1}\left(l_{n-1}(1)\right)=f_{n-1}\left(\partial_{n}^{X}\left(l_{n}(1)\right)\right)=f_{n-1}\left(\partial_{n}^{X}(z)\right) =f_{n-1}(x)=\partial_{n}^{Y}(y)=\partial_{n}^{Y}\left(q_{n}(1)\right)=q_{n-1}(1).$$
Next consider a commutative diagram as follows:
\begin{equation*}
  \begin{tikzcd}
  0 \arrow{r} \arrow{d}[swap]{\lambda}
  & X \arrow{d}{f}
  \\
  S(0) \arrow{r}{r}
  & Y
\end{tikzcd}
\end{equation*}

\noindent
By the surjectivity of $f_{0}$, there is an element $w\in X_{0}=Z_{0}(X)$ such that $f_{0}(w)=r_{0}(1)$. Consider the isomorphism $\phi_{0}^{X}: \Mor_{\mathcal{C}_{\geq 0}(R)}\left(S(0),X\right)\rightarrow Z_{0}(X)$. There is an $s\in \Mor_{\mathcal{C}_{\geq 0}(R)}\left(S(0),X\right)$ such that $\phi_{0}^{X}(s)=s_{0}(1)=w$. Then the following diagram is commutative:
\begin{equation*}
  \begin{tikzcd}
  0 \arrow{r} \arrow{d}[swap]{\lambda}
  & X \arrow{d}{f}
  \\
  S(0) \arrow{r}{r} \arrow{ru}{s}
  & Y
\end{tikzcd}
\end{equation*}

\noindent
Indeed, it is clear that the upper triangle is commutative. Moreover, for the lower triangle, we note that $s_{i}=0=r_{i}$ for every $i\neq 0$. Also, $f_{0}\left(s_{0}(1)\right)=f_{0}(w)=r_{0}(1)$. This means that $f\in \RLP(\mathcal{X})$. Therefore, $\RLP(\mathcal{X})$ is the class of trivial fibrations.

We next show that $\RLP(\mathcal{Y})$ is the class of fibrations in $\mathcal{C}_{\geq0}(R)$. Let $f:X\rightarrow Y$ be a morphism in $\mathcal{C}_{\geq0}(R)$. Suppose that $f\in \RLP(\mathcal{Y})$. We show that $f$ is a fibration, i.e. $f_{n}$ is surjective for every $n\geq 1$. Let $n\geq 1$ and $v\in Y_{n}$. Consider the isomorphism $\psi^{Y}_{n}:\Mor_{\mathcal{C}_{\geq 0}(R)}\left(D(n),Y\right)\rightarrow Y_{n}$. There is a $q\in \Mor_{\mathcal{C}_{\geq 0}(R)}\left(D(n),Y\right)$ such that $\psi^{Y}_{n}(q)=q_{n}(1)=v$. By the choice of $f$, the commutative diagram
\begin{equation*}
  \begin{tikzcd}
  0 \arrow{r} \arrow{d}[swap]{\kappa^{n}}
  & X \arrow{d}{f}
  \\
  D(n) \arrow{r}{q}
  & Y
\end{tikzcd}
\end{equation*}

\noindent
can be completed to the following commutative diagram:
\begin{equation*}
  \begin{tikzcd}
  0 \arrow{r} \arrow{d}[swap]{\kappa^{n}}
  & X \arrow{d}{f}
  \\
  D(n) \arrow{r}{q} \arrow{ru}{t}
  & Y
\end{tikzcd}
\end{equation*}

\noindent
Then $f_{n}\left(t_{n}(1)\right)=q_{n}(1)=v$. This shows that $f_{n}$ is surjective. As a result, $f$ is a fibration.

Conversely, suppose that $f$ is a fibration, i.e. $f_{n}$ is surjective for every $n\geq 1$. Let $n\geq 1$, and consider a commutative diagram as follows:
\begin{equation*}
  \begin{tikzcd}
  0 \arrow{r} \arrow{d}[swap]{\kappa^{n}}
  & X \arrow{d}{f}
  \\
  D(n) \arrow{r}{q}
  & Y
\end{tikzcd}
\end{equation*}

\noindent
Since $f_{n}$ is surjective, there is an element $x\in X_n$ such that $f_{n}(x)=q_{n}(1)$. Consider the isomorphism $\psi^{X}_{n}:\Mor_{\mathcal{C}_{\geq 0}(R)}\left(D(n),X\right)\rightarrow X_{n}$. There is an $l\in \Mor_{\mathcal{C}_{\geq 0}(R)}\left(D(n),X\right)$ such that $\psi^{X}_{n}(l)=l_{n}(1)=x$. Then the following diagram is commutative:
\begin{equation*}
  \begin{tikzcd}
  0 \arrow{r} \arrow{d}[swap]{\kappa^{n}}
  & X \arrow{d}{f}
  \\
  D(n) \arrow{r}{q} \arrow{ru}{l}
  & Y
\end{tikzcd}
\end{equation*}

\noindent
Indeed, it is obvious that the upper triangle is commutative. For the lower triangle, we note that $q_{i}= 0 =l_{i}$ for every $i\neq n,n-1$. Moreover, using the fact that $f$ is a morphism of $R$-complexes, we have $f_{n}\left(l_{n}(1)\right)=f_{n}(x)=q_{n}(1)$ and
$$f_{n-1}\left(l_{n-1}(1)\right)=f_{n-1}\left(\partial_{n}^{X}\left(l_{n}(1)\right)\right)=f_{n-1}\left(\partial_{n}^{X}(x)\right) =\partial_{n}^{Y}\left(f_{n}(x)\right)=\partial_{n}^{Y}\left(q_{n}(1)\right)=q_{n-1}(1).$$
This implies that $f\in \RLP(\mathcal{Y})$. Therefore, $\RLP(\mathcal{Y})$ is the class of fibrations.

We are now ready to check the axioms of the model category as follows:

Retract: Suppose that $f$ is a retract of $g$, i.e. there is a commutative diagram
\begin{equation*}
  \begin{tikzcd}
  X \arrow{r}{h} \arrow{d}{f}
  & Z \arrow{d}{g} \arrow{r}{l}
  & X \arrow{d}{f}
  \\
  Y \arrow{r}{p}
  & W \arrow{r}{q}
  & Y
\end{tikzcd}
\end{equation*}

\noindent
in $\mathcal{C}_{\geq0}(R)$ in which $lh=1^{X}$ and $qp=1^{Y}$. If $g$ is a fibration, then $g_{i}$ is surjective for every $i\geq 1$. On the other hand, the relation $q_{i}p_{i}=1^{Y_{i}}$ implies that $q_{i}$ is surjective for every $i\geq 0$. Therefore, $q_{i}g_{i}$ is surjective for every $i\geq 1$. Now the relation $f_{i}l_{i}=q_{i}g_{i}$ yields that $f_{i}$ is surjective for every $i\geq 1$. That means that $f$ is a fibration.

If $g$ is a cofibration, then $g_{i}$ is injective and $\Coker(g_{i})$ is projective for every $i\geq 0$. The relation $l_{i}h_{i}=1^{X_{i}}$ implies that $h_{i}$ is injective for every $i\geq 0$. Therefore, $g_{i}h_{i}$ is injective for every $i\geq 0$. Now the relation $p_{i}f_{i}=g_{i}h_{i}$ yields that $f_{i}$ is injective for every $i\geq 0$. Furthermore, we get the following induced commutative diagram:
\begin{equation*}
  \begin{tikzcd}
  X \arrow{r}{h} \arrow{d}{f}
  & Z \arrow{d}{g} \arrow{r}{l}
  & X \arrow{d}{f}
  \\
  Y \arrow{r}{p} \arrow{d}
  & W \arrow{r}{q} \arrow{d}
  & Y \arrow{d}
  \\
  \Coker(f) \arrow{r}{\bar{p}}
  & \Coker(g) \arrow{r}{\bar{q}}
  & \Coker(f)
\end{tikzcd}
\end{equation*}

\noindent
It is also evident that $\bar{q}\bar{p}=1^{\Coker(f)}$. It follows that $\Coker(f)$ is isomorphic to a direct summand of $\Coker(g)$. Therefore, $\Coker(f_{i})$ is projective for every $i\geq 0$. Thus $f$ is a cofibration.

Finally, if $g$ is a weak equivalence, then $g$ is a quasi-isomorphism. Fix $i\geq 0$. The functoriality of homology yields the following commutative diagram:
\begin{equation*}
  \begin{tikzcd}[column sep=3em,row sep=2em]
  H_{i}(X) \arrow{r}{H_{i}(h)} \arrow{d}{H_{i}(f)}
  & H_{i}(Z) \arrow{d}{H_{i}(g)} \arrow{r}{H_{i}(l)}
  & H_{i}(X) \arrow{d}{H_{i}(f)}
  \\
  H_{i}(Y) \arrow{r}{H_{i}(p)}
  & H_{i}(W) \arrow{r}{H_{i}(q)}
  & H_{i}(Y)
\end{tikzcd}
\end{equation*}

\noindent
Setting $\phi=H_{i}(l)H_{i}(g)^{-1}H_{i}(p):H_{i}(Y)\rightarrow H_{i}(X)$, we observe from the above commutative diagram that $\phi$ is the inverse of $H_{i}(f)$, so $H_{i}(f)$ is an isomorphism. That is to say, $f$ is a quasi-isomorphism, hence a weak equivalence.

Two-Out-of-Three: Suppose that we have morphisms $f=gh$ in $\mathcal{C}_{\geq0}(R)$ in which two of $f$, $g$, or $h$ are quasi-isomorphisms. Then by the functoriality of homology, it is clear that the third morphism must be a quasi-isomorphism as well.

First Half of Lifting: Consider a commutative diagram
\begin{equation*}
  \begin{tikzcd}
  X \arrow{r}{h} \arrow{d}[swap]{f}
  & Z \arrow{d}{g}
  \\
  Y \arrow{r}{l}
  & W
\end{tikzcd}
\end{equation*}

\noindent
in $\mathcal{C}_{\geq0}(R)$ in which $f$ is a cofibration and $g$ is a trivial fibration. By definition, $f_{i}:X_{i}\rightarrow Y_{i}$ is injective and $\Coker(f_{i})$ is projective for every $i\geq 0$. As a result, for any $i\geq 0$, the short exact sequence
$$0\rightarrow X_{i}\xrightarrow{f_{i}} Y_{i}\rightarrow \Coker(f_{i})\rightarrow 0$$
is split, resulting in a decomposition $Y_{i} = \im(f_{i})\oplus P_{i}$ for some projective left $R$-module $P_{i} \cong \Coker(f_{i})$. Moreover, as noted before, $g:X\rightarrow Y$ is a surjective quasi-isomorphism, so there is a short exact sequence
$$0 \rightarrow \Ker(g) \xrightarrow{\iota} Z \xrightarrow{g} W \rightarrow 0$$
of $R$-complexes in which $\Ker(g)$ is exact. Now we construct a morphism $\phi =(\phi_{i})_{i\geq 0}:Y \rightarrow Z$ of $R$-complexes inductively. Let $i=0$, and consider the following commutative diagram:
\begin{equation*}
  \begin{tikzcd}
  X_{0} \arrow{r}{h_{0}} \arrow{d}[swap]{f_{0}}
  & Z_{0} \arrow{d}{g_{0}}
  \\
  Y_{0} \arrow{r}{l_{0}}
  & W_{0}
\end{tikzcd}
\end{equation*}

\noindent
Since $g_{0}$ is surjective and $P_{0}$ is projective, there is an $R$-homomorphism $\rho_{0}:P_{0} \rightarrow Z_{0}$ that makes the following diagram commutative:
\begin{equation*}
  \begin{tikzcd}
  & P_{0} \arrow{d}{l_{0}\mid _{P_{0}}} \arrow{dl}[swap]{\rho_{0}}
  \\
  Z_{0} \arrow{r}{g_{0}}
  & W_{0}
\end{tikzcd}
\end{equation*}

\noindent
If $y\in Y_{0} = \im(f_{0}) \oplus P_{0}$, then we can uniquely write $y=f_{0}(x)+z$ for some $x\in X_{0}$ and $z\in P_{0}$. Define a map $\phi_{0}:Y_{0} \rightarrow Z_{0}$ by setting $\phi_{0}(y)=h_{0}(x)+\rho_{0}(z)$. Then it is clear that $\phi_{0}$ is an $R$-homomorphism. Moreover, the following diagram is commutative:
\begin{equation*}
  \begin{tikzcd}
  X_{0} \arrow{r}{h_{0}} \arrow{d}[swap]{f_{0}}
  & Z_{0} \arrow{d}{g_{0}}
  \\
  Y_{0} \arrow{r}{l_{0}} \arrow{ur}{\phi_{0}}
  & W_{0}
\end{tikzcd}
\end{equation*}

\noindent
Indeed, if $x \in X_{0}$, then using the definition, we have $\phi_{0}\left(f_{0}(x)\right)=h_{0}(x)$. Further, if $y=f_{0}(x)+z \in Y_{0} = \im(f_{0}) \oplus P_{0}$, then we have:
$$g_{0}\left(\phi_{0}(y)\right)=g_{0}\left(h_{0}(x)\right)+g_{0}\left(\rho_{0}(z)\right)=l_{0}\left(f_{0}(x)\right)+l_{0}(z)=l_{0}\left(f_{0}(x)+z\right) =l_{0}(y)$$
Now suppose that $n\geq 1$, and for any $0\leq i \leq n-1$, the $R$-homomorphism $\phi_{i}:Y_{i} \rightarrow Z_{i}$ is constructed in a way that the following diagrams are commutative:
\[
 \begin{tikzcd}
  X_{i} \arrow{r}{h_{i}} \arrow{d}[swap]{f_{i}}
  & Z_{i} \arrow{d}{g_{i}}
  \\
  Y_{i} \arrow{r}{l_{i}} \arrow{ur}{\phi_{i}}
  & W_{i}
\end{tikzcd}
\quad \text{and} \quad
 \begin{tikzcd}
  Y_{i} \arrow{r}{\partial_{i}^{Y}} \arrow{d}[swap]{\phi_{i}}
  & Y_{i-1} \arrow{d}{\phi_{i-1}}
  \\
  Z_{i} \arrow{r}{\partial_{i}^{Z}}
  & Z_{i-1}
\end{tikzcd}
\]

\noindent
That is, $\phi_{i}f_{i}=h_{i}$, $g_{i}\phi_{i}=l_{i}$, and $\partial_{i}^{Z}\phi_{i}=\phi_{i-1}\partial_{i}^{Y}$ for every $0 \leq i \leq n-1$. By the same argument as in the case $i=0$, we can obtain an $R$-homomorphism $\rho_{n}:P_{n} \rightarrow Z_{n}$ that makes the diagram
\begin{equation*}
  \begin{tikzcd}
  & P_{n} \arrow{d}{l_{n}\mid _{P_{n}}} \arrow{dl}[swap]{\rho_{n}}
  \\
  Z_{n} \arrow{r}{g_{n}}
  & W_{n}
\end{tikzcd}
\end{equation*}

\noindent
commutative, and we can construct an $R$-homomorphism $\psi_{n}:Y_{n} \rightarrow Z_{n}$, given by $\psi_{n}(y)=h_{n}(x)+\rho_{n}(z)$ for every $y=f_{n}(x)+z \in Y_{n}=\im(f_{n})\oplus P_{n}$, that makes the following diagram commutative:
\begin{equation*}
  \begin{tikzcd}
  X_{n} \arrow{r}{h_{n}} \arrow{d}[swap]{f_{n}}
  & Z_{n} \arrow{d}{g_{n}}
  \\
  Y_{n} \arrow{r}{l_{n}} \arrow{ur}{\psi_{n}}
  & W_{n}
\end{tikzcd}
\end{equation*}

\noindent
That is, $\psi_{n}f_{n}=h_{n}$ and $g_{n}\psi_{n}=l_{n}$. However, the following diagram need not be commutative:
\begin{equation*}
  \begin{tikzcd}
  Y_{n} \arrow{r}{\partial_{n}^{Y}} \arrow{d}[swap]{\psi_{n}}
  & Y_{n-1} \arrow{d}{\phi_{n-1}}
  \\
  Z_{n} \arrow{r}{\partial_{n}^{Z}}
  & Z_{n-1}
\end{tikzcd}
\end{equation*}

\noindent
To fix this, we set $\zeta_{n}=\partial_{n}^{Z}\psi_{n}-\phi_{n-1}\partial_{n}^{Y}:Y_{n} \rightarrow Z_{n-1}$. Using the fact that $l:Y \rightarrow W$ and $g:Z \rightarrow W$ are morphisms of $R$-complexes, we have for every $z \in P_{n} \subseteq Y_{n}$:
\begin{equation*}
\begin{split}
 g_{n-1}\left(\zeta_{n}(z)\right) & = g_{n-1}\left(\partial_{n}^{Z}\left(\psi_{n}(z)\right)\right)-g_{n-1}\left(\phi_{n-1}\left(\partial_{n}^{Y}(z)\right)\right) = g_{n-1}\left(\partial_{n}^{Z}\left(\rho_{n}(z)\right)\right)-l_{n-1}\left(\partial_{n}^{Y}(z)\right) \\
 & = \partial_{n}^{W}\left(g_{n}\left(\rho_{n}(z)\right)\right)-\partial_{n}^{W}\left(l_{n}(z)\right) = \partial_{n}^{W}\left(l_{n}(z)\right)-\partial_{n}^{W}\left(l_{n}(z)\right) = 0
\end{split}
\end{equation*}

\noindent
That is, $\zeta_{n}(z)\in \Ker(g_{n-1})$. Therefore, we have for every $z \in P_{n} \subseteq Y_{n}$:
\begin{equation*}
\begin{split}
 \partial_{n-1}^{\Ker(g)}\left(\zeta_{n}(z)\right) & = \partial_{n-1}^{Z}\left(\zeta_{n}(z)\right) = \partial_{n-1}^{Z}\left(\partial_{n}^{Z}\left(\psi_{n}(z)\right)\right)-\partial_{n-1}^{Z}\left(\phi_{n-1}\left(\partial_{n}^{Y}(z)\right)\right) \\
 & = \partial_{n-1}^{Z}\left(\partial_{n}^{Z}\left(\psi_{n}(z)\right)\right)-\phi_{n-2}\left(\partial_{n-1}^{Y}\left(\partial_{n}^{Y}(z)\right)\right) = 0
\end{split}
\end{equation*}

\noindent
That is, $\zeta_{n}(z) \in Z_{n-1}\left(\Ker(g)\right)$. But $\Ker(g)$ is exact, so $\zeta_{n}(z) \in B_{n-1}\left(\Ker(g)\right)$. Therefore, $\zeta_{n}$ induces an $R$-homomorphism $\overline{\zeta_{n}\mid_{P_{n}}}:P_{n} \rightarrow B_{n-1}\left(\Ker(g)\right)$. By the projectivity of $P_{n}$, there is an $R$-homomorphism $\mu_{n}:P_{n} \rightarrow \Ker(g_{n})$ that makes the following diagram commutative:
\begin{equation*}
  \begin{tikzcd}[column sep=4em,row sep=2.5em]
  & P_{n} \arrow{d}{\overline{\zeta_{n}\mid_{P_{n}}}} \arrow[dl, bend right, swap, "\mu_{n}"]
  \\
  \Ker(g_{n}) \arrow{r}{\overline{\partial_{n}^{\Ker(g)}}}
  & B_{n-1}\left(\Ker(g)\right)
\end{tikzcd}
\end{equation*}

\noindent
Let $\pi_{n}:Y_{n} = \im(f_{n})\oplus P_{n} \rightarrow P_{n}$ be the canonical projection. Also, let $\chi_{n}:Y_{n} \rightarrow Z_{n}$ be the following composition:
$$Y_{n} \xrightarrow{\pi_{n}} P_{n} \xrightarrow{\mu_{n}} \Ker(g_{n}) \xrightarrow{\iota_{n}} Z_{n}$$
Set $\phi_{n}=\psi_{n}-\chi_{n}:Y_{n} \rightarrow Z_{n}$. Then the following diagram is commutative:
\begin{equation*}
  \begin{tikzcd}
  X_{n} \arrow{r}{h_{n}} \arrow{d}[swap]{f_{n}}
  & Z_{n} \arrow{d}{g_{n}}
  \\
  Y_{n} \arrow{r}{l_{n}} \arrow{ur}{\phi_{n}}
  & W_{n}
\end{tikzcd}
\end{equation*}

\noindent
Indeed, we have for every $x \in X_{n}$:
$$\phi_{n}\left(f_{n}(x)\right)=\psi_{n}\left(f_{n}(x)\right)-\chi_{n}\left(f_{n}(x)\right)= h_{n}(x)-\iota_{n}\left(\mu_{n}\left(\pi_{n}\left(f_{n}(x)\right)\right)\right)=h_{n}(x)$$
Also, we have for every $y=f_{n}(x)+z \in Y_{n} = Im(f_{n}) \oplus P_{n}$:
$$g_{n}\left(\phi_{n}(y)\right) = g_{n}\left(\psi_{n}(y)\right)-g_{n}\left(\chi_{n}(y)\right) = l_{n}(y)-g_{n}\left(\iota_{n}\left(\mu_{n}\left(\pi_{n}(y)\right)\right)\right) = l_{n}(y)-g_{n}\left(\mu_{n}(z)\right) = l_{n}(y)$$
In addition, the following diagram is commutative:
\begin{equation*}
  \begin{tikzcd}
  Y_{n} \arrow{r}{\partial_{n}^{Y}} \arrow{d}[swap]{\phi_{n}}
  & Y_{n-1} \arrow{d}{\phi_{n-1}}
  \\
  Z_{n} \arrow{r}{\partial_{n}^{Z}}
  & Z_{n-1}
\end{tikzcd}
\end{equation*}

\noindent
Indeed, noting that $f:X \rightarrow Y$ and $h:X \rightarrow Z$ are morphisms of $R$-complexes, we have for every $y=f_{n} (x)+z \in Y_{n} = \im(f_{n}) \oplus P_{n}$:
\begin{equation*}
\begin{split}
 \partial_{n}^{Z}\left(\phi_{n}(y)\right) & = \partial_{n}^{Z}\left(\psi_{n}(y)\right)- \partial_{n}^{Z}\left(\chi_{n}(y)\right) \\
 & = \partial_{n}^{Z}\left(\psi_{n}(y)\right) - \partial_{n}^{Z}\left(\iota_{n}\left(\mu_{n}\left(\pi_{n}(y)\right)\right)\right) \\
 & = \partial_{n}^{Z}\left(\psi_{n}(y)\right) - \partial_{n}^{Z}\left(\mu_{n}(z)\right) \\
 & = \partial_{n}^{Z}\left(\psi_{n}(y)\right) - \partial_{n}^{\Ker(g)}\left(\mu_{n}(z)\right) \\
 & = \partial_{n}^{Z}\left(\psi_{n}(y)\right) - \zeta_{n}(z) \\
 & = \partial_{n}^{Z}\left(\psi_{n}(y)\right) - \partial_{n}^{Z}\left(\psi_{n}(z)\right) + \phi_{n-1}\left(\partial_{n}^{Y}(z)\right) \\
 & = \partial_{n}^{Z}\left(\psi_{n}(y-z)\right) + \phi_{n-1}\left(\partial_{n}^{Y}\left(y-f_{n}(x)\right)\right) \\
 & = \partial_{n}^{Z}\left(\psi_{n}\left(f_{n}(x)\right)\right) + \phi_{n-1}\left(\partial_{n}^{Y}(y)\right) - \phi_{n-1}\left(\partial_{n}^{Y}\left(f_{n}(x)\right)\right) \\
 & = \partial_{n}^{Z}\left(\psi_{n}\left(f_{n}(x)\right)\right) + \phi_{n-1}\left(\partial_{n}^{Y}(y)\right) - \phi_{n-1}\left(f_{n-1}\left(\partial_{n}^{X}(x)\right)\right) \\
 & = \partial_{n}^{Z}\left(h_{n}(x)\right) + \phi_{n-1}\left(\partial_{n}^{Y}(y)\right) - h_{n-1}\left(\partial_{n}^{X}(x)\right) \\
 & = \partial_{n}^{Z}\left(h_{n}(x)\right) + \phi_{n-1}\left(\partial_{n}^{Y}(y)\right) - \partial_{n}^{Z}\left(h_{n}(x)\right) \\
 & = \phi_{n-1}\left(\partial_{n}^{Y}(y)\right)
\end{split}
\end{equation*}

\noindent
As a consequence, we can continue this construction inductively and find a morphism $\phi=(\phi_{i})_{i \geq 0}:Y \rightarrow Z$ of $R$-complexes that makes the following diagram commutative:
\begin{equation*}
  \begin{tikzcd}
  X \arrow{r}{h} \arrow{d}[swap]{f}
  & Z \arrow{d}{g}
  \\
  Y \arrow{r}{l} \arrow{ru}{\phi}
  & W
\end{tikzcd}
\end{equation*}

First Half of Factorization: Let $f:X \rightarrow Y$ be a morphism in $\mathcal{C}_{\geq 0}(R)$. For any $n \geq 1$, consider the isomorphism $\psi_{n}^{Y}:\Mor_{\mathcal{C}_{\geq 0}(R)}\left(D(n),Y\right) \rightarrow Y_n$. For any $n \geq 1$ and $x \in Y_{n}$, there is a $p^{n,x} \in \Mor_{\mathcal{C}_{\geq 0}(R)}\left(D(n),Y\right)$ such that $\psi_{n}^{Y}\left(p^{n,x}\right)=p^{n,x}_{n}(1)=x$. Let $P=\bigoplus_{n=1}^{\infty}\bigoplus_{x \in Y_{n}}D(n)$. By the universal property of direct sum, there is a unique morphism $p:P \rightarrow Y$ of $R$-complexes that makes the diagram
\begin{equation*}
  \begin{tikzcd}
  D(n) \arrow{r}{\iota^{n,x}} \arrow{d}[swap]{p^{n,x}}
  & P \arrow{dl}{p}
  \\
  Y
  &
\end{tikzcd}
\end{equation*}

\noindent
commutative in which $\iota^{n,x}$ is the canonical injection for every $n \geq 1$ and $x \in Y_{n}$. Thus if $n\geq 1$ and $x\in Y_{n}$, then $p_{n}\left(\iota_{n}^{n,x}(1)\right)=p_{n}^{n,x}(1)=x$. This shows that $p_{n}$ is surjective for every $n\geq 1$. In addition, there is a unique morphism $\eta:X \oplus P \rightarrow U$ of $R$-complexes that makes the diagram
\begin{equation*}
  \begin{tikzcd}
  X \arrow{r}{\kappa} \arrow{dr}[swap]{f}
  & X \oplus P \arrow{d}{\eta}
  & P \arrow{l}[swap]{\nu} \arrow{dl}{p}
  \\
  & Y
  &
\end{tikzcd}
\end{equation*}

\noindent
commutative in which $\kappa$ and $\nu$ are canonical injections. For any $i \geq 1$, $\eta_{i}\nu_{i}=p_{i}$ is surjective, so $\eta_{i}$ is surjective. This implies that $\eta$ is a fibration. On the other hand, for any $i \geq 0$, $\kappa_{i}$ is injective, and $\Coker(\kappa_{i})\cong P_{i}$ is a direct sum of copies of $R$, so $\Coker(\kappa_{i})$ is projective. That is, $\kappa$ is a cofibration. Also, for any $i \geq 0$, we have
$$H_{i}(P)=H_{i}\left(\textstyle\bigoplus_{n=1}^{\infty}\textstyle\bigoplus_{x \in Y_{n}}D(n)\right)\cong \textstyle\bigoplus_{n=1}^{\infty}\bigoplus_{x \in Y_{n}}H_{i}\left(D(n)\right)=0,$$
which implies that $H_{i}(\kappa):H_{i}(X) \rightarrow H_{i}(X \oplus P)$ is an isomorphism, so $\kappa$ is a quasi-isomorphism. Therefore, $\kappa$ is a trivial cofibration. Now $f=\eta\kappa$ is a factorization of $f$ in which $\eta$ is a fibration and $\kappa$ is a trivial cofibration. We further show that $\kappa$ has the left lifting property against all fibrations. To this end, let $t:U \rightarrow V$ be a fibration in $\mathcal{C}_{\geq 0}(R)$. Consider a commutative diagram as follows:
\begin{equation*}
  \begin{tikzcd}
  X \arrow{r}{r} \arrow{d}[swap]{\kappa}
  & U \arrow{d}{t}
  \\
  X \oplus P \arrow{r}{s}
  & V
\end{tikzcd}
\end{equation*}

\noindent
For any $n \geq 1$ and $x \in Y_{n}$, let $u^{n,x}:D(n) \rightarrow V$ be the following composition:
$$D(n) \xrightarrow{\iota^{n,x}} P \xrightarrow{\nu} X \oplus P \xrightarrow{s} V$$
As we observed above, $\RLP(\mathcal{Y})$ is the class of fibrations, so $t \in \RLP(\mathcal{Y})$. It follows that for any $n \geq 1$ and $x\in Y_{n}$, there is a morphism $q^{n,x}:D(n) \rightarrow U$ that makes the following diagram commutative:
\begin{equation*}
  \begin{tikzcd}[column sep=3em,row sep=2em]
  0 \arrow{r} \arrow{d}[swap]{\kappa^{n}}
  & U \arrow{d}{t}
  \\
  D(n) \arrow{r}{u^{n,x}} \arrow{ru}{q^{n,x}}
  & V
\end{tikzcd}
\end{equation*}

\noindent
By the universal property of direct sum, there is a unique morphism $q:P \rightarrow U$ of $R$-complexes that makes the following diagram commutative for every $n \geq 1$ and $x \in Y_{n}$:
\begin{equation*}
  \begin{tikzcd}
  D(n) \arrow{r}{\iota^{n,x}} \arrow{d}[swap]{q^{n,x}}
  & P \arrow{dl}{q}
  \\
  U
  &
\end{tikzcd}
\end{equation*}

\noindent
In addition, there is a unique morphism $\theta:X \oplus P \rightarrow U$ of $R$-complexes that makes the following diagram commutative:
\begin{equation*}
  \begin{tikzcd}
  X \arrow{r}{\kappa} \arrow{dr}[swap]{r}
  & X \oplus P \arrow{d}{\theta}
  & P \arrow{l}[swap]{\nu} \arrow{dl}{q}
  \\
  & U
  &
\end{tikzcd}
\end{equation*}

\noindent
Then the following diagram is commutative:
\begin{equation*}
  \begin{tikzcd}
  X \arrow{r}{r} \arrow{d}[swap]{\kappa}
  & U \arrow{d}{t}
  \\
  X \oplus P \arrow{r}{s} \arrow{ru}{\theta}
  & V
\end{tikzcd}
\end{equation*}

\noindent
Clearly, $\theta\kappa=r$. Also, $tq\iota^{n,x}=tq^{n,x}=u^{n,x}=s\nu\iota^{n,x}$ for every $n \geq 1$ and $x \in Y_{n}$, implying that $tq=s\nu$. Further, $t\theta\kappa=tr=s\kappa$ and $t\theta\nu=tq=s\nu$, showing that $t\theta=s$. As a consequence, $\kappa$ has the left lifting property against all fibrations.

Second Half of Lifting: Consider a commutative diagram
\begin{equation*}
  \begin{tikzcd}
  X \arrow{r}{h} \arrow{d}[swap]{f}
  & Z \arrow{d}{g}
  \\
  Y \arrow{r}{l}
  & W
\end{tikzcd}
\end{equation*}

\noindent
in $\mathcal{C}_{\geq0}(R)$ in which $f$ is a trivial cofibration and $g$ is a fibration. Consider the factorization
\begin{equation*}
  \begin{tikzcd}
  X \arrow{r}{f} \arrow{d}[swap]{\kappa}
  & Y
  \\
  X \oplus P \arrow{ru}[swap]{\eta}
  &
\end{tikzcd}
\end{equation*}

\noindent
of $f$ that was established above in which $P=\bigoplus_{n=1}^{\infty}\bigoplus_{x \in Y_{n}}D(n)$, $\eta$ is a fibration, and $\kappa$ is a trivial cofibration that further has the left lifting property against all fibrations. Since $f$ and $\kappa$ are quasi-isomorphisms, we deduce that $\eta$ is also a quasi-isomorphism, so $\eta$ is a trivial fibration. Consider the following commutative diagram:
\begin{equation*}
  \begin{tikzcd}
  X \arrow{r}{\kappa} \arrow{d}[swap]{f}
  & X \oplus P \arrow{d}{\eta}
  \\
  Y \arrow{r}{1^{Y}}
  & Y
\end{tikzcd}
\end{equation*}

\noindent
As $f$ is a cofibration and $\eta$ is a trivial fibration, the first half of lifting that was established above provides a morphism $r:Y \rightarrow X \oplus P$ that makes the following diagram commutative:
\begin{equation*}
  \begin{tikzcd}
  X \arrow{r}{\kappa} \arrow{d}[swap]{f}
  & X \oplus P \arrow{d}{\eta}
  \\
  Y \arrow{r}{1^{Y}} \arrow{ru}{r}
  & Y
\end{tikzcd}
\end{equation*}

\noindent
This yields the following commutative diagram:
\begin{equation*}
  \begin{tikzcd}
  X \arrow{r}{1^{X}} \arrow{d}{f}
  & X \arrow{r}{1^{X}} \arrow{d}{\kappa}
  & X \arrow{d}{f}
  \\
  Y \arrow{r}{r}
  & X \oplus P \arrow{r}{\eta}
  & Y
\end{tikzcd}
\end{equation*}

\noindent
This shows that $f$ is a retract of $\kappa$. But $\kappa$ has the left lifting property against all fibrations, so the proof of \cite[Lemma 7.2.8]{Hi} that makes no use of the model category assumption, shows that $f$ has the left lifting property against all fibrations. This implies that the above diagram can be completed as follows:
\begin{equation*}
  \begin{tikzcd}
  X \arrow{r}{h} \arrow{d}[swap]{f}
  & Z \arrow{d}{g}
  \\
  Y \arrow{r}{l} \arrow{ru}
  & W
\end{tikzcd}
\end{equation*}

Second Half of Factorization: Let $f:X \rightarrow Y$ be a morphism in $\mathcal{C}_{\geq 0}(R)$. We construct the desired factorization inductively. Let $\rho_{0}:P_{0} \rightarrow Y_{0}$ be an $R$-epimorphism in which $P_{0}$ is a projective left $R$-module, and set $Q_{0}=X_{0} \oplus P_{0}$. By the universal property of direct sum, there is a unique morphism $\eta_{0}:Q_{0} \rightarrow Y_{0}$ of $R$-complexes that makes the diagram
\begin{equation*}
  \begin{tikzcd}
  X_{0} \arrow{r}{\kappa_{0}} \arrow{dr}[swap]{f_{0}}
  & Q_{0} \arrow{d}{\eta_{0}}
  & P_{0} \arrow{l}[swap]{\nu_{0}} \arrow{dl}{\rho_{0}}
  \\
  & Y_{0}
  &
\end{tikzcd}
\end{equation*}

\noindent
commutative in which $\kappa_{0}$ and $\nu_{0}$ are canonical injections. Then $f_{0}=\eta_{0}\kappa_{0}$ and $\rho_{0}=\eta_{0}\nu_{0}$. It is clear that $\kappa_{0}$ is injective whose $\Coker(\kappa_{0})\cong P_{0}$ is projective. On the other hand, since $\rho_{0}$ is surjective, we infer that $\eta_{0}$ is surjective. Now suppose that $n\geq 1$, and for any $0 \leq i \leq n-1$, the $R$-module $Q_{i}$ and the $R$-homomorphisms $\partial_{i}^{Q}:Q_{i} \rightarrow Q_{i-1}$, $\kappa_{i}:X_{i} \rightarrow Q_{i}$, and $\eta_{i}:Q_{i} \rightarrow Y_{i}$ are constructed in a way that $\partial_{i-1}^{Q}\partial_{i}^{Q}=0$, $f_{i}=\eta_{i}\kappa_{i}$, $\kappa_{i}$ is injective whose $\Coker(\kappa_{i})$ is projective, $\eta_{i}$ is surjective with $H_{i-1}(\eta)$ an isomorphism, and the following diagram is commutative:
\begin{equation*}
  \begin{tikzcd}
  X_{i} \arrow{r}{\partial_{i}^{X}} \arrow{d}[swap]{\kappa_{i}}
  & X_{i-1} \arrow{d}{\kappa_{i-1}}
  \\
  Q_{i} \arrow{r}{\partial_{i}^{Q}} \arrow{d}[swap]{\eta_{i}}
  & Q_{i-1} \arrow{d}{\eta_{i-1}}
  \\
  Y_{i} \arrow{r}{\partial_{i}^{Y}}
  & Y_{i-1}
\end{tikzcd}
\end{equation*}

\noindent
Using the above diagram, we see that if $x \in X_{n}$, then $\partial_{n-1}^{Q}\left(\kappa_{n-1}\left(\partial_{n}^{X}(x)\right)\right)=\kappa_{n-2}\left(\partial_{n-1}^{X}\left(\partial_{n}^{X}(x)\right)\right)=0$, so $\kappa_{n-1}\left(\partial_{n}^{X}(x)\right) \in Z_{n-1}(Q)$. Therefore, we get an induced $R$-homomorphism $\overline{\kappa_{n-1}\partial_{n}^{X}}:X_{n} \rightarrow Z_{n-1}(Q)$. Consider the following commutative diagram:
\begin{equation*}
  \begin{tikzcd}[column sep=4em,row sep=2em]
  X_{n} \arrow{r}{f_{n}} \arrow{d}[swap]{\overline{\kappa_{n-1}\partial_{n}^{X}}}
  & Y_{n} \arrow{d}{\overline{\partial_{n}^{Y}}}
  \\
  Z_{n-1}(Q) \arrow[scale=3]{r}{Z_{n-1}(\eta)}
  & Z_{n-1}(Y)
\end{tikzcd}
\end{equation*}

\noindent
Indeed, using the fact that $f$ is a morphism of $R$-complexes, we have $\eta_{n-1}\left(\kappa_{n-1}\left(\partial_{n}^{X}(x)\right)\right)= f_{n-1}\left(\partial_{n}^{X}(x)\right)=\partial_{n}^{Y}\left(f_{n}(x)\right)$ for every $x\in X_{n}$. Set:
$$L_{n}=Z_{n-1}(Q)\textstyle\bigsqcap_{Z_{n-1}(Y)}Y_{n}= \left\{(x,y)\in Z_{n-1}(Q)\oplus Y_{n}\suchthat \eta_{n-1}(x)=\partial_{n}^{Y}(y)\right\}$$
Then there is a unique morphism $\varrho_{n}:X_{n} \rightarrow L_{n}$ that makes the following pullback diagram commutative:
\begin{equation*}
  \begin{tikzcd}[column sep=4em,row sep=2em]
  X_{n} \arrow{dr}{\varrho_{n}} \arrow[ddr, bend right, swap, "\overline{\kappa_{n-1}\partial_{n}^{X}}"] \arrow[drr, bend left, "f_{n}"] & &
  \\
  & L_{n} \arrow{r}{\zeta_{n}} \arrow{d}[swap]{\xi_{n}} & Y_{n} \arrow{d}{\overline{\partial_{n}^{Y}}}
  \\
  & Z_{n-1}(Q) \arrow{r}{Z_{n-1}(\eta)} & Z_{n-1}(Y)
\end{tikzcd}
\end{equation*}

\noindent
As in the case $i=0$, let $\rho_{n}:P_{n} \rightarrow L_{n}$ be an $R$-epimorphism in which $P_{n}$ is a projective left $R$-module, and set $Q_{n}=X_{n} \oplus P_{n}$. By the universal property of direct sum, there is a unique morphism $\theta_{n}:Q_{n} \rightarrow L_{n}$ of $R$-complexes that makes the diagram
\begin{equation*}
  \begin{tikzcd}
  X_{n} \arrow{r}{\kappa_{n}} \arrow{dr}[swap]{\varrho_{n}}
  & Q_{n} \arrow{d}{\theta_{n}}
  & P_{n} \arrow{l}[swap]{\nu_{n}} \arrow{dl}{\rho_{n}}
  \\
  & L_{n}
  &
\end{tikzcd}
\end{equation*}

\noindent
commutative in which $\kappa_{n}$ and $\nu_{n}$ are canonical injections. Set $\eta_{n}=\zeta_{n}\theta_{n}:Q_{n} \rightarrow Y_{n}$. Then $f_{n}=\zeta_{n}\varrho_{n}=\zeta_{n}\theta_{n}\kappa_{n}=\eta_{n}\kappa_{n}$. It is also clear that $\kappa_{n}$ is injective whose $\Coker(\kappa_{n})\cong P_{n}$ is projective.

Consider the following compositions of $R$-homomorphisms:
\[
 X_{n} \xrightarrow{\partial_{n}^{X}} X_{n-1} \xrightarrow{\kappa_{n-1}} Q_{n-1}
\quad \text{and} \quad
 P_{n} \xrightarrow{\rho_{n}} L_{n} \xrightarrow{\xi_{n}} Z_{n-1}(Q) \xrightarrow{\iota_{n-1}^{Q}} Q_{n-1}
\]

\noindent
By the universal property of direct sum, there is a unique morphism $\partial_{n}^{Q}:Q_{n} \rightarrow Q_{n-1}$ of $R$-complexes that makes the following diagram commutative:
\begin{equation*}
  \begin{tikzcd}
  X_{n} \arrow{r}{\kappa_{n}} \arrow{dr}[swap]{\kappa_{n-1}\partial_{n}^{X}}
  & Q_{n} \arrow{d}{\partial_{n}^{Q}}
  & P_{n} \arrow{l}[swap]{\nu_{n}} \arrow{dl}{\iota_{n-1}^{Q}\xi_{n}\rho_{n}}
  \\
  & Q_{n-1}
  &
\end{tikzcd}
\end{equation*}

\noindent
Then we have $\partial_{n-1}^{Q}\partial_{n}^{Q}\kappa_{n}=\partial_{n-1}^{Q}\kappa_{n-1}\partial_{n}^{X}=\kappa_{n-2}\partial_{n-1}^{X}\partial_{n}^{X}=0$ and $\partial_{n-1}^{Q}\partial_{n}^{Q}\nu_{n}=\partial_{n-1}^{Q}\iota_{n-1}^{Q}\xi_{n}\rho_{n}=0$ as $\im(\xi_{n}) \subseteq Z_{n-1}(Q)$. It follows that $\partial_{n-1}^{Q}\partial_{n}^{Q}=0$.

We next show that the following diagram is commutative:
\begin{equation*}
  \begin{tikzcd}
  X_{n} \arrow{r}{\partial_{n}^{X}} \arrow{d}[swap]{\kappa_{n}}
  & X_{n-1} \arrow{d}{\kappa_{n-1}}
  \\
  Q_{n} \arrow{r}{\partial_{n}^{Q}} \arrow{d}[swap]{\eta_{n}}
  & Q_{n-1} \arrow{d}{\eta_{n-1}}
  \\
  Y_{n} \arrow{r}{\partial_{n}^{Y}}
  & Y_{n-1}
\end{tikzcd}
\end{equation*}

\noindent
Indeed, we have $\partial_{n}^{Q}\kappa_{n}=\kappa_{n-1}\partial_{n}^{X}$ from the previous commutative diagram. Moreover, we have:
$$\partial_{n}^{Y}\eta_{n}\kappa_{n}=\partial_{n}^{Y}\zeta_{n}\theta_{n}\kappa_{n}=\partial_{n}^{Y}\zeta_{n}\varrho_{n}=\partial_{n}^{Y}f_{n}=f_{n-1}
\partial_{n}^{X}=\eta_{n-1}\kappa_{n-1}\partial_{n}^{X}=\eta_{n-1}\partial_{n}^{Q}\kappa_{n}$$
and
$$\partial_{n}^{Y}\eta_{n}\nu_{n}=\partial_{n}^{Y}\zeta_{n}\theta_{n}\nu_{n}=\partial_{n}^{Y}\zeta_{n}\rho_{n}=\iota_{n-1}^{Y}\overline{\partial_{n}^{Y}}
\zeta_{n}\rho_{n}=\iota_{n-1}^{Y}Z_{n-1}(\eta)\xi_{n}\rho_{n}=\eta_{n-1}\iota_{n-1}^{Q}\xi_{n}\rho_{n}=\eta_{n-1}\partial_{n}^{Q}\nu_{n}$$
It follows that $\partial_{n}^{Y}\eta_{n}=\eta_{n-1}\partial_{n}^{Q}$.

We now show that $H_{n-1}(\eta)$ is injective. Suppose that $H_{n-1}(\eta)\left(x+B_{n-1}(Q)\right)=\eta_{n-1}(x)+B_{n-1}(Y)=0$ for some $x\in Z_{n-1}(Q)$. Then $\eta_{n-1}(x)\in B_{n-1}(Y)$, so $\eta_{n-1}(x)=\partial_{n}^{Y}(y)$ for some $y\in Y_{n}$. This implies that $(x,y)\in L_{n}$. On the other hand, $\rho_{n}=\theta_{n}\nu_{n}$ and $\rho_{n}$ is surjective, so $\theta_{n}:Q_{n} \rightarrow L_{n}$ is surjective. Therefore, there is an element $(z,w) \in Q_{n}=X_{n} \oplus P_{n}$ such that $\theta_{n}(z,w)=(x,y)$. Then we have:
\begin{equation*}
\begin{split}
 x & = \xi_{n}(x,y)=\xi_{n}\left(\theta_{n}(z,w)\right)=\xi_{n}\left(\varrho_{n}(z)\right)+\xi_{n}\left(\rho_{n}(w)\right)= \kappa_{n-1}\left(\partial_{n}^{X}(z)\right) + \partial_{n}^{Q}\left(\nu_{n}(w)\right) \\
 & = \partial_{n}^{Q}\left(\kappa_{n}(z)\right)+\partial_{n}^{Q}\left(\nu_{n}(w)\right)= \partial_{n}^{Q}\left(\kappa_{n}(z)+\nu_{n}(w)\right)=\partial_{n}^{Q}(z,w)
\end{split}
\end{equation*}

\noindent
Hence $x\in B_{n-1}(Q)$, so $x+B_{n-1}(Q)=0$. This shows that $H_{n-1}(\eta)$ is injective.

We next show that $H_{n-1}(\eta)$ is surjective. Let $y\in Z_{n}(Y)$. Then $\eta_{n-1}(0)=0=\partial_{n}^{Y}(y)$, so $(0,y) \in L_{n}$. By the surjectivity of $\theta_{n}:Q_{n} \rightarrow L_{n}$, there is an element $(z,w)\in Q_{n}=X_{n} \oplus P_{n}$ such that $\theta_{n}(z,w)=(0,y)$. Then $y=\zeta_{n}(0,y)=\zeta_{n}\left(\theta_{n}(z,w)\right)=\eta_{n}(z,w)$. Also, a similar calculation as in the previous paragraph shows that $\partial_{n}^{Q}(z,w)=0$, so $(z,w)\in Z_{n}(Q)$. This shows that $Z_{n}(\eta):Z_{n}(Q) \rightarrow Z_{n}(Y)$ is surjective. Since we have constructed all these $R$-homomorphisms in the same way in each step, we can infer that $Z_{n-1}(\eta)$ is surjective. It follows that $H_{n-1}(\eta)$ is surjective. Therefore, $H_{n-1}(\eta)$ is an isomorphism.

We finally show that $\eta_{n}$ is surjective. Let $y \in Y_{n}$. Then $\partial_{n}^{Y}(y)\in B_{n-1}(Y) \subseteq Z_{n-1}(Y)$, so by the surjectivity of $Z_{n-1}(\eta)$, there is an element $x \in Z_{n-1}(Q)$ such that $\eta_{n-1}(x)=\partial_{n}^{Y}(y)$. Then $(x,y) \in L_{n}$, so by the surjectivity of $\theta_{n}$, there is an element $(z,w) \in Q_{n}=X_{n} \oplus P_{n}$ such that $\theta_{n}(z,w)=(x,y)$. It follows that $y=\zeta_{n}(x,y)=\zeta_{n}\left(\theta_{n}(z,w)\right)=\eta_{n}(z,w)$. Thus $\eta_{n}$ is surjective.

As a consequence, we can inductively construct the morphisms $\kappa:X \rightarrow Q$ and $\eta:Q \rightarrow Y$ of $R$-complexes such that $f=\eta\kappa$, $\eta$ is a trivial fibration, and $\kappa$ is a cofibration.

All in all, we have a cofibrantly generated model structure on $\mathcal{C}_{\geq 0}(R)$.
\end{prf}

\section{Simplicial Objects and Dold-Kan Correspondence Theorem}

Simplicial objects are nowadays widely used in modern mathematics. Simplicial sets in particular are combinatorial models for topological spaces. They also serve as the bedrock of the language of $\infty$-categories. Our central focus of attention is on simplicial modules and simplicial commutative algebras which can be thought of as generalizations as well as non-abelian analogues of chain complexes. In this section, we recall the basic definitions and examples of simplicial objects. Moreover, we fix our notations on the normalization and Dld-Kan functors. Deploying the Dold-Kan Correspondence, we transfer the model structure on connective chain complexes to simplicial modules.

\begin{definition} \label{4.1}
Let $\mathcal{C}$ be a category. A \textit{simplicial object} over $\mathcal{C}$ is a family $A=\{A_{n}\}_{n\geq 0}$ of objects of $\mathcal{C}$ together with \textit{face morphisms} $d_{n,i}^{A}:A_{n} \rightarrow A_{n-1}$ for every $n\geq 1$ and $0\leq i\leq n$, and \textit{degeneracy morphisms} $s_{n,i}^{A}:A_{n} \rightarrow A_{n+1}$ for every $n\geq 0$ and $0\leq i\leq n$, that satisfy the following relations for every $n\geq 0$ and $0\leq i,j \leq n$:
\begin{enumerate}
\item[(i)] $d_{n,i}^{A}d_{n+1,j}^{A}=d_{n,j-1}^{A}d_{n+1,i}^{A}$ if $i<j$.
\item[(ii)] $s_{n,i}^{A}s_{n-1,j}^{A}=s_{n,j+1}^{A}s_{n-1,i}^{A}$ if $i\leq j$.
\item[(iii)]
 \label{eqn:damage piecewise}
 $d_{n,i}^{A}s_{n-1,j}^{A}=
      \begin{cases}
        s_{n-2,j-1}^{A}d_{n-1,i}^{A} & \text{if } i<j \\
        1^{A_{n-1}} & \text{if } i=j \text{ or } j+1 \\
        s_{n-2,j}^{A}d_{n-1,i-1}^{A} & \text{if } i>j+1
      \end{cases}$
\end{enumerate}

\noindent
Given two simplicial objects $A=\{A_{n}\}_{n\geq 0}$ and $B=\{B_{n}\}_{n\geq 0}$ over $\mathcal{C}$, a \textit{morphism} $\tau:A \rightarrow B$ \textit{of simplicial objects} is a collection $\tau=(\tau_{n})_{n\geq 0}$ of morphisms $\tau_{n}:A_{n} \rightarrow B_{n}$ of $\mathcal{C}$ for every $n \geq 0$, such that the following diagram is commutative for every $n\geq 1$ and $0\leq i\leq n$:
\begin{equation*}
  \begin{tikzcd}[column sep=3.5em,row sep=2em]
  A_{n-1} \arrow{r}{s_{n-1,i}^{A}} \arrow{d}{\tau_{n-1}}
  & A_{n} \arrow{r}{d_{n,i}^{A}} \arrow{d}{\tau_{n}}
  & A_{n-1} \arrow{d}{\tau_{n-1}}
  \\
  B_{n-1} \arrow{r}{s_{n-1,i}^{B}}
  & B_{n} \arrow{r}{d_{n,i}^{B}}
  & B_{n-1}
\end{tikzcd}
\end{equation*}
\end{definition}

We speak of simplicial sets, simplicial modules, or simplicial commutative algebras when we consider simplicial objects over the categories of sets, modules, or commutative algebras.

\begin{remark} \label{4.2}
It is immediate from Definition \ref{4.1} that we have a category $\mathpzc{s}\mathcal{C}$ of simplicial objects over $\mathcal{C}$. As a matter of fact, $\mathpzc{s}\mathcal{C}$ is a functor category. Let $\Delta$ denote the \textit{simplex category} described by
$$\Obj(\Delta)=\left\{[n]=\{0,1,2,...,n\} \suchthat n\geq 0\right\}$$
and
$$\Mor_{\Delta}\left([n],[m]\right) = \left\{f\in \Mor_{\mathcal{S}\mathpzc{et}}\left([n],[m]\right) \suchthat f \textrm{ is order-preserving}\right\}$$
for every $[n],[m] \in \Obj(\Delta)$. Then it is folklore that $\mathpzc{s}\mathcal{C}=\mathcal{C}^{\Delta^{\op}}$. In other words, a simplicial object over $\mathcal{C}$ is nothing but a contravariant functor $\Delta \rightarrow \mathcal{C}$. Accordingly, we sometimes use the functorial point of view when defining a simplicial object.

For any $n\geq 0$ and $0\leq i\leq n$, the \textit{face map} $\delta_{n,i}:[n-1]\rightarrow [n]$ is the unique injective map that "misses" $i$, i.e.
\begin{equation*}
 \label{eqn:damage piecewise}
\delta_{n,i}(k)=
 \begin{dcases}
  k & \textrm{if } k<i \\
  k+1 & \textrm{if } k\geq i
 \end{dcases}
\end{equation*}

\noindent
for every $0\leq k\leq n-1$, and the \textit{degeneracy map} $\sigma_{n,i}:[n+1]\rightarrow [n]$ is the unique surjective map that "hits" $i$ twice, i.e.
\begin{equation*}
 \label{eqn:damage piecewise}
\sigma_{n,i}(k)=
 \begin{dcases}
  k & \textrm{if } k\leq i \\
  k-1 & \textrm{if } k> i
 \end{dcases}
\end{equation*}

\noindent
for every $0\leq k\leq n+1$. It is clear that the face and the degeneracy maps are morphisms in the simplex category. Moreover, $\delta_{n,n}$ is the inclusion map for every $n\geq 0$. One can show by brute force that the following face-degeneracy relations hold for every $0\leq i,j \leq n$:
\begin{enumerate}
\item[(i)] $\delta_{n+1,j}\delta_{n,i}=\delta_{n+1,i}\delta_{n,j-1}$ if $i<j$.
\item[(ii)] $\sigma_{n-1,j}\sigma_{n,i}=\sigma_{n-1,i}\sigma_{n,j+1}$ if $i\leq j$.
\item[(iii)]
 \label{eqn:damage piecewise}
 $\sigma_{n-1,j}\delta_{n,i}=
      \begin{cases}
        \delta_{n-1,i}\sigma_{n-2,j-1} & \text{if } i<j \\
        1^{[n-1]} & \text{if } i=j \text{ or } j+1 \\
        \delta_{n-1,i-1}\sigma_{n-2,j} & \text{if } i>j+1.
      \end{cases}$
\end{enumerate}

Suppose that $n,m\geq 0$ and $f\in \Mor_{\Delta}\left([n],[m]\right)$. If $f$ is bijective, then $n=m$ and $f=1^{[n]}$. Now assume that $f$ is not bijective. If $f$ is injective, then there is a unique factorization $f=\delta_{m,i_{1}}\cdots \delta_{n+1,i_{m-n}}$ where $0\leq i_{m-n}<\cdots < i_{1}\leq m$ are the elements of $[m]$ that are not in the image of $f$. If $f$ is surjective, then there is a unique factorization $f=\sigma_{m,j_{1}}\cdots \sigma_{n-1,j_{n-m}}$ where $0\leq j_{1}<\cdots < j_{n-m}< n$ are the elements of $[n]$ for which $f(j)=f(j+1)$. In general, there is a unique factorization $f=gh$ where $g\in \Mor_{\Delta}\left([k],[m]\right)$ is injective and $h\in \Mor_{\Delta}\left([n],[k]\right)$ is surjective; See \cite[Lemma 8.1.2]{We}.
\end{remark}

A trivial example of a simplicial object is the constant one in which we have the same object in each degree and the face and the degeneracy morphisms are all identity morphisms. We next look at some non-trivial examples.

\begin{example} \label{4.3}
Fix $n\geq 0$. We have the contravariant functor $\Delta^{n}= \Mor_{\Delta}\left(-,[n]\right):\Delta \rightarrow \mathcal{S}\mathpzc{et}$, so $\Delta^{n}$ is a simplicial set called the \textit{combinatorial} $n$-\textit{simplex}. If $m\geq 0$ and $f \in \Mor_{\Delta}\left([n],[m]\right)$, then $\Delta^{f}= \Mor_{\Delta}(-,f):\Delta^{n}= \Mor_{\Delta}\left(-,[n]\right) \rightarrow \Mor_{\Delta}\left(-,[m]\right)=\Delta^{m}$ is a natural transformation of functors, i.e. a morphism of simplicial sets. For any $m\geq 0$, considering the images of order-preserving maps $[m]\rightarrow [n]$, we can make the following identification:
$$\Delta_{m}^{n}= \Mor_{\Delta}\left([m],[n]\right) \cong \left\{(\alpha_{0},\alpha_{1},...,\alpha_{m})\in \mathbb{N}^{m+1} \suchthat 0\leq \alpha_{0} \leq \alpha_{1} \leq \cdots \leq \alpha_{m} \leq n \right\}$$
The above identification shows that $|\Delta_{m}^{n}|= \binom{n+m+1}{n}$. On the other hand, under this identification, for any $m\geq 0$ and $0\leq i\leq m$, the face morphism $d_{m,i}^{\Delta^{n}}:\Delta_{m}^{n}\rightarrow \Delta_{m-1}^{n}$ is given by $d_{m,i}^{\Delta^{n}}(\alpha_{0},\alpha_{1},...,\alpha_{m})=(\alpha_{0},...,\alpha_{i-1},\alpha_{i+1},...,\alpha_{m})$, and the degeneracy morphism $s_{m,i}^{\Delta^{n}}:\Delta_{m}^{n}\rightarrow \Delta_{m+1}^{n}$ is given by $s_{m,i}^{\Delta^{n}}(\alpha_{0},\alpha_{1},...,\alpha_{m})=(\alpha_{0},...,\alpha_{i-1},\alpha_{i},\alpha_{i},\alpha_{i+1},...,\alpha_{m})$. Given $m\geq 0$, any element of $\Delta_{m}^{n}$ is called an $m$-\textit{cell}. The $m$-cells that have repeated coordinates, i.e. the ones that belong to the images of degeneracy morphisms, are called \textit{degenerate}, and the rest are called \textit{non-degenerate}.

We further have the contravariant functor $\partial\Delta^{n}:\Delta\rightarrow \mathcal{S}\mathpzc{et}$, given by
$$\partial\Delta^{n}_{m} = \left\{f\in \Delta_{m}^{n}=\Mor_{\Delta}\left([m],[n]\right) \suchthat f \textrm{ is not surjective}\right\}$$
for every $m\geq 0$, and $(\partial\Delta^{n})(\eta):= \Delta^{n}(\eta)= \Mor_{\Delta}\left(\eta,[n]\right)$ for every $\eta\in \Mor(\Delta)$. Thus $\partial\Delta^{n}$ is a simplicial set called the \textit{boundary} of $\Delta^{n}$. If $\iota_{m}:\partial\Delta_{m}^{n}\rightarrow \Delta_{m}^{n}$ is the inclusion map for every $m\geq 0$, then $\iota=(\iota_{m})_{m\geq 0}:\partial\Delta^{n}\rightarrow \Delta^{n}$ is a morphism of simplicial sets. Under the above identification, we have for every $m\geq 0$:
$$\partial\Delta_{m}^{n} \cong \left\{(\alpha_{0},\alpha_{1},...,\alpha_{m})\in \Delta_{m}^{n} \suchthat \{\alpha_{0},\alpha_{1},...,\alpha_{m}\} \subset [n] \right\}$$
For any $m\geq 0$, we notice that $\partial\Delta_{m}^{1}=\{(\underbrace{0,...,0}_{m+1 \textrm{ times}}),(\underbrace{1,...,1}_{m+1 \textrm{ times}})\}$, so in light of $\Delta_{m}^{0}=\{(\underbrace{0,...,0}_{m+1 \textrm{ times}})\}$, we see that there is an isomorphism $\partial\Delta^{1} \cong \Delta^{0}\sqcup \Delta^{0}$ of simplicial sets.
\end{example}

We next recall some basic constructions of simplicial objects which will be needed later.

\begin{remark} \label{4.4}
If $R$ is a ring, $M$ is a simplicial right $R$-module, and $N$ is a simplicial left $R$-module, then the \textit{simplicial tensor product} $M\otimes_{R}N$ is defined as follows. If $n\geq 0$, then $(M\otimes_{R}N)_{n}=M_{n}\otimes_{R}N_{n}$. If $\eta \in \Mor_{\Delta}\left([n],[m]\right)$, then $(M\otimes_{R}N)(\eta)=M(\eta)\otimes_{R}N(\eta):M_{m}\otimes_{R}N_{m}\rightarrow M_{n}\otimes_{R}N_{n}$. This construction is functorial in each argument.

If $\mathcal{C}$ is a locally small bicomplete category, $A$ is a simplicial object over $\mathcal{C}$, and $U$ is a simplicial set, then the \textit{copower simplicial object} $A^{(U)}$ is defined as follows. If $n\geq 0$, then $(A^{(U)})_{n}=A_{n}^{(U_{n})}$. If $\eta \in \Mor_{\Delta}\left([n],[m]\right)$, then we consider the morphism $A(\eta):A_{m}\rightarrow A_{n}$ in $\mathcal{C}$ as well as the map $U(\eta):U_{m}\rightarrow U_{n}$, and then define $A^{(U)}(\eta):A_{m}^{(U_{m})} \rightarrow A_{n}^{(U_{n})}$ to be the diagonal of the following commutative diagram:
\begin{equation*}
\begin{tikzcd}[column sep=4.5em,row sep=2.5em]
  A_{m}^{(U_{m})} \arrow{r}{A(\eta)^{(U_{m})}} \arrow{d}[swap]{\overline{U(\eta)}} \arrow{rd}
  & A_{n}^{(U_{m})} \arrow{d}{\overline{U(\eta)}}
  \\
  A_{m}^{(U_{n})} \arrow{r}{A(\eta)^{(U_{n})}}
  & A_{n}^{(U_{n})}
\end{tikzcd}
\end{equation*}

\noindent
This construction is functorial in each argument. Moreover, if $R$ is a ring, $M$ is a simplicial left $R$-module, and $U$ is a simplicial set, then there is a natural isomorphism $M^{(U)}\cong R^{(U)}\otimes_{R}M$ of simplicial left $R$-modules.

If $\mathcal{C}$ is a locally small bicomplete category, and $A$ and $B$ are two simplicial objects over $\mathcal{C}$, then the \textit{mapping simplicial set} $\Map_{\mathpzc{s}\mathcal{C}}(A,B)$ is defined as follows. If $n\geq 0$, then $\Map_{\mathpzc{s}\mathcal{C}}(A,B)_{n}=\Mor_{\mathpzc{s}\mathcal{C}}\left(A^{(\Delta^{n})},B\right)$. If $\eta \in \Mor_{\Delta}\left([n],[m]\right)$ and $f \in \Mor_{\mathpzc{s}\mathcal{C}}\left(A^{(\Delta^{m})},B\right)$, then we consider the morphisms
$$A^{(\Delta^{n})} \xrightarrow{A^{(\Delta^{\eta})}} A^{(\Delta^{m})} \xrightarrow{f} B$$
and define the map $\Map_{\mathpzc{s}\mathcal{C}}(A,B)(\eta):\Mor_{\mathpzc{s}\mathcal{C}}\left(A^{(\Delta^{m})},B\right)\rightarrow \Mor_{\mathpzc{s}\mathcal{C}}\left(A^{(\Delta^{n})},B\right)$ by setting $\Map_{\mathpzc{s}\mathcal{C}}(A,B)(\eta)(f)=f A^{(\Delta^{\eta})}$. This construction is functorial in each argument. Moreover, if $R$ is a ring, $M$ and $N$ are two simplicial left $R$-modules, and $U$ is a simplicial set, then there is a natural bijection of sets as follows:
$$\Mor_{\mathpzc{s}\mathcal{M}\mathpzc{od}(R)}\left(M^{(U)},N\right)\cong \Mor_{\mathpzc{s}\mathcal{M}\mathpzc{od}(R)}\left(M,\Map_{\mathpzc{s}\mathcal{S}\mathpzc{et}}(U,N)\right)$$

Let $U$ be a simplicial set. If $R$ is a ring and $M$ is a simplicial left $R$-module, then we can forget the module structure of $M$, consider it as a simplicial set, and then form $\Map_{\mathpzc{s}\mathcal{S}\mathpzc{et}}(U,M)$. Then $\Map_{\mathpzc{s}\mathcal{S}\mathpzc{et}}(U,M)$ is a simplicial set which becomes a simplicial left $R$-module with the module structure inherited from $M$ by pointwise operations. Similarly, if $R$ is a commutative ring and $A$ is a simplicial commutative $R$-algebra, then we can forget the algebra structure of $A$, consider it as a simplicial set, and then form $\Map_{\mathpzc{s}\mathcal{S}\mathpzc{et}}(U,A)$. Then $\Map_{\mathpzc{s}\mathcal{S}\mathpzc{et}}(U,A)$ is a simplicial set which becomes a simplicial commutative $R$-algebra with the algebra structure inherited from $A$ by pointwise operations.
\end{remark}

We now recall the normalization functor; see \cite[Definition 8.3.6]{We}.

\begin{definition} \label{4.5}
Let $R$ be a ring. The \textit{normalization functor} $\Nor:\mathpzc{s}\mathcal{M}\mathpzc{od}(R) \rightarrow \mathcal{C}_{\geq 0}(R)$ is defined as follows:
\begin{enumerate}
\item[(i)] $\Nor$ assigns to each simplicial left $R$-module $M$, the normalized $R$-complex $\Nor(M)$, whose terms are $\Nor(M)_{0}=M_{0}$ and $\Nor(M)_{n}=\bigcap_{i=0}^{n-1}\Ker\left(d_{n,i}^{M}\right)\subseteq M_{n}$ for every $n\geq 1$, and whose differential $\partial_{n}^{\Nor(M)}:\Nor(M)_{n} \rightarrow \Nor(M)_{n-1}$ is given by $\partial_{n}^{\Nor(M)}(x)=(-1)^{n}d_{n,n}^{M}(x)$ for every $n\geq 1$ and $x\in \Nor(M)_{n}$.
\item[(ii)] $\Nor$ assigns to each morphism $f:M \rightarrow N$ of simplicial left $R$-modules, the morphism $\Nor(f):\Nor(M)\rightarrow \Nor(N)$ of $R$-complexes, where $\Nor(f)_{n}:\Nor(M)_{n}\rightarrow \Nor(N)_{n}$ is given by $\Nor(f)_{n}(x)=f_{n}(x)$ for every $n\geq 0$ and $x\in \Nor(M)_{n}$.
\end{enumerate}
\end{definition}

\begin{remark} \label{4.6}
Let $R$ be a ring, and $M$ a simplicial left $R$-module. The \textit{Moore complex} $C(M)$ is defined as follows. If $n\geq 0$, then $C(M)_{n}=M_{n}$. If $n\geq 1$, then $\partial_{n}^{C(M)}=\sum_{i=0}^{n}(-1)^{i}d_{n,i}^{M}: C(M)_{n}\rightarrow C(M)_{n-1}$. It can be verified by inspection that $C(M)$ is a connective $R$-complex. Moreover, if we set $D(M)_{0}=0$ and $D(M)_{n}=\sum_{i=0}^{n-1}\im\left(s_{n-1,i}^{M}\right)$ for every $n\geq 1$, and also $\partial_{n}^{D(M)}(x)=\partial_{n}^{C(M)}(x)$ for every $n\geq 1$ and $x\in D(M)_{n}$, then $D(M)$ turns out to be a subcomplex of $C(M)$. Moreover, we have $C(M)=\Nor(M)\oplus D(M)$ and $\Nor(M)\cong C(M)/D(M)$ naturally in $M$. See \cite[Lemma 8.3.7]{We}.
\end{remark}

We recall a quintessential example of simplicial sets.

\begin{example} \label{4.7}
Let $\mathcal{C}$ be a small category. The \textit{nerve} of $\mathcal{C}$ is a simplicial set defined as follows. If $[n]\in \Obj(\Delta)$, then viewing the poset $[n]$ as a small category, we set $\nrv_{n}(\mathcal{C}) = \Fun\left([n],\mathcal{C}\right)$, i.e. the set of covariant functors $[n]\rightarrow \mathcal{C}$. If $\eta\in \Mor_{\Delta}\left([n],[m]\right)$, then the map $\nrv(\mathcal{C})(\eta):\Fun\left([m],\mathcal{C}\right)\rightarrow \Fun\left([n],\mathcal{C}\right)$ is defined by $\nrv(\mathcal{C})(\eta)(\theta) = \theta\eta$ for every $\theta\in \Fun\left([m],\mathcal{C}\right)$. Then $\nrv(\mathcal{C}):\Delta \rightarrow \mathcal{S}\mathpzc{et}$ is a contravariant functor, so it is a simplicial set. More specifically, we have $\nrv_{0}(\mathcal{C})= \Obj(\mathcal{C})$, and for any $n\geq 1$, $\nrv_{n}(\mathcal{C})$ is the set of $n$-tuples of composable morphisms
$$A_{0}\rightarrow A_{1}\rightarrow \cdots \rightarrow A_{n-1}\rightarrow A_{n}$$
in $\mathcal{C}$. Moreover, the face morphism $d_{n,i}^{\nrv(\mathcal{C})}: \nrv_{n}(\mathcal{C})\rightarrow \nrv_{n-1}(\mathcal{C})$ is given by
$$d_{n,i}^{\nrv(\mathcal{C})}\left(A_{0}\rightarrow \cdots \rightarrow A_{i-1}\xrightarrow{f_{i-1}} A_{i}\xrightarrow{f_{i}} A_{i+1}\rightarrow \cdots \rightarrow A_{n}\right) = \left(A_{0}\rightarrow \cdots \rightarrow A_{i-1}\xrightarrow{f_{i}f_{i-1}} A_{i+1}\rightarrow \cdots \rightarrow A_{n}\right)$$
and the degeneracy morphism $s_{n,i}^{\nrv(\mathcal{C})}: \nrv_{n}(\mathcal{C})\rightarrow \nrv_{n+1}(\mathcal{C})$ is given by
\small
$$s_{n,i}^{\nrv(\mathcal{C})}\left(A_{0}\rightarrow \cdots \rightarrow A_{i-1}\rightarrow A_{i}\rightarrow A_{i+1}\rightarrow \cdots \rightarrow A_{n}\right) = \left(A_{0}\rightarrow \cdots \rightarrow A_{i-1}\rightarrow A_{i}\xrightarrow{1^{A_{i}}} A_{i} \rightarrow A_{i+1}\rightarrow \cdots \rightarrow A_{n}\right)$$
\normalsize
for every $0\leq i\leq n$.
\end{example}

We need the next lemma in the sequel.

\begin{lemma} \label{4.8}
Let $R$ be a ring, and $\mathcal{P}$ a poset with a least element $e$. View $\mathcal{P}$ as a small category, and let $\zeta:\Delta^{0}\rightarrow \nrv(\mathcal{P})$ be the morphism of simplicial sets given by $\zeta_{m}(0,...,0)=(e\leq \cdots \leq e)$ for every $m\geq 0$. Then $\Nor\left(R^{(\zeta)}\right):\Nor\left(R^{(\Delta^{0})}\right) \rightarrow \Nor\left(R^{\left(\nrv(\mathcal{P})\right)}\right)$ is a homotopy equivalence of $R$-complexes. As a consequence, we have:
\begin{equation*}
 \label{eqn:damage piecewise}
 H_{i}\left(\Nor\left(R^{\left(\nrv(\mathcal{P})\right)}\right)\right)\cong
 \begin{dcases}
  R & \textrm{if } i=0 \\
  0 & \textrm{if } i>0
 \end{dcases}
\end{equation*}
\end{lemma}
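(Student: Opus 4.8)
The plan is to produce an explicit homotopy inverse of $\Nor\bigl(R^{(\zeta)}\bigr)$ and to obtain the needed homotopy from a simplicial homotopy living on the nerve itself. Write $\pi:\nrv(\mathcal{P})\rightarrow\Delta^{0}$ for the unique morphism of simplicial sets. Since $\Delta^{0}$ has exactly one endomorphism we have $\pi\zeta=1^{\Delta^{0}}$, so applying the levelwise free $R$-module functor $R^{(-)}$ and then the normalization functor $\Nor$ gives $\Nor\bigl(R^{(\pi)}\bigr)\Nor\bigl(R^{(\zeta)}\bigr)=1$ on the nose. Thus the statement reduces to showing that $\Nor\bigl(R^{(c)}\bigr)$ is chain homotopic to the identity, where $c:=\zeta\pi:\nrv(\mathcal{P})\rightarrow\nrv(\mathcal{P})$ is the morphism collapsing every simplex $(A_{0}\leq\cdots\leq A_{n})$ to the constant simplex $(e\leq\cdots\leq e)$.

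To build a homotopy on $\nrv(\mathcal{P})$, I would exploit that $e$ is a least element, hence an initial object of $\mathcal{P}$ viewed as a category: the constant functor $\underline{e}:\mathcal{P}\rightarrow\mathcal{P}$ and $1^{\mathcal{P}}$ are joined by the natural transformation whose component at $A$ is the relation $e\leq A$, which is the same datum as a functor $\mathcal{P}\times[1]\rightarrow\mathcal{P}$. Applying the nerve, which carries finite products to products and satisfies $\nrv([1])=\Delta^{1}$, yields a morphism of simplicial sets $H:\nrv(\mathcal{P})\times\Delta^{1}\rightarrow\nrv(\mathcal{P})$ restricting to $c$ over one vertex of $\Delta^{1}$ and to $1^{\nrv(\mathcal{P})}$ over the other; concretely $H_{n}\bigl((A_{0}\leq\cdots\leq A_{n}),\varphi\bigr)$ is the chain whose $k$-th entry is $e$ when $\varphi(k)=0$ and $A_{k}$ when $\varphi(k)=1$, for $\varphi\in\Delta^{1}_{n}=\Mor_{\Delta}([n],[1])$. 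Equivalently, $H$ is encoded by the customary system of ``prism'' maps $h_{j}:\nrv_{n}(\mathcal{P})\rightarrow\nrv_{n+1}(\mathcal{P})$ subject to the homotopy simplicial identities.

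Next I would transport $H$ along $R^{(-)}$ and $\Nor$. The functor $R^{(-)}:\mathpzc{s}\mathcal{S}\mathpzc{et}\rightarrow\mathpzc{s}\mathcal{M}\mathpzc{od}(R)$ sends the $h_{j}$ to morphisms $R^{(\nrv_{n}(\mathcal{P}))}\rightarrow R^{(\nrv_{n+1}(\mathcal{P}))}$ satisfying the same identities — indeed $R^{(U\times V)}\cong R^{(U)}\otimes_{R}R^{(V)}$ degreewise, so the cylinder $-\otimes_{R}R^{(\Delta^{1})}$ of Remark \ref{4.4} is preserved — giving a simplicial homotopy between $R^{(c)}$ and $1$ in $\mathpzc{s}\mathcal{M}\mathpzc{od}(R)$. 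Finally, a simplicial homotopy between morphisms of simplicial $R$-modules induces a chain homotopy between the corresponding morphisms of normalized complexes; this is the standard compatibility of the Dold--Kan normalization with homotopies, see \cite{We} and \cite{GJ}. Applying it to $R^{(H)}$ yields $\Nor\bigl(R^{(c)}\bigr)\simeq 1$, so $\Nor\bigl(R^{(\zeta)}\bigr)$ and $\Nor\bigl(R^{(\pi)}\bigr)$ are mutually inverse up to homotopy, and $\Nor\bigl(R^{(\zeta)}\bigr)$ is a homotopy equivalence.

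For the displayed consequence, observe that $\Delta^{0}$ has a single simplex in each degree with all face and degeneracy maps equal to identities, so the same holds for $R^{(\Delta^{0})}$; hence $\Nor\bigl(R^{(\Delta^{0})}\bigr)_{0}=R$ while $\Nor\bigl(R^{(\Delta^{0})}\bigr)_{n}=\bigcap_{i=0}^{n-1}\Ker(1^{R})=0$ for $n\geq1$, i.e. $\Nor\bigl(R^{(\Delta^{0})}\bigr)$ is $R$ concentrated in degree $0$. Since $\Nor\bigl(R^{(\zeta)}\bigr)$ is a homotopy equivalence it is in particular a quasi-isomorphism, and the formula for $H_{i}\bigl(\Nor\bigl(R^{(\nrv(\mathcal{P}))}\bigr)\bigr)$ follows. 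The step I expect to be the main obstacle is the verification that $H$ is genuinely a morphism of simplicial sets — the simplicial identities for the prism maps, which reduce to the inequality $e\leq A_{k}$ together with face/degeneracy bookkeeping on chains — and, to a lesser extent, making precise the passage from a simplicial homotopy of simplicial modules to a chain homotopy of the normalized complexes, where one must check that the correcting signs and homotopy operators respect the normalized subcomplexes $\bigcap_{i<n}\Ker d_{n,i}$ rather than merely the Moore complexes.
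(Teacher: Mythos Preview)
Your proposal is correct and takes a genuinely different route from the paper. The paper works directly at the chain level: it passes to the quotient $C/D\cong\Nor$ and writes down an explicit contracting homotopy $t_{m}(a_{0}\leq\cdots\leq a_{m})=(e\leq a_{0}\leq\cdots\leq a_{m})$, i.e.\ the ``prepend the least element'' operator, then verifies by hand (in the two cases $m=0$ and $m>0$) that $1-\overline{R^{(\zeta)}}\,\overline{R^{(\varepsilon)}}=\partial\,\overline{t}+\overline{t}\,\partial$ on basis chains. This is essentially the extra-degeneracy/cone-point contraction, carried out with no appeal to outside results.

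You instead stay at the simplicial level: the initial object $e$ gives a natural transformation $\underline{e}\Rightarrow 1^{\mathcal{P}}$, hence a functor $\mathcal{P}\times[1]\rightarrow\mathcal{P}$; nerving yields a simplicial homotopy $H:\nrv(\mathcal{P})\times\Delta^{1}\rightarrow\nrv(\mathcal{P})$, which $R^{(-)}$ carries to a simplicial homotopy of simplicial $R$-modules, and finally you invoke the standard fact (Weibel, \S8.3) that simplicial homotopies normalize to chain homotopies. This is cleaner conceptually and generalizes immediately (any category with an initial or terminal object has contractible nerve), but it imports machinery the paper deliberately avoids---the paper's stated aim is a self-contained algebraic treatment, so it prefers the explicit formula over citing the simplicial-to-chain homotopy correspondence. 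The two obstacles you flag are indeed the only content, and both are standard: the first is just functoriality of the nerve on natural transformations, and the second is \cite[Lemma 8.3.13]{We}. Either approach is fine; yours is shorter if one is willing to cite, the paper's is longer but dependency-free.
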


\begin{proof}
Set $U=\nrv(\mathcal{P})$. Since $\Delta_{m}^{0}=\{(\underbrace{0,...,0}_{m+1 \textrm{ times}})\}$ for every $m\geq 0$, we can define a morphism $\zeta:\Delta^{0}\rightarrow U$ of simplicial sets by setting $\zeta_{m}(0,...,0)=(e\leq \cdots \leq e)$ for every $m\geq 0$. We show that the induced morphism
$$\textstyle \overline{R^{(\zeta)}}: \frac{C\left(R^{(\Delta^{0})}\right)}{D\left(R^{(\Delta^{0})}\right)} \rightarrow \frac{C\left(R^{(U)}\right)}{D\left(R^{(U)}\right)}$$
is a homotopy equivalence. Since $\Delta^{0}$ is a terminal object in $\mathpzc{s}\mathcal{S}\mathpzc{et}$, there exists a unique morphism $\varepsilon:U\rightarrow \Delta^{0}$ of simplicial sets. It is clear that $\varepsilon\zeta=1^{\Delta^{0}}$, so $R^{(\varepsilon)}R^{(\zeta)}=1^{R^{(\Delta^{0})}}$, whence
$$\overline{R^{(\varepsilon)}}\overline{R^{(\zeta)}}=1^{\frac{C\left(R^{(\Delta^{0})}\right)}{D\left(R^{(\Delta^{0})}\right)}}.$$

Now let $m\geq 0$. Then $R^{(U_{m})}$ is a free left $R$-module with a basis that is in 1-1 correspondence with the set $U_{m}$. We identify this basis with elements of $U_{m}$ in what follows. Define an $R$-homomorphism
$$t_{m}:C\left(R^{(U)}\right)_{m}=R^{(U_{m})}\rightarrow R^{(U_{m+1})}=C\left(R^{(U)}\right)_{m+1}$$
by setting
$$t_{m}(a_{0} \leq \cdots \leq a_{m}) = (e \leq a_{0} \leq \cdots \leq a_{m})$$
for every $(a_{0} \leq \cdots \leq a_{m})\in U_{m}$, and then extending linearly. We note that $(a_{0} \leq \cdots \leq a_{m})\in \im\left(s_{m-1,i}^{U}\right)$ for some $0\leq i\leq m-1$ if and only if $a_{i}=a_{i+1}$. This implies that if $(a_{0} \leq \cdots \leq a_{m})\in \im\left(s_{m-1,i}^{U}\right)$, then $(e \leq a_{0} \leq \cdots \leq a_{m})\in \im\left(s_{m,i+1}^{U}\right)$. It follows that $t_{m}\left(D\left(R^{(U)}\right)_{m}\right)\subseteq D\left(R^{(U)}\right)_{m+1}$, so $t_{m}$ induces an $R$-homomorphism
$$\textstyle \overline{t_{m}}: \frac{C\left(R^{(U)}\right)_{m}}{D\left(R^{(U)}\right)_{m}} \rightarrow \frac{C\left(R^{(U)}\right)_{m+1}}{D\left(R^{(U)}\right)_{m+1}}.$$
We also set $\overline{t_{m}}=0$ for every $m< 0$. We show that
$$\textstyle 1^{\frac{C\left(R^{(U)}\right)_{m}}{D\left(R^{(U)}\right)_{m}}} - \overline{R^{(\zeta_{m})}}\overline{R^{(\varepsilon_{m})}} = \partial_{m+1}^{\frac{C\left(R^{(U)}\right)}{D\left(R^{(U)}\right)}}\overline{t_{m}} + \overline{t_{m-1}}\partial_{m}^{\frac{C\left(R^{(U)}\right)}{D\left(R^{(U)}\right)}}$$
every $m\geq 0$. To this end, it suffices to establish the equality for the elements of $U_{m}$. Let $(a_{0} \leq \cdots \leq a_{m})\in U_{m}$. We consider two case: \\

Case I: $m=0$

In this case, we have:
$$(a_{0}) - \overline{R^{(\zeta_{0})}}\left(\overline{R^{(\varepsilon_{0})}}(a_{0})\right) = (a_{0})-\overline{R^{(\zeta_{0})}}(0)= (a_{0})-(e)$$
On the other hand, since $\overline{t_{-1}}=0$, we have:
\begin{equation*}
\begin{split}
 \partial_{1}^{\frac{C\left(R^{(U)}\right)}{D\left(R^{(U)}\right)}}\left(\overline{t_{0}}(a_{0})\right) + \overline{t_{-1}}\left(\partial_{0}^{\frac{C\left(R^{(U)}\right)}{D\left(R^{(U)}\right)}}(a_{0})\right)
 & = \partial_{1}^{\frac{C\left(R^{(U)}\right)}{D\left(R^{(U)}\right)}}\left((e \leq a_{0}) + D\left(R^{(U)}\right)_{1}\right) = \partial_{1}^{C\left(R^{(U)}\right)}(e \leq a_{0}) \\
 & = d_{1,0}^{R^{(U)}}(e \leq a_{0}) - d_{1,1}^{R^{(U)}}(e \leq a_{0}) = (a_{0})-(e)
\end{split}
\end{equation*}

\noindent
Consequently, we see that the desired equality holds in this case. \\

Case II: $m>0$

In this case, since $(e \leq \cdots \leq e)\in D\left(R^{(U)}\right)_{m}$, we have:
\footnotesize
\begin{equation*}
\begin{split}
 (a_{0} \leq \cdots \leq a_{m}) - \overline{R^{(\zeta_{m})}}\left(\overline{R^{(\varepsilon_{m})}}\left((a_{0} \leq \cdots \leq a_{m})+D\left(R^{(U)}\right)_{m}\right)\right)
 & = (a_{0} \leq \cdots \leq a_{m}) - \overline{R^{(\zeta_{m})}}\left((0,...,0)+D\left(R^{(\Delta^{0})}\right)_{m}\right) \\
 & = (a_{0} \leq \cdots \leq a_{m}) - (e \leq \cdots \leq e) + D\left(R^{(U)}\right)_{m} \\
 & = (a_{0} \leq \cdots \leq a_{m}) + D\left(R^{(U)}\right)_{m}
\end{split}
\end{equation*}
\normalsize

\noindent
On the other hand, we have:
\footnotesize
\begin{equation*}
\begin{split}
 \partial_{m+1}^{\frac{C\left(R^{(U)}\right)}{D\left(R^{(U)}\right)}}\left(\overline{t_{m}}\left((a_{0} \leq \cdots \leq a_{m}) + D\left(R^{(U)}\right)_{m}\right)\right)
 & = \partial_{m+1}^{\frac{C\left(R^{(U)}\right)}{D\left(R^{(U)}\right)}}\left((e \leq a_{0} \leq \cdots \leq a_{m}) + D\left(R^{(U)}\right)_{m+1}\right) \\
 & = \partial_{m+1}^{C\left(R^{(U)}\right)}(e \leq a_{0} \leq \cdots \leq a_{m}) + D\left(R^{(U)}\right)_{m} \\
 & = \sum_{i=0}^{m+1}(-1)^{i}d_{m+1,i}^{R^{(U)}}(e \leq a_{0} \leq \cdots \leq a_{m}) + D\left(R^{(U)}\right)_{m} \\
 & = d_{m+1,0}^{R^{(U)}}(e \leq a_{0} \leq \cdots \leq a_{m}) + \sum_{i=1}^{m+1}(-1)^{i}d_{m+1,i}^{R^{(U)}}(e \leq a_{0} \leq \cdots \leq a_{m}) + D\left(R^{(U)}\right)_{m} \\
 & = (a_{0} \leq \cdots \leq a_{m}) + \sum_{i=1}^{m+1}(-1)^{i}(e \leq a_{0} \leq \cdots \leq a_{i-2} \leq a_{i} \leq \cdots \leq a_{m}) + D\left(R^{(U)}\right)_{m}
\end{split}
\end{equation*}
\normalsize

\noindent
and
\footnotesize
\begin{equation*}
\begin{split}
 \overline{t_{m-1}}\left(\partial_{m}^{\frac{C\left(R^{(U)}\right)}{D\left(R^{(U)}\right)}}\left((a_{0} \leq \cdots \leq a_{m}) + D\left(R^{(U)}\right)_{m}\right)\right)
 & = \overline{t_{m-1}}\left(\partial_{m}^{C\left(R^{(U)}\right)}(a_{0} \leq \cdots \leq a_{m})+D\left(R^{(U)}\right)_{m-1}\right) \\
 & = \overline{t_{m-1}}\left(\sum_{i=0}^{m}(-1)^{i}d_{m,i}^{R^{(U)}}(a_{0} \leq \cdots \leq a_{m})+D\left(R^{(U)}\right)_{m-1}\right) \\
 & = \overline{t_{m-1}}\left(\sum_{i=0}^{m}(-1)^{i}(a_{0} \leq \cdots \leq a_{i-1} \leq a_{i+1} \leq \cdots \leq a_{m})+D\left(R^{(U)}\right)_{m-1}\right) \\
 & = \sum_{i=0}^{m}(-1)^{i}(e \leq a_{0} \leq \cdots \leq a_{i-1} \leq a_{i+1} \leq \cdots \leq a_{m})+D\left(R^{(U)}\right)_{m} \\
 & = - \sum_{i=1}^{m+1}(-1)^{i}(e \leq a_{0} \leq \cdots \leq a_{i-2} \leq a_{i} \leq \cdots \leq a_{m})+D\left(R^{(U)}\right)_{m}
\end{split}
\end{equation*}
\normalsize

\noindent
Adding them up, we get:
$$\partial_{m+1}^{\frac{C\left(R^{(U)}\right)}{D\left(R^{(U)}\right)}}\left(\overline{t_{m}}\left((a_{0} \leq \cdots \leq a_{m}) + D\left(R^{(U)}\right)_{m}\right)\right) + \overline{t_{m-1}}\left(\partial_{m}^{\frac{C\left(R^{(U)}\right)}{D\left(R^{(U)}\right)}}\left((a_{0} \leq \cdots \leq a_{m}) + D\left(R^{(U)}\right)_{m}\right)\right) = $$$$ (a_{0} \leq \cdots \leq a_{m}) + D\left(R^{(U)}\right)_{m}$$

\noindent
Consequently, we see that the desired equality holds in this case. This means that $\overline{R^{(\zeta)}}\overline{R^{(\varepsilon)}} \sim 1^{\frac{C\left(R^{(U)}\right)}{D\left(R^{(U)}\right)}}$. As a result, $\overline{R^{(\zeta)}}$ is a homotopy equivalence.

Now the commutative diagram
\begin{equation*}
  \begin{tikzcd}
  \Nor\left(R^{(\Delta^{0})}\right) \arrow{r}{\Nor\left(R^{(\zeta)}\right)} \arrow{d}[swap]{\cong}
  & [2em] \Nor\left(R^{(U)}\right) \arrow{d}{\cong}
  \\
  \frac{C\left(R^{(\Delta^{0})}\right)}{D\left(R^{(\Delta^{0})}\right)} \arrow{r}{\overline{R^{(\zeta)}}}
  & \frac{C\left(R^{(U)}\right)}{D\left(R^{(U)}\right)}
\end{tikzcd}
\end{equation*}

\noindent
implies that $\Nor\left(R^{(\zeta)}\right)$ is a homotopy equivalence, hence a quasi-isomorphism.

Next let $n\geq 0$. Recall that we have an identification
$$\Delta_{m}^{n}=\left\{(\alpha_{0},\alpha_{1},...,\alpha_{m})\in \mathbb{N}^{m+1} \suchthat 0\leq \alpha_{0} \leq\alpha_{1} \leq\cdots \leq \alpha_{m}\leq n \right\}$$
for every $m\geq 0$. The simplicial left $R$-module $R^{(\Delta^{n})}$ is in degree $m$ equal to the free left $R$-module $R^{(\Delta^{n}_{m})}$ with a basis $e_{1},...,e_{k}$ where $k=|\Delta_{m}^{n}|=\binom{n+m+1}{n}$. If an element of $\Delta^{n}_{m}$ is degenerate, then it belongs to the image of some degeneracy morphism $s_{m-1,i}^{\Delta^{n}}$ where $0\leq i \leq m-1$, so the corresponding basis element belongs to the image of the degeneracy morphism $R^{\left(s_{m-1,i}^{\Delta^{n}}\right)}=s_{m-1,i}^{R^{(\Delta^{n})}}$. In light of Remark \ref{4.6}, we have the isomorphism
$$\Nor\left(R^{(\Delta^{n})}\right)_{m}\cong \frac{R^{(\Delta_{m}^{n})}}{\sum_{i=0}^{m-1}\im\left(s_{m-1,i}^{R^{(\Delta^{n})}}\right)}$$
and we know that $\Nor\left(R^{(\Delta^{n})}\right)_{m}$ is a direct summand of $R^{(\Delta_{m}^{n})}$, so we conclude that $\Nor\left(R^{(\Delta^{n})}\right)_{m}$ is a free left $R$-module with a basis that corresponds to the non-degenerate elements of $\Delta^{n}_{m}$ for every $m\geq 0$. As a result, since $\Delta_{n}^{n}=\{(0,1,...,n)\}$, we get $\Nor\left(R^{(\Delta^{n})}\right)_{n}\cong R^{(\{(0,1,...,n)\})} = R$. Moreover, if $m>n$, then every element of $\Delta^{n}_{m}$ is degenerate, so $\Nor\left(R^{(\Delta^{n})}\right)_{m}=0$. In particular, we have $\Nor\left(R^{(\Delta^{0})}\right)_{0}=R^{(\Delta_{0}^{0})}=R^{(\{0\})}=R$ and $\Nor\left(R^{(\Delta^{0})}\right)_{m}=0$ for every $m\geq 1$. As a consequence, $\Nor\left(R^{(\Delta^{0})}\right)$ is the $R$-complex
$$0\rightarrow R \rightarrow 0$$
with $R$ in degree $0$. Therefore, we have:
\begin{equation*}
 \label{eqn:damage piecewise}
 H_{i}\left(\Nor\left(R^{(U)}\right)\right)\cong H_{i}\left(\Nor\left(R^{(\Delta^{0})}\right)\right)=
 \begin{dcases}
  R & \textrm{if } i=0 \\
  0 & \textrm{if } i>0
 \end{dcases}
\end{equation*}
\end{proof}

We next recall the Dold-Kan functor. We use the functorial point of view for simplicial modules in the next definition; see \cite[8.4.4]{We}.

\begin{definition} \label{4.9}
Let $R$ be a ring. The \textit{Dold-Kan functor} $\DK:\mathcal{C}_{\geq 0}(R) \rightarrow \mathpzc{s}\mathcal{M}\mathpzc{od}(R)$ is defined as follows:
\begin{enumerate}
\item[(i)] $\DK$ assigns to each connective $R$-complex $X$, the contravariant functor $\DK(X):\Delta \rightarrow \mathcal{M}\mathpzc{od}(R)$ defined as follows. If $[n] \in \Obj(\Delta)$, then we set
    $$\DK(X)\left([n]\right)=\textstyle\bigoplus_{\substack{{f\in \Mor_{\Delta}\left([n],[k]\right)} \\ {f \textrm{ surjective}}}}X_{k}$$
    which is a finite direct sum since there is no surjective map $[n]\rightarrow [k]$ if $k>n$. If $\eta \in \Mor_{\Delta}\left([n],[m]\right)$, then the $R$-homomorphism
    $$\DK(X)(\eta):\DK(X)\left([m]\right)=\textstyle\bigoplus_{\substack{{g\in \Mor_{\Delta}\left([m],[l]\right)} \\ {g \textrm{ surjective}}}}X_{l} \rightarrow \textstyle\bigoplus_{\substack{{f\in \Mor_{\Delta}\left([n],[k]\right)} \\ {f \textrm{ surjective}}}}X_{k}=\DK(X)\left([n]\right)$$
    is given by a matrix $\left(\DK(X)(\eta)_{f,g}:X_{l} \rightarrow X_{k}\right)$, where we consider the unique factorization of $g\eta$ as
\begin{equation*}
  \begin{tikzcd}
  {[n]} \arrow{r}{\eta} \arrow{d}[swap]{f}
  & {[m]} \arrow{d}{g}
  \\
  {[k]} \arrow{r}{\iota} & {[l]}
\end{tikzcd}
\end{equation*}

\noindent
in which $\iota$ is injective and $f$ is surjective (see Remark \ref{4.2}), and then we set:
\begin{equation*}
 \label{eqn:damage piecewise}
 \DK(X)(\eta)_{f,g}=
 \begin{dcases}
  1^{X_{l}} & \textrm{if } k=l \textrm{ and } \iota=1^{[l]} \\
  (-1)^{l}\partial_{l}^{X} & \textrm{if } k=l-1 \textrm{ and } \iota=\delta_{l,l} \\
  0 & \textrm{otherwise}
 \end{dcases}
\end{equation*}

\item[(ii)] $\DK$ assigns to each morphism $\phi:X \rightarrow Y$ of connective $R$-complexes, the natural transformation $\DK(\phi):\DK(X) \rightarrow \DK(Y)$, where for any $[n] \in \Obj(\Delta)$, the $R$-homomorphism
     $$\DK(X)(\phi)^{[n]}:\DK(X)\left([n]\right)=\textstyle\bigoplus_{\substack{{g\in \Mor_{\Delta}\left([n],[l]\right)} \\ {g \textrm{ surjective}}}}X_{l} \rightarrow \textstyle\bigoplus_{\substack{{f\in \Mor_{\Delta}\left([n],[k]\right)} \\ {f \textrm{ surjective}}}}Y_{k}=\DK(Y)\left([n]\right)$$
     is given by a matrix $\left(\DK(\phi)_{f,g}^{[n]}:X_{l} \rightarrow Y_{k}\right)$ where we have:
\begin{equation*}
 \label{eqn:damage piecewise}
 \DK(X)(\phi)_{f,g}^{[n]}=
 \begin{dcases}
  \phi_{k} & \textrm{if } f=g \\
  0 & \textrm{otherwise}
 \end{dcases}
\end{equation*}

\end{enumerate}
\end{definition}

In order to get a flavor of the Dold-Kan functor, we give the following example.

\begin{example} \label{4.10}
Let $R$ be a ring, and $X$ a connective $R$-complex. Let $\eta\in \Mor_{\Delta}\left([1],[2]\right)$ be given by $\eta(0)=0$ and $\eta(1)=1$. Consider the following $R$-homomorphism:
$$\DK(X)(\eta):\DK(X)\left([2]\right)=\textstyle\bigoplus_{\substack{{g\in \Mor_{\Delta}\left([2],[l]\right)} \\ {g \textrm{ surjective}}}}X_{l} \rightarrow \textstyle\bigoplus_{\substack{{f\in \Mor_{\Delta}\left([1],[k]\right)} \\ {f \textrm{ surjective}}}}X_{k}=\DK(X)\left([1]\right)$$
We first determine all the surjective maps $g$ in $\Mor_{\Delta}\left([2],[l]\right)$ for $0\leq l\leq 2$. We have:

Case I: $l=0$; $g_{1}:[2]\rightarrow [0]$ given by $g_{1}(0)=g_{1}(1)=g_{1}(2)=0$.

Case II: $l=1$; $g_{2}:[2]\rightarrow [1]$ given by $g_{2}(0)=0$, $g_{2}(1)=1$, and $g_{2}(2)=1$; and $g_{3}:[2]\rightarrow [1]$ given by $g_{3}(0)=0$, $g_{3}(1)=0$, and $g_{3}(2)=1$.

Case III: $l=2$; If $g_{4}:[2]\rightarrow [2]$ is surjective, then it must be injective as well, so $g_{4}=1^{[2]}$.

\noindent
We next determine all the surjective maps $f$ in $\Mor_{\Delta}\left([1],[k]\right)$ for $0\leq k\leq 1$. We have:

Case I: $k=0$; $f_{1}:[1]\rightarrow [0]$ given by $f_{1}(0)=f_{1}(1)=0$.

Case II: $k=1$; If $f_{2}:[1]\rightarrow [1]$ is surjective, then it must be injective as well, so $f_{2}=1^{[1]}$.

\noindent
By direct inspection, we observe that the only possible commutative diagrams are as follows:

\[
 \begin{tikzcd}
  {[1]} \arrow{r}{\eta} \arrow{d}[swap]{f_{1}} & {[2]} \arrow{d}{g_{1}} \\
  {[0]} \arrow{r}{1^{[0]}} & {[0]}
\end{tikzcd}
\quad \text{,} \quad
 \begin{tikzcd}
  {[1]} \arrow{r}{\eta} \arrow{d}[swap]{f_{2}} & {[2]} \arrow{d}{g_{2}} \\
  {[1]} \arrow{r}{1^{[1]}} & {[1]}
\end{tikzcd}
 \quad \text{,} \quad
 \begin{tikzcd}
  {[1]} \arrow{r}{\eta} \arrow{d}[swap]{f_{1}} & {[2]} \arrow{d}{g_{3}} \\
  {[0]} \arrow{r}{\delta_{1,1}} & {[1]}
\end{tikzcd}
\quad \text{,} \quad
 \begin{tikzcd}
  {[1]} \arrow{r}{\eta} \arrow{d}[swap]{f_{2}} & {[2]} \arrow{d}{g_{4}} \\
  {[1]} \arrow{r}{\delta_{2,2}} & {[2]}
\end{tikzcd}
\]

\noindent
It follows that
$$\DK(X)(\eta):\DK(X)\left([2]\right)= X_{0}\oplus X_{1} \oplus X_{1} \oplus X_{2} \rightarrow X_{0}\oplus X_{1} =\DK(X)\left([1]\right)$$
can be represented by the following matrix:
\begin{equation*}
\begin{pmatrix}
1^{X_{0}} & 0 & -\partial_{1}^{X} & 0 \\
0 & 1^{X_{1}} & 0 & \partial_{2}^{X}
\end{pmatrix}
\end{equation*}
\end{example}

The following proposition describes how the Dold-Kan functor interacts with surjections.

\begin{proposition} \label{4.11}
Let $R$ be a ring, and $X$ an $R$-complex. Let $\lambda_{f}^{X}:X_{k}\rightarrow \DK(X)\left([n]\right)$ be the canonical injection corresponding to the index $f\in \Mor_{\Delta}\left([n],[k]\right)$. If $\eta\in \Mor_{\Delta}\left([n],[m]\right)$, $g\in \Mor_{\Delta}\left([m],[l]\right)$ is surjective, and $x\in X_{l}$, then we have:
\begin{equation*}
 \label{eqn:damage piecewise}
 \DK(X)(\eta)\left(\lambda_{g}^{X}(x)\right)=
 \begin{dcases}
  \lambda_{g\eta}^{X}(x) & \textrm{if } g\eta \textrm{ is surjective} \\
  0 & \textrm{if } g\eta \textrm{ is not surjective but } m\in \im(\eta)
 \end{dcases}
\end{equation*}
\end{proposition}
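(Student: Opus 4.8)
The plan is to unwind the matrix description of $\DK(X)(\eta)$ against the element $\lambda_g^X(x)$, which by construction is concentrated in the single summand of $\DK(X)([m])$ indexed by $g$. So the first step is to record that
\[
\DK(X)(\eta)\bigl(\lambda_g^X(x)\bigr)=\sum_{\substack{f\in\Mor_\Delta([n],[k])\\ f\text{ surjective}}}\lambda_f^X\bigl(\DK(X)(\eta)_{f,g}(x)\bigr),
\]
so that everything reduces to deciding, for the fixed $g$ and $\eta$, which entries $\DK(X)(\eta)_{f,g}$ are nonzero.

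Next I would appeal to the uniqueness of the epi--mono factorization in $\Delta$ (Remark \ref{4.2}): write $g\eta=\iota_0 f_0$ with $f_0\colon[n]\to[k_0]$ surjective and $\iota_0\colon[k_0]\to[l]$ injective. According to Definition \ref{4.9}, the entry $\DK(X)(\eta)_{f,g}$ is read off from a factorization $g\eta=\iota f$ with $f$ surjective and $\iota$ injective; since the index $f$ is already surjective, such an $\iota$ can exist only when $f=f_0$ (and then $\iota=\iota_0$, $k=k_0$), so every entry with $f\neq f_0$ vanishes. Hence
\[
\DK(X)(\eta)\bigl(\lambda_g^X(x)\bigr)=\lambda_{f_0}^X\bigl(\DK(X)(\eta)_{f_0,g}(x)\bigr),
\]
and by the case distinction in Definition \ref{4.9} this entry equals $1^{X_l}$ when $\iota_0=1^{[l]}$ (i.e.\ $g\eta$ is surjective), equals $(-1)^l\partial_l^X$ when $\iota_0=\delta_{l,l}$ (i.e.\ $\im(g\eta)=[l]\setminus\{l\}$), and is $0$ otherwise.

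Finally I would dispatch the two cases. If $g\eta$ is surjective, then $f_0=g\eta$, $k_0=l$, $\iota_0=1^{[l]}$, and the displayed formula collapses to $\lambda_{g\eta}^X(x)$. If $g\eta$ is not surjective but $m\in\im(\eta)$, I would use that $g$ is order-preserving and surjective with $m=\max[m]$ to conclude $g(m)=\max[l]=l$; thus $l\in g(\im\eta)=\im(g\eta)$, while $\im(g\eta)\subsetneq[l]$ forces $\im(g\eta)$ to omit some element strictly below $l$. Therefore $\im(\iota_0)=\im(g\eta)$ is neither $[l]$ nor $[l]\setminus\{l\}$, the entry $\DK(X)(\eta)_{f_0,g}$ is $0$, and the whole expression vanishes. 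The only genuinely delicate point is the reading of Definition \ref{4.9} in the second step --- that an entry $\DK(X)(\eta)_{f,g}$ can be nonzero only when $f$ is the epimorphic part of $g\eta$ --- and this is precisely where uniqueness of epi--mono factorizations is invoked; the rest is routine manipulation of order-preserving maps, and one can cross-check the outcome against the matrix computed in Example \ref{4.10}, where $m=2\notin\im(\eta)$ so the proposition makes no assertion, consistent with the surviving $\pm\partial$ entries appearing there.
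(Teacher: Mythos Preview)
Your proof is correct and follows essentially the same route as the paper's: both identify the column of the matrix $\DK(X)(\eta)$ indexed by $g$ via the unique epi--mono factorization of $g\eta$, and both rule out the $\delta_{l,l}$ case in the second part by observing that $g(m)=l$ (since $g$ is surjective and order-preserving) together with $m\in\im(\eta)$ forces $l\in\im(g\eta)$. The only organizational difference is that you first isolate the unique potentially nonzero entry $\DK(X)(\eta)_{f_0,g}$ and then check its value, whereas the paper argues separately that no diagram with $\iota=1^{[l]}$ or $\iota=\delta_{l,l}$ can exist; the content is the same.
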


\begin{proof}
Let $A$ denote the matrix representing $\DK(X)(\eta)$ as in Definition \ref{4.9}. First suppose that $g\eta$ is surjective. We focus on the column $g$ of $A$. By the uniqueness of the face-degeneracy factorization in Remark \ref{4.2}, we can only have the following commutative diagram:
\begin{equation*}
  \begin{tikzcd}
  {[n]} \arrow{r}{\eta} \arrow{d}[swap]{g\eta}
  & {[m]} \arrow{d}{g}
  \\
  {[l]} \arrow{r}{1^{[l]}} & {[l]}
\end{tikzcd}
\end{equation*}

\noindent
This means that the column $g$ of $A$ has $1^{X_{l}}$ in the position $(g\eta,g)$ and zero elsewhere. We note that $\lambda_{g}^{X}(x)$ has $x$ in its coordinate corresponding to $g$ and zero elsewhere. Considering $\lambda_{g}^{X}(x)$ as a column matrix, we see that the matrix product $A\lambda_{g}^{X}(x)$ is equal to the column matrix with only one non-zero coordinate $1^{X_{l}}(x)=x$ in position $g\eta$. It follows that $\DK(X)(\eta)\left(\lambda_{g}^{X}(x)\right)=A\lambda_{g}^{X}(x)=\lambda_{g\eta}^{X}(x)$.

Next suppose that $g\eta$ is not surjective but $m\in \im(\eta)$. We focus on the column $g$ of $A$. If there is a commutative diagram as
\begin{equation*}
  \begin{tikzcd}
  {[n]} \arrow{r}{\eta} \arrow{d}[swap]{f}
  & {[m]} \arrow{d}{g}
  \\
  {[l]} \arrow{r}{1^{[l]}} & {[l]}
\end{tikzcd}
\end{equation*}

\noindent
in which $f$ is surjective, then $g\eta=f$ is surjective which contradicts our assumption. Therefore, there is no such diagram. On the other hand, if there is a commutative diagram as
\begin{equation*}
  \begin{tikzcd}
  {[n]} \arrow{r}{\eta} \arrow{d}[swap]{f}
  & {[m]} \arrow{d}{g}
  \\
  {[l-1]} \arrow{r}{\delta_{l,l}} & {[l]}
\end{tikzcd}
\end{equation*}

\noindent
in which $f$ is surjective, then $g\eta=\delta_{l,l}f$. Let $g(m)=p\in [l]$. If $p\neq l$, then $p<l$. Since $g$ is surjective, there is an element $r\in [m]$ such that $g(r)=l$. But then $g(m)=p<l=g(r)$ which is a contradiction since $r\leq m$. Therefore, $g(m)=p=l$. Since $m\in \im(\eta)$, there is an element $s\in[n]$ such that $\eta(s)=m$. But then $\delta_{l,l}\left(f(s)\right)=g\left(\eta(s)\right)=g(m)=l$, so $l\in \im(\delta_{l,l})$ which is a contradiction. As a consequence, there is no such diagram either. This means that the column $g$ of $A$ is entirely zero. Considering $\lambda_{g}^{X}(x)$ as a column matrix, we see that the matrix product $A\lambda_{g}^{X}(x)$ is equal to the zero column matrix. It follows that $\DK(X)(\eta)\left(\lambda_{g}^{X}(x)\right)=A\lambda_{g}^{X}(x)=0$.
\end{proof}

\begin{corollary} \label{4.12}
Let $R$ be a ring, and $X$ an $R$-complex. Let $\lambda_{f}^{X}:X_{k}\rightarrow \DK(X)\left([n]\right)$ be the canonical injection corresponding to the index $f\in \Mor_{\Delta}\left([n],[k]\right)$. If $g\in \Mor_{\Delta}\left([n],[l]\right)$ is surjective and $x\in X_{l}$, then we have
\begin{equation*}
 \label{eqn:damage piecewise}
 d_{n,i}^{\DK(X)}\left(\lambda_{g}^{X}(x)\right)=
 \begin{dcases}
  \lambda_{g\delta_{n,i}}^{X}(x) & \textrm{if } 0\leq i \leq n \textrm{ and } g\delta_{n,i} \textrm{ is surjective} \\
  0 & \textrm{if } 0\leq i \leq n-1 \textrm{ and } g\delta_{n,i} \textrm{ is not surjective} \\
 (-1)^{l}\lambda_{\overline{g}}^{X}\left(\partial_{l}^{X}(x)\right) & \textrm{if } i=n \textrm{ and } g\delta_{n,n} \textrm{ is not surjective}
 \end{dcases}
\end{equation*}

\noindent
for every $0\leq i \leq n$, where $\overline{g}\in \Mor_{\Delta}\left([n-1],[l-1]\right)$ is induced by $g$.
\end{corollary}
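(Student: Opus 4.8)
The plan is to observe that, in the functorial description of simplicial objects (Remark \ref{4.2}), the face morphism $d_{n,i}^{\DK(X)}$ is exactly $\DK(X)(\delta_{n,i})$, where $\delta_{n,i}\in\Mor_{\Delta}([n-1],[n])$ is the $i$th face map, and then to invoke Proposition \ref{4.11} with $\eta=\delta_{n,i}$ for the first two cases and fall back on the matrix definition of the Dold-Kan functor for the last case.

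For the first two cases I would apply Proposition \ref{4.11} with $m=n$ and $\eta=\delta_{n,i}$. If $g\delta_{n,i}$ is surjective, the proposition gives at once $d_{n,i}^{\DK(X)}\bigl(\lambda_{g}^{X}(x)\bigr)=\lambda_{g\delta_{n,i}}^{X}(x)$. If $0\leq i\leq n-1$ and $g\delta_{n,i}$ is not surjective, then since $\delta_{n,i}$ is the injection that misses exactly $i$ we have $\im(\delta_{n,i})=[n]\setminus\{i\}$, hence $n\in\im(\delta_{n,i})$; Proposition \ref{4.11} then yields $d_{n,i}^{\DK(X)}\bigl(\lambda_{g}^{X}(x)\bigr)=0$.

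The remaining case is $i=n$ with $g\delta_{n,n}$ not surjective; here $n\notin\im(\delta_{n,n})$, so Proposition \ref{4.11} does not apply and I would argue directly. Since $\delta_{n,n}$ is the inclusion $[n-1]\hookrightarrow[n]$, the composite $g\delta_{n,n}$ is just the restriction of $g$ to $\{0,\dots,n-1\}$, and its image equals $[l]\setminus\{g(n)\}$; as $g$ is order-preserving and surjective, $g(n)=l$, so $g\delta_{n,n}$ is surjective onto $[l-1]$. Thus the unique face-degeneracy factorization of $g\delta_{n,n}$ (Remark \ref{4.2}) is $g\delta_{n,n}=\delta_{l,l}\overline{g}$ with $\overline{g}\in\Mor_{\Delta}([n-1],[l-1])$ surjective. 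By the definition of $\DK(X)(\delta_{n,n})$ in Definition \ref{4.9}, the only nonzero entry in the column indexed by $g$ of the representing matrix is the one in position $(\overline{g},g)$, equal to $(-1)^{l}\partial_{l}^{X}$; reading off the matrix product then gives $d_{n,n}^{\DK(X)}\bigl(\lambda_{g}^{X}(x)\bigr)=(-1)^{l}\lambda_{\overline{g}}^{X}\bigl(\partial_{l}^{X}(x)\bigr)$.

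The only step requiring genuine verification is the combinatorial claim that $g\delta_{n,n}$ factors as $\delta_{l,l}\overline{g}$ with $\overline{g}$ surjective; everything else is a routine unwinding of definitions, so I do not expect a serious obstacle.
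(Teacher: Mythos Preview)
The proposal is correct and follows essentially the same approach as the paper: both identify $d_{n,i}^{\DK(X)}=\DK(X)(\delta_{n,i})$, apply Proposition~\ref{4.11} for the first two cases (using $n\in\im(\delta_{n,i})$ when $i\leq n-1$), and for the case $i=n$ with $g\delta_{n,n}$ not surjective argue directly that the unique epi-mono factorization is $g\delta_{n,n}=\delta_{l,l}\overline{g}$ and read off the matrix entry from Definition~\ref{4.9}. Your justification that the image of $g\delta_{n,n}$ is exactly $[l-1]$ is slightly compressed but correct, since removing a single domain element from a surjection drops at most one value from the image, and non-surjectivity forces exactly one to drop.
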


\begin{proof}
Let $0\leq i \leq n$. We note that $\delta_{n,i}\in \Mor_{\Delta}\left([n-1],[n]\right)$ and $d_{n,i}^{\DK(X)}=\DK(X)(\delta_{n,i})$. Let $A_{n,i}$ denote the matrix representing $d_{n,i}^{\DK(X)}$ as in Definition \ref{4.9}. If $g\delta_{n,i}$ is surjective, then by Proposition \ref{4.11}, we have $d_{n,i}^{\DK(X)}\left(\lambda_{g}^{X}(x)\right)=\DK(X)(\delta_{n,i})\left(\lambda_{g}^{X}(x)\right)=\lambda_{g\delta_{n,i}}^{X}(x)$. If $0\leq i \leq n-1$, then $n\in \im(\delta_{n,i})$, so if in addition, $g\delta_{n,i}$ is not surjective, then Proposition \ref{4.11} implies that $d_{n,i}^{\DK(X)}\left(\lambda_{g}^{X}(x)\right)=\DK(X)(\delta_{n,i})\left(\lambda_{g}^{X}(x)\right)=0$. Now suppose that $g\delta_{n,n}$ is not surjective. We focus on the column $g$ of $A_{n,i}$. If there is a commutative diagram as
\begin{equation*}
  \begin{tikzcd}
  {[n-1]} \arrow{r}{\delta_{n,n}} \arrow{d}[swap]{f}
  & {[n]} \arrow{d}{g}
  \\
  {[l]} \arrow{r}{1^{[l]}} & {[l]}
\end{tikzcd}
\end{equation*}

\noindent
in which $f$ is surjective, then then $g\delta_{n,n}=f$ is surjective which contradicts our assumption. Therefore, there is no such diagram. On the other hand, as we saw in the proof of Proposition \ref{4.11}, since $g$ is surjective, we must have $g(n)=l$. But $g\delta_{n,n}$ is not surjective which means that if we omit $n$ from the domain of $g$, then it will no longer remain surjective. That is to say, $l$ is assumed by $g$ only at $n$. It follows that $g$ induces a surjective map $\overline{g}\in \Mor_{\Delta}\left([n-1],[l-1]\right)$ given by $\overline{g}(i)=g(i)$ for every $i\in [n-1]$. Therefore, we get the following commutative diagram:
\begin{equation*}
  \begin{tikzcd}
  {[n-1]} \arrow{r}{\delta_{n,n}} \arrow{d}[swap]{\overline{g}}
  & {[n]} \arrow{d}{g}
  \\
  {[l-1]} \arrow{r}{\delta_{l,l}} & {[l]}
\end{tikzcd}
\end{equation*}

\noindent
This implies that the column of the matrix $A_{n,i}$ corresponding to $g$ has all its entries zero except for the entry in the position $(\overline{g},g)$ which is equal to $(-1)^{l}\partial_{l}^{X}$. As a result, we have $d_{n,i}^{\DK(X)}\left(\lambda_{g}^{X}(x)\right)=A_{n,i}\lambda_{g}^{X}(x)= (-1)^{l}\lambda_{\overline{g}}^{X}\left(\partial_{l}^{X}(x)\right)$.
\end{proof}

\begin{corollary} \label{4.13}
Let $R$ be a ring, and $X$ an $R$-complex. Let $\lambda_{f}^{X}:X_{k}\rightarrow \DK(X)\left([n]\right)$ be the canonical injection corresponding to the index $f\in \Mor_{\Delta}\left([n],[k]\right)$. If $g\in \Mor_{\Delta}\left([n],[l]\right)$ is surjective and $x\in X_{l}$, then we have
\begin{equation*}
 \label{eqn:damage piecewise}
 s_{n,i}^{\DK(X)}\left(\lambda_{g}^{X}(x)\right)=\lambda_{g\sigma_{n,i}}^{X}(x)
\end{equation*}

\noindent
for every $0\leq i \leq n$.
\end{corollary}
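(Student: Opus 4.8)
The plan is to reduce the claim directly to Proposition \ref{4.11}, exactly as was done for the face morphisms in Corollary \ref{4.12}. Under the identification of simplicial modules with contravariant functors on $\Delta$ from Remark \ref{4.2}, the degeneracy morphism $s_{n,i}^{\DK(X)}:\DK(X)\left([n]\right)\rightarrow \DK(X)\left([n+1]\right)$ is nothing but $\DK(X)(\sigma_{n,i})$, where $\sigma_{n,i}\in \Mor_{\Delta}\left([n+1],[n]\right)$ is the degeneracy map; this is the same observation used for $d_{n,i}^{\DK(X)}=\DK(X)(\delta_{n,i})$ in the proof of Corollary \ref{4.12}. Hence it suffices to evaluate $\DK(X)(\sigma_{n,i})\left(\lambda_{g}^{X}(x)\right)$.

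First I would note that the composite $g\sigma_{n,i}\in \Mor_{\Delta}\left([n+1],[l]\right)$ is surjective, being the composite of the surjection $\sigma_{n,i}$ with the surjection $g$. I would then apply Proposition \ref{4.11} with the map $\sigma_{n,i}\in \Mor_{\Delta}\left([n+1],[n]\right)$ in the role of $\eta$, the surjective map $g\in \Mor_{\Delta}\left([n],[l]\right)$, and the element $x\in X_{l}$. Since $g\sigma_{n,i}$ is surjective, the first case of Proposition \ref{4.11} yields $\DK(X)(\sigma_{n,i})\left(\lambda_{g}^{X}(x)\right)=\lambda_{g\sigma_{n,i}}^{X}(x)$, which is precisely the asserted formula.

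There is essentially no obstacle to overcome here; in contrast with Corollary \ref{4.12}, no case distinction is needed, because precomposing a surjection with a degeneracy map can never destroy surjectivity, whereas precomposing with a face map can. For a fully self-contained argument one may instead argue directly from Definition \ref{4.9}: in the matrix representing $\DK(X)(\sigma_{n,i})$, examine the column indexed by $g$. The face-degeneracy factorization of the surjection $g\sigma_{n,i}$ is $[n+1]\xrightarrow{g\sigma_{n,i}}[l]\xrightarrow{1^{[l]}}[l]$, so that column has $1^{X_{l}}$ in position $(g\sigma_{n,i},g)$ and zeros elsewhere; multiplying it against the column vector $\lambda_{g}^{X}(x)$, whose only nonzero coordinate is $x$ in position $g$, produces $\lambda_{g\sigma_{n,i}}^{X}(x)$.
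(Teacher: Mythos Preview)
Your proposal is correct and follows essentially the same approach as the paper's own proof: identify $s_{n,i}^{\DK(X)}=\DK(X)(\sigma_{n,i})$, observe that $g\sigma_{n,i}$ is surjective because $\sigma_{n,i}$ is, and apply the first case of Proposition~\ref{4.11}. Your added remark contrasting with Corollary~\ref{4.12} and the optional direct matrix argument are both accurate elaborations, but the core argument is identical to the paper's.
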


\begin{proof}
Let $0\leq i \leq n$. We note that $\sigma_{n,i}\in \Mor_{\Delta}\left([n+1],[n]\right)$ and $s_{n,i}^{\DK(X)}=\DK(X)(\sigma_{n,i})$. Since $\sigma_{n,i}$ is surjective, we conclude that $g\sigma_{n,i}$ is surjective, so by Proposition \ref{4.11}, we have $s_{n,i}^{\DK(X)}\left(\lambda_{g}^{X}(x)\right)=\DK(X)(\sigma_{n,i})\left(\lambda_{g}^{X}(x)\right)=\lambda_{g\sigma_{n,i}}^{X}(x)$.
\end{proof}

Here is the celebrated Dold-Kan Correspondence.

\begin{theorem} \label{4.14}
Let $R$ be a ring. Then there is an adjoint equivalence of categories as follows:
$$(\DK,\Nor):\mathcal{C}_{\geq 0}(R) \leftrightarrows \mathpzc{s}\mathcal{M}\mathpzc{od}(R)$$
\end{theorem}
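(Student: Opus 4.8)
The strategy is the standard one for Dold--Kan: produce natural isomorphisms $\eta\colon\mathrm{id}_{\mathcal{C}_{\geq 0}(R)}\xrightarrow{\sim}\Nor\circ\DK$ and $\varepsilon\colon\DK\circ\Nor\xrightarrow{\sim}\mathrm{id}_{\mathpzc{s}\mathcal{M}\mathpzc{od}(R)}$, then check that $(\eta,\varepsilon)$ is a unit--counit pair exhibiting $\DK$ as left adjoint to $\Nor$; since both transformations are isomorphisms, this is automatically an adjoint equivalence.

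\textbf{The isomorphism $\eta$.} For a connective complex $X$, recall $\DK(X)([n])=\bigoplus_{g}X_k$ with $g$ running over surjections $g\colon[n]\twoheadrightarrow[k]$. I would first use Corollary \ref{4.12} to see that $\lambda_{1^{[n]}}^X(X_n)$ is killed by $d_{n,i}^{\DK(X)}$ for all $0\le i\le n-1$ (because $1^{[n]}\delta_{n,i}=\delta_{n,i}$ is injective but not bijective), so $\lambda_{1^{[n]}}^X(X_n)\subseteq\Nor(\DK(X))_n$. Conversely, applying Remark \ref{4.6} to the simplicial module $\DK(X)$ and using Corollary \ref{4.13}, the degenerate submodule $D(\DK(X))_n=\sum_{i=0}^{n-1}\im\big(s_{n-1,i}^{\DK(X)}\big)$ is the direct sum of the summands $\lambda_g^X(X_k)$ over the nonidentity surjections $g$ (every nonidentity surjection out of $[n]$ factors through a degeneracy $\sigma_{n-1,i}$), so $\lambda_{1^{[n]}}^X(X_n)$ is a complement of $D(\DK(X))_n$; combined with $C(\DK(X))=\Nor(\DK(X))\oplus D(\DK(X))$ and the inclusion just proved, this forces $\Nor(\DK(X))_n=\lambda_{1^{[n]}}^X(X_n)\cong X_n$. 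The degree-$n$ map $\eta_X\colon x\mapsto\lambda_{1^{[n]}}^X(x)$ is then a chain map: the third case of Corollary \ref{4.12} with $i=n$ gives $d_{n,n}^{\DK(X)}\big(\lambda_{1^{[n]}}^X(x)\big)=(-1)^n\lambda_{1^{[n-1]}}^X\big(\partial_n^X(x)\big)$, and the sign $(-1)^n$ in the normalized differential of Definition \ref{4.5} cancels this. Naturality in $X$ is read off from the matrix description of $\DK$ on morphisms in Definition \ref{4.9}.

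\textbf{The isomorphism $\varepsilon$.} For a simplicial module $M$, define $\varepsilon_M\colon\DK(\Nor(M))\to M$ degreewise by $\varepsilon_M\big(\lambda_g^{\Nor(M)}(x)\big)=M(g)(x)$ for a surjection $g\colon[n]\twoheadrightarrow[k]$ and $x\in\Nor(M)_k$. That $\varepsilon_M$ is bijective in each degree is the classical normalization decomposition $M_n\cong\bigoplus_{g\colon[n]\twoheadrightarrow[k]}\Nor(M)_k$, a refinement of the splitting $C(M)=\Nor(M)\oplus D(M)$ of Remark \ref{4.6}; I would prove it by induction on $n$, peeling off $D(M)_n$, or else cite \cite{We}. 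Compatibility with degeneracies is immediate from Corollary \ref{4.13}. Compatibility with faces is a case check against Corollary \ref{4.12}: if $g\delta_{n,i}$ is surjective both sides equal $M(g\delta_{n,i})(x)$ by contravariant functoriality; if $0\le i\le n-1$ and $g\delta_{n,i}$ is not surjective, writing $g\delta_{n,i}=\iota h$ with $\iota$ a nonidentity injection which (since order-preserving surjections preserve top elements) does not omit $k$, one gets $M(\iota)(x)=0$ for $x\in\Nor(M)_k$; and for $i=n$ the factorization $g\delta_{n,n}=\delta_{k,k}\overline{g}$ from the proof of Corollary \ref{4.12}, together with $\partial_k^{\Nor(M)}=(-1)^k d_{k,k}^M$, matches the third case of that corollary. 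Naturality in $M$ again follows from Definition \ref{4.9}.

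\textbf{The adjunction and the obstacle.} With $\eta$ and $\varepsilon$ available I would verify the triangle identities $\Nor(\varepsilon_M)\circ\eta_{\Nor(M)}=1^{\Nor(M)}$ and $\varepsilon_{\DK(X)}\circ\DK(\eta_X)=1^{\DK(X)}$; after unwinding, the first collapses to $M(1^{[k]})=1^{M_k}$, and the second to Proposition \ref{4.11} applied to a surjection $g$, which gives $(\DK X)(g)\big(\lambda_{1^{[k]}}^X(x)\big)=\lambda_g^X(x)$. This shows $(\DK,\Nor)$ is an adjoint equivalence with $\DK$ the left adjoint. The real obstacle is the normalization decomposition $M_n\cong\bigoplus_{g}\Nor(M)_k$ — the combinatorial core of Dold--Kan — together with the somewhat lengthy but routine bookkeeping confirming that $\eta$ and $\varepsilon$ intertwine the differentials and the face and degeneracy operators; once Proposition \ref{4.11} and Corollaries \ref{4.12} and \ref{4.13} are in hand, everything else is formal.
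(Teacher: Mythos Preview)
The paper does not actually prove this theorem: its entire proof is a citation to \cite[Theorem 8.4.1 and Exercise 8.4.2]{We}. Your proposal, by contrast, supplies a genuine argument, and it is the standard one (essentially Weibel's), correctly adapted to the paper's conventions and machinery.

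Your identification of $\Nor(\DK(X))_n$ with $\lambda_{1^{[n]}}^X(X_n)$ via Corollaries \ref{4.12} and \ref{4.13} and Remark \ref{4.6} is correct, as is the differential check. For $\varepsilon_M$, your case analysis of the face compatibility is right; in particular, the observation that for $0\le i\le n-1$ the injection $\iota$ in the epi--mono factorization of $g\delta_{n,i}$ must hit $k$ (since $g(n)=k$ and $n\in\im(\delta_{n,i})$), hence omits some $j<k$ and factors through $\delta_{k,j}$, so that $M(\iota)(x)=0$ for $x\in\Nor(M)_k$, is exactly the needed point. The triangle identities reduce as you say, with the second one following from Proposition \ref{4.11}. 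The one step you defer --- the decomposition $M_n\cong\bigoplus_{g\colon[n]\twoheadrightarrow[k]}\Nor(M)_k$ --- is indeed the combinatorial heart of Dold--Kan, and citing \cite{We} for it is no worse than what the paper itself does for the whole theorem.

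In short: your proof is correct, complete modulo the cited decomposition, and strictly more informative than the paper's own treatment.
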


\begin{proof}
See \cite[Theorem 8.4.1 and Exercise 8.4.2]{We}.
\end{proof}

It is worth noting that under the Dold-Kan Correspondence, the connective chain complex concentrated in degree zero corresponds to the constant simplicial module.

The cofibrant generation property of the model structure of connective chain complexes in conjunction with the Dold-Kan Correspondence allow us to transfer the model structure of connective chain complexes to simplicial modules.

\begin{theorem} \label{4.15}
Let $R$ be a ring. Then $\mathpzc{s}\mathcal{M}\mathpzc{od}(R)$ is a cofibrantly generated model category with the model structure given as follows:
\begin{enumerate}
\item[(i)] Fibrations are morphisms $f:M\rightarrow N$ in $\mathpzc{s}\mathcal{M}\mathpzc{od}(R)$ such that $\Nor(f)_{i}:\Nor(M)_{i}\rightarrow \Nor(N)_{i}$ is surjective for every $i\geq 1$.
\item[(ii)] Cofibrations are morphisms $f:M\rightarrow N$ in $\mathpzc{s}\mathcal{M}\mathpzc{od}(R)$ such that $\Nor(f)_{i}:\Nor(M)_{i}\rightarrow \Nor(N)_{i}$ is injective and $\Coker\left(\Nor(f)_{i}\right)$ is a projective left $R$-module for every $i\geq 0$.
\item[(iii)] Weak equivalences are morphisms $f:M\rightarrow N$ in $\mathpzc{s}\mathcal{M}\mathpzc{od}(R)$ such that $\Nor(f):\Nor(M)\rightarrow \Nor(N)$ is a quasi-isomorphism in $\mathcal{C}_{\geq 0}(R)$.
\end{enumerate}
\end{theorem}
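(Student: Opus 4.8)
The plan is to obtain the model structure on $\mathpzc{s}\mathcal{M}\mathpzc{od}(R)$ purely by transport of structure across the Dold-Kan equivalence, exploiting the fact that the model structure on connective chain complexes is cofibrantly generated. Concretely, I would invoke Corollary \ref{2.16} applied to the adjoint equivalence furnished by Theorem \ref{4.14}.

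First, recall from Theorem \ref{3.1} that $\mathcal{C}_{\geq 0}(R)$ is a cofibrantly generated model category, with fibrations the morphisms that are degreewise surjective in positive degrees, cofibrations the degreewise injections with degreewise projective cokernel, and weak equivalences the quasi-isomorphisms. Next, by Theorem \ref{4.14}, the pair $(\DK,\Nor):\mathcal{C}_{\geq 0}(R) \leftrightarrows \mathpzc{s}\mathcal{M}\mathpzc{od}(R)$ is an adjoint equivalence of categories. Since $\mathpzc{s}\mathcal{M}\mathpzc{od}(R) = \mathcal{M}\mathpzc{od}(R)^{\Delta^{\op}}$ is a functor category with values in a locally small bicomplete category, it is itself locally small and bicomplete (alternatively, these properties transport across the equivalence). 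Hence Corollary \ref{2.16} applies with $\mathcal{C} = \mathcal{C}_{\geq 0}(R)$, $\mathcal{D} = \mathpzc{s}\mathcal{M}\mathpzc{od}(R)$, $\mathcal{F} = \DK$, and $\mathcal{G} = \Nor$: it produces a cofibrantly generated model structure on $\mathpzc{s}\mathcal{M}\mathpzc{od}(R)$, whose corresponding generating sets are $\DK(\mathcal{X})$ and $\DK(\mathcal{Y})$, and in which a morphism $f:M \rightarrow N$ is a fibration (respectively a cofibration, a weak equivalence) if and only if $\Nor(f):\Nor(M) \rightarrow \Nor(N)$ is a fibration (respectively a cofibration, a weak equivalence) in $\mathcal{C}_{\geq 0}(R)$.

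It then remains only to unwind these conditions via the explicit description in Theorem \ref{3.1}: $\Nor(f)$ is a fibration in $\mathcal{C}_{\geq 0}(R)$ precisely when $\Nor(f)_{i}$ is surjective for every $i \geq 1$, yielding (i); $\Nor(f)$ is a cofibration precisely when each $\Nor(f)_{i}$ is injective with $\Coker\left(\Nor(f)_{i}\right)$ a projective left $R$-module for every $i \geq 0$, yielding (ii); and $\Nor(f)$ is a weak equivalence precisely when it is a quasi-isomorphism, yielding (iii). This matches the asserted model structure verbatim.

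As for difficulty, there is essentially no obstacle at this stage: all the substantive work has been front-loaded into Theorem \ref{3.1} (the direct verification of the model axioms for connective complexes) and into Corollary \ref{2.16} (transport along an adjoint equivalence, together with Lemma \ref{2.10} for the small object argument). The only point deserving a moment's care is checking that $\mathpzc{s}\mathcal{M}\mathpzc{od}(R)$ meets the standing requirements of being locally small and bicomplete, which is immediate. The genuine difficulties of the paper, namely overcoming the acyclicity condition, are deferred to the transfer to simplicial commutative algebras in the later sections, where the equivalence is replaced by a mere adjunction and Theorem \ref{2.9} rather than Corollary \ref{2.16} must be invoked.
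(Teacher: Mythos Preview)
Your proposal is correct and follows precisely the paper's own argument: the proof in the paper reads in its entirety ``Follows from Corollary \ref{2.16}, Theorem \ref{3.1}, and Theorem \ref{4.14},'' which is exactly the transport-along-the-Dold--Kan-equivalence strategy you spell out. Your additional remarks about bicompleteness and the deferral of the real difficulty to the simplicial-algebra case are accurate commentary but not needed for the proof itself.
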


\begin{proof}
Follows from Corollary \ref{2.16}, Theorem \ref{3.1}, and Theorem \ref{4.14}.
\end{proof}

\section{Shuffle Product and a Variant of Eilenberg-Zilber Theorem}

In this section, we introduce the notion of shuffle product of connective chain complexes and prove a variant of the Eilenberg-Zilber Theorem. This will equip us with a formidable tool to overcome the acyclicity condition in Theorem \ref{2.9}.

We first review the concept of a shuffle. For any $p,q\geq 0$, let $S_{p+q}$ denote the symmetric group on the set $\{1,2,...,p+q\}$, and $\Sh(p,q)$ denote the set of $(p,q)$-shuffles in $S_{p+q}$, i.e.
$$\Sh(p,q)=\left\{\nu \in S_{p+q}\mid \nu(1)< \cdots <\nu(p) \textrm{ and } \nu(p+1)< \cdots <\nu(p+q)\right\}.$$
We note that $|\Sh(p,q)|=\binom{p+q}{p}$. For any $\nu \in \Sh(p,q)$, consider the set:
$$A_{\nu}=\left\{\left(\nu(\alpha),\nu(\beta)\right)\mid 1\leq \alpha \leq p\textrm{, } p+1 \leq \beta \leq p+q, \textrm{ and } \nu(\alpha)>\nu(\beta)\right\}$$
Then $\sgn(\nu)=(-1)^{|A_{\nu}|}$. Indeed, we fix the elements $\nu(p+1)< \cdots < \nu(p+q)$, and then for each $\alpha=p,p-1,...,1$, we compare $\nu(\alpha)$ with these elements to see how many moves to the right are needed in order to put $\nu(\alpha)$ in the right place.

We next fix some notations in the next remark.

\begin{remark} \label{5.1}
Let $n\geq 0$ and $0\leq k,l \leq n$. Moreover, let $f\in \Mor_{\Delta}\left([n],[k]\right)$ and $g\in \Mor_{\Delta}\left([n],[l]\right)$ be surjective. Define a map $\phi_{f,g}:[n]\rightarrow [k]\times [l]$ by setting $\phi_{f,g}(i)=\left(f(i),g(i)\right)$ for every $i\in [n]$. Suppose that $f$ and $g$ are not bijective. As in Remark \ref{4.2}, let $0\leq j_{1}< \cdots <j_{n-k}<n$ be the elements of $[n]$ for which $f(j)=f(j+1)$ so that $f=\sigma_{k,j_{1}} \cdots \sigma_{n-1,j_{n-k}}$, and $0\leq i_{1}< \cdots <i_{n-l}<n$ the elements of $[n]$ for which $g(i)=g(i+1)$ so that $g=\sigma_{l,i_{1}} \cdots \sigma_{n-1,i_{n-l}}$. If $\phi_{f,g}$ is not injective, then there are integers $m,m'\in [n]$ with $m<m'$ such that $\phi_{f,g}(m)=\phi_{f,g}(m')$, so we get $f(m)=f(m')$ and $g(m)=g(m')$. But $f(m)\leq f(m+1)\leq f(m')$ and $g(m)\leq g(m+1)\leq g(m')$, so $f(m)=f(m+1)$ and $g(m)=g(m+1)$. This means that $m\in \{i_{1},...,i_{n-l}\} \cap \{j_{1},...,j_{n-k}\}$. Conversely, if $m\in \{i_{1},...,i_{n-l}\} \cap \{j_{1},...,j_{n-k}\}$, then $\phi_{f,g}(m)=\phi_{f,g}(m+1)$, so $\phi_{f,g}$ is not injective. As a result, $\phi_{f,g}$ is injective if and only if $\{i_{1},...,i_{n-l}\} \cap \{j_{1},...,j_{n-k}\}=\emptyset$. If $f$ is bijective, then $f=1^{[n]}$ , so $\{j_{1},...,j_{n-k}\}=\emptyset$, and $\phi_{f,g}$ is injective. A similar result holds when $g$ is bijective. Including these two degenerate cases as well, we can state in general that $\phi_{f,g}$ is injective if and only if $\{i_{1},...,i_{n-l}\} \cap \{j_{1},...,j_{n-k}\}=\emptyset$.

Next let $n\geq 0$ and $0\leq k,l \leq n$ with $k+l=n$. Set:
$$\mathcal{S}_{n}^{k,l}= \left\{(f,g) \in \Mor_{\Delta}([n],[k])\times \Mor_{\Delta}([n],[l]) \suchthat \begin{tabular}{ccc}
$f$ and $g$ are surjective, and $\phi_{f,g}:[n]\rightarrow [k]\times[l]$ given \\
by $\phi_{f,g}(i)=(f(i),g(i))$ for every $i\in [n]$ is injective
  \end{tabular} \right\}$$
Let $(f,g)\in \mathcal{S}_{n}^{k,l}$. As before, let $0\leq j_{1}< \cdots <j_{l}<n$ be the elements of $[n]$ for which $f(j)=f(j+1)$ so that $f=\sigma_{k,j_{1}} \cdots \sigma_{n-1,j_{l}}$, and $0\leq i_{1}< \cdots <i_{k}<n$ the elements of $[n]$ for which $g(i)=g(i+1)$ so that $g=\sigma_{l,i_{1}} \cdots \sigma_{n-1,i_{k}}$. Define a shuffle $\nu_{f,g}\in \Sh(k,l)$ by setting $\nu_{f,g}(\alpha)=i_{\alpha}+1$ for every $1\leq \alpha \leq k$ and $\nu_{f,g}(\alpha)=j_{\alpha-k}+1$ for every $k+1\leq \alpha \leq n$. In other words, $\nu_{f,g}$ can be represented as follows:
\begin{equation*}
\nu_{f,g} =
\begin{pmatrix}
1 & \cdots & k & k+1 & \cdots & n \\
i_{1}+1 & \cdots & i_{k}+1 & j_{1}+1 & \cdots & j_{l}+1
\end{pmatrix}
\end{equation*}

\noindent
In the degenerate case where $f$ is bijective, we have $k=n$, so $l=0$, whence $g=0$, implying that $\{i_{1},...,i_{n}\}=\{0,...,n-1\}$. Therefore, $\nu_{f,g}=1^{[n]}$, so $\sgn(\nu_{f,g})=1$. The degenerate case where $g$ is bijective leads to a similar conclusion. Now define a map $\eta:\mathcal{S}_{n}^{k,l} \rightarrow \Sh(k,l)$ by setting $\eta(f,g)=\nu_{f,g}$ for every $(f,g)\in \mathcal{S}_{n}^{k,l}$. On the other hand, for any shuffle $\nu \in \Sh(k,l)$, define two maps as follows:
$$f_{n,\nu}=\sigma_{k,\nu(k+1)-1} \cdots \sigma_{n-1,\nu(n)-1}:[n] \rightarrow [k]$$
and
$$g_{n,\nu}=\sigma_{l,\nu(1)-1} \cdots \sigma_{n-1,\nu(k)-1}:[n] \rightarrow [l]$$
Then it is clear that $(f_{n,\nu},g_{n,\nu})\in \mathcal{S}_{n}^{k,l}$. Next define a map $\theta:\Sh(k,l) \rightarrow \mathcal{S}_{n}^{k,l}$ by setting $\theta(\nu)=(f_{n,\nu},g_{n,\nu})$ for every $\nu \in \Sh(k,l)$. It is easily seen that $\eta$ and $\theta$ are mutually inverse maps. Hence the two sets $ \mathcal{S}_{n}^{k,l}$ and $\Sh(k,l)$ are in 1-1 correspondence.
\end{remark}

We now define the shuffle product of connective chain complexes using the Dold-Kan functor. The terminology is justified by Remark \ref{5.1}.

\begin{definition} \label{5.2}
Let $R$ be a ring, $X$ a connective $R^{\op}$-complex, and $Y$ a connective $R$-complex. Then the \textit{shuffle product} $X\boxtimes_{R} Y$ is defined as follows. For any $n\geq 0$, set:
\small
$$\mathcal{S}_{n}= \left\{(f,g) \in \Mor_{\Delta}\left([n],[k]\right)\times \Mor_{\Delta}\left([n],[l]\right) \suchthat \begin{tabular}{ccc}
 $0 \leq k,l \leq n$, $f$ and $g$ are surjective, and $\phi_{f,g}:[n]\rightarrow [k]\times[l]$ \\
 given by $\phi_{f,g}(i)=\left(f(i),g(i)\right)$ for every $i\in [n]$ is injective
 \end{tabular} \right\}$$
\normalsize
Then we have
$$(X\boxtimes_{R}Y)_{n}:= \textstyle\bigoplus_{(f,g)\in \mathcal{S}_{n}}(X_{k}\otimes_{R}Y_{l})$$
for every $n\geq 0$. Moreover, if $\lambda_{f}^{X}:X_{k}\rightarrow \DK(X)\left([n]\right)$ is the canonical injection corresponding to the index $f \in \Mor_{\Delta}\left([n],[k]\right)$, $\lambda_{g}^{Y}:Y_{l}\rightarrow \DK(Y)\left([n]\right)$ is the canonical injection corresponding to the index $g \in \Mor_{\Delta}\left([n],[l]\right)$, and $\lambda_{f,g}^{X,Y}:X_{k}\otimes_{R}Y_{l}\rightarrow (X\boxtimes_{R}Y)_{n}$ is the canonical injection corresponding to the indices $f\in \Mor_{\Delta}\left([n],[k]\right)$ and $g \in \Mor_{\Delta}\left([n],[l]\right)$, then the differential
$$\partial_{n}^{X\boxtimes_{R}Y}:(X\boxtimes_{R}Y)_{n}\rightarrow (X\boxtimes_{R}Y)_{n-1}$$
is given by
$$\partial_{n}^{X\boxtimes_{R}Y}\left(\lambda_{f,g}^{X,Y}(x\otimes y)\right):=\textstyle \sum_{i=0}^{n}(-1)^{i} d_{n,i}^{\DK(X)}\left(\lambda_{f}^{X}(x)\right)\otimes d_{n,i}^{\DK(Y)}\left(\lambda_{g}^{Y}(y)\right)$$
for every $n\geq 0$, $(f,g)\in \mathcal{S}_{n}$, $x\in X_{k}$, and $y\in Y_{l}$.
\end{definition}

We need to first show that the shuffle product of two complexes is indeed a complex.

\begin{proposition} \label{5.3}
Let $R$ be a ring, $X$ a connective $R^{\op}$-complex, and $Y$ a connective $R$-complex. Then $X\boxtimes_{R} Y$ is a connective $\mathbb{Z}$-complex.
\end{proposition}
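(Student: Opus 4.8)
The plan is to verify directly that $\partial_{n-1}^{X\boxtimes_R Y}\partial_n^{X\boxtimes_R Y}=0$ by reducing the computation to the Dold--Kan functor, where the analogous identity follows from the simplicial identities. The key observation is that the formula defining $\partial_n^{X\boxtimes_R Y}$ on a summand indexed by $(f,g)\in\mathcal S_n$ is built out of the face morphisms $d_{n,i}^{\DK(X)}$ and $d_{n,i}^{\DK(Y)}$ in exactly the way the alternating-sum differential of a bicomplex-like object is built. However, there is a subtlety: the target of $\partial_n^{X\boxtimes_R Y}$ as written is $\bigoplus_{(f,g)\in\mathcal S_{n-1}}(X_k\otimes_R Y_l)$, so I first need to check that the right-hand side $\sum_{i=0}^n(-1)^i d_{n,i}^{\DK(X)}(\lambda_f^X(x))\otimes d_{n,i}^{\DK(Y)}(\lambda_g^Y(y))$ actually lands in that subobject of $\DK(X)([n-1])\otimes_R \DK(Y)([n-1])$, i.e.\ that each nonzero term $\lambda_{f'}^X(x')\otimes\lambda_{g'}^Y(y')$ occurring has $(f',g')\in\mathcal S_{n-1}$. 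Using Corollary~\ref{4.12}, the $i$-th term is nonzero only when either $g\delta_{n,i}$ is surjective and $f\delta_{n,i}$ is surjective (giving index $(f\delta_{n,i},g\delta_{n,i})$), or when one of them fails surjectivity precisely at $i=n$ (giving an index involving $\overline{f}$ or $\overline{g}$ together with a boundary applied to $x$ or $y$). In each surviving case one checks the injectivity of $\phi_{f',g'}$ from the injectivity of $\phi_{f,g}$; this is a short combinatorial argument about which degeneracy indices are deleted, essentially already contained in Remark~\ref{5.1}.

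\textbf{Once well-definedness of the differential is settled}, I would compute $\partial_{n-1}^{X\boxtimes_R Y}\partial_n^{X\boxtimes_R Y}\bigl(\lambda_{f,g}^{X,Y}(x\otimes y)\bigr)$ by expanding both differentials. Writing $u_i=d_{n,i}^{\DK(X)}(\lambda_f^X(x))$ and $v_i=d_{n,i}^{\DK(Y)}(\lambda_g^Y(y))$, applying $\partial_n$ gives $\sum_i(-1)^i\,u_i\otimes v_i$, a sum of elements of $\DK(X)([n-1])\otimes_R\DK(Y)([n-1])$; applying $\partial_{n-1}$ then produces
\[
\sum_{i=0}^{n}\sum_{j=0}^{n-1}(-1)^{i+j}\,d_{n-1,j}^{\DK(X)}(u_i)\otimes d_{n-1,j}^{\DK(Y)}(v_i).
\]
Now $\DK(X)$ and $\DK(Y)$ are simplicial modules, so $d_{n-1,j}^{\DK(X)}d_{n,i}^{\DK(X)}$ and $d_{n-1,j}^{\DK(Y)}d_{n,i}^{\DK(Y)}$ both satisfy the simplicial identity $d_{n-1,i}d_{n,j}=d_{n-1,j-1}d_{n,i}$ for $i<j$. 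The standard telescoping argument pairs the $(i,j)$ term with the $(j,i-1)$-type term with opposite sign, so the double sum cancels termwise. Since this cancellation happens \emph{simultaneously} in the $X$ and $Y$ tensor factors (the reindexing is the same on both sides), the tensor products cancel in pairs and the whole expression is zero. The only remaining point is that all intermediate terms lie in the relevant summands, so that the cancellation is an identity in $(X\boxtimes_R Y)_{n-2}$ and not merely in $\DK(X)([n-2])\otimes_R\DK(Y)([n-2])$; this follows from the same well-definedness analysis applied one degree down, together with the fact that the canonical injections $\lambda_{f,g}^{X,Y}$ are compatible with the $\lambda_f^X\otimes\lambda_g^Y$.

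\textbf{The main obstacle} I anticipate is purely bookkeeping: tracking how the index $(f,g)$ transforms under each $d_{n,i}^{\DK}$ via Corollary~\ref{4.12} — in particular distinguishing the ``generic'' case where surjectivity is preserved from the ``boundary'' case $i=n$ where $\partial_l^X$ or $\partial_l^Y$ enters and the degree of $x$ or $y$ drops. One must be careful that the sign conventions $(-1)^l\partial_l^X$ baked into $\DK$ (Definition~\ref{4.9}) interact correctly with the alternating sign $(-1)^i$ in $\partial_n^{X\boxtimes_R Y}$; but since these signs are identical to those appearing in the Moore complex of a tensor product of simplicial modules, the Eilenberg--Zilber/Alexander--Whitney sign calculus guarantees they work out, and I would invoke or mirror the verification that $C(\DK(X)\otimes_R\DK(Y))$ is a chain complex rather than redo the sign chase from scratch. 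A clean way to organize the whole argument is to exhibit $X\boxtimes_R Y$ as a sub-chain-complex of the Moore complex $C\bigl(\DK(X)\otimes_R\DK(Y)\bigr)$ — the summands indexed by $\mathcal S_n$ are precisely the nondegenerate part in each degree, by Remark~\ref{5.1} — and then $\partial^2=0$ is inherited, modulo checking (as above) that the differential restricts to this subobject.
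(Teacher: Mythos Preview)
Your proposal is correct, and the ``clean way'' you identify at the end is exactly what the paper does: exhibit $X\boxtimes_R Y$ as a subcomplex of the Moore complex $C\bigl(\DK(X)\otimes_R\DK(Y)\bigr)$, so that $\partial^2=0$ is inherited automatically, with the only real work being the four-case analysis (via Corollary~\ref{4.12}) that the Moore differential sends the summands indexed by $\mathcal{S}_n$ into those indexed by $\mathcal{S}_{n-1}$. The earlier portion of your proposal, where you expand $\partial_{n-1}\partial_n$ and telescope via the simplicial identities, is therefore redundant --- once the restriction is checked, there is nothing further to compute --- and the paper simply omits it; also, the identification of $\mathcal{S}_n$ with the \emph{nondegenerate} part is not needed here (that is the content of Proposition~\ref{5.6}), only the weaker fact that it is a direct summand preserved by the differential.
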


\begin{proof}
Let $n\geq 0$. Then we have:
\begin{equation*}
\begin{split}
 C\left(\DK(X)\otimes_{R}\DK(Y)\right)_{n} & = \left(\DK(X)\otimes_{R}\DK(Y)\right)_{n} = \DK(X)\left([n]\right)\otimes_{R}\DK(Y)\left([n]\right) \\
 & = \left(\textstyle\bigoplus_{\substack{{f\in \Mor_{\Delta}\left([n],[k]\right)} \\ {f \textrm{ surjective}}}} X_{k}\right)\otimes_{R}\left(\textstyle\bigoplus_{\substack{{g\in \Mor_{\Delta}\left([n],[l]\right)} \\ {g \textrm{ surjective}}}} Y_{l}\right) \\
 & \cong \textstyle\bigoplus_{\substack{{f\in \Mor_{\Delta}\left([n],[k]\right)} \\ {g\in \Mor_{\Delta}\left([n],[l]\right)} \\ {f \textrm{ and } g \textrm{ surjective}}}} (X_{k}\otimes_{R}Y_{l})
\end{split}
\end{equation*}

\noindent
We suppress the above isomorphism and consider it as an equality in what follows. Let $\lambda_{f}^{X}:X_{k}\rightarrow \DK(X)\left([n]\right)$ be the canonical injection corresponding to the index $f \in \Mor_{\Delta}\left([n],[k]\right)$, $\lambda_{g}^{Y}:Y_{l}\rightarrow \DK(Y)\left([n]\right)$ the canonical injection corresponding to the index $g \in \Mor_{\Delta}\left([n],[l]\right)$, and $\lambda_{f,g}^{X,Y}:X_{k}\otimes_{R}Y_{l}\rightarrow C\left(\DK(X)\otimes_{R}\DK(Y)\right)_{n}$ the canonical injection corresponding to the indices $f\in \Mor_{\Delta}\left([n],[k]\right)$ and $g \in \Mor_{\Delta}\left([n],[l]\right)$. Then it is clear that $\lambda_{f}^{X}(x)\otimes \lambda_{g}^{Y}(y)=\lambda_{f,g}^{X,Y}(x\otimes y)$ for every $x\in X_{k}$ and $y\in Y_{l}$, so we have $\lambda_{f}^{X}\otimes_{R}\lambda_{g}^{Y}=\lambda_{f,g}^{X,Y}$. As a result, we get
\begin{equation*}
\begin{split}
 \partial_{n}^{C\left(\DK(X)\otimes_{R}\DK(Y)\right)}\left(\lambda_{f,g}^{X,Y}(x\otimes y)\right) & = \textstyle\sum_{i=0}^{n}(-1)^{i}d_{n,i}^{\DK(X)\otimes_{R}\DK(Y)}\left(\lambda_{f,g}^{X,Y}(x\otimes y)\right) \\
 & = \textstyle\sum_{i=0}^{n}(-1)^{i}\left(d_{n,i}^{\DK(X)}\otimes_{R}d_{n,i}^{\DK(Y)}\right)\left(\lambda_{f}^{X}(x)\otimes \lambda_{g}^{Y}(y)\right) \\
 & = \textstyle\sum_{i=0}^{n}(-1)^{i}d_{n,i}^{\DK(X)}\left(\lambda_{f}^{X}(x)\right) \otimes d_{n,i}^{\DK(Y)}\left(\lambda_{g}^{Y}(y)\right)
\end{split}
\end{equation*}

\noindent
for every $\lambda_{f,g}^{X,Y}(x\otimes y)\in C\left(\DK(X)\otimes_{R}\DK(Y)\right)_{n}$. We show that $X\boxtimes_{R} Y$ is a subcomplex of $C\left(\DK(X)\otimes_{R}\DK(Y)\right)$. Let $n\geq 0$, and let $\mathcal{S}_{n}$ be as in Definition \ref{5.2}. Then we can write:
\begin{equation*}
\begin{split}
 C\left(\DK(X)\otimes_{R}\DK(Y)\right)_{n} & = \left(\textstyle\bigoplus_{(f,g)\in \mathcal{S}_{n}}(X_{k}\otimes_{R}Y_{l})\right) \oplus \left(\textstyle\bigoplus_{(f,g)\notin \mathcal{S}_{n}}(X_{k}\otimes_{R}Y_{l})\right) \\
 & = (X\boxtimes_{R} Y)_{n} \oplus \left(\textstyle\bigoplus_{(f,g)\notin \mathcal{S}_{n}}(X_{k}\otimes_{R}Y_{l})\right)
\end{split}
\end{equation*}

\noindent
Next let $(f,g)\in \mathcal{S}_{n}$. We show that $\partial_{n}^{C\left(\DK(X)\otimes_{R}\DK(Y)\right)}\left(\lambda_{f,g}^{X,Y}(x\otimes y)\right) \in (X\boxtimes_{R}Y)_{n-1}$ for every $x\in X_{k}$ and $y\in Y_{l}$. To this end, fix $0\leq i\leq n$, and consider the following cases:

Case I: $f\delta_{n,i}$ is not surjective but $g\delta_{n,i}$ is surjective

If $0\leq i\leq n-1$, then by Corollary \ref{4.12}, $d_{n,i}^{\DK(X)}\left(\lambda_{f}^{X}(x)\right)=0$, so $d_{n,i}^{\DK(X)}\left(\lambda_{f}^{X}(x)\right) \otimes d_{n,i}^{\DK(Y)}\left(\lambda_{g}^{Y}(y)\right)=0 \in (X\boxtimes_{R}Y)_{n-1}$. Hence we can assume that $i=n$. In this case, as in the proof of Corollary \ref{4.12}, $f$ induces a surjective map $\overline{f} \in \Mor_{\Delta}\left([n-1],[k-1]\right)$ given by $\overline{f}(i)=f(i)$ for every $i\in [n-1]$. Accordingly, using Corollary \ref{4.12}, we have:
$$d_{n,n}^{\DK(X)}\left(\lambda_{f}^{X}(x)\right) \otimes d_{n,n}^{\DK(Y)}\left(\lambda_{g}^{Y}(y)\right)= (-1)^{k}\lambda_{\overline{f}}^{X}\left(\partial_{k}^{X}(x)\right)\otimes \lambda_{g\delta_{n,n}}^{Y}(y)= (-1)^{k}\lambda_{\overline{f},g\delta_{n,n}}^{X,Y}\left(\partial_{k}^{X}(x)\otimes y\right)$$
If $\left(\overline{f},g\delta_{n,n}\right)\notin \mathcal{S}_{n-1}$, then $\phi_{\overline{f},g\delta_{n,n}}$ is not injective, so there is an $0\leq m\leq n-2$ such that $\phi_{\overline{f},g\delta_{n,n}}(m)=\phi_{\overline{f},g\delta_{n,n}}(m+1)$. But we note that:
$$\phi_{\overline{f},g\delta_{n,n}}(m)=\left(\overline{f}(m),(g\delta_{n,n})(m)\right)=\left(f(m),g(m)\right)=\phi_{f,g}(m)$$
and
$$\phi_{\overline{f},g\delta_{n,n}}(m+1)=\left(\overline{f}(m+1),(g\delta_{n,n})(m+1)\right)=\left(f(m+1),g(m+1)\right)=\phi_{f,g}(m+1)$$
It follows that $\phi_{f,g}$ is not injective, so $(f,g) \notin \mathcal{S}_{n}$ which is a contradiction. Therefore, $\left(\overline{f},g\delta_{n,n}\right)\in \mathcal{S}_{n-1}$, so $\lambda_{\overline{f},g\delta_{n,n}}^{X,Y}\left(\partial_{k}^{X}(x)\otimes y\right)\in (X\boxtimes_{R}Y)_{n-1}$.

Case II: $f\delta_{n,i}$ is surjective but $g\delta_{n,i}$ is not surjective

This case is similar to Case I.

Case III: $f\delta_{n,i}$ and $g\delta_{n,i}$ are both surjective

In this case, using Corollary \ref{4.12}, we have:
$$d_{n,i}^{\DK(X)}\left(\lambda_{f}^{X}(x)\right) \otimes d_{n,i}^{\DK(Y)}\left(\lambda_{g}^{Y}(y)\right)= \lambda_{f\delta_{n,i}}^{X}(x)\otimes \lambda_{g\delta_{n,i}}^{Y}(y)= \lambda_{f\delta_{n,i},g\delta_{n,i}}^{X,Y}\left(x\otimes y\right)$$
If $\left(f\delta_{n,i},g\delta_{n,i}\right)\notin \mathcal{S}_{n-1}$, then $\phi_{f\delta_{n,i},g\delta_{n,i}}$ is not injective, so there is an $0\leq m\leq n-2$ such that $\phi_{f\delta_{n,i},g\delta_{n,i}}(m)=\phi_{f\delta_{n,i},g\delta_{n,i}}(m+1)$. But we note that:
\begin{equation*}
 \label{eqn:damage piecewise}
 \phi_{f\delta_{n,i},g\delta_{n,i}}(m)=\left((f\delta_{n,i})(m),(g\delta_{n,i})(m)\right)=
 \begin{dcases}
  \left(f(m+1),g(m+1)\right)=\phi_{f,g}(m+1) & \textrm{if } i \leq m \\
  \left(f(m),g(m)\right)=\phi_{f,g}(m) & \textrm{if } i = m+1 \\
  \left(f(m),g(m)\right)=\phi_{f,g}(m) & \textrm{if } m+1 < i
 \end{dcases}
\end{equation*}

\noindent
and
\begin{equation*}
 \label{eqn:damage piecewise}
 \phi_{f\delta_{n,i},g\delta_{n,i}}(m+1)=\left((f\delta_{n,i})(m+1),(g\delta_{n,i})(m+1)\right)=
 \begin{dcases}
  \left(f(m+2),g(m+2)\right)=\phi_{f,g}(m+2) & \textrm{if } i \leq m \\
  \left(f(m+2),g(m+2)\right)=\phi_{f,g}(m+2) & \textrm{if } i = m+1 \\
  \left(f(m+1),g(m+1)\right)=\phi_{f,g}(m+1) & \textrm{if } m+1 < i
 \end{dcases}
\end{equation*}

\noindent
In any case, it follows that $\phi_{f,g}$ is not injective, so $(f,g)\notin \mathcal{S}_{n}$ which is a contradiction. Therefore, $\left(f\delta_{n,i},g\delta_{n,i}\right)\in \mathcal{S}_{n-1}$, so $\lambda_{f\delta_{n,i},g\delta_{n,i}}^{X,Y}\left(x\otimes y\right)\in (X\boxtimes_{R}Y)_{n-1}$.

Case IV: $f\delta_{n,i}$ and $g\delta_{n,i}$ are neither surjective

If $0\leq i\leq n-1$, then by Corollary \ref{4.12}, $d_{n,i}^{\DK(X)}\left(\lambda_{f}^{X}(x)\right)=0=d_{n,i}^{\DK(Y)}\left(\lambda_{g}^{Y}(y)\right)$, so $d_{n,i}^{\DK(X)}\left(\lambda_{f}^{X}(x)\right) \otimes d_{n,i}^{\DK(Y)}\left(\lambda_{g}^{Y}(y)\right)=0 \in (X\boxtimes_{R}Y)_{n-1}$. Hence we can assume that $i=n$. In this case, as in the proof of Corollary \ref{4.12}, $f$ induces a surjective map $\overline{f} \in \Mor_{\Delta}\left([n-1],[k-1]\right)$ given by $\overline{f}(i)=f(i)$ for every $i\in [n-1]$, and $g$ induces a surjective map $\overline{g} \in \Mor_{\Delta}\left([n-1],[l-1]\right)$ given by $\overline{g}(i)=g(i)$ for every $i\in [n-1]$. Accordingly, using Corollary \ref{4.12}, we have:
$$d_{n,n}^{\DK(X)}\left(\lambda_{f}^{X}(x)\right) \otimes d_{n,n}^{\DK(Y)}\left(\lambda_{g}^{Y}(y)\right)= (-1)^{k}\lambda_{\overline{f}}^{X}\left(\partial_{k}^{X}(x)\right)\otimes (-1)^{l}\lambda_{\overline{g}}^{Y}\left(\partial_{l}^{Y}(y)\right)= (-1)^{k+l}\lambda_{\overline{f},\overline{g}}^{X,Y}\left(\partial_{k}^{X}(x)\otimes \partial_{l}^{Y}(y)\right)$$
If $\left(\overline{f},\overline{g}\right)\notin \mathcal{S}_{n-1}$, then $\phi_{\overline{f},\overline{g}}$ is not injective, so there is an $0\leq m\leq n-2$ such that $\phi_{\overline{f},\overline{g}}(m)=\phi_{\overline{f},\overline{g}}(m+1)$. But we note that:
$$\phi_{\overline{f},\overline{g}}(m)=\left(\overline{f}(m),\overline{g}(m)\right)=\left(f(m),g(m)\right)=\phi_{f,g}(m)$$
and
$$\phi_{\overline{f},\overline{g}}(m+1)=\left(\overline{f}(m+1),\overline{g}(m+1)\right)=\left(f(m+1),g(m+1)\right)=\phi_{f,g}(m+1)$$
It follows that $\phi_{f,g}$ is not injective, so $(f,g) \notin \mathcal{S}_{n}$ which is a contradiction. Therefore, $\left(\overline{f},\overline{g}\right)\in \mathcal{S}_{n-1}$, so $\lambda_{\overline{f},\overline{g}}^{X,Y}\left(\partial_{k}^{X}(x)\otimes \partial_{l}^{Y}(y)\right)\in (X\boxtimes_{R}Y)_{n-1}$.

All in all, we conclude that
$$\partial_{n}^{C\left(\DK(X)\otimes_{R}\DK(Y)\right)}\left(\lambda_{f,g}^{X,Y}(x\otimes y)\right) = \textstyle\sum_{i=0}^{n}(-1)^{i}d_{n,i}^{\DK(X)}\left(\lambda_{f}^{X}(x)\right) \otimes d_{n,i}^{\DK(Y)}\left(\lambda_{g}^{Y}(y)\right) \in (X\boxtimes_{R}Y)_{n-1}$$
for every $x\in X_{k}$ and $y\in Y_{l}$. It now follows that $X\boxtimes_{R} Y$ is a subcomplex of $C\left(\DK(X)\otimes_{R}\DK(Y)\right)$.
\end{proof}

We next establish the functoriality of the shuffle product.

\begin{proposition} \label{5.4}
Let $R$ be a ring. Then the following assertions hold:
\begin{enumerate}
\item[(i)] If $X$ is a connective $R^{\op}$-complex, then $X\boxtimes_{R}-:\mathcal{C}_{\geq 0}(R)\rightarrow \mathcal{C}_{\geq 0}(\mathbb{Z})$ defines a covariant functor.
\item[(ii)] If $Y$ is a connective $R$-complex, then $-\boxtimes_{R}Y:\mathcal{C}_{\geq 0}(R^{\op})\rightarrow \mathcal{C}_{\geq 0}(\mathbb{Z})$ defines a covariant functor.
\end{enumerate}
\end{proposition}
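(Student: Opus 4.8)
The plan is to obtain $X \boxtimes_{R} \phi$ (and $\psi \boxtimes_{R} Y$) as the restriction of an already-functorial construction on the ambient Moore complex, so that almost no computation is needed beyond what was done in Proposition \ref{5.3}. Recall from the proof of Proposition \ref{5.3} that, for a connective $R^{\op}$-complex $X$ and a connective $R$-complex $Y$, the shuffle product $X \boxtimes_{R} Y$ is the subcomplex of the Moore complex $C\left(\DK(X) \otimes_{R} \DK(Y)\right)$ whose degree-$n$ term consists of the direct summands $X_{k} \otimes_{R} Y_{l}$ of $C\left(\DK(X) \otimes_{R} \DK(Y)\right)_{n} = \bigoplus_{f,g}(X_{k} \otimes_{R} Y_{l})$ indexed by the pairs $(f,g) \in \mathcal{S}_{n}$. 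Moreover, the assignment $Y \mapsto C\left(\DK(X) \otimes_{R} \DK(Y)\right)$ is a covariant functor $\mathcal{C}_{\geq 0}(R) \rightarrow \mathcal{C}_{\geq 0}(\mathbb{Z})$, being the composite of the functor $\DK$ (Theorem \ref{4.14}), the simplicial tensor product $\DK(X) \otimes_{R} -$ (Remark \ref{4.4}), and the Moore complex functor $C(-)$ (Remark \ref{4.6}).

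For part (i), given a morphism $\phi: Y \rightarrow Y'$ of connective $R$-complexes, I would consider the chain map $C\left(\mathrm{id}_{\DK(X)} \otimes_{R} \DK(\phi)\right): C\left(\DK(X) \otimes_{R} \DK(Y)\right) \rightarrow C\left(\DK(X) \otimes_{R} \DK(Y')\right)$ and show that it carries $X \boxtimes_{R} Y$ into $X \boxtimes_{R} Y'$. This is the only computation involved: by the matrix description in Definition \ref{4.9}, on each $[n]$ the morphism $\DK(\phi)^{[n]}$ is supported on the diagonal, with the entry at $(g,g)$ equal to $\phi_{l}$ for $g \in \Mor_{\Delta}\left([n],[l]\right)$; hence $\DK(\phi)^{[n]}\left(\lambda_{g}^{Y}(y)\right) = \lambda_{g}^{Y'}\left(\phi_{l}(y)\right)$ and therefore
$$\left(\mathrm{id}_{\DK(X)} \otimes_{R} \DK(\phi)^{[n]}\right)\left(\lambda_{f,g}^{X,Y}(x \otimes y)\right) = \lambda_{f,g}^{X,Y'}\left(x \otimes \phi_{l}(y)\right).$$
Since the indexing set $\mathcal{S}_{n}$ of Definition \ref{5.2} depends only on $n$ and not on $X$ or $Y$, the summand labeled by $(f,g) \in \mathcal{S}_{n}$ is sent to the summand labeled by the same pair $(f,g) \in \mathcal{S}_{n}$, so the image of $(X \boxtimes_{R} Y)_{n}$ lies in $(X \boxtimes_{R} Y')_{n}$. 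Restricting gives a morphism $X \boxtimes_{R} \phi: X \boxtimes_{R} Y \rightarrow X \boxtimes_{R} Y'$ of connective $\mathbb{Z}$-complexes, which commutes with the differentials automatically, these being the restrictions of the differential of the ambient Moore complex.

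Functoriality is then inherited from the ambient construction: $\DK(\mathrm{id}_{Y}) = \mathrm{id}_{\DK(Y)}$ and $\DK(\psi\phi) = \DK(\psi)\DK(\phi)$ by Theorem \ref{4.14}, these relations pass through $\mathrm{id}_{\DK(X)} \otimes_{R} -$ and $C(-)$, hence through the restriction to shuffle subcomplexes, yielding $X \boxtimes_{R} \mathrm{id}_{Y} = \mathrm{id}_{X \boxtimes_{R} Y}$ and $X \boxtimes_{R} (\psi\phi) = (X \boxtimes_{R} \psi)(X \boxtimes_{R} \phi)$. Part (ii) is proved symmetrically: for a morphism $\psi: X \rightarrow X'$ of connective $R^{\op}$-complexes one restricts the chain map $C\left(\DK(\psi) \otimes_{R} \mathrm{id}_{\DK(Y)}\right)$ to the shuffle subcomplexes, using the analogous diagonal description of $\DK(\psi)$. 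I do not anticipate a genuine obstacle; the only point needing care is verifying that the functorial map on $C\left(\DK(X) \otimes_{R} \DK(Y)\right)$ really preserves the shuffle subcomplex, which holds because $\DK$ acts on morphisms through a matrix supported on the diagonal while the admissible index set $\mathcal{S}_{n}$ is fixed once $n$ is fixed. One could instead define $(X \boxtimes_{R} \phi)_{n}$ directly by the displayed formula and check compatibility with $\partial_{n}^{X\boxtimes_{R}Y}$ through the case analysis of Proposition \ref{5.3}, but routing through the ambient complex avoids repeating that work.
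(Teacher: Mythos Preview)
Your proof is correct and takes a genuinely more economical route than the paper. The paper defines $(X\boxtimes_{R}\theta)_{n}$ by the same formula you derive, but then verifies the chain-map condition $\partial_{n}^{X\boxtimes_{R}Y'}(X\boxtimes_{R}\theta)_{n}=(X\boxtimes_{R}\theta)_{n-1}\partial_{n}^{X\boxtimes_{R}Y}$ directly, repeating the four-case analysis of Proposition~\ref{5.3} term by term. You instead exploit the fact, already proved in Proposition~\ref{5.3}, that $X\boxtimes_{R}Y$ sits inside $C\left(\DK(X)\otimes_{R}\DK(Y)\right)$ as a subcomplex whose differential is the restriction of the Moore differential; once you check that the ambient functorial chain map $C\left(1^{\DK(X)}\otimes_{R}\DK(\phi)\right)$ preserves this subcomplex (which follows immediately from the diagonal form of $\DK(\phi)$ in Definition~\ref{4.9}(ii) and the fact that $\mathcal{S}_{n}$ is independent of $Y$), the chain-map condition and functoriality are inherited for free. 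Your approach buys brevity and conceptual clarity by reusing the subcomplex result rather than reproving compatibility with the differential; the paper's approach is more self-contained at the cost of duplicating the case analysis. You even anticipate the paper's method in your final sentence, so you are aware of both options.
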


\begin{proof}
(i): Let $n\geq 0$, and let $\mathcal{S}_{n}$ be as in Definition \ref{5.2}. Let $X$ be a connective $R^{\op}$-complex. If $Y$ is a connective $R$-complex, then by Proposition \ref{5.3}, $X\boxtimes_{R}Y$ is a connective $\mathbb{Z}$-complex. On the other hand, if $\theta:Y\rightarrow Y'$ is a morphism of connective $R$-complexes, then we define the morphism $X\boxtimes_{R}\theta:X\boxtimes_{R}Y \rightarrow X\boxtimes_{R}Y'$ of $\mathbb{Z}$-complexes as follows. Let $n\geq 0$. Then we have $(X\boxtimes_{R}Y)_{n}= \textstyle\bigoplus_{(f,g)\in \mathcal{S}_{n}}(X_{k}\otimes_{R}Y_{l})$ and $(X\boxtimes_{R}Y')_{n}= \textstyle\bigoplus_{(f,g)\in \mathcal{S}_{n}}(X_{k}\otimes_{R}Y_{l}')$. We set
$$(X\boxtimes_{R}\theta)_{n}\left(\lambda_{f,g}^{X,Y}(x\otimes y)\right):= \lambda_{f,g}^{X,Y'}\left(x\otimes \theta_{l}(y)\right)$$
for every $(f,g)\in \mathcal{S}_{n}$, $x\in X_{k}$, and $y\in Y_{l}$. Extending it linearly to $(X\boxtimes_{R}Y)_{n}$, we see that $(X\boxtimes_{R}\theta)_{n}$ is a $\mathbb{Z}$-homomorphism. Moreover, the following diagram is commutative:
\begin{equation*}
\begin{tikzcd}[column sep=3.5em,row sep=2em]
  (X\boxtimes_{R} Y)_{n} \arrow{r}{\partial_{n}^{X\boxtimes_{R}Y}} \arrow{d}[swap]{(X\boxtimes_{R}\theta)_{n}}
  & (X\boxtimes_{R} Y)_{n-1} \arrow{d}{(X\boxtimes_{R}\theta)_{n-1}}
  \\
  (X\boxtimes_{R} Y')_{n} \arrow{r}{\partial_{n}^{X\boxtimes_{R}Y'}}
  & (X\boxtimes_{R} Y')_{n-1}
\end{tikzcd}
\end{equation*}

\noindent
Indeed, we have for every $(f,g)\in \mathcal{S}_{n}$, $x\in X_{k}$, and $y\in Y_{l}$:
\begin{equation*}
\begin{split}
 \partial_{n}^{X\boxtimes_{R}Y'}\left((X\boxtimes_{R}\theta)_{n}\left(\lambda_{f,g}^{X,Y}(x\otimes y)\right)\right) & = \partial_{n}^{X\boxtimes_{R}Y'}\left(\lambda_{f,g}^{X,Y'}\left(x\otimes \theta_{l}(y)\right)\right) \\
 & = \textstyle\sum_{i=0}^{n}(-1)^{i} d_{n,i}^{\DK(X)}\left(\lambda_{f}^{X}(x)\right)\otimes d_{n,i}^{\DK(Y')}\left(\lambda_{g}^{Y'}\left(\theta_{l}(y)\right)\right)
\end{split}
\end{equation*}

\noindent
and
\begin{equation*}
\begin{split}
 (X\boxtimes_{R}\theta)_{n-1}\left(\partial_{n}^{X\boxtimes_{R}Y}\left(\lambda_{f,g}^{X,Y}(x\otimes y)\right)\right) & = (X\boxtimes_{R}\theta)_{n-1}\left(\textstyle\sum_{i=0}^{n}(-1)^{i} d_{n,i}^{\DK(X)}\left(\lambda_{f}^{X}(x)\right)\otimes d_{n,i}^{\DK(Y)}\left(\lambda_{g}^{Y}(y)\right)\right) \\
 & = \textstyle\sum_{i=0}^{n}(-1)^{i} (X\boxtimes_{R}\theta)_{n-1}\left(d_{n,i}^{\DK(X)}\left(\lambda_{f}^{X}(x)\right)\otimes d_{n,i}^{\DK(Y)}\left(\lambda_{g}^{Y}(y)\right)\right)
\end{split}
\end{equation*}

\noindent
Let $0\leq i\leq n$, and consider the following cases from Proposition \ref{5.3}:

Case I: $f\delta_{n,i}$ is not surjective but $g\delta_{n,i}$ is surjective

If $0\leq i\leq n-1$, then $d_{n,i}^{\DK(X)}\left(\lambda_{f}^{X}(x)\right)=0$, so we have:
$$d_{n,i}^{\DK(X)}\left(\lambda_{f}^{X}(x)\right) \otimes d_{n,i}^{\DK(Y')}\left(\lambda_{g}^{Y'}\left(\theta_{l}(y)\right)\right) = 0 = d_{n,i}^{\DK(X)}\left(\lambda_{f}^{X}(x)\right) \otimes d_{n,i}^{\DK(Y)}\left(\lambda_{g}^{Y}(y)\right)$$
If $i=n$, then we have:
\begin{equation*}
\begin{split}
 d_{n,n}^{\DK(X)}\left(\lambda_{f}^{X}(x)\right) \otimes d_{n,n}^{\DK(Y')}\left(\lambda_{g}^{Y'}\left(\theta_{l}(y)\right)\right) & = (-1)^{k}\lambda_{\overline{f},g\delta_{n,n}}^{X,Y'}\left(\partial_{k}^{X}(x)\otimes \theta_{l}(y)\right) \\
 & = (X\boxtimes_{R}\theta)_{n-1}\left((-1)^{k}\lambda_{\overline{f},g\delta_{n,n}}^{X,Y}\left(\partial_{k}^{X}(x)\otimes y\right)\right) \\
 & = (X\boxtimes_{R}\theta)_{n-1}\left(d_{n,n}^{\DK(X)}\left(\lambda_{f}^{X}(x)\right) \otimes d_{n,n}^{\DK(Y)}\left(\lambda_{g}^{Y}(y)\right)\right)
\end{split}
\end{equation*}

Case II: $f\delta_{n,i}$ is surjective but $g\delta_{n,i}$ is not surjective

This case is similar to Case I.

Case III: $f\delta_{n,i}$ and $g\delta_{n,i}$ are both surjective

In this case, we have:
\begin{equation*}
\begin{split}
 d_{n,i}^{\DK(X)}\left(\lambda_{f}^{X}(x)\right) \otimes d_{n,i}^{\DK(Y')}\left(\lambda_{g}^{Y'}\left(\theta_{l}(y)\right)\right) & = \lambda_{f\delta_{n,i},g\delta_{n,i}}^{X,Y'}\left(x \otimes \theta_{l}(y)\right) = (X\boxtimes_{R}\theta)_{n-1}\left(\lambda_{f\delta_{n,i},g\delta_{n,i}}^{X,Y}\left(x \otimes y\right)\right) \\
 & = (X\boxtimes_{R}\theta)_{n-1}\left(d_{n,i}^{\DK(X)}\left(\lambda_{f}^{X}(x)\right) \otimes d_{n,i}^{\DK(Y)}\left(\lambda_{g}^{Y}(y)\right)\right)
\end{split}
\end{equation*}

Case IV: $f\delta_{n,i}$ and $g\delta_{n,i}$ are neither surjective

If $0\leq i\leq n-1$, then $d_{n,i}^{\DK(X)}\left(\lambda_{f}^{X}(x)\right) = d_{n,i}^{\DK(Y)}\left(\lambda_{g}^{Y}(y)\right) = d_{n,i}^{\DK(Y')}\left(\lambda_{g}^{Y'}\left(\theta_{l}(y)\right)\right)=0$, so we have:
$$d_{n,i}^{\DK(X)}\left(\lambda_{f}^{X}(x)\right) \otimes d_{n,i}^{\DK(Y')}\left(\lambda_{g}^{Y'}\left(\theta_{l}(y)\right)\right) = 0 = d_{n,i}^{\DK(X)}\left(\lambda_{f}^{X}(x)\right) \otimes d_{n,i}^{\DK(Y)}\left(\lambda_{g}^{Y}(y)\right)$$
If $i=n$, then using the fact that $\theta:Y\rightarrow Y'$ is a morphism of $R$-complexes, we get:
\begin{equation*}
\begin{split}
 d_{n,n}^{\DK(X)}\left(\lambda_{f}^{X}(x)\right) \otimes d_{n,n}^{\DK(Y')}\left(\lambda_{g}^{Y'}\left(\theta_{l}(y)\right)\right) & = (-1)^{k+l}\lambda_{\overline{f},\overline{g}}^{X,Y'}\left(\partial_{k}^{X}(x)\otimes \partial_{l}^{Y'}\left(\theta_{l}(y)\right)\right) \\
 & = (-1)^{k+l}\lambda_{\overline{f},\overline{g}}^{X,Y'}\left(\partial_{k}^{X}(x)\otimes \theta_{l-1}\left(\partial_{l}^{Y}(y)\right)\right) \\
 & = (X\boxtimes_{R}\theta)_{n-1}\left((-1)^{k+l}\lambda_{\overline{f},\overline{g}}^{X,Y}\left(\partial_{k}^{X}(x)\otimes \partial_{l}^{Y}(y)\right)\right) \\
 & = (X\boxtimes_{R}\theta)_{n-1}\left(d_{n,n}^{\DK(X)}\left(\lambda_{f}^{X}(x)\right) \otimes d_{n,n}^{\DK(Y)}\left(\lambda_{g}^{Y}(y)\right)\right)
\end{split}
\end{equation*}

\noindent
We observe from the above analysis that in any case we have:
$$\partial_{n}^{X\boxtimes_{R}Y'}\left((X\boxtimes_{R}\theta)_{n}\left(\lambda_{f,g}^{X,Y}(x\otimes y)\right)\right) = (X\boxtimes_{R}\theta)_{n-1}\left(\partial_{n}^{X\boxtimes_{R}Y}\left(\lambda_{f,g}^{X,Y}(x\otimes y)\right)\right)$$
Therefore, it follows that $\partial_{n}^{X\boxtimes_{R}Y'}(X\boxtimes_{R}\theta)_{n} = (X\boxtimes_{R}\theta)_{n-1}\partial_{n}^{X\boxtimes_{R}Y}$. As a result, $X\boxtimes_{R}\theta:X\boxtimes_{R}Y \rightarrow X\boxtimes_{R}Y'$ is a morphism of $\mathbb{Z}$-complexes. Finally, we note that $X\boxtimes_{R}-:\mathcal{C}_{\geq 0}(R)\rightarrow \mathcal{C}_{\geq 0}(\mathbb{Z})$ preserves the composition of morphisms as well as the identity morphisms, so it defines a covariant functor.

(ii): Similar to (i).
\end{proof}

We have the following special cases of the shuffle product.

\begin{proposition} \label{5.5}
Let $R$ be a ring, $X$ a connective $R^{\op}$-complex, $Y$ a connective $R$-complex, $M$ a right $R$-module, and $N$ a left $R$-module. Then the following assertions hold:
\begin{enumerate}
\item[(i)] $X\boxtimes_{R}N=X\otimes_{R}N$.
\item[(ii)] $M\boxtimes_{R}Y=M\otimes_{R}Y$.
\end{enumerate}
\end{proposition}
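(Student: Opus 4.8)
The plan is to unwind Definition \ref{5.2} in the special case where one of the two factors is a module, viewed as a connective complex concentrated in degree $0$, and verify directly that the shuffle product collapses to the ordinary tensor product of complexes. I will carry out (i); assertion (ii) is entirely symmetric. Recall that $X\otimes_{R}N$ denotes the connective $\mathbb{Z}$-complex with $(X\otimes_{R}N)_{n}=X_{n}\otimes_{R}N$ and differential $\partial_{n}^{X}\otimes 1^{N}$, and that $\DK(N)$ is the constant simplicial module on $N$ (the remark following Theorem \ref{4.14}), so that under the identification $\DK(N)([n])=N$ every face map $d_{n,i}^{\DK(N)}$ is the identity of $N$. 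There are two things to check: that the graded pieces agree and that the differentials agree.

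First I would identify the graded pieces. Fix $n\geq 0$ and let $\mathcal{S}_{n}$ be as in Definition \ref{5.2}. A summand $X_{k}\otimes_{R}N_{l}$ of $(X\boxtimes_{R}N)_{n}$ indexed by $(f,g)\in\mathcal{S}_{n}$ with $f\in\Mor_{\Delta}([n],[k])$ and $g\in\Mor_{\Delta}([n],[l])$ vanishes unless $l=0$, since $N$ is supported in degree $0$. When $l=0$ the surjection $g$ is the unique constant map $g_{n}:[n]\to[0]$, so $\phi_{f,g_{n}}(i)=(f(i),0)$, and injectivity of $\phi_{f,g_{n}}$ is equivalent to injectivity of $f$; being also surjective, $f$ must equal $1^{[n]}$ and $k=n$. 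Hence exactly one index contributes, yielding the natural identification $(X\boxtimes_{R}N)_{n}=X_{n}\otimes_{R}N_{0}=X_{n}\otimes_{R}N$ via $\lambda_{1^{[n]},g_{n}}^{X,N}(x\otimes y)\mapsto x\otimes y$.

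Next I would match the differentials by invoking Corollary \ref{4.12}. For the $X$-factor, applied to the surjection $1^{[n]}$: since $\delta_{n,i}$ is never surjective, the terms with $0\leq i\leq n-1$ vanish and the term $i=n$ gives $(-1)^{n}\lambda_{1^{[n-1]}}^{X}(\partial_{n}^{X}(x))$. For the $N$-factor, applied to $g_{n}$: each $g_{n}\delta_{n,i}=g_{n-1}$ is still surjective, so $d_{n,i}^{\DK(N)}(\lambda_{g_{n}}^{N}(y))=\lambda_{g_{n-1}}^{N}(y)$ for every $i$. Substituting into the formula for $\partial_{n}^{X\boxtimes_{R}N}$ in Definition \ref{5.2}, only the $i=n$ summand survives, and the two factors of $(-1)^{n}$ cancel, leaving $\lambda_{1^{[n-1]},g_{n-1}}^{X,N}(\partial_{n}^{X}(x)\otimes y)$, which under the identification above is $\partial_{n}^{X}(x)\otimes y=(\partial_{n}^{X}\otimes 1^{N})(x\otimes y)$. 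This shows $X\boxtimes_{R}N=X\otimes_{R}N$ as connective $\mathbb{Z}$-complexes. The proof of (ii) is the mirror image, with $M$ forcing $g=1^{[n]}$ and Corollary \ref{4.12} applied to the $Y$-factor. There is no genuine obstacle beyond careful bookkeeping with the Dold-Kan face-map formulas and the signs; everything reduces to the single-index computation just described.
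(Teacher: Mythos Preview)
Your proof is correct and follows essentially the same approach as the paper: both arguments observe that the constraint $l=0$ forces $(f,g)=(1^{[n]},g_{n})$ so that only one summand survives, and then use Corollary \ref{4.12} (together with the fact that $\DK(N)$ is the constant simplicial module) to see that only the $i=n$ term contributes to the differential, with the two factors of $(-1)^{n}$ cancelling. The paper's write-up is nearly identical to yours, down to invoking Corollary \ref{4.12} for the $X$-factor and the constancy of $\DK(N)$ for the $N$-factor.
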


\begin{proof}
(i): Let $n\geq 0$, and let $\mathcal{S}_{n}$ be as in Definition \ref{5.2}. Then we have $(X\boxtimes_{R}N)_{n}=\bigoplus_{(f,g)\in \mathcal{S}_{n}}(X_{k}\otimes_{R}N_{l})$. But $N_{l}=0$ for every $l\neq 0$, so we should only consider those terms in the latter direct sum that correspond to $l=0$. However, if $(f,g)\in \mathcal{S}_{n}$ and $l=0$, then $g=0$. But $\phi_{f,g}$ is injective, so $f$ must be injective. As $f$ is already surjective, we conclude that $k=n$ and $f=1^{[n]}$. As a result, we only have one possibly non-zero term, i.e. $(X\boxtimes_{R}N)_{n}=X_{n}\otimes_{R}N=(X\otimes_{R}N)_{n}$. On the other hand, since $1^{[n]}\delta_{n,i}=\delta_{n,i}$ is not surjective, Corollary \ref{4.12} implies that $d_{n,i}^{\DK(X)}\left(\lambda_{1^{[n]}}^{X}(x)\right)=0$ for every $0\leq i\leq n-1$ and $x\in X_{n}$. Furthermore, $\DK(N)$ is a constant simplicial module, so $d_{n,i}^{\DK(N)}=1^{N}$ for every $0\leq i\leq n$. Therefore, using Corollary \ref{4.12}, we have for every $x\in X_{n}$ and $y\in N$:
\small
\begin{equation*}
\begin{split}
\partial_{n}^{X\boxtimes_{R}N}(x\otimes y) & = \textstyle\sum_{i=0}^{n}(-1)^{i}d_{n,i}^{\DK(X)}\left(\lambda_{1^{[n]}}^{X}(x)\right)\otimes d_{n,i}^{\DK(N)}\left(\lambda_{0}^{N}(y)\right) = (-1)^{n}d_{n,n}^{\DK(X)}\left(\lambda_{1^{[n]}}^{X}(x)\right)\otimes d_{n,n}^{\DK(N)}\left(\lambda_{0}^{N}(y)\right) \\
 & = (-1)^{n}(-1)^{n}\lambda_{1^{[n-1]}}^{X}\left(\partial_{n}^{X}(x)\right)\otimes_{R}\lambda_{0}^{N}(y) = \partial_{n}^{X}(x)\otimes y = \partial_{n}^{X\otimes_{R}N}(x\otimes y) \\
\end{split}
\end{equation*}
\normalsize
It follows that $\partial_{n}^{X\boxtimes_{R}N}=\partial_{n}^{X\otimes_{R}N}$. Thus $X\boxtimes_{R}N=X\otimes_{R}N$.

(ii): Similar to (i).
\end{proof}

The next proposition describes the shuffle product in terms of the normalization and Dold-Kan functors. This in turn provides some useful corollaries.

\begin{proposition} \label{5.6}
Let $R$ be a ring, $X$ a connective $R^{\op}$-complex, and $Y$ a connective $R$-complex. Then the following natural isomorphism of $\mathbb{Z}$-complexes holds:
$$X\boxtimes_{R}Y \cong \Nor\left(\DK(X)\otimes_{R}\DK(Y)\right)$$
\end{proposition}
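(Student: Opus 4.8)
The plan is to realize both sides as the same quotient of the Moore complex of the simplicial tensor product. Write $M=\DK(X)\otimes_{R}\DK(Y)$, a simplicial $\mathbb{Z}$-module. By the proof of Proposition \ref{5.3}, $X\boxtimes_{R}Y$ is a subcomplex of the Moore complex $C(M)$, and in each degree $n$ it is exactly the direct summand $\bigoplus_{(f,g)\in\mathcal{S}_{n}}(X_{k}\otimes_{R}Y_{l})$ of $C(M)_{n}=M_{n}=\bigoplus_{f,g\text{ surjective}}(X_{k}\otimes_{R}Y_{l})$. On the other hand, Remark \ref{4.6} furnishes a subcomplex $D(M)\subseteq C(M)$ with $C(M)=\Nor(M)\oplus D(M)$ and a natural isomorphism $\Nor(M)\cong C(M)/D(M)$. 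Hence it suffices to identify $D(M)_{n}$ with the complementary summand $\bigoplus_{(f,g)\notin\mathcal{S}_{n}}(X_{k}\otimes_{R}Y_{l})$; granting this, $D(M)$ and $X\boxtimes_{R}Y$ are degreewise-complementary subcomplexes of $C(M)$, so the composite
$$X\boxtimes_{R}Y\hookrightarrow C(M)\twoheadrightarrow C(M)/D(M)\cong\Nor(M)$$
is an isomorphism of $\mathbb{Z}$-complexes, which is the asserted identification.

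To compute $D(M)_{n}=\sum_{i=0}^{n-1}\im\bigl(s_{n-1,i}^{M}\bigr)$, I would use that $s_{n-1,i}^{M}=s_{n-1,i}^{\DK(X)}\otimes_{R}s_{n-1,i}^{\DK(Y)}$, so Corollary \ref{4.13} gives $s_{n-1,i}^{M}\bigl(\lambda_{f,g}^{X,Y}(x\otimes y)\bigr)=\lambda_{f\sigma_{n-1,i},g\sigma_{n-1,i}}^{X,Y}(x\otimes y)$ whenever $f,g$ are surjective. Since $\sigma_{n-1,i}$ identifies $i$ and $i+1$, both $f\sigma_{n-1,i}$ and $g\sigma_{n-1,i}$ do so, whence $\phi_{f\sigma_{n-1,i},g\sigma_{n-1,i}}$ is not injective; thus each $\im(s_{n-1,i}^{M})$, and therefore $D(M)_{n}$, lies in $\bigoplus_{(f,g)\notin\mathcal{S}_{n}}(X_{k}\otimes_{R}Y_{l})$. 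For the reverse inclusion, take surjections $(f',g')$ with $\phi_{f',g'}$ not injective; by Remark \ref{5.1} there is an index $m$ with $f'(m)=f'(m+1)$ and $g'(m)=g'(m+1)$, so $f'$ and $g'$ factor through $\sigma_{n-1,m}$, say $f'=f\sigma_{n-1,m}$ and $g'=g\sigma_{n-1,m}$ with $f,g$ surjective (cf.\ Remark \ref{4.2}); then the entire summand indexed by $(f',g')$ equals $s_{n-1,m}^{M}\bigl(\lambda_{f,g}^{X,Y}(X_{k}\otimes_{R}Y_{l})\bigr)\subseteq D(M)_{n}$. This yields $D(M)_{n}=\bigoplus_{(f,g)\notin\mathcal{S}_{n}}(X_{k}\otimes_{R}Y_{l})$, hence $M_{n}=(X\boxtimes_{R}Y)_{n}\oplus D(M)_{n}$.

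With this equality, the composite displayed above is a degreewise isomorphism, and it is a chain map since $X\boxtimes_{R}Y$ is a subcomplex of $C(M)$ (Proposition \ref{5.3}) and $C(M)\to C(M)/D(M)$ is a chain map; composing with the isomorphism $C(M)/D(M)\cong\Nor(M)$ of Remark \ref{4.6} gives $X\boxtimes_{R}Y\cong\Nor(M)$. Naturality in $X$ and in $Y$ follows from the functoriality of $X\boxtimes_{R}-$ and $-\boxtimes_{R}Y$ (Proposition \ref{5.4}) together with the naturality of $\DK$, $\otimes_{R}$, $C(-)$, $D(-)$, the inclusion $X\boxtimes_{R}Y\hookrightarrow C(M)$, and the isomorphism $\Nor(M)\cong C(M)/D(M)$. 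I expect the only real obstacle to lie in the middle paragraph, specifically in the reverse inclusion: one must know that a non-injective $\phi_{f',g'}$ always admits a \emph{common} collapsed adjacent pair $\{m,m+1\}$, which is exactly the injectivity criterion of Remark \ref{5.1}; everything else is bookkeeping with the face--degeneracy formulas of Definition \ref{4.5} through Corollary \ref{4.13}.
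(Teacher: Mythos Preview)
Your proposal is correct and follows essentially the same approach as the paper: both identify $D\bigl(\DK(X)\otimes_{R}\DK(Y)\bigr)_{n}$ with the complementary summand $\bigoplus_{(f,g)\notin\mathcal{S}_{n}}(X_{k}\otimes_{R}Y_{l})$ via the same two inclusions (using Corollary~\ref{4.13} for one direction and the common-collapsed-pair criterion of Remark~\ref{5.1} for the other), then conclude from the two complementary decompositions of the Moore complex. The only cosmetic difference is that you pass explicitly through the quotient $C(M)/D(M)\cong\Nor(M)$, whereas the paper simply juxtaposes the decompositions $C(M)=(X\boxtimes_{R}Y)\oplus D(M)$ and $C(M)=\Nor(M)\oplus D(M)$; these amount to the same thing.
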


\begin{proof}
Let $n\geq 0$, and let $\mathcal{S}_{n}$ be as in Definition \ref{5.2}. As in the proof of Proposition \ref{5.3}, we can write:
\begin{equation} \label{eq:5.5.1}
C\left(\DK(X)\otimes_{R}\DK(Y)\right)_{n} = (X\boxtimes_{R} Y)_{n} \oplus \left(\textstyle\bigoplus_{(f,g)\notin \mathcal{S}_{n}}(X_{k}\otimes_{R}Y_{l})\right)
\end{equation}

Let $f\in \Mor_{\Delta}\left([n],[k]\right)$ and $g\in \Mor_{\Delta}\left([n],[l]\right)$ be surjective. Let $0\leq j_{1}< \cdots <j_{n-k}<n$ be the elements of $[n]$ for which $f(j)=f(j+1)$ so that $f=\sigma_{k,j_{1}} \cdots \sigma_{n-1,j_{n-k}}$, and $0\leq i_{1}< \cdots <i_{n-l}<n$ the elements of $[n]$ for which $g(i)=g(i+1)$ so that $g=\sigma_{l,i_{1}} \cdots \sigma_{n-1,i_{n-l}}$. If $(f,g)\notin \mathcal{S}_{n}$, then $\phi_{f,g}$ is not injective, so there is an element $m\in \{i_{1},...,i_{n-l}\} \cap \{j_{1},...,j_{n-k}\}$. This implies that $f=\sigma_{k,j_{1}} \cdots \sigma_{p,m} \cdots \sigma_{n-1,j_{n-k}}$ and $g=\sigma_{l,i_{1}} \cdots \sigma_{q,m} \cdots \sigma_{n-1,i_{n-l}}$ where $k\leq p\leq n-1$ and $l\leq q\leq n-1$. Using the face-degeneracy relations of Remark \ref{4.2}, we can successively swap the face map $\sigma_{p,m}$ with the terms appearing on its right to get $f=\sigma_{k,j_{1}} \cdots \sigma_{n-2,j_{n-k}-1}\sigma_{n-1,m}$. Similarly, we get $g=\sigma_{l,i_{1}} \cdots \sigma_{n-2,i_{n-l}-1}\sigma_{n-1,m}$. Applying the contravariant functor $\DK(X)$, we get $\DK(X)(f)=s_{n-1,m}^{\DK(X)}s_{n-2,j_{n-k}-1}^{\DK(X)} \cdots s_{k,j_{1}}^{\DK(X)}$ and $\DK(Y)(g)=s_{n-1,m}^{\DK(Y)}s_{n-2,i_{n-l}-1}^{\DK(Y)} \cdots s_{l,i_{1}}^{\DK(Y)}$. Moreover, since $f$ and $g$ are surjective, we can invoke Proposition \ref{4.11} to write: $$\lambda_{f}^{X}=\DK(X)(f)\lambda_{1^{[k]}}^{X}=s_{n-1,m}^{\DK(X)}s_{n-2,j_{n-k}-1}^{\DK(X)} \cdots s_{k,j_{1}}^{\DK(X)}\lambda_{1^{[k]}}^{X}$$
and
$$\lambda_{g}^{Y}=\DK(Y)(g)\lambda_{1^{[l]}}^{Y}=s_{n-1,m}^{\DK(Y)}s_{n-2,i_{n-l}-1}^{\DK(Y)} \cdots s_{l,i_{1}}^{\DK(Y)}\lambda_{1^{[l]}}^{Y}$$
Therefore, we obtain:
\begin{equation*}
\begin{split}
 \lambda_{f,g}^{X,Y} & = \lambda_{f}^{X}\otimes_{R}\lambda_{g}^{Y} \\
 & = s_{n-1,m}^{\DK(X)}s_{n-2,j_{n-k}-1}^{\DK(X)} \cdots s_{k,j_{1}}^{\DK(X)}\lambda_{1^{[k]}}^{X} \otimes_{R} s_{n-1,m}^{\DK(Y)}s_{n-2,i_{n-l}-1}^{\DK(Y)} \cdots s_{l,i_{1}}^{\DK(Y)}\lambda_{1^{[l]}}^{Y} \\
 & = \left(s_{n-1,m}^{\DK(X)}\otimes_{R}s_{n-1,m}^{\DK(Y)}\right)\left(s_{n-2,j_{n-k}-1}^{\DK(X)}\cdots s_{k,j_{1}}^{\DK(X)}\lambda_{1^{[k]}}^{X} \otimes_{R} s_{n-2,i_{n-l}-1}^{\DK(Y)}\cdots s_{l,i_{1}}^{\DK(Y)}\lambda_{1^{[l]}}^{Y}\right) \\
 & = \left(s_{n-1,m}^{\DK(X)\otimes_{R}\DK(Y)}\right)\left(s_{n-2,j_{n-k}-1}^{\DK(X)}\cdots s_{k,j_{1}}^{\DK(X)}\lambda_{1^{[k]}}^{X} \otimes_{R} s_{n-2,i_{n-l}-1}^{\DK(Y)}\cdots s_{l,i_{1}}^{\DK(Y)}\lambda_{1^{[l]}}^{Y}\right)
\end{split}
\end{equation*}

\noindent
This shows that:
$$\im\left(\lambda_{f,g}^{X,Y}\right) \subseteq \im\left(s_{n-1,m}^{\DK(X)\otimes_{R}\DK(Y)}\right) \subseteq \textstyle\sum_{i=0}^{n-1}\im\left(s_{n-1,i}^{\DK(X)\otimes_{R}\DK(Y)}\right)$$
Thus $\lambda_{f,g}^{X,Y}(x\otimes y)\in \sum_{i=0}^{n-1}\im\left(s_{n-1,i}^{\DK(X)\otimes_{R}\DK(Y)}\right)$ for every $x\in X_{k}$ and $y\in Y_{l}$. It follows that:
\begin{equation*}
\textstyle\bigoplus_{(f,g)\notin \mathcal{S}_{n}}(X_{k}\otimes_{R}Y_{l})\subseteq \textstyle\sum_{i=0}^{n-1}\im\left(s_{n-1,i}^{\DK(X)\otimes_{R}\DK(Y)}\right)
\end{equation*}

On the other hand, let $f'\in \Mor_{\Delta}\left([n-1],[k]\right)$ and $g'\in \Mor_{\Delta}\left([n-1],[l]\right)$ be surjective. Using Corollary \ref{4.13}, we have for every $0\leq i\leq n-1$, $x\in X_{k}$, and $y\in Y_{l}$:
\begin{equation*}
\begin{split}
 s_{n-1,i}^{\DK(X)\otimes_{R}\DK(Y)}\left(\lambda_{f',g'}^{X,Y}(x\otimes y)\right) & = \left(s_{n-1,i}^{\DK(X)}\otimes_{R}s_{n-1,i}^{\DK(Y)}\right)\left(\left(\lambda_{f'}^{X}\otimes_{R}\lambda_{g'}^{Y}\right)(x\otimes y)\right) \\
 & = \left(s_{n-1,i}^{\DK(X)}\lambda_{f'}^{X}\otimes_{R}s_{n-1,i}^{\DK(Y)}\lambda_{g'}^{Y}\right)\left(x\otimes y\right) \\
 & = \left(\lambda_{f'\sigma_{n-1,i}}^{X}\otimes_{R}\lambda_{g'\sigma_{n-1,i}}^{Y}\right)\left(x\otimes y\right) \\
 & = \lambda_{f'\sigma_{n-1,i},g'\sigma_{n-1,i}}^{X,Y}\left(x\otimes y\right)
\end{split}
\end{equation*}

\noindent
Set $f=f'\sigma_{n-1,i}\in \Mor_{\Delta}\left([n],[k]\right)$ and $g=g'\sigma_{n-1,i} \in \Mor_{\Delta}\left([n],[l]\right)$. As $\sigma_{n-1,i}(i)=i=\sigma_{n-1,i}(i+1)$, we see that $\phi_{f,g}(i)=\left(f(i),g(i)\right)=\left(f(i+1),g(i+1)\right)=\phi_{f,g}(i+1)$. Thus $\phi_{f,g}$ is not injective, so $(f,g) \notin \mathcal{S}_{n}$. It follows that:
\begin{equation*}
\textstyle\sum_{i=0}^{n-1}\im\left(s_{n-1,i}^{\DK(X)\otimes_{R}\DK(Y)}\right) \subseteq \textstyle\bigoplus_{(f,g)\notin \mathcal{S}_{n}}(X_{k}\otimes_{R}Y_{l})
\end{equation*}

Therefore, we have:
\begin{equation} \label{eq:5.5.2}
\textstyle\bigoplus_{(f,g)\notin \mathcal{S}_{n}}(X_{k}\otimes_{R}Y_{l}) = \textstyle\sum_{i=0}^{n-1}\im\left(s_{n-1,i}^{\DK(X)\otimes_{R}\DK(Y)}\right) = D\left(\DK(X)\otimes_{R}\DK(Y)\right)_{n}
\end{equation}

\noindent
Hence we obtain from \eqref{eq:5.5.1} and \eqref{eq:5.5.2} that:
$$C\left(\DK(X)\otimes_{R}\DK(Y)\right)_{n} = (X\boxtimes_{R} Y)_{n} \oplus D\left(\DK(X)\otimes_{R}\DK(Y)\right)_{n}$$
In light of the proof of Proposition \ref{5.3} and Remark \ref{4.6}, we observe that $X\boxtimes_{R} Y$ and $D\left(\DK(X)\otimes_{R}\DK(Y)\right)$ are subcomplexes of $C\left(\DK(X)\otimes_{R}\DK(Y)\right)$, so we deduce that:
\begin{equation} \label{eq:5.5.3}
C\left(\DK(X)\otimes_{R}\DK(Y)\right) = (X\boxtimes_{R} Y) \oplus D\left(\DK(X)\otimes_{R}\DK(Y)\right)
\end{equation}

On the other hand, we know from Remark \ref{4.6} that:
\begin{equation} \label{eq:5.5.4}
C\left(\DK(X)\otimes_{R}\DK(Y)\right) = \Nor\left(\DK(X)\otimes_{R}\DK(Y)\right) \oplus D\left(\DK(X)\otimes_{R}\DK(Y)\right)
\end{equation}

\noindent
Therefore, it follows from \eqref{eq:5.5.3} and \eqref{eq:5.5.4} that $X\boxtimes_{R}Y \cong \Nor\left(\DK(X)\otimes_{R}\DK(Y)\right)$.
\end{proof}

\begin{corollary} \label{5.7}
Let $R$ be a ring, $M$ a simplicial right $R$-module, and $N$ a simplicial left $R$-module. Then the following natural isomorphism of $\mathbb{Z}$-complexes holds:
$$\Nor(M\otimes_{R}N)\cong \Nor(M)\boxtimes_{R}\Nor(N)$$
\end{corollary}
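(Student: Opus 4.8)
The plan is to deduce this immediately from Proposition \ref{5.6} and the Dold-Kan Correspondence. First I would put $X=\Nor(M)$, which is a connective $R^{\op}$-complex, and $Y=\Nor(N)$, which is a connective $R$-complex. Feeding this pair into Proposition \ref{5.6} yields a natural isomorphism
$$\Nor(M)\boxtimes_{R}\Nor(N)\cong \Nor\left(\DK(\Nor(M))\otimes_{R}\DK(\Nor(N))\right)$$
of $\mathbb{Z}$-complexes, where $\otimes_{R}$ on the right is the simplicial tensor product of Remark \ref{4.4}.

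Next I would invoke Theorem \ref{4.14}: since $(\DK,\Nor)$ is an adjoint equivalence, the counit provides natural isomorphisms $\DK\left(\Nor(M)\right)\cong M$ of simplicial right $R$-modules and $\DK\left(\Nor(N)\right)\cong N$ of simplicial left $R$-modules. Because the simplicial tensor product is functorial in each argument, these combine into a natural isomorphism $\DK(\Nor(M))\otimes_{R}\DK(\Nor(N))\cong M\otimes_{R}N$ of simplicial $\mathbb{Z}$-modules; applying the functor $\Nor$ then gives a natural isomorphism
$$\Nor\left(\DK(\Nor(M))\otimes_{R}\DK(\Nor(N))\right)\cong \Nor(M\otimes_{R}N)$$
of $\mathbb{Z}$-complexes. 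Composing this with the isomorphism from the first paragraph produces the desired natural isomorphism $\Nor(M\otimes_{R}N)\cong \Nor(M)\boxtimes_{R}\Nor(N)$.

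The only delicate point — and it is minor, hardly an obstacle — is tracking naturality: one must note that the isomorphism of Proposition \ref{5.6} is natural in both complex arguments (as stated there) and that the counit of the Dold-Kan equivalence is natural in $M$ and in $N$ (part of the definition of an adjoint equivalence in Theorem \ref{4.14}). Granting these, the final isomorphism is a composite of natural isomorphisms and hence natural, and no computation beyond that already carried out for Proposition \ref{5.6} is required.
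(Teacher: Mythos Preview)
Your proposal is correct and follows essentially the same approach as the paper: apply Proposition \ref{5.6} with $X=\Nor(M)$ and $Y=\Nor(N)$, then use the Dold-Kan equivalence of Theorem \ref{4.14} to identify $\DK(\Nor(M))\otimes_{R}\DK(\Nor(N))$ with $M\otimes_{R}N$. The paper's proof is the same chain of isomorphisms written in a single line, just traversed in the opposite direction.
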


\begin{proof}
In view of Theorem \ref{4.14} and Proposition \ref{5.6}, we have the following natural isomorphisms:
$$\Nor(M\otimes_{R}N)\cong \Nor\left(\DK\left(\Nor(M)\right)\otimes_{R}\DK\left(\Nor(N)\right)\right)\cong \Nor(M)\boxtimes_{R}\Nor(N)$$
\end{proof}

\begin{corollary} \label{5.8}
Let $R$ be a ring, $M$ a simplicial right $R$-module, $N$ a simplicial left $R$-module, $K$ a right $R$-module, and $L$ a left $R$-module. Then the following natural isomorphisms of $\mathbb{Z}$-complexes hold:
\begin{enumerate}
\item[(i)] $\Nor(M\otimes_{R}L) \cong \Nor(M)\otimes_{R}L$.
\item[(ii)] $\Nor(K\otimes_{R}N) \cong K\otimes_{R}\Nor(N)$.
\end{enumerate}
\end{corollary}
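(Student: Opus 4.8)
The plan is to deduce both statements formally from Corollary \ref{5.7} together with Proposition \ref{5.5}, by regarding the modules $K$ and $L$ as constant simplicial modules.

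For (i) I would view $L$ as a constant simplicial left $R$-module, so that the simplicial tensor product $M\otimes_{R}L$ of Remark \ref{4.4} makes sense, with $(M\otimes_{R}L)_{n}=M_{n}\otimes_{R}L$ and face and degeneracy morphisms $d_{n,i}^{M}\otimes_{R}1^{L}$ and $s_{n,i}^{M}\otimes_{R}1^{L}$. By Definition \ref{4.5}, since every face morphism of the constant simplicial module $L$ is $1^{L}$, we have $\Nor(L)_{0}=L$ and $\Nor(L)_{n}=\bigcap_{i=0}^{n-1}\Ker(1^{L})=0$ for every $n\geq1$; that is, $\Nor(L)$ is the $R$-complex concentrated in degree $0$ with $L$ sitting there, which is exactly what is meant by ``$L$ regarded as a left $R$-module'' in Proposition \ref{5.5}. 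Now Corollary \ref{5.7}, applied with $N=L$, yields a natural isomorphism $\Nor(M\otimes_{R}L)\cong \Nor(M)\boxtimes_{R}\Nor(L)$, and Proposition \ref{5.5}(i), applied with $X=\Nor(M)$ and the module $\Nor(L)=L$, gives $\Nor(M)\boxtimes_{R}\Nor(L)=\Nor(M)\boxtimes_{R}L=\Nor(M)\otimes_{R}L$. Composing these produces the desired natural isomorphism $\Nor(M\otimes_{R}L)\cong \Nor(M)\otimes_{R}L$. For (ii) I would argue symmetrically: regard $K$ as a constant simplicial right $R$-module, apply Corollary \ref{5.7} with $M=K$ to obtain $\Nor(K\otimes_{R}N)\cong \Nor(K)\boxtimes_{R}\Nor(N)$, note as above that $\Nor(K)$ is the complex concentrated in degree $0$ equal to $K$, and then invoke Proposition \ref{5.5}(ii) to identify $\Nor(K)\boxtimes_{R}\Nor(N)=K\boxtimes_{R}\Nor(N)=K\otimes_{R}\Nor(N)$.

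Naturality is inherited at each stage: the isomorphism of Corollary \ref{5.7} is natural in both arguments, the identification of $\Nor$ of a constant simplicial module with the corresponding degree-zero complex is natural, and the two statements borrowed from Proposition \ref{5.5} are equalities of complexes verified degreewise. Consequently the composite isomorphisms are natural in $M$, $L$ and in $K$, $N$ respectively. There is essentially no genuine obstacle in this proof; the entire content has been front-loaded into Corollary \ref{5.7} and Proposition \ref{5.5}, and the only point that calls for a word of care is the routine identification of $\Nor$ applied to a constant simplicial module with the module placed in homological degree zero.
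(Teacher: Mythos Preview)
Your proposal is correct and follows essentially the same approach as the paper: regard $L$ (respectively $K$) as a constant simplicial module so that $\Nor(L)=L$ concentrated in degree zero, apply Corollary~\ref{5.7} to get $\Nor(M\otimes_{R}L)\cong \Nor(M)\boxtimes_{R}\Nor(L)$, and then invoke Proposition~\ref{5.5} to identify $\Nor(M)\boxtimes_{R}L=\Nor(M)\otimes_{R}L$. The paper's proof is identical in substance, only more terse.
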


\begin{proof}
(i): If we consider $L$ as a constant simplicial left $R$-module, then $\Nor(L)=L$ is the $R$-complex with only $L$ concentrated in degree zero. Therefore, in light of Corollary \ref{5.7} and Proposition \ref{5.5}, we get the following natural isomorphism:
$$\Nor(M\otimes_{R}L) \cong \Nor(M)\boxtimes_{R}\Nor(L) = \Nor(M)\boxtimes_{R}L = \Nor(M)\otimes_{R}L$$

(ii): Similar to (i).
\end{proof}

Now we are ready to establish a variant of the Eilenberg-Zilber Theorem; see \cite[Chapter 8, Section 8.5]{We} for the original version.

\begin{theorem} \label{5.9}
Let $R$ be a ring, $X$ a connective $R^{\op}$-complex, and $Y$ a connective $R$-complex. Then there exists a natural quasi-isomorphism $\nabla^{X,Y}:X\otimes_{R}Y\rightarrow X\boxtimes_{R}Y$ of $\mathbb{Z}$-complexes.
\end{theorem}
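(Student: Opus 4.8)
The plan is to reduce the statement to the classical Eilenberg--Zilber theorem for simplicial modules by transporting it along the Dold--Kan equivalence and invoking Proposition \ref{5.6}. Recall that for a simplicial right $R$-module $M$ and a simplicial left $R$-module $N$ there is the normalized \emph{shuffle map}
\[
\mathrm{sh}_{M,N}\colon \Nor(M)\otimes_{R}\Nor(N)\longrightarrow \Nor(M\otimes_{R}N),
\]
a natural morphism of $\mathbb{Z}$-complexes which on $x\otimes y$ with $x\in \Nor(M)_{k}$ and $y\in \Nor(N)_{l}$ is a signed sum over $(k,l)$-shuffles of tensors of degeneracies of $x$ and $y$; by the Eilenberg--Zilber theorem \cite[Chapter 8, Section 8.5]{We} this map is a chain homotopy equivalence, in particular a quasi-isomorphism. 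Since the shuffle map is produced by the acyclic-models method, this input is itself purely algebraic and does not rely on the homotopy theory of simplicial sets.

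First I would specialize to $M=\DK(X)$ and $N=\DK(Y)$, which are a simplicial right $R$-module and a simplicial left $R$-module respectively. Theorem \ref{4.14}, applied over $R^{\op}$ and over $R$, furnishes the unit isomorphisms $\iota_{X}\colon X\xrightarrow{\ \cong\ }\Nor\bigl(\DK(X)\bigr)$ and $\iota_{Y}\colon Y\xrightarrow{\ \cong\ }\Nor\bigl(\DK(Y)\bigr)$ of connective complexes, while Proposition \ref{5.6} provides the natural isomorphism $\epsilon_{X,Y}\colon \Nor\bigl(\DK(X)\otimes_{R}\DK(Y)\bigr)\xrightarrow{\ \cong\ }X\boxtimes_{R}Y$. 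I would then \emph{define} $\nabla^{X,Y}$ to be the composite
\[
X\otimes_{R}Y\xrightarrow{\ \iota_{X}\otimes_{R}\iota_{Y}\ }\Nor\bigl(\DK(X)\bigr)\otimes_{R}\Nor\bigl(\DK(Y)\bigr)\xrightarrow{\ \mathrm{sh}\ }\Nor\bigl(\DK(X)\otimes_{R}\DK(Y)\bigr)\xrightarrow{\ \epsilon_{X,Y}\ }X\boxtimes_{R}Y.
\]
Each of the outer three maps is an isomorphism and $\mathrm{sh}$ is a quasi-isomorphism, so $\nabla^{X,Y}$ is a quasi-isomorphism; naturality in $X$ and $Y$ follows because each constituent is natural. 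This already establishes the theorem.

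To make the construction intrinsic and to justify the terminology, I would then unwind the composite using the identification $\lambda_{f}^{X}=\DK(X)(f)\lambda_{1^{[k]}}^{X}$ from Proposition \ref{4.11} together with the bijection $\mathcal{S}_{n}^{k,l}\cong \Sh(k,l)$, $\nu\mapsto(f_{n,\nu},g_{n,\nu})$, of Remark \ref{5.1}. Writing $(X\otimes_{R}Y)_{n}=\bigoplus_{k+l=n}X_{k}\otimes_{R}Y_{l}$, the upshot should be the explicit formula
\[
\nabla^{X,Y}_{n}(x\otimes y)=\sum_{\nu\in \Sh(k,l)}\sgn(\nu)\,\lambda_{f_{n,\nu},\,g_{n,\nu}}^{X,Y}(x\otimes y)\qquad(x\in X_{k},\ y\in Y_{l},\ k+l=n).
\]
With this formula in hand one could alternatively verify directly that $\nabla^{X,Y}$ is a chain map, expanding $\partial^{X\boxtimes_{R}Y}$ via Corollary \ref{4.12}, and reprove the quasi-isomorphism claim by acyclic models or by reducing along Corollary \ref{5.8} to the case of modules, where $\nabla$ is the identity by Proposition \ref{5.5}.

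I expect the only real difficulty to be the sign bookkeeping: one must check that the signs $\sgn(\nu)$ carried by the classical shuffle map match the signs $(-1)^{i}$, $(-1)^{k}$, $(-1)^{l}$ produced by $\partial^{X\boxtimes_{R}Y}$ through Definition \ref{5.2} and Corollary \ref{4.12}, and that the three isomorphisms above are compatible as morphisms of complexes and not merely degreewise. Once the shuffle map is correctly recognized under the Dold--Kan dictionary, the quasi-isomorphism assertion is inherited from the classical Eilenberg--Zilber theorem with no further homological input beyond Proposition \ref{5.6}.
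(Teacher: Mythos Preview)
Your proposal is correct but takes a genuinely different route from the paper. You define $\nabla^{X,Y}$ as the transport of the classical Eilenberg--Zilber shuffle map along the Dold--Kan equivalence and the identification of Proposition~\ref{5.6}, and you inherit the quasi-isomorphism property directly from the classical theorem in \cite[Chapter~8, Section~8.5]{We}. The paper, by contrast, writes down the explicit formula
\[
\nabla_{n}^{X,Y}\bigl(\iota_{k,l}^{X,Y}(x\otimes y)\bigr)=\sum_{(f,g)\in\mathcal{S}_{n}^{k,l}}\sgn(\nu_{f,g})\,\lambda_{f,g}^{X,Y}(x\otimes y)
\]
from the outset, verifies by a four-case analysis (using Corollary~\ref{4.12}) that this is a chain map, and then proves the quasi-isomorphism property from scratch by a double induction on the top nonzero degrees of $X$ and $Y$: at each step a degreewise split short exact sequence reduces the claim to the complexes $X_{n}\otimes_{R}\Nor\bigl(R^{(\Delta^{n})}\bigr)$ and $\Nor\bigl(R^{(\Delta^{m})}\bigr)\otimes_{R}Y_{m}$, and the base of the induction is settled by Lemma~\ref{4.8}, which shows $\Nor\bigl(R^{(\Delta^{n})}\bigr)\simeq\Nor\bigl(R^{(\Delta^{0})}\bigr)$. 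The general case then follows by passing to direct limits over hard truncations.

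Your approach is shorter and conceptually transparent, and it is purely algebraic since the classical shuffle/Alexander--Whitney homotopy equivalence is established by acyclic models. What the paper's approach buys is self-containment: the article is expressly written to avoid importing external homotopical results, and its proof of Theorem~\ref{5.9} does not invoke the classical Eilenberg--Zilber theorem at all, instead building the quasi-isomorphism argument on the elementary Lemma~\ref{4.8}. Your secondary suggestion---to verify the chain-map property directly from the explicit formula---is exactly what the paper does, though its reduction step is the bounded double induction above rather than acyclic models or a direct appeal to Corollary~\ref{5.8}.
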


\begin{proof}
Let $n\geq 0$. Then $(X\otimes_{R}Y)_{n}=\bigoplus_{k+l=n}(X_{k}\otimes_{R}Y_{l})$. For any $0\leq k,l\leq n$, set:
$$\mathcal{S}_{n}^{k,l}= \left\{(f,g) \in \Mor_{\Delta}([n],[k])\times \Mor_{\Delta}([n],[l]) \suchthat \begin{tabular}{ccc}
$f$ and $g$ are surjective, and $\phi_{f,g}:[n]\rightarrow [k]\times[l]$ given \\
by $\phi_{f,g}(i)=(f(i),g(i))$ for every $i\in [n]$ is injective
  \end{tabular} \right\}$$
Moreover, set $\mathcal{S}_{n}=\bigcup_{0\leq k,l\leq n}\mathcal{S}_{n}^{k,l}$. Then $(X\boxtimes_{R}Y)_{n}=\bigoplus_{(f,g)\in \mathcal{S}_{n}}(X_{k}\otimes_{R}Y_{l})$. Now let $k,l\geq 0$ be such that $k+l=n$. Let $\iota_{k,l}^{X,Y}:X_{k}\otimes_{R}Y_{l} \rightarrow (X\otimes_{R}Y)_{n}$ be the canonical injection. Also, let $\lambda_{f}^{X}:X_{k}\rightarrow \DK(X)([n])$ be the canonical injection corresponding to the index $f\in \Mor_{\Delta}\left([n],[k]\right)$, $\lambda_{g}^{Y}:Y_{l}\rightarrow \DK(Y)([n])$ the canonical injection corresponding to the index $g\in \Mor_{\Delta}\left([n],[l]\right)$, and $\lambda_{f,g}^{X,Y}:X_{k}\otimes_{R}Y_{l}\rightarrow (X\boxtimes_{R}Y)_{n}$ the canonical injection corresponding to the indices $f\in \Mor_{\Delta}\left([n],[k]\right)$ and $g\in \Mor_{\Delta}\left([n],[l]\right)$. Moreover, let $\nu_{f,g}\in \Sh(k,l)$ denote the shuffle determined by a pair $(f,g)\in \mathcal{S}_{n}^{k,l}$ as in Remark \ref{5.1}. Define a map $\psi_{k,l}:X_{k}\times Y_{l}\rightarrow (X\boxtimes_{R}Y)_{n}$ by setting
$$\psi_{k,l}(x,y)=\textstyle\sum_{(f,g)\in\mathcal{S}_{n}^{k,l}}\sgn(\nu_{f,g})\lambda_{f,g}^{X,Y}(x\otimes y)$$
for every $(x,y)\in X_{k}\times Y_{l}$. It is clear that $\psi_{k,l}$ is an $R$-balanced map. Therefore, the universal property of tensor product yields a unique $\mathbb{Z}$-homomorphism $\overline{\psi_{k,l}}:X_{k}\otimes_{R} Y_{l}\rightarrow (X\boxtimes_{R}Y)_{n}$ that makes the following diagram commutative:
\begin{equation*}
  \begin{tikzcd}
  X_{k}\times Y_{l} \arrow{r}{\psi_{k,l}} \arrow{d}[swap]{\omega_{X_{k},Y_{l}}}
  & (X\boxtimes_{R}Y)_{n}
  \\
  X_{k}\otimes_{R} Y_{l} \arrow{ru}[swap]{\overline{\psi_{k,l}}}
\end{tikzcd}
\end{equation*}

\noindent
Then we have
$$\overline{\psi_{k,l}}(x\otimes y)=\textstyle\sum_{(f,g)\in\mathcal{S}_{n}^{k,l}}\sgn(\nu_{f,g})\lambda_{f,g}^{X,Y}(x\otimes y)$$
for every $x\in X_{k}$ and $y\in Y_{l}$. By the universal property of direct sum, there is a unique $\mathbb{Z}$-homomorphism $\nabla_{n}^{X,Y}:(X\otimes_{R}Y)_{n}\rightarrow (X\boxtimes_{R}Y)_{n}$ that makes the following diagram commutative for every $0\leq k,l\leq n$ with $k+l=n$:
\begin{equation*}
  \begin{tikzcd}
  X_{k}\otimes_{R} Y_{l} \arrow{r}{\iota_{k,l}^{X,Y}} \arrow{d}[swap]{\overline{\psi_{k,l}}}
  & (X\otimes_{R}Y)_{n} \arrow{dl}{\nabla_{n}^{X,Y}}
  \\
  (X\boxtimes_{R}Y)_{n}
\end{tikzcd}
\end{equation*}

\noindent
Then we have
$$\nabla_{n}^{X,Y}\left(\textstyle\sum_{k+l=n}\iota_{k,l}^{X,Y}(x_{k}\otimes y_{l})\right)= \textstyle\sum_{k+l=n}\textstyle\sum_{(f,g)\in\mathcal{S}_{n}^{k,l}}\sgn(\nu_{f,g})\lambda_{f,g}^{X,Y}(x_{k}\otimes y_{l})$$
for every $\sum_{k+l=n}\iota_{k,l}^{X,Y}(x_{k}\otimes y_{l})\in (X\otimes_{R}Y)_{n}$.

Now set $\nabla^{X,Y}=\left(\nabla_{n}^{X,Y}\right)_{n\geq 0}:X\otimes_{R}Y \rightarrow X\boxtimes_{R}Y$. We show that $\nabla^{X,Y}$ is a morphism of $\mathbb{Z}$-complexes. Let $n\geq 0$. Then the following diagram is commutative:
\begin{equation*}
\begin{tikzcd}[column sep=3.5em,row sep=2em]
  (X\otimes_{R} Y)_{n} \arrow{r}{\partial_{n}^{X\otimes_{R}Y}} \arrow{d}[swap]{\nabla_{n}^{X,Y}}
  & (X\otimes_{R} Y)_{n-1} \arrow{d}{\nabla_{n-1}^{X,Y}}
  \\
  (X\boxtimes_{R}Y)_{n} \arrow{r}{\partial_{n}^{X\boxtimes_{R}Y}}
  & (X\boxtimes_{R}Y)_{n-1}
\end{tikzcd}
\end{equation*}

\noindent
Indeed, we have for every $x\in X_{k}$ and $y\in Y_{l}$ with $k+l=n$:
\begin{equation*}
\begin{split}
 \partial_{n}^{X\boxtimes_{R}Y}\left(\nabla_{n}^{X,Y}\left(\iota_{k,l}^{X,Y}(x\otimes y)\right)\right) & = \partial_{n}^{X\boxtimes_{R}Y}\left(\textstyle\sum_{(f,g)\in\mathcal{S}_{n}^{k,l}}\sgn(\nu_{f,g})\lambda_{f,g}^{X,Y}(x\otimes y)\right) \\
 & = \textstyle\sum_{(f,g)\in\mathcal{S}_{n}^{k,l}}\sgn(\nu_{f,g})\partial_{n}^{X\boxtimes_{R}Y}\left(\lambda_{f,g}^{X,Y}(x\otimes y)\right) \\
 & = \textstyle\sum_{i=0}^{n}(-1)^{i}\textstyle\sum_{(f,g)\in\mathcal{S}_{n}^{k,l}}\sgn(\nu_{f,g}) d_{n,i}^{\DK(X)}\left(\lambda_{f}^{X}(x)\right)\otimes d_{n,i}^{\DK(Y)}\left(\lambda_{g}^{Y}(y)\right) \\
\end{split}
\end{equation*}

Let $0\leq i\leq n$ and $(f,g)\in \mathcal{S}_{n}^{k,l}$, and consider the following cases:

Case I: $f\delta_{n,i}$ is not surjective but $g\delta_{n,i}$ is surjective

If $0\leq i\leq n-1$, then by Corollary \ref{4.12}, $d_{n,i}^{\DK(X)}\left(\lambda_{f}^{X}(x)\right)=0$, so $d_{n,i}^{\DK(X)}\left(\lambda_{f}^{X}(x)\right)\otimes d_{n,i}^{\DK(Y)}\left(\lambda_{g}^{Y}(y)\right)=0$. Hence we can assume that $i=n$. In this case, by Corollary \ref{4.12}, $f$ induces a surjective map $\overline{f} \in \Mor_{\Delta}\left([n-1],[k-1]\right)$ given by $\overline{f}(i)=f(i)$ for every $i\in [n-1]$. Moreover, we note that $g\delta_{n,n}=g\mid_{[n-1]}:[n-1]\rightarrow [l]$. If $0\leq j_{1}<\cdots<j_{l}<n$ are the elements of $[n]$ for which $f(j)=f(j+1)$, and $0\leq i_{1}<\cdots<i_{k}<n$ are the elements of $[n]$ for which $g(i)=g(i+1)$, then we have:
\begin{equation*}
\nu_{f,g} =
\begin{pmatrix}
1 & \cdots & k & k+1 & \cdots & n \\
i_{1}+1 & \cdots & i_{k}+1 & j_{1}+1 & \cdots & j_{l}+1
\end{pmatrix}
\end{equation*}

\noindent
Set:
$$A=\left\{(i_{\alpha}+1,j_{\beta}+1)\mid 1\leq \alpha \leq k \textrm{, } 1\leq \beta \leq l\textrm{, and } i_{\alpha}>j_{\beta}\right\}$$
If $|A|=r$, then $\sgn(\nu_{f,g})=(-1)^{r}$. As $f$ is surjective but $f\delta_{n,n}$ is not surjective, we see that $j_{l}<n-1$ since otherwise, $f(n-1)=f(n)=k$ which would imply that $f\delta_{n,n}=f\mid_{[n-1]}:[n-1]\rightarrow [k]$ is surjective. Also, since $g$ and $g\delta_{n,n}$ are both surjective, we notice that $i_{k}=n-1$ as $g(n-1)=l=g(n)$. As a result, $0\leq j_{1}<\cdots<j_{l}<n-1$ are the elements of $[n-1]$ for which $\overline{f}(j)=\overline{f}(j+1)$, and $0\leq i_{1}<\cdots<i_{k-1}<n-1$ are the elements of $[n-1]$ for which $(g\delta_{n,n})(i)=(g\delta_{n,n})(i+1)$, so we have:
\begin{equation*}
\nu_{\overline{f},g\delta_{n,n}} =
\begin{pmatrix}
1 & \cdots & k-1 & k & \cdots & n-1 \\
i_{1}+1 & \cdots & i_{k-1}+1 & j_{1}+1 & \cdots & j_{l}+1
\end{pmatrix}
\end{equation*}

\noindent
Set:
$$B=\left\{(i_{\alpha}+1,j_{\beta}+1)\mid 1\leq \alpha \leq k-1 \textrm{, } 1\leq \beta \leq l\textrm{, and } i_{\alpha}>j_{\beta}\right\}$$
If $|B|=s$, then $\sgn\left(\nu_{\overline{f},g\delta_{n,n}}\right)=(-1)^{s}$. Since $i_{k}=n-1>j_{l}>\cdots>j_{1}$, we conclude that
$$A=B \cupdot \{(i_{k}+1,j_{\beta}+1)\mid 1\leq \beta \leq l\},$$
which in turn implies that $r=s+l$. Therefore, we have $\sgn(\nu_{f,g})=(-1)^{r}=(-1)^{l}(-1)^{s}=(-1)^{l}\sgn\left(\nu_{\overline{f},g\delta_{n,n}}\right)$. Thus using Corollary \ref{4.12}, we get:
\begin{equation*}
\begin{split}
(-1)^{n}\sgn(\nu_{f,g})d_{n,n}^{\DK(X)}\left(\lambda_{f}^{X}(x)\right)\otimes d_{n,n}^{\DK(Y)}\left(\lambda_{g}^{Y}(y)\right) & = (-1)^{n}(-1)^{l}\sgn\left(\nu_{\overline{f},g\delta_{n,n}}\right)(-1)^{k}\lambda_{\overline{f}}^{X}\left(\partial_{k}^{X}(x)\right)\otimes \lambda_{g\delta_{n,n}}^{Y}(y) \\
 & = \sgn\left(\nu_{\overline{f},g\delta_{n,n}}\right)\lambda_{\overline{f},g\delta_{n,n}}^{X,Y}\left(\partial_{k}^{X}(x)\otimes y\right)
\end{split}
\end{equation*}

Case II: $f\delta_{n,i}$ is surjective but $g\delta_{n,i}$ is not surjective

If $0\leq i\leq n-1$, then by Corollary \ref{4.12}, $d_{n,i}^{\DK(Y)}\left(\lambda_{g}^{Y}(y)\right)=0$, so $d_{n,i}^{\DK(X)}\left(\lambda_{f}^{X}(x)\right)\otimes d_{n,i}^{\DK(Y)}\left(\lambda_{g}^{Y}(y)\right)=0$. Hence we can assume that $i=n$. In this case, we have $f\delta_{n,n}=f\mid_{[n-1]}:[n-1]\rightarrow [k]$. Moreover, by Corollary \ref{4.12}, $g$ induces a surjective map $\overline{g}\in \Mor_{\Delta}\left([n-1],[l-1]\right)$ given by $\overline{g}(i)=g(i)$ for every $i\in [n-1]$. If $0\leq j_{1}<\cdots<j_{l}<n$ are the elements of $[n]$ for which $f(j)=f(j+1)$, and $0\leq i_{1}<\cdots<i_{k}<n$ are the elements of $[n]$ for which $g(i)=g(i+1)$, then we have:
\begin{equation*}
\nu_{f,g} =
\begin{pmatrix}
1 & \cdots & k & k+1 & \cdots & n \\
i_{1}+1 & \cdots & i_{k}+1 & j_{1}+1 & \cdots & j_{l}+1
\end{pmatrix}
\end{equation*}

\noindent
Set:
$$A=\left\{(i_{\alpha}+1,j_{\beta}+1)\mid 1\leq \alpha \leq k \textrm{, } 1\leq \beta \leq l\textrm{, and } i_{\alpha}>j_{\beta}\right\}$$
If $|A|=r$, then $\sgn(\nu_{f,g})=(-1)^{r}$. As $f$ and $f\delta_{n,n}$ are both surjective, we notice that $j_{l}=n-1$ as $f(n-1)=k=f(n)$. Also, since $g$ is surjective but $g\delta_{n,n}$ is not surjective, we see that $i_{k}<n-1$ since otherwise, $g(n-1)=g(n)=l$ which would imply that $g\delta_{n,n}=g\mid_{[n-1]}:[n-1]\rightarrow [l]$ is surjective. As a result, $0\leq j_{1}<\cdots<j_{l-1}<n-1$ are the elements of $[n-1]$ for which $(f\delta_{n,n})(j)=(f\delta_{n,n})(j+1)$, and $0\leq i_{1}<\cdots<i_{k}<n-1$ are the elements of $[n-1]$ for which $\overline{g}(i)=\overline{g}(i+1)$, so we have:
\begin{equation*}
\nu_{f\delta_{n,n},\overline{g}} =
\begin{pmatrix}
1 & \cdots & k & k+1 & \cdots & n-1 \\
i_{1}+1 & \cdots & i_{k}+1 & j_{1}+1 & \cdots & j_{l-1}+1
\end{pmatrix}
\end{equation*}

\noindent
Set:
$$B=\left\{(i_{\alpha}+1,j_{\beta}+1)\mid 1\leq \alpha \leq k \textrm{, } 1\leq \beta \leq l-1 \textrm{, and } i_{\alpha}>j_{\beta}\right\}$$
If $|B|=s$, then $\sgn\left(\nu_{f\delta_{n,n},\overline{g}}\right)=(-1)^{s}$. Since $j_{l}=n-1>i_{k}>\cdots>i_{1}$, we deduce that $A=B$, which in turn implies that $r=s$. Therefore, we have $\sgn(\nu_{f,g})=(-1)^{r}=(-1)^{s}=\sgn\left(\nu_{f\delta_{n,n},\overline{g}}\right)$. Thus using Corollary \ref{4.12}, we get:
\begin{equation*}
\begin{split}
(-1)^{n}\sgn(\nu_{f,g})d_{n,n}^{\DK(X)}\left(\lambda_{f}^{X}(x)\right)\otimes d_{n,n}^{\DK(Y)}\left(\lambda_{g}^{Y}(y)\right) & = (-1)^{n}\sgn\left(\nu_{f\delta_{n,n},\overline{g}}\right)\lambda_{f\delta_{n,n}}^{X}(x)\otimes (-1)^{l}\lambda_{\overline{g}}^{Y}\left(\partial_{l}^{Y}(y)\right) \\
 & = (-1)^{k}\sgn\left(\nu_{f\delta_{n,n},\overline{g}}\right)\lambda_{f\delta_{n,n},\overline{g}}^{X,Y}\left(x\otimes \partial_{l}^{Y}(y)\right)
\end{split}
\end{equation*}

Case III: $f\delta_{n,i}$ and $g\delta_{n,i}$ are both surjective

Let $0\leq j_{1}<\cdots<j_{l}<n$ be the elements of $[n]$ for which $f(j)=f(j+1)$, and $0\leq i_{1}<\cdots<i_{k}<n$ the elements of $[n]$ for which $g(i)=g(i+1)$. If $i=0$, then $f\delta_{n,0}$ is surjective which means that if $0$ is omitted from the domain of $f$, then it will remain surjective. This implies that $f(0)=0=f(1)$, so $j_{1}=0$. Similarly, we see that $i_{1}=0$. This is a contradiction as $\{i_{1},...,i_{k}\} \cap \{j_{1},...,j_{l}\}=\emptyset$. If $i=n$, then $f\delta_{n,n}$ is surjective which means that if $n$ is omitted from the domain of $f$, then it will remain surjective. This yields that $f(n-1)=k=f(n)$, so $j_{l}=n-1$. Similarly, we see that $i_{k}=n-1$. This is a contradiction as $\{i_{1},...,i_{k}\} \cap \{j_{1},...,j_{l}\}=\emptyset$. Hence we can assume that $1\leq i\leq n-1$. By Corollary \ref{4.12}, we have:
$$\sgn(\nu_{f,g})d_{n,i}^{\DK(X)}\left(\lambda_{f}^{X}(x)\right)\otimes d_{n,i}^{\DK(Y)}\left(\lambda_{g}^{Y}(y)\right) = \sgn(\nu_{f,g})\lambda_{f\delta_{n,i}}^{X}(x)\otimes \lambda_{g\delta_{n,i}}^{Y}(y) = \sgn(\nu_{f,g})\lambda_{f\delta_{n,i},g\delta_{n,i}}^{X,Y}(x\otimes y)$$
Since $f\delta_{n,i}$ is surjective, we observe that if $i$ is omitted from the domain of $f$, then it will remain surjective. This means that $f(i-1)=f(i)$ or $f(i)=f(i+1)$, so $i-1\in \{j_{1},...,j_{l}\}$ or $i\in \{j_{1},...,j_{l}\}$. Similarly, we have $i-1\in \{i_{1},...,i_{k}\}$ or $i\in \{i_{1},...,i_{k}\}$. Since $\{i_{1},...,i_{k}\} \cap \{j_{1},...,j_{l}\}=\emptyset$, we should have either $i-1\in \{j_{1},...,j_{l}\}$ and $i\in \{i_{1},...,i_{k}\}$, or $i\in \{j_{1},...,j_{l}\}$ and $i-1\in \{i_{1},...,i_{k}\}$. Without loss of generality, suppose that $i-1\in \{j_{1},...,j_{l}\}$ and $i\in \{i_{1},...,i_{k}\}$. Define a map $f':[n]\rightarrow [k]$ by setting $f'(t)=f(t)$ for every $t\neq i$ and $f'(i)=f(i+1)$. Since $f'(i-1)=f(i-1)=f(i)<f(i+1)=f'(i)=f'(i+1)$, it follows that $f'$ is order-preserving. That is, $f'\in \Mor_{\Delta}\left([n],[k]\right)$. Moreover, $f'(i-1)=f(i-1)=f(i)$, so $f(i)\in \im(f')$, whence we see that $f'$ is surjective. Also, since $f$ and $f'$ have the same values except at $i$, we have $f\delta_{n,i}=f'\delta_{n,i}$. Similarly, define a map $g':[n]\rightarrow [l]$ by setting $g'(t)=g(t)$ for every $t\neq i$ and $g'(i)=g(i-1)$. Since $g'(i-1)=g'(i)=g(i-1)<g(i)=g(i+1)=g'(i+1)$, it follows that $g'$ is order-preserving. That is, $g'\in \Mor_{\Delta}\left([n],[l]\right)$. Moreover, $g'(i+1)=g(i+1)=g(i)$, so $g(i)\in \im(g')$, thereby we notice that $g'$ is surjective. Also, since $g$ and $g'$ have the same values except at $i$, we have $g\delta_{n,i}=g'\delta_{n,i}$. Now if $(f',g') \notin \mathcal{S}_{n}^{k,l}$, then $\phi_{f',g'}$ is not injective, so there is an $0\leq m\leq n-1$ such that $\phi_{f',g'}(m)=\phi_{f',g'}(m+1)$. But we note that:
\begin{equation*}
 \label{eqn:damage piecewise}
 \phi_{f',g'}(m)=\left(f'(m),g'(m)\right)=
 \begin{dcases}
  \left(f(m),g(m)\right)=\phi_{f,g}(m) & \textrm{if } m < i-1 \\
  \left(f(m),g(m)\right) & \textrm{if } m = i-1 \\
  \left(f(m+1),g(m-1)\right) & \textrm{if } m = i \\
  \left(f(m),g(m)\right)=\phi_{f,g}(m) & \textrm{if } m > i
 \end{dcases}
\end{equation*}

\noindent
and
\begin{equation*}
 \label{eqn:damage piecewise}
 \phi_{f',g'}(m+1)=\left(f'(m+1),g'(m+1)\right)=
 \begin{dcases}
  \left(f(m+1),g(m+1)\right)=\phi_{f,g}(m+1) & \textrm{if } m < i-1 \\
  \left(f(m+2),g(m)\right) & \textrm{if } m = i-1 \\
  \left(f(m+1),g(m+1)\right) & \textrm{if } m = i \\
  \left(f(m+1),g(m+1)\right)=\phi_{f,g}(m+1) & \textrm{if } m > i
 \end{dcases}
\end{equation*}

\noindent
Taking into account that $\phi_{f,g}$ is injective, we notice that in any case, $\phi_{f',g'}(m)\neq \phi_{f',g'}(m+1)$ which is a contradiction. Hence $(f',g') \in \mathcal{S}_{n}^{k,l}$. We finally have:
\begin{equation*}
\nu_{f,g} =
\begin{pmatrix}
1 & \cdots & p & \cdots & k & k+1 & \cdots & q & \cdots & n \\
i_{1}+1 & \cdots & i & \cdots & i_{k}+1 & j_{1}+1 & \cdots & i-1 & \cdots & j_{l}+1
\end{pmatrix}
\end{equation*}

\noindent
Furthermore, we have:
\begin{equation*}
\nu_{f',g'} =
\begin{pmatrix}
1 & \cdots & p & \cdots & k & k+1 & \cdots & q & \cdots & n \\
i_{1}+1 & \cdots & i-1 & \cdots & i_{k}+1 & j_{1}+1 & \cdots & i & \cdots & j_{l}+1
\end{pmatrix}
\end{equation*}

\noindent
It is clear from the above displays that $\sgn(\nu_{f',g'})=-\sgn(\nu_{f,g})$. As a result, we have:
$$\sgn(\nu_{f',g'})d_{n,i}^{\DK(X)}\left(\lambda_{f'}^{X}(x)\right)\otimes d_{n,i}^{\DK(Y)}\left(\lambda_{g'}^{Y}(y)\right) = \sgn(\nu_{f',g'})\lambda_{f'\delta_{n,i}}^{X}(x)\otimes \lambda_{g'\delta_{n,i}}^{Y}(y) = \sgn(\nu_{f',g'})\lambda_{f'\delta_{n,i},g'\delta_{n,i}}^{X,Y}(x\otimes y)=$$$$-sgn(\nu_{f,g})\lambda_{f\delta_{n,i},g\delta_{n,i}}^{X,Y}(x\otimes y)$$
Consequently, fixing $1\leq i\leq n-1$, for any pair $(f,g)\in \mathcal{S}_{n}^{k,l}$, there is a dual pair $(f',g')\in \mathcal{S}_{n}^{k,l}$ whose corresponding term cancels that of $(f,g)$. Therefore, the sum of the terms arising in this way vanishes.

Case IV: $f\delta_{n,i}$ and $g\delta_{n,i}$ are neither surjective

Let $0\leq j_{1}<\cdots<j_{l}<n$ be the elements of $[n]$ for which $f(j)=f(j+1)$, and $0\leq i_{1}<\cdots<i_{k}<n$ the elements of $[n]$ for which $g(i)=g(i+1)$. Since $f\delta_{n,i}$ is not surjective, we conclude that if $i$ is omitted from the domain of $f$, then it will no longer remain surjective. This implies that $f(i)$ is not assumed by any element of $[n]$ other than $i$. If $i=0$, then $f(0)\neq f(1)$, so $0 \notin \{j_{1},...,j_{l}\}$. Similarly, $0 \notin \{i_{1},...,i_{k}\}$. But then $\{i_{1},...,i_{k},j_{1},...,j_{l}\}\subseteq \{0,1,...,n-1\}\setminus \{0\}$ and $\{i_{1},...,i_{k}\} \cap \{j_{1},...,j_{l}\}=\emptyset$, so $n=k+l\leq n-1$ which is a contradiction. If $1\leq i\leq n-1$, then $f(i-1)\neq f(i)\neq f(i+1)$, so $i-1,i \notin \{j_{1},...,j_{l}\}$. Similarly, $i-1,i \notin \{i_{1},...,i_{k}\}$. But then $\{i_{1},...,i_{k},j_{1},...,j_{l}\}\subseteq \{0,1,...,n-1\}\setminus \{i-1,i\}$, so $n=k+l\leq n-2$ which is a contradiction. If $i=n$, then $f(n-1)\neq f(n)$, so $n-1 \notin \{j_{1},...,j_{l}\}$. Similarly, $n-1 \notin \{i_{1},...,i_{k}\}$. But then $\{i_{1},...,i_{k},j_{1},...,j_{l}\}\subseteq \{0,1,...,n-1\}\setminus \{n-1\}$, so $n=k+l\leq n-1$ which is a contradiction. Therefore, this case never happens.

Now in light of the above analysis, we have:
\small
$$\textstyle\sum_{i=0}^{n}(-1)^{i}\textstyle\sum_{(f,g)\in \mathcal{S}_{n}^{k,l}}\sgn(\nu_{f,g})d_{n,i}^{\DK(X)}\left(\lambda_{f}^{X}(x)\right)\otimes d_{n,i}^{\DK(Y)}\left(\lambda_{g}^{Y}(y)\right) = $$$$\textstyle\sum_{\left(\overline{f},g\delta_{n,n}\right)\in \mathcal{S}_{n-1}^{k-1,l}}\sgn\left(\nu_{\overline{f},g\delta_{n,n}}\right)\lambda_{\overline{f},g\delta_{n,n}}^{X,Y}\left(\partial_{k}^{X}(x)\otimes y\right) + (-1)^{k}\textstyle\sum_{\left(f\delta_{n,n},\overline{g}\right) \in \mathcal{S}_{n-1}^{k,l-1}}\sgn\left(\nu_{f\delta_{n,n},\overline{g}}\right)\lambda_{f\delta_{n,n},\overline{g}}^{X,Y}\left(x\otimes \partial_{l}^{Y}(y)\right)$$
\normalsize
On the other hand, we have:
\footnotesize
\begin{equation*}
\begin{split}
 \nabla_{n-1}^{X,Y}\left(\partial_{n}^{X\otimes_{R}Y}\left(\iota_{k,l}^{X,Y}(x\otimes y)\right)\right) & = \nabla_{n-1}^{X,Y}\left(\iota_{k-1,l}^{X,Y}\left(\partial_{k}^{X}(x)\otimes y\right)\right) + (-1)^{k}\nabla_{n-1}^{X,Y}\left(\iota_{k,l-1}^{X,Y}\left(x\otimes \partial_{l}^{Y}(y)\right)\right)\\
 & = \textstyle\sum_{(f,g)\in \mathcal{S}_{n-1}^{k-1,l}}\sgn(\nu_{f,g})\lambda_{f,g}^{X,Y}\left(\partial_{k}^{X}(x)\otimes y\right) + (-1)^{k} \textstyle\sum_{(f,g)\in \mathcal{S}_{n-1}^{k,l-1}} \sgn(\nu_{f,g})\lambda_{f,g}^{X,Y}\left(x\otimes \partial_{l}^{Y}(y)\right)
\end{split}
\end{equation*}
\normalsize

\noindent
After reindexing properly, we observe that $\partial_{n}^{X\boxtimes_{R}Y}\nabla_{n}^{X,Y}\iota_{k,l}^{X,Y}=\nabla_{n-1}^{X,Y}\partial_{n}^{X\otimes_{R}Y}\iota_{k,l}^{X,Y}$ for every $k,l\geq 0$ with $k+l=n$, so we infer that $\partial_{n}^{X\boxtimes_{R}Y}\nabla_{n}^{X,Y}=\nabla_{n-1}^{X,Y}\partial_{n}^{X\otimes_{R}Y}$. As a consequence, $\nabla^{X,Y}: X\otimes_{R}Y \rightarrow X\boxtimes_{R}Y$ is a morphism of $\mathbb{Z}$-complexes.

We next establish the naturality of $\nabla^{X,Y}$. To this end, let $\eta:X\rightarrow X'$ be a morphism of $R^{\op}$-complexes. Then the following diagram is commutative:
\begin{equation*}
\begin{tikzcd}[column sep=3em,row sep=2em]
  X\otimes_{R} Y \arrow{r}{\eta\otimes_{R}Y} \arrow{d}[swap]{\nabla^{X,Y}}
  & X'\otimes_{R}Y \arrow{d}{\nabla^{X',Y}}
  \\
  X\boxtimes_{R}Y \arrow{r}{\eta\boxtimes_{R}Y}
  & X'\boxtimes_{R}Y
\end{tikzcd}
\end{equation*}

\noindent
Indeed, we have for every $x\in X_{k}$ and $y\in Y_{l}$ with $k+l=n$:
\begin{equation*}
\begin{split}
 (\eta\boxtimes_{R}Y)_{n}\left(\nabla_{n}^{X,Y}\left(\iota_{k,l}^{X,Y}(x\otimes y)\right)\right) & = (\eta\boxtimes_{R}Y)_{n}\left(\textstyle\sum_{(f,g)\in \mathcal{S}_{n}^{k,l}}\sgn(\nu_{f,g})\lambda_{f,g}^{X,Y}(x\otimes y)\right) \\
 & = \textstyle\sum_{(f,g)\in \mathcal{S}_{n}^{k,l}}\sgn(\nu_{f,g})(\eta\boxtimes_{R}Y)_{n}\left(\lambda_{f,g}^{X,Y}(x\otimes y)\right) \\
 & = \textstyle\sum_{(f,g)\in \mathcal{S}_{n}^{k,l}}\sgn(\nu_{f,g})\lambda_{f,g}^{X',Y}\left(\eta_{k}(x)\otimes y\right) \\
 & = \nabla_{n}^{X',Y}\left(\iota_{k,l}^{X',Y}\left(\eta_{k}(x)\otimes y\right)\right) \\
 & = \nabla_{n}^{X',Y}\left((\eta\otimes_{R}Y)_{n}\left(\iota_{k,l}^{X,Y}(x\otimes y)\right)\right)
\end{split}
\end{equation*}

\noindent
As a result, $(\eta\boxtimes_{R}Y)_{n}\nabla_{n}^{X,Y}\iota_{k,l}^{X,Y} = \nabla_{n}^{X',Y}(\eta\otimes_{R}Y)_{n}\iota_{k,l}^{X,Y}$ for every $k,l\geq 0$ with $k+l=n$, so we infer that $(\eta\boxtimes_{R}Y)_{n}\nabla_{n}^{X,Y} = \nabla_{n}^{X',Y}(\eta\otimes_{R}Y)_{n}$ for every $n\geq 0$, whence $(\eta\boxtimes_{R}Y)\nabla^{X,Y} = \nabla^{X',Y}(\eta\otimes_{R}Y)$. If $\theta:Y\rightarrow Y'$ is a morphism of $R$-complexes, then a similar argument shows that the following diagram is commutative:
\begin{equation*}
\begin{tikzcd}[column sep=3em,row sep=2em]
  X\otimes_{R} Y \arrow{r}{X\otimes_{R}\theta} \arrow{d}[swap]{\nabla^{X,Y}}
  & X\otimes_{R}Y' \arrow{d}{\nabla^{X,Y'}}
  \\
  X\boxtimes_{R}Y \arrow{r}{X\boxtimes_{R}\theta}
  & X\boxtimes_{R}Y'
\end{tikzcd}
\end{equation*}

\noindent
As a consequence, $\nabla^{X,Y}$ is natural in $X$ and $Y$.

We finally show that $\nabla^{X,Y}$ is a quasi-isomorphism. The author has learned the ideas of the following delicate argument from \cite[proof of Theorem 2.5.7.14]{Lu}. We first consider the special case in which $X$ and $Y$ are left-bounded. Thus we assume that $X_{i}=0$ for every $i>n$, and $Y_{i}=0$ for every $i>m$. We argue by induction on $n$ to show that $\nabla^{X,Y}$ is a quasi-isomorphism. If $n=0$, then $X$ has only one non-zero term in degree $0$, so we can identify $X$ with the left $R$-module $X_{0}$. Consequently, $X\otimes_{R}Y=X\boxtimes_{R}Y$ by Proposition \ref{5.5}, and $\nabla_{i}^{X,Y}:(X\otimes_{R}Y)_{i}\rightarrow (X\boxtimes_{R}Y)_{i}$ is the identity map for every $i\geq 0$. Therefore, $\nabla^{X,Y}$ is a quasi-isomorphism. Now let $n\geq 1$, and assume the result for $n-1$. Consider the degreewise split short exact sequence
$$0\rightarrow X_{n-1\sqsubset}\rightarrow X\rightarrow \Sigma^{n}X_{n}\rightarrow 0$$
of $R$-complexes in which $\sqsubset$ designates the hard truncation from left; see \cite[A.1.14]{Ch}. Then we obtain the following commutative diagram with exact rows:
\begin{equation*}
\begin{tikzcd}
  0 \arrow{r} & X_{n-1\sqsubset}\otimes_{R}Y \arrow{r} \arrow{d}{\nabla^{X_{n-1\sqsubset},Y}}
  & X\otimes_{R}Y \arrow{r} \arrow{d}{\nabla^{X,Y}} & \Sigma^{n}X_{n}\otimes_{R}Y \arrow{r} \arrow{d}{\nabla^{\Sigma^{n}X_{n},Y}} & 0
  \\
  0 \arrow{r} & X_{n-1\sqsubset}\boxtimes_{R}Y \arrow{r} & X\boxtimes_{R}Y \arrow{r} \arrow{r} & \Sigma^{n}X_{n}\boxtimes_{R}Y \arrow{r} & 0
\end{tikzcd}
\end{equation*}

\noindent
By the induction hypothesis, $\nabla^{X_{n-1\sqsubset},Y}$ is a quasi-isomorphism. Therefore, the above diagram implies that $\nabla^{X,Y}$ is a quasi-isomorphism if and only if $\nabla^{\Sigma^{n}X_{n},Y}$ is a quasi-isomorphism. We note that $X_{n}\otimes_{R}\Nor\left(R^{(\Delta^{n})}\right)$ is a left-bounded $R$-complex with
$$\left(X_{n}\otimes_{R}\Nor\left(R^{(\Delta^{n})}\right)\right)_{n} = X_{n}\otimes_{R}\Nor\left(R^{(\Delta^{n})}\right)_{n} = X_{n}\otimes_{R}R \cong X_{n},$$
so by the above argument, $\nabla^{X_{n}\otimes_{R}\Nor\left(R^{(\Delta^{n})}\right),Y}$ is a quasi-isomorphism if and only if $\nabla^{\Sigma^{n}X_{n},Y}$ is a quasi-isomorphism. It follows that $\nabla^{X,Y}$ is a quasi-isomorphism if and only if $\nabla^{X_{n}\otimes_{R}\Nor\left(R^{(\Delta^{n})}\right),Y}$ is a quasi-isomorphism. Now we argue by induction on $m$ to show that $\nabla^{X_{n}\otimes_{R}\Nor\left(R^{(\Delta^{n})}\right),Y}$ is a quasi-isomorphism. If $m=0$, then $Y$ has only one non-zero term in degree $0$, so we can identify $Y$ with the left $R$-module $Y_{0}$, and thus a similar argument as above shows that $\nabla^{X_{n}\otimes_{R}\Nor\left(R^{(\Delta^{n})}\right),Y}$ is a quasi-isomorphism. Let $m\geq 1$, and assume the result for $m-1$. Consider the degreewise split short exact sequence
$$0\rightarrow Y_{m-1\sqsubset}\rightarrow Y\rightarrow \Sigma^{m}Y_{m}\rightarrow 0$$
of $R$-complexes. Then we achieve the following commutative diagram with exact rows:
\small
\begin{equation*}
\begin{tikzcd}[column sep=1em,row sep=3em]
  0 \arrow{r} & \left(X_{n}\otimes_{R}\Nor\left(R^{(\Delta^{n})}\right)\right)\otimes_{R}Y_{m-1\sqsubset} \arrow{r} \arrow{d}{\nabla^{X_{n}\otimes_{R}\Nor\left(R^{(\Delta^{n})}\right),Y_{m-1\sqsubset}}}
  & \left(X_{n}\otimes_{R}\Nor\left(R^{(\Delta^{n})}\right)\right)\otimes_{R}Y \arrow{r} \arrow{d}{\nabla^{X_{n}\otimes_{R}\Nor\left(R^{(\Delta^{n})}\right),Y}}
  & \left(X_{n}\otimes_{R}\Nor\left(R^{(\Delta^{n})}\right)\right)\otimes_{R}\Sigma^{m}Y_{m} \arrow{r} \arrow{d}{\nabla^{X_{n}\otimes_{R}\Nor\left(R^{(\Delta^{n})}\right),\Sigma^{m}Y_{m}}} & 0
  \\
 0 \arrow{r} & \left(X_{n}\otimes_{R}\Nor\left(R^{(\Delta^{n})}\right)\right)\boxtimes_{R}Y_{m-1\sqsubset} \arrow{r}
  & \left(X_{n}\otimes_{R}\Nor\left(R^{(\Delta^{n})}\right)\right)\boxtimes_{R}Y \arrow{r}
  & \left(X_{n}\otimes_{R}\Nor\left(R^{(\Delta^{n})}\right)\right)\boxtimes_{R}\Sigma^{m}Y_{m} \arrow{r} & 0
\end{tikzcd}
\end{equation*}
\normalsize

\noindent
By the induction hypothesis, $\nabla^{X_{n}\otimes_{R}\Nor\left(R^{(\Delta^{n})}\right),Y_{m-1\sqsubset}}$ is a quasi-isomorphism. Therefore, the above diagram implies that $\nabla^{X_{n}\otimes_{R}\Nor\left(R^{(\Delta^{n})}\right),Y}$ is a quasi-isomorphism if and only if $\nabla^{X_{n}\otimes_{R}\Nor\left(R^{(\Delta^{n})}\right),\Sigma^{m}Y_{m}}$ is a quasi-isomorphism. We note that $\Nor\left(R^{(\Delta^{m})}\right)\otimes_{R}Y_{m}$ is a left-bounded $R$-complex with
$$\left(\Nor\left(R^{(\Delta^{m})}\right)\otimes_{R}Y_{m}\right)_{m} =\Nor\left(R^{(\Delta^{m})}\right)_{m}\otimes_{R}Y_{m} =R\otimes_{R}Y_{m} \cong Y_{m},$$
so by the above argument, $\nabla^{X_{n}\otimes_{R}\Nor\left(R^{(\Delta^{n})}\right),\Nor\left(R^{(\Delta^{m})}\right)\otimes_{R}Y_{m}}$ is a quasi-isomorphism if and only if $\nabla^{X_{n}\otimes_{R}\Nor\left(R^{(\Delta^{n})}\right),\Sigma^{m}Y_{m}}$ is a quasi-isomorphism. It follows that $\nabla^{X_{n}\otimes_{R}\Nor\left(R^{(\Delta^{n})}\right),Y}$ is a quasi-isomorphism if and only if $\nabla^{X_{n}\otimes_{R}\Nor\left(R^{(\Delta^{n})}\right),\Nor\left(R^{(\Delta^{m})}\right)\otimes_{R}Y_{m}}$ is a quasi-isomorphism. Altogether, we infer that $\nabla^{X,Y}$ is a quasi-isomorphism if and only if $\nabla^{X_{n}\otimes_{R}\Nor\left(R^{(\Delta^{n})}\right),\Nor\left(R^{(\Delta^{m})}\right)\otimes_{R}Y_{m}}$ is a quasi-isomorphism.

Now let $n,m\geq 0$. Viewing $[n]$ as a small category, we have $\nrv([n])=\Delta^{n}$. Since $[n]$ has a least element $0$, we can define a morphism $\zeta^{\Delta^{n}}:\Delta^{0}\rightarrow \Delta^{n}$ of simplicial sets by setting $\zeta_{k}^{\Delta^{n}}(0,...,0)=(0,...,0)$ for every $k\geq 0$. Then by Lemma \ref{4.8}, $\Nor\left(R^{\left(\zeta^{\Delta^{n}}\right)}\right):\Nor\left(R^{(\Delta^{0})}\right)\rightarrow \Nor\left(R^{(\Delta^{n})}\right)$ is a homotopy equivalence. Similarly, $\nrv([m])=\Delta^{m}$, $\zeta^{\Delta^{m}}:\Delta^{0}\rightarrow \Delta^{m}$ is given by $\zeta_{k}^{\Delta^{m}}(0,...,0)=(0,...,0)$ for every $k\geq 0$, and $\Nor\left(R^{\left(\zeta^{\Delta^{m}}\right)}\right):\Nor\left(R^{(\Delta^{0})}\right)\rightarrow \Nor\left(R^{(\Delta^{m})}\right)$ is a homotopy equivalence. Also, $[n]\times[m]$ is a poset with componentwise order, so $\nrv([n]\times[m])=\Delta^{n}\times \Delta^{m}$, $\zeta^{\Delta^{n}\times \Delta^{m}}:\Delta^{0}\rightarrow \Delta^{n}\times \Delta^{m}$ is given by $\zeta_{k}^{\Delta^{n}\times \Delta^{m}}(0,...,0)=\left((0,...,0),(0,...,0)\right)$ for every $k\geq 0$, and $\Nor\left(R^{\left(\zeta^{\Delta^{n}\times \Delta^{m}}\right)}\right):\Nor\left(R^{(\Delta^{0})}\right)\rightarrow \Nor\left(R^{(\Delta^{n}\times \Delta^{m})}\right)$ is a homotopy equivalence. Let
$$\eta:\Nor\left(R^{(\Delta^{0})}\right)\otimes_{R}\Nor\left(R^{(\Delta^{0})}\right)\rightarrow \Nor\left(R^{(\Delta^{0})}\right)$$
be the isomorphism given by $\eta_{0}(a\otimes b)=ab$ for every $a,b\in R$, and $\eta_{i}=0$ for every $i\neq 0$. Furthermore, let $\chi$ be the composition of the following morphisms of $\mathbb{Z}$-complexes:
$$\Nor\left(R^{(\Delta^{n})}\right)\otimes_{R}\Nor\left(R^{(\Delta^{m})}\right) \xrightarrow{\nabla^{\Nor\left(R^{(\Delta^{n})}\right),\Nor\left(R^{(\Delta^{m})}\right)}} \Nor\left(R^{(\Delta^{n})}\right)\boxtimes_{R}\Nor\left(R^{(\Delta^{m})}\right) $$$$ \xrightarrow{\cong}\Nor\left(R^{(\Delta^{n})}\otimes_{R}R^{(\Delta^{m})}\right) \xrightarrow{\cong}\Nor\left(R^{(\Delta^{n}\times \Delta^{m})}\right)$$

\noindent
Then the following diagram is commutative:
\begin{equation*}
\begin{tikzcd}[column sep=2em,row sep=4em]
  \Nor\left(R^{(\Delta^{0})}\right)\otimes_{R}\Nor\left(R^{(\Delta^{0})}\right) \arrow{r}{\eta} \arrow{d}[swap]{\Nor\left(R^{\left(\zeta^{\Delta^{n}}\right)}\right)\otimes_{R}\Nor\left(R^{\left(\zeta^{\Delta^{m}}\right)}\right)}
  & \Nor\left(R^{(\Delta^{0})}\right) \arrow{d}{\Nor\left(R^{\left(\zeta^{\Delta^{n}\times\Delta^{m}}\right)}\right)}
  \\
   \Nor\left(R^{(\Delta^{n})}\right)\otimes_{R}\Nor\left(R^{(\Delta^{m})}\right) \arrow{r}{\chi}
  & \Nor\left(R^{(\Delta^{n}\times \Delta^{m})}\right)
\end{tikzcd}
\end{equation*}

\noindent
To see this, we note that $\Nor\left(R^{(\Delta^{0})}\right)$ has only one non-zero term in degree $0$, so we must have $\Nor\left(R^{\left(\zeta^{\Delta^{n}}\right)}\right)_{i}= \Nor\left(R^{\left(\zeta^{\Delta^{m}}\right)}\right)_{i}= \Nor\left(R^{\left(\zeta^{\Delta^{n}\times \Delta^{m}}\right)}\right)_{i}=0$ for every $i\neq 0$. Therefore, it suffices to check the commutativity of the above diagram for $i=0$. If $i=0$, then we have the following commutative diagram:
\begin{equation*}
\begin{tikzcd}[column sep=2em,row sep=3em]
  R\otimes_{R}R \arrow{r}{\eta_{0}} \arrow{d}[swap]{R^{\left(\zeta_{0}^{\Delta^{n}}\right)}\otimes_{R}R^{\left(\zeta_{0}^{\Delta^{m}}\right)}}
  & R \arrow{d}{R^{\left(\zeta_{0}^{\Delta^{n}\times\Delta^{m}}\right)}}
  \\
   R^{n}\otimes_{R}R^{m} \arrow{r}{\chi_{0}}
  & R^{nm}
\end{tikzcd}
\end{equation*}

\noindent
Indeed, we have for every $a,b\in R$:
\begin{equation*}
\begin{split}
 \chi_{0}\left(\left(R^{\left(\zeta_{0}^{\Delta^{n}}\right)}\otimes_{R}R^{\left(\zeta_{0}^{\Delta^{m}}\right)}\right)(a\otimes b)\right) & = \chi_{0}\left(R^{\left(\zeta_{0}^{\Delta^{n}}\right)}(a)\otimes R^{\left(\zeta_{0}^{\Delta^{m}}\right)}(b)\right) = \chi_{0}\left((a,0,...,0)\otimes(b,0,...,0)\right) \\
 & = (ab,0,...,0) = R^{\left(\zeta_{0}^{\Delta^{n}\times\Delta^{m}}\right)}(ab) = R^{\left(\zeta_{0}^{\Delta^{n}\times\Delta^{m}}\right)}\left(\eta_{0}(a\otimes b)\right)
\end{split}
\end{equation*}

\noindent
Now $\eta$ is an isomorphism, and $\Nor\left(R^{\left(\zeta^{\Delta^{n}}\right)}\right)\otimes_{R}\Nor\left(R^{\left(\zeta^{\Delta^{m}}\right)}\right)$ and $\Nor\left(R^{\left(\zeta^{\Delta^{n}\times\Delta^{m}}\right)}\right)$ are homotopy equivalences, so the above diagram implies that $\chi$ is a homotopy equivalence. By the definition of $\chi$, we conclude that $\nabla^{\Nor\left(R^{(\Delta^{n})}\right),\Nor\left(R^{(\Delta^{m})}\right)}$ is a homotopy equivalence. Consider the following commutative diagram:
\footnotesize
\begin{equation*}
\begin{tikzcd}[column sep=1.5em,row sep=4em]
  X_{n}\otimes_{R}\left(\Nor\left(R^{(\Delta^{n})}\right)\otimes_{R}\Nor\left(R^{(\Delta^{m})}\right)\right)\otimes_{R}Y_{m} \arrow{d}[swap]{X_{n}\otimes_{R}\nabla^{\Nor\left(R^{(\Delta^{n})}\right),\Nor\left(R^{(\Delta^{m})}\right)}\otimes_{R}Y_{m}} \arrow{r}{\cong}
  & \left(X_{n}\otimes_{R}\Nor\left(R^{(\Delta^{n})}\right)\right)\otimes_{R}\left(\Nor\left(R^{(\Delta^{m})}\right)\otimes_{R}Y_{m}\right) \arrow{d}{\nabla^{X_{n}\otimes_{R}\Nor\left(R^{(\Delta^{n})}\right),\Nor\left(R^{(\Delta^{m})}\right)\otimes_{R}Y_{m}}}
  \\
  X_{n}\otimes_{R}\left(\Nor\left(R^{(\Delta^{n})}\right)\boxtimes_{R}\Nor\left(R^{(\Delta^{m})}\right)\right)\otimes_{R}Y_{m} \arrow{r}{\cong}
  & \left(X_{n}\otimes_{R}\Nor\left(R^{(\Delta^{n})}\right)\right)\boxtimes_{R}\left(\Nor\left(R^{(\Delta^{m})}\right)\otimes_{R}Y_{m}\right)
\end{tikzcd}
\end{equation*}
\normalsize

\noindent
Since tensor product functor preserves homotopy equivalences, $X_{n}\otimes_{R}\nabla^{\Nor\left(R^{(\Delta^{n})}\right),\Nor\left(R^{(\Delta^{m})}\right)}\otimes_{R}Y_{m}$ is a homotopy equivalence, so we deduce from the above diagram that $\nabla^{X_{n}\otimes_{R}\Nor\left(R^{(\Delta^{n})}\right),\Nor\left(R^{(\Delta^{m})}\right)\otimes_{R}Y_{m}}$ is a homotopy equivalence, so in particular, it is a quasi-isomorphism. Therefore, the above discussion shows that $\nabla^{X,Y}$ is a quasi-isomorphism.

Next suppose that $X$ and $Y$ are arbitrary. Then we have the following commutative diagram:
\begin{equation*}
\begin{tikzcd}[column sep=8.5em,row sep=2em]
  \underset{(n,m)\in \mathbb{Z}\times \mathbb{Z}}{\varinjlim}\left(X_{n\sqsubset}\otimes_{R}Y_{m\sqsubset}\right) \arrow{r}{\underset{(n,m)\in \mathbb{Z}\times \mathbb{Z}}{\varinjlim}\nabla^{X_{n\sqsubset}\otimes_{R}Y_{m\sqsubset}}} \arrow{d}[swap]{\cong}
  & \underset{(n,m)\in \mathbb{Z}\times \mathbb{Z}}{\varinjlim}\left(X_{n\sqsubset} \boxtimes_{R}Y_{m\sqsubset}\right) \arrow{d}{\cong}
  \\
  \left(\underset{n\in \mathbb{Z}}{\varinjlim}X_{n\sqsubset}\right) \otimes_{R} \left(\underset{m\in \mathbb{Z}}{\varinjlim}Y_{m\sqsubset}\right)  \arrow{r}{\nabla^{\underset{n\in \mathbb{Z}}{\varinjlim}X_{n\sqsubset},\underset{m\in \mathbb{Z}}{\varinjlim}Y_{m\sqsubset}}} \arrow{d}[swap]{\cong}
  & \left(\underset{n\in \mathbb{Z}}{\varinjlim}X_{n\sqsubset}\right) \boxtimes_{R} \left(\underset{m\in \mathbb{Z}}{\varinjlim}Y_{m\sqsubset}\right) \arrow{d}{\cong}
  \\
  X \otimes_{R}Y  \arrow{r}{\nabla^{X,Y}}
  & X \boxtimes_{R}Y
\end{tikzcd}
\end{equation*}

\noindent
By the special case, $\nabla^{X_{n\sqsubset}\otimes_{R}Y_{m\sqsubset}}$ is a quasi-isomorphism for every $n,m \geq 0$, so $\underset{(n,m)\in \mathbb{Z}\times \mathbb{Z}}{\varinjlim}\nabla^{X_{n\sqsubset}\otimes_{R}Y_{m\sqsubset}}$ is a quasi-isomorphism. Then the above diagram shows that $\nabla^{X,Y}$ is a quasi-isomorphism.
\end{proof}

\begin{corollary} \label{5.10}
Let $R$ be a ring, $X$ a connective $R^{\op}$-complex, and $Y$ a connective $R$-complex. Then the following natural $\mathbb{Z}$-isomorphism holds for every $i\geq 0$:
$$H_{i}(X\boxtimes_{R}Y) \cong H_{i}(X\otimes_{R}Y)$$
\end{corollary}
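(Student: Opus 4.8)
The plan is to invoke Theorem \ref{5.9} directly. That theorem furnishes a natural quasi-isomorphism $\nabla^{X,Y}:X\otimes_{R}Y\rightarrow X\boxtimes_{R}Y$ of $\mathbb{Z}$-complexes, so the only remaining work is to pass to homology.

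First I would recall that, by definition, a quasi-isomorphism induces an isomorphism on homology in every degree; hence for each $i\geq 0$ the map $H_{i}\left(\nabla^{X,Y}\right):H_{i}(X\otimes_{R}Y)\rightarrow H_{i}(X\boxtimes_{R}Y)$ is a $\mathbb{Z}$-isomorphism. Next I would observe that the naturality of $\nabla^{X,Y}$ in both arguments, established at the end of the proof of Theorem \ref{5.9}, together with the functoriality of $H_{i}(-)$, yields that the collection $\left\{H_{i}\left(\nabla^{X,Y}\right)\right\}$ is natural in $X$ and $Y$. This gives the asserted natural isomorphism $H_{i}(X\boxtimes_{R}Y)\cong H_{i}(X\otimes_{R}Y)$.

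There is no real obstacle here: the entire content of the corollary is already packaged into Theorem \ref{5.9}, and the argument is a routine application of the fact that quasi-isomorphisms are precisely the morphisms inducing homology isomorphisms. The proof will therefore be a single short paragraph citing Theorem \ref{5.9}.
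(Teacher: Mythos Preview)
Your proposal is correct and matches the paper's approach exactly: the paper's proof is simply ``Follows from Theorem \ref{5.9},'' and your paragraph merely spells out what that entails.
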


\begin{proof}
Follows from Theorem \ref{5.9}.
\end{proof}

\begin{corollary} \label{5.11}
Let $R$ be a ring. Then the following assertions hold:
\begin{enumerate}
\item[(i)] If $X$ is a connective $R^{\op}$-complex of flat modules, then $X\boxtimes_{R}-:\mathcal{C}_{\geq 0}(R)\rightarrow \mathcal{C}_{\geq 0}(\mathbb{Z})$ preserves quasi-isomorphisms.
\item[(ii)] If $Y$ is a connective $R$-complex of flat modules, then $-\boxtimes_{R}Y:\mathcal{C}_{\geq 0}(R^{\op})\rightarrow \mathcal{C}_{\geq 0}(\mathbb{Z})$ preserves quasi-isomorphisms.
\end{enumerate}
\end{corollary}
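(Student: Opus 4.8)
The plan is to deduce the statement from the Eilenberg--Zilber quasi-isomorphism of Theorem \ref{5.9} together with the classical fact that tensoring with a bounded-below complex of flat modules preserves quasi-isomorphisms. I will only treat (i); assertion (ii) is handled by the symmetric argument, interchanging the roles of $X$ and $Y$ and replacing $\mathcal{C}_{\geq 0}(R)$ by $\mathcal{C}_{\geq 0}(R^{\op})$.

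First I would fix a quasi-isomorphism $\theta\colon Y\to Y'$ in $\mathcal{C}_{\geq 0}(R)$ and record the commutative square furnished by the naturality of $\nabla$ in Theorem \ref{5.9}:
\begin{equation*}
\begin{tikzcd}[column sep=3em,row sep=2em]
  X\otimes_{R} Y \arrow{r}{X\otimes_{R}\theta} \arrow{d}[swap]{\nabla^{X,Y}}
  & X\otimes_{R}Y' \arrow{d}{\nabla^{X,Y'}}
  \\
  X\boxtimes_{R}Y \arrow{r}{X\boxtimes_{R}\theta}
  & X\boxtimes_{R}Y'
\end{tikzcd}
\end{equation*}
By Theorem \ref{5.9} the vertical maps $\nabla^{X,Y}$ and $\nabla^{X,Y'}$ are quasi-isomorphisms, so applying $H_{i}(-)$ to this square (equivalently, the two-out-of-three property of quasi-isomorphisms) shows that $X\boxtimes_{R}\theta$ is a quasi-isomorphism if and only if $X\otimes_{R}\theta$ is. Thus it remains to prove that the ordinary tensor functor $X\otimes_{R}-$ preserves quasi-isomorphisms when $X$ is a connective $R^{\op}$-complex of flat modules.

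For this last point I would pass to mapping cones: since $\theta$ is a quasi-isomorphism, $\Cone(\theta)$ is an exact connective $R$-complex, and there is a natural isomorphism $X\otimes_{R}\Cone(\theta)\cong\Cone(X\otimes_{R}\theta)$, so it suffices to show that $X\otimes_{R}E$ is exact for every exact connective $R$-complex $E$. Writing $X$ as the filtered colimit of its hard left-truncations $X_{n\sqsubset}$ and using that homology commutes with filtered colimits, this reduces to the case of a bounded complex of flat modules, which I would settle by induction on the length using the degreewise split short exact sequences $0\to X_{n-1\sqsubset}\to X_{n\sqsubset}\to\Sigma^{n}X_{n}\to 0$ exactly as in the proof of Theorem \ref{5.9}: tensoring with $E$ keeps these exact, the two outer terms are exact by the induction hypothesis and by flatness of $X_{n}$ (a flat module tensored with an exact complex is exact), and the long exact homology sequence then forces $X_{n\sqsubset}\otimes_{R}E$ to be exact; the base case $X=\Sigma^{0}X_{0}$ is immediate from flatness of $X_{0}$. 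Alternatively, one may simply invoke the standard fact that a bounded-below complex of flat modules is semi-flat, so that tensoring with it preserves quasi-isomorphisms.

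None of the ingredients is delicate; the only bookkeeping that needs care is the colimit step --- one must check that both the tensor functor and homology commute with the specific filtered system $X_{0\sqsubset}\hookrightarrow X_{1\sqsubset}\hookrightarrow\cdots$ --- but this is the same manipulation already carried out at the end of the proof of Theorem \ref{5.9}, so I do not anticipate any genuine obstacle.
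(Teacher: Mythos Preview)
Your proposal is correct and follows essentially the same approach as the paper: both use the naturality square from Theorem \ref{5.9} to reduce the shuffle-product statement to the ordinary tensor product, then invoke the fact that $X\otimes_{R}-$ preserves quasi-isomorphisms when $X$ is a connective complex of flat modules. The only difference is cosmetic: the paper dispatches the latter fact by citing \cite[Corollary A.2]{CT} and \cite[Examples 1.1.F, (3)]{AF}, whereas you sketch a self-contained argument via mapping cones and truncations (and note the semi-flat alternative).
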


\begin{proof}
(i): Since $X$ is a right-bounded $R^{\op}$-complex of flat modules, \cite[Corollary A.2]{CT} or \cite[Example 5.4.8]{CFH} implies that $X\otimes_{R}-:\mathcal{C}_{\geq 0}(R)\rightarrow \mathcal{C}_{\geq 0}(\mathbb{Z})$ preserves acyclicity, so by \cite[Examples 1.1.F, (3)]{AF}, it also preserves quasi-isomorphisms. On the other hand, by Theorem \ref{5.9}, there is a natural quasi-isomorphism $\nabla^{X,Y}:X\otimes_{R}Y\rightarrow X\boxtimes_{R}Y$ of $\mathbb{Z}$-complexes. Let $g:Y\rightarrow Y'$ be a quasi-isomorphism of $R$-complexes. Consider the following commutative diagram:
\begin{equation*}
\begin{tikzcd}
  X\otimes_{R}Y \arrow{r}{X\otimes_{R}g} \arrow{d}[swap]{\nabla^{X,Y}} & [1em] X\otimes_{R}Y' \arrow{d}{\nabla^{X,Y'}}
  \\
  X\boxtimes_{R}Y \arrow{r}{X\boxtimes_{R}g} & X\boxtimes_{R}Y'
\end{tikzcd}
\end{equation*}

\noindent
As $X\otimes_{R}g$, $\nabla^{X,Y}$, and $\nabla^{X,Y'}$ are quasi-isomorphisms, the above diagram implies that $X\boxtimes_{R}g$ is a quasi-isomorphism.

(ii): Similar to (i).
\end{proof}

\section{Model Structure on Simplicial Commutative Algebras}

In this section, we deploy Theorem \ref{2.9} to transfer the model structure on connective chain complexes to simplicial commutative algebras. In order to overcome the acyclicity condition, we need to construct certain cylinder and path objects for simplicial commutative algebras. To this end, we need Lemmas \ref{6.1} and \ref{6.3} below. Variants of these lemmas are incorporated as Axiom SM7 into Quillen's theory of model categories to define a simplicially enriched model category; see \cite[Definition 2 on page 2.2]{Qu2} or \cite[Proposition 11.5 and Axiom 3.1]{GJ}. He proves that the category of simplicial sets satisfy these lemmas. However, the proofs that are available in the literature are based on the model structure of simplicial sets which we have totally avoided in this article. Instead, we leverage the shuffle product of connective chain complexes developed in the previous section to prove these lemmas for simplicial modules.

\begin{lemma} \label{6.1}
Let $R$ be a commutative ring, $\rho:M\rightarrow N$ a fibration in $\mathpzc{s}\mathcal{M}\mathpzc{od}(R)$, and $\iota:U\rightarrow V$ a morphism of simplicial sets. Consider the commutative diagram
\begin{equation*}
\begin{tikzcd}[column sep=5.5em,row sep=3em]
  \Map_{\mathpzc{s}\mathcal{S}\mathpzc{et}}(V,M) \arrow{r}{\Map_{\mathpzc{s}\mathcal{S}\mathpzc{et}}(V,\rho)} \arrow{d}[swap]{\Map_{\mathpzc{s}\mathcal{S}\mathpzc{et}}(\iota,M)}
  & \Map_{\mathpzc{s}\mathcal{S}\mathpzc{et}}(V,N) \arrow{d}{\Map_{\mathpzc{s}\mathcal{S}\mathpzc{et}}(\iota,N)}
  \\
  \Map_{\mathpzc{s}\mathcal{S}\mathpzc{et}}(U,M) \arrow{r}{\Map_{\mathpzc{s}\mathcal{S}\mathpzc{et}}(U,\rho)}
  & \Map_{\mathpzc{s}\mathcal{S}\mathpzc{et}}(U,N)
\end{tikzcd}
\end{equation*}

\noindent
and form the following pullback diagram:
\begin{equation*}
  \begin{tikzcd}
  \Map_{\mathpzc{s}\mathcal{S}\mathpzc{et}}(V,M) \arrow{dr}{\eta} \arrow[bend right, swap]{ddr}{\Map_{\mathpzc{s}\mathcal{S}\mathpzc{et}}(\iota,M)}
  \arrow[bend left]{drr}{\Map_{\mathpzc{s}\mathcal{S}\mathpzc{et}}(V,\rho)} & &
  \\
  & \Map_{\mathpzc{s}\mathcal{S}\mathpzc{et}}(U,M)\bigsqcap_{\Map_{\mathpzc{s}\mathcal{S}\mathpzc{et}}(U,N)} \Map_{\mathpzc{s}\mathcal{S}\mathpzc{et}}(V,N) \arrow{r}{r} \arrow{d}[swap]{s} & \Map_{\mathpzc{s}\mathcal{S}\mathpzc{et}}(V,N) \arrow{d}{\Map_{\mathpzc{s}\mathcal{S}\mathpzc{et}}(\iota,N)}
  \\ [1em]
  & \Map_{\mathpzc{s}\mathcal{S}\mathpzc{et}}(U,M) \arrow{r}{\Map_{\mathpzc{s}\mathcal{S}\mathpzc{et}}(U,\rho)} & \Map_{\mathpzc{s}\mathcal{S}\mathpzc{et}}(U,N)
\end{tikzcd}
\end{equation*}

\noindent
If $R^{(\iota)}:R^{(U)}\rightarrow R^{(V)}$ is a cofibration in $\mathpzc{s}\mathcal{M}\mathpzc{od}(R)$, then $\eta$ is a fibration in $\mathpzc{s}\mathcal{M}\mathpzc{od}(R)$ which is trivial if either $\rho$ or $R^{(\iota)}$ is.
\end{lemma}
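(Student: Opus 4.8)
The plan is to reduce the statement to a \emph{pushout--product axiom} for $\mathpzc{s}\mathcal{M}\mathpzc{od}(R)$ by transposing lifting problems across the adjunction of Remark \ref{4.4}, and then to establish that axiom by pushing everything through $\Nor$ into $\mathcal{C}_{\geq0}(R)$, where the shuffle product of Section~5 is available. For \emph{Step 1 (adjunction reduction)}, recall from Remark \ref{4.4} that $M^{(W)}\cong R^{(W)}\otimes_{R}M$ and that $(-)^{(W)}$ is left adjoint to $\Map_{\mathpzc{s}\mathcal{S}\mathpzc{et}}(W,-)$ for every simplicial set $W$. For a morphism $j\colon A\to B$ in $\mathpzc{s}\mathcal{M}\mathpzc{od}(R)$, let
$$\theta_{j}\colon\; A^{(V)}\cup_{A^{(U)}}B^{(U)}\longrightarrow B^{(V)}$$
be the canonical map from the pushout; under $M^{(W)}\cong R^{(W)}\otimes_{R}M$ it is precisely the pushout--product of $j$ with $R^{(\iota)}\colon R^{(U)}\to R^{(V)}$ relative to $\otimes_{R}$. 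Transposing via the adjunctions for $W=U$ and $W=V$, a map $A\to\Map_{\mathpzc{s}\mathcal{S}\mathpzc{et}}(V,M)$ is the datum $A^{(V)}\to M$; a map $B\to P$ into the pullback-corner target $P$ of $\eta$ is a pair $B^{(U)}\to M$, $B^{(V)}\to N$ compatible over $B^{(U)}\to N$; commutativity of the lifting square forces $A^{(V)}\to M$ and $B^{(U)}\to M$ to agree on $A^{(U)}$, hence to glue to a map $A^{(V)}\cup_{A^{(U)}}B^{(U)}\to M$, and a filler $B\to\Map_{\mathpzc{s}\mathcal{S}\mathpzc{et}}(V,M)$ is exactly a filler of the transposed square against $\rho$. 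Thus $j\in\LLP(\{\eta\})$ if and only if $\theta_{j}\in\LLP(\{\rho\})$.

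For \emph{Step 2 (reduction to a pushout--product axiom)}, combine the lifting characterizations recalled after Proposition \ref{2.7} with Step 1: $\eta$ is a fibration iff $\rho$ has the right lifting property against $\theta_{j}$ for every trivial cofibration $j$, and $\eta$ is a trivial fibration iff $\rho$ has the right lifting property against $\theta_{j}$ for every cofibration $j$. Since $\rho$ is a fibration (respectively a trivial fibration, when that hypothesis is in force), it therefore suffices to prove: for cofibrations $p\colon K_{1}\to K_{2}$ and $q\colon L_{1}\to L_{2}$ in $\mathpzc{s}\mathcal{M}\mathpzc{od}(R)$, the canonical map $K_{2}\otimes_{R}L_{1}\cup_{K_{1}\otimes_{R}L_{1}}K_{1}\otimes_{R}L_{2}\to K_{2}\otimes_{R}L_{2}$ is a cofibration, and is a trivial cofibration if either $p$ or $q$ is. Applying this with $p=R^{(\iota)}$ and $q=j$ (and with $p=R^{(\iota)}$ trivial in the third case) yields, respectively, that $\eta$ is a fibration, that $\eta$ is a trivial fibration when $\rho$ is, and that $\eta$ is a trivial fibration when $R^{(\iota)}$ is.

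For \emph{Step 3 (the axiom via $\Nor$ and $\boxtimes_{R}$)}, note that since $(\DK,\Nor)$ is an adjoint equivalence (Theorem \ref{4.14}), $\Nor$ preserves pushouts, cofibrations and weak equivalences in $\mathpzc{s}\mathcal{M}\mathpzc{od}(R)$ are detected by $\Nor$ (Theorem \ref{4.15}), and $\Nor$ carries $\otimes_{R}$ to $\boxtimes_{R}$ naturally (Corollary \ref{5.7}); so it suffices to prove that for cofibrations $u\colon X'\to Y'$ and $v\colon X\to Y$ in $\mathcal{C}_{\geq0}(R)$ the canonical map
$$w\colon\; (Y'\boxtimes_{R}X)\cup_{X'\boxtimes_{R}X}(X'\boxtimes_{R}Y)\longrightarrow Y'\boxtimes_{R}Y$$
is a cofibration, which is a quasi-isomorphism if $u$ or $v$ is. Now $u,v$ are degreewise split monomorphisms with projective cokernels $C'=\Coker(u)$, $C=\Coker(v)$; since $(A\boxtimes_{R}B)_{n}=\bigoplus_{(f,g)\in\mathcal{S}_{n}}A_{k}\otimes_{R}B_{l}$ (with $\mathcal{S}_{n}$ as in Definition \ref{5.2}) depends only on the underlying graded modules and is additive in each variable, graded splittings $Y'\cong X'\oplus C'$, $Y\cong X\oplus C$ identify $w$ degreewise with a split inclusion whose cokernel is $C'\boxtimes_{R}C$; its degree-$n$ term $\bigoplus_{(f,g)\in\mathcal{S}_{n}}C'_{k}\otimes_{R}C_{l}$ is projective, being a tensor product of projective modules over the commutative ring $R$, so $w$ is a cofibration. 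If moreover $v$ is trivial, the short exact sequence $0\to X\to Y\to C\to0$ and its long exact homology sequence show $C$ is acyclic; $C$ is also degreewise projective, hence degreewise flat, so $C'\boxtimes_{R}-$ preserves quasi-isomorphisms by Corollary \ref{5.11}, and applying it to $C\to0$ shows $C'\boxtimes_{R}C$ is acyclic. Functoriality of $\boxtimes_{R}$ makes $0\to (Y'\boxtimes_{R}X)\cup_{X'\boxtimes_{R}X}(X'\boxtimes_{R}Y)\xrightarrow{w}Y'\boxtimes_{R}Y\to C'\boxtimes_{R}C\to0$ a short exact sequence of complexes, so its long exact homology sequence forces $w$ to be a quasi-isomorphism; the case in which $u$ is trivial is symmetric, using that $-\boxtimes_{R}C$ preserves quasi-isomorphisms (Corollary \ref{5.11}) and that now $C'$ is acyclic. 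I expect the main obstacle to be Step~1 --- correctly transposing lifting problems across the two-variable adjunction $(-)^{(-)}\dashv\Map_{\mathpzc{s}\mathcal{S}\mathpzc{et}}(-,-)$ and matching the pullback-corner target of $\eta$ with the pushout-corner source $A^{(V)}\cup_{A^{(U)}}B^{(U)}$ --- together with keeping track in Step~3 that ``degreewise split'' suffices for the cofibration conclusion (no compatibility with differentials is needed), while the quasi-isomorphism conclusion genuinely rests on the flat/acyclic dichotomy of Corollary \ref{5.11}; the homologically substantive input, namely that $\boxtimes_{R}$ agrees with $\otimes_{R}$ up to quasi-isomorphism, is already packaged in Theorem \ref{5.9} and Corollaries \ref{5.10}--\ref{5.11}.
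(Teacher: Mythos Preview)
Your proof is correct and takes a cleaner, more conceptual route than the paper's. Both begin identically with the adjunction transposition (your Step~1), reducing the pullback-corner problem for $\eta$ to a pushout-product problem against $\rho$. The paper then exploits cofibrant generation: it only tests $\varrho=\DK(h)$ for $h$ ranging over the generating sets $\mathcal{X}$ and $\mathcal{Y}$ of Theorem~\ref{3.1}, and for each of $h=\kappa^{n}$, $h=\iota^{n}$, $h=\lambda$ carries out an explicit analysis of $\mu\boxtimes_{R}D(n)$, $\mu\boxtimes_{R}S(n-1)$, $\mu\boxtimes_{R}S(0)$ (with $\mu=\Nor(R^{(\iota)})$), verifying injectivity and projectivity of cokernels by hand from the direct-sum description of $\boxtimes_{R}$. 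You instead prove the full pushout-product axiom for $\boxtimes_{R}$ against arbitrary cofibrations in $\mathcal{C}_{\geq0}(R)$: the degreewise-splitting argument identifies $\Coker(w)$ with $C'\boxtimes_{R}C$ in one stroke, and Corollary~\ref{5.11} handles acyclicity. This buys generality and eliminates three separate case analyses; the paper's approach, while longer, keeps the role of the specific generators visible and never needs to identify $\Coker(w)$ as a \emph{complex}. One small slip in your Step~3: you write ``$C$ is degreewise flat, so $C'\boxtimes_{R}-$ preserves quasi-isomorphisms'', but Corollary~\ref{5.11}(i) requires $C'$ (not $C$) to be degreewise flat---which it is, since $u$ is a cofibration; the argument stands once this is swapped.
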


\begin{proof}
Let $\varrho:K\rightarrow L$ be a morphism of simplicial $R$-modules. Consider a commutative diagram as follows:
\begin{equation*}
\begin{tikzcd}
  K \arrow{r}{p} \arrow{d}[swap]{\varrho}
  & \Map_{\mathpzc{s}\mathcal{S}\mathpzc{et}}(V,M) \arrow{d}{\eta}
  \\
  L \arrow{r}{q}
  & \Map_{\mathpzc{s}\mathcal{S}\mathpzc{et}}(U,M)\bigsqcap_{\Map_{\mathpzc{s}\mathcal{S}\mathpzc{et}}(U,N)} \Map_{\mathpzc{s}\mathcal{S}\mathpzc{et}}(V,N)
\end{tikzcd}
\end{equation*}

\noindent
Considering the commutative diagram
\begin{equation*}
\begin{tikzcd}
  K^{(U)} \arrow{r}{\varrho^{(U)}} \arrow{d}[swap]{K^{(\iota)}}
  & L^{(U)} \arrow{d}{L^{(\iota)}}
  \\
  K^{(V)} \arrow{r}{\varrho^{(V)}}
  & L^{(V)}
\end{tikzcd}
\end{equation*}

\noindent
there is a unique morphism $\zeta:K^{(V)}\bigsqcup_{K^{(U)}}L^{(U)}\rightarrow L^{(V)}$ that makes the following pushout diagram commutative:
\begin{equation*}
  \begin{tikzcd}
  K^{(U)} \arrow{r}{\varrho^{(U)}} \arrow{d}[swap]{K^{(\iota)}} & L^{(U)} \arrow{d} \arrow[bend left]{ddr}{L^{(\iota)}} &
  \\
  K^{(V)} \arrow{r} \arrow[bend right, swap]{drr}{\varrho^{(V)}} & K^{(V)}\bigsqcup_{K^{(U)}}L^{(U)} \arrow{dr}{\zeta} &
  \\
  & & L^{(V)}
\end{tikzcd}
\end{equation*}

\noindent
On the other hand, considering the natural bijections
$$\phi_{KVM}:\Mor_{\mathpzc{s}\mathcal{M}\mathpzc{od}(R)}\left(K,\Map_{\mathpzc{s}\mathcal{S}\mathpzc{et}}(V,M)\right)\rightarrow \Mor_{\mathpzc{s}\mathcal{M}\mathpzc{od}(R)}\left(K^{(V)},M\right)$$
and
$$\phi_{LUM}:\Mor_{\mathpzc{s}\mathcal{M}\mathpzc{od}(R)}\left(L,\Map_{\mathpzc{s}\mathcal{S}\mathpzc{et}}(U,M)\right)\rightarrow \Mor_{\mathpzc{s}\mathcal{M}\mathpzc{od}(R)}\left(L^{(U)},M\right)$$
from Remark \ref{4.4}, we get the following commutative diagram:
\begin{equation*}
\begin{tikzcd}[column sep=4em,row sep=2em]
  K^{(U)} \arrow{r}{\varrho^{(U)}} \arrow{d}[swap]{K^{(\iota)}}
  & L^{(U)} \arrow{d}{\phi_{LUM}(sq)}
  \\
  K^{(V)} \arrow{r}{\phi_{KVM}(p)}
  & M
\end{tikzcd}
\end{equation*}

\noindent
Hence there is a unique morphism $\xi:K^{(V)}\bigsqcup_{K^{(U)}}L^{(U)}\rightarrow M$ that makes the following pushout diagram commutative:
\begin{equation*}
  \begin{tikzcd}
  K^{(U)} \arrow{r}{\varrho^{(U)}} \arrow{d}[swap]{K^{(\iota)}} & L^{(U)} \arrow{d} \arrow[bend left]{ddr}{\phi_{LUM}(sq)} &
  \\
  K^{(V)} \arrow{r} \arrow[bend right, swap]{drr}{\phi_{KVM}(p)} & K^{(V)}\bigsqcup_{K^{(U)}}L^{(U)} \arrow{dr}{\xi} &
  \\
  & & M
\end{tikzcd}
\end{equation*}

\noindent
Then considering the natural bijection
$$\phi_{LVN}:\Mor_{\mathpzc{s}\mathcal{M}\mathpzc{od}(R)}\left(L,\Map_{\mathpzc{s}\mathcal{S}\mathpzc{et}}(V,N)\right)\rightarrow \Mor_{\mathpzc{s}\mathcal{M}\mathpzc{od}(R)}\left(L^{(V)},N\right)$$
we get the following commutative diagram:
\begin{equation*}
\begin{tikzcd}
  K^{(V)}\bigsqcup_{K^{(U)}}L^{(U)} \arrow{r}{\xi} \arrow{d}[swap]{\zeta}
  & M \arrow{d}{\rho}
  \\
  L^{(V)} \arrow{r}{\phi_{LVN}(rq)}
  & N
\end{tikzcd}
\end{equation*}

\noindent
If $\tau:L^{(V)}\rightarrow M$ is a morphism of simplicial $R$-modules that makes the diagram
\begin{equation*}
\begin{tikzcd}
  K^{(V)}\bigsqcup_{K^{(U)}}L^{(U)} \arrow{r}{\xi} \arrow{d}[swap]{\zeta}
  & M \arrow{d}{\rho}
  \\
  L^{(V)} \arrow{r}{\phi_{LVN}(rq)} \arrow{ru}{\tau}
  & N
\end{tikzcd}
\end{equation*}

\noindent
commutative, then considering the natural bijection
$$\phi_{LVM}: \Mor_{\mathpzc{s}\mathcal{M}\mathpzc{od}(R)}\left(L,\Map_{\mathpzc{s}\mathcal{S}\mathpzc{et}}(V,M)\right) \rightarrow \Mor_{\mathpzc{s}\mathcal{M}\mathpzc{od}(R)}\left(L^{(V)},M\right)$$
we see that the following diagram is commutative:
\begin{equation*}
\begin{tikzcd}[column sep=3em,row sep=3.5em]
  K \arrow{r}{p} \arrow{d}[swap]{\varrho}
  & \Map_{\mathpzc{s}\mathcal{S}\mathpzc{et}}(V,M) \arrow{d}{\eta}
  \\
  L \arrow{r}{q} \arrow{ru}{\phi_{LVM}^{-1}(\tau)}
  & \Map_{\mathpzc{s}\mathcal{S}\mathpzc{et}}(U,M)\bigsqcap_{\Map_{\mathpzc{s}\mathcal{S}\mathpzc{et}}(U,N)} \Map_{\mathpzc{s}\mathcal{S}\mathpzc{et}}(V,N)
\end{tikzcd}
\end{equation*}

\noindent
Therefore, in order to show that $\eta$ has the right lifting property against $\varrho$, it suffices to show that $\rho$ has the right lifting property against $\zeta$. Indeed, if $\rho$ is a fibration in $\mathpzc{s}\mathcal{M}\mathpzc{od}(R)$, then it suffices to show that $\zeta$ is a trivial cofibration in $\mathpzc{s}\mathcal{M}\mathpzc{od}(R)$, and if $\rho$ is a trivial fibration in $\mathpzc{s}\mathcal{M}\mathpzc{od}(R)$, then it suffices to show that $\zeta$ is a cofibration in $\mathpzc{s}\mathcal{M}\mathpzc{od}(R)$.

Denote the morphism $\Nor\left(R^{(\iota)}\right):\Nor\left(R^{(U)}\right)\rightarrow \Nor\left(R^{(V)}\right)$ in $\mathcal{C}_{\geq 0}(R)$ by $\mu:Z\rightarrow W$ for convenience. If $X$ is an $R$-complex and $T$ is a simplicial set, then in view of Remark \ref{4.4}, Corollary \ref{5.7}, and Theorem \ref{4.14}, we have the following natural isomorphisms:
$$\Nor\left(\DK(X)^{(T)}\right)\cong \Nor\left(R^{(T)}\otimes_{R}\DK(X)\right)\cong \Nor\left(R^{(T)}\right)\boxtimes_{R}\Nor\left(\DK(X)\right) \cong \Nor\left(R^{(T)}\right)\boxtimes_{R}X$$
As a consequence, assuming that $\varrho:K\rightarrow L$ is of the form $\DK(h):\DK(X)\rightarrow \DK(Y)$ for some morphism $h:X\rightarrow Y$ of connective $R$-complexes, then we get a commutative diagram as follows:
\begin{equation*}
\begin{tikzcd}
  \Nor\left(K^{(V)}\bigsqcup_{K^{(U)}}L^{(U)}\right) \arrow{r}{\Nor(\zeta)} \arrow{d}[swap]{\cong}
  & \Nor\left(L^{(V)}\right) \ar[equal]{d}
  \\
   \Nor\left(K^{(V)}\right)\bigsqcup_{\Nor\left(K^{(U)}\right)}\Nor\left(L^{(U)}\right) \arrow{r} \ar[equal]{d}
  & \Nor\left(L^{(V)}\right) \ar[equal]{d}
  \\
   \Nor\left(\DK(X)^{(V)}\right)\bigsqcup_{\Nor\left(\DK(X)^{(U)}\right)}\Nor\left(\DK(Y)^{(U)}\right) \arrow{r} \arrow{d}[swap]{\cong}
  & \Nor\left(\DK(Y)^{(V)}\right) \arrow{d}{\cong}
  \\
   \left(\Nor\left(R^{(V)}\right)\boxtimes_{R}X\right)\bigsqcup_{\left(\Nor\left(R^{(U)}\right)\boxtimes_{R}X\right)} \left(\Nor\left(R^{(U)}\right)\boxtimes_{R}Y\right) \arrow{r} \ar[equal]{d}
  & \Nor\left(R^{(V)}\right)\boxtimes_{R}Y \ar[equal]{d}
   \\
   \left(W\boxtimes_{R}X\right)\bigsqcup_{\left(Z\boxtimes_{R}X\right)} \left(Z\boxtimes_{R}Y\right) \arrow{r}{\zeta'}
  & W\boxtimes_{R}Y
\end{tikzcd}
\end{equation*}

\noindent
Thus in order to show that $\zeta$ is a (trivial) cofibration in $\mathpzc{s}\mathcal{M}\mathpzc{od}(R)$, i.e. $\Nor(\zeta)$ is a (trivial) cofibration in $\mathcal{C}_{\geq 0}(R)$, it suffices to show that $\zeta'$ is a (trivial) cofibration in $\mathcal{C}_{\geq 0}(R)$.

By Theorem \ref{3.1}, $\mathcal{C}_{\geq 0}(R)$ is a cofibrantly generated model category with the corresponding sets
$$\mathcal{X}=\{\iota^{n}:S(n-1)\rightarrow D(n)\suchthat n\geq 1\} \cup \{\lambda:0\rightarrow S(0)\}$$
where $\iota^{n}$ is the inclusion morphism for every $n\geq 1$, and
$$\mathcal{Y}=\{\kappa^{n}:0\rightarrow D(n)\suchthat n\geq 1\}.$$
In light of the proof of Corollary \ref{2.16} and Theorem \ref{4.15}, we see that $\mathpzc{s}\mathcal{M}\mathpzc{od}(R)$ is a cofibrantly generated model category with the corresponding sets $\DK(\mathcal{X})$ and $\DK(\mathcal{Y})$. In particular, $\RLP\left(\DK(\mathcal{X})\right)$ is the class of trivial fibrations and $\RLP\left(\DK(\mathcal{Y})\right)$ is the class of fibrations in $\mathpzc{s}\mathcal{M}\mathpzc{od}(R)$. As a result, in order to show that $\eta$ is a fibration in $\mathpzc{s}\mathcal{M}\mathpzc{od}(R)$, it suffices to show that $\eta$ has the right lifting property against morphisms in $\DK(\mathcal{Y})$, i.e. to choose $\varrho:K\rightarrow L$ from $\DK(\mathcal{Y})$ and solve the corresponding lifting problem. Similarly, to show that $\eta$ is a trivial fibration in $\mathpzc{s}\mathcal{M}\mathpzc{od}(R)$, it suffices to show that $\eta$ has the right lifting property against morphisms in $\DK(\mathcal{X})$, i.e. to choose $\varrho:K\rightarrow L$ from $\DK(\mathcal{X})$ and solve the corresponding lifting problem.

Let $X$ be a connective $R$-complex. We analyze the shuffle products $X\boxtimes_{R}D(n)$ and $X\boxtimes_{R}S(n-1)$ for every $n\geq 1$ more closely. For any $m\geq 0$, set:
\small
$$\mathcal{S}_{m}= \left\{(f,g) \in \Mor_{\Delta}\left([m],[k]\right)\times \Mor_{\Delta}\left([m],[l]\right) \suchthat \begin{tabular}{ccc}
 $0 \leq k,l \leq m$, $f$ and $g$ are surjective, and $\phi_{f,g}:[m]\rightarrow [k]\times[l]$ \\
 given by $\phi_{f,g}(i)=\left(f(i),g(i)\right)$ for every $i\in [m]$ is injective
 \end{tabular} \right\}$$
\normalsize
Let $n\geq 1$ and $m\geq 0$. If $l\neq n,n-1$, then $D(n)_{l}=0$. Moreover, $D(n)_{n}=D(n)_{n-1}=R$. Let $\mathcal{S}_{m}'$ be the subset of $\mathcal{S}_{m}$ in which we assume $l=n$ or $n-1$. Then we have the following natural isomorphisms:
$$\left(X\boxtimes_{R}D(n)\right)_{m}=\textstyle\bigoplus_{(f,g)\in \mathcal{S}_{m}}\left(X_{k}\otimes_{R}D(n)_{l}\right)\cong \textstyle\bigoplus_{(f,g)\in \mathcal{S}_{m}'}\left(X_{k}\otimes_{R}R\right)\cong \textstyle\bigoplus_{(f,g)\in \mathcal{S}_{m}'}X_{k}$$
If $l\neq n-1$, then $S(n-1)_{l}=0$. Moreover, $S(n-1)_{n-1}=R$. Let $\mathcal{S}_{m}''$ be the subset of $\mathcal{S}_{m}$ in which we assume $l=n-1$. Then we have the following natural isomorphisms:
$$\left(X\boxtimes_{R}S(n)\right)_{m}=\textstyle\bigoplus_{(f,g)\in \mathcal{S}_{m}}\left(X_{k}\otimes_{R}S(n)_{l}\right)\cong \textstyle\bigoplus_{(f,g)\in \mathcal{S}_{m}''}\left(X_{k}\otimes_{R}R\right)\cong \textstyle\bigoplus_{(f,g)\in \mathcal{S}_{m}''}X_{k}$$
It is clear that $\mathcal{S}_{m}''\subseteq \mathcal{S}_{m}'$. Accordingly, we let $\nu_{m}:\mathcal{S}_{m}''\rightarrow \mathcal{S}_{m}'$ be the inclusion map.

Now suppose that $R^{(\iota)}:R^{(U)}\rightarrow R^{(V)}$ is a cofibration in $\mathpzc{s}\mathcal{M}\mathpzc{od}(R)$, i.e. $\mu:Z\rightarrow W$ is a cofibration in $\mathcal{C}_{\geq 0}(R)$. We will show that $\eta$ is a fibration in $\mathpzc{s}\mathcal{M}\mathpzc{od}(R)$. To this end, let $n\geq 1$, and let $\varrho:K\rightarrow L$ be the morphism $\DK(\kappa^{n}):\DK(0)\rightarrow \DK\left(D(n)\right)$. As we observed before, it suffices to show that
$$\zeta':\left(W\boxtimes_{R}0\right)\textstyle\bigsqcup_{\left(Z\boxtimes_{R}0\right)}\left(Z\boxtimes_{R}D(n)\right)\rightarrow W\boxtimes_{R}D(n)$$
is a trivial cofibration. We have the following commutative diagram:
\begin{equation*}
\begin{tikzcd}
 \left(W\boxtimes_{R}0\right)\bigsqcup_{\left(Z\boxtimes_{R}0\right)}\left(Z\boxtimes_{R}D(n)\right) \arrow{r}{\zeta'} \arrow{d}[swap]{\cong}
  & W\boxtimes_{R}D(n) \ar[equal]{d}
  \\
   0\bigsqcup_{0}\left(Z\boxtimes_{R}D(n)\right) \arrow{d}[swap]{\cong}
  & W\boxtimes_{R}D(n) \ar[equal]{d}
  \\
  Z\boxtimes_{R}D(n) \arrow{r}{\mu\boxtimes_{R}D(n)}
  & W\boxtimes_{R}D(n)
\end{tikzcd}
\end{equation*}

\noindent
As $\mu$ is a cofibration, we have the short exact sequence $0\rightarrow Z\xrightarrow{\mu} W\rightarrow \Coker(\mu)\rightarrow 0$ of $R$-complexes in which $\Coker(\mu)$ is an $R$-complex of projective modules. In particular, for any $k\geq 0$, we have the short exact sequence $0\rightarrow Z_{k}\xrightarrow{\mu_{k}} W_{k}\rightarrow \Coker(\mu_{k})\rightarrow 0$ of $R$-modules in which $\Coker(\mu_{k})$ is projective. Now consider the following sequence:
$$0\rightarrow Z\boxtimes_{R}D(n)\xrightarrow{\mu\boxtimes_{R}D(n)} W\boxtimes_{R}D(n)\rightarrow \Coker(\mu)\boxtimes_{R}D(n)\rightarrow 0$$
For each $m\geq 0$, we have the following commutative diagram:
\begin{equation*}
\begin{tikzcd}
 0 \arrow{r} & \left(Z\boxtimes_{R}D(n)\right)_{m} \arrow{r}{\left(\mu\boxtimes_{R}D(n)\right)_{m}} \arrow{d}{\cong} & [3.5em]
 \left(W\boxtimes_{R}D(n)\right)_{m} \arrow{r} \arrow{d}{\cong} &
 \left(\Coker(\mu)\boxtimes_{R}D(n)\right)_{m} \arrow{r} \arrow{d}{\cong} & 0
  \\
 0 \arrow{r} & \bigoplus_{(f,g)\in \mathcal{S}_{m}'}Z_{k} \arrow{r}{\bigoplus_{(f,g)\in \mathcal{S}_{m}'}\mu_{k}} &
 \bigoplus_{(f,g)\in \mathcal{S}_{m}'}W_{k} \arrow{r} &
 \bigoplus_{(f,g)\in \mathcal{S}_{m}'}\Coker(\mu_{k}) \arrow{r} & 0
\end{tikzcd}
\end{equation*}

\noindent
Now the second row is exact and $\bigoplus_{(f,g)\in \mathcal{S}_{m}'}\Coker(\mu_{k})$ is projective, so it follows that the first row is exact and $\left(\Coker(\mu)\boxtimes_{R}D(n)\right)_{m}$ is projective. As a result, the previous sequence is exact and $\Coker(\mu)\boxtimes_{R}D(n)$ is an $R$-complex of projective modules. This means that $\mu\boxtimes_{R}D(n)$ is a cofibration, so $\zeta'$ is a cofibration. On the other hand, we have the following commutative diagram for every $i\geq 0$:
\begin{equation*}
\begin{tikzcd}
 H_{i}\left(Z\boxtimes_{R}D(n)\right) \arrow{r}{H_{i}\left(\mu\boxtimes_{R}D(n)\right)} \arrow{d}[swap]{\cong}
  & [3.5em] H_{i}\left(W\boxtimes_{R}D(n)\right) \arrow{d}{\cong}
  \\
  H_{i}\left(Z\otimes_{R}D(n)\right) \arrow{r}{H_{i}\left(\mu\otimes_{R}D(n)\right)}
  & H_{i}\left(W\otimes_{R}D(n)\right)
\end{tikzcd}
\end{equation*}

\noindent
We note that $D(n)$ is an exact $R$-complex whose terms and boundaries are all flat, so by the K\"{u}nneth isomorphisms in \cite[Corollary 10.84]{Ro}, $Z\otimes_{R}D(n)$ and $W\otimes_{R}D(n)$ are exact. As a result, for any $i\geq 0$, we notice that
$$0=H_{i}\left(Z\otimes_{R}D(n)\right)\xrightarrow{H_{i}\left(\mu\otimes_{R}D(n)\right)} H_{i}\left(W\otimes_{R}D(n)\right)=0$$
is an isomorphism, so the above diagram implies that $H_{i}\left(\mu\boxtimes_{R}D(n)\right)$ is an isomorphism. Thus $\mu\boxtimes_{R}D(n)$ is a quasi-isomorphism, so $\zeta'$ is a quasi-isomorphism. Altogether, $\zeta'$ is a trivial cofibration.

Next suppose that either $R^{(\iota)}:R^{(U)}\rightarrow R^{(V)}$ is a trivial cofibration or $\rho:M\rightarrow N$ is a trivial fibration in $\mathpzc{s}\mathcal{M}\mathpzc{od}(R)$. We will show that $\eta$ is a trivial fibration in $\mathpzc{s}\mathcal{M}\mathpzc{od}(R)$. To this end, let $n\geq 1$, and let $\varrho:K\rightarrow L$ be the morphism $\DK(\iota^{n}):\DK\left(S(n-1)\right)\rightarrow \DK\left(D(n)\right)$. We show that
$$\zeta':\left(W\boxtimes_{R}S(n-1)\right)\textstyle\bigsqcup_{\left(Z\boxtimes_{R}S(n-1)\right)}\left(Z\boxtimes_{R}D(n)\right)\rightarrow W\boxtimes_{R}D(n)$$
is a cofibration. Consider the following pushout diagram:
\begin{equation*}
  \begin{tikzcd}
  Z\boxtimes_{R}S(n-1) \arrow{r}{Z\boxtimes_{R}\iota^{n}} \arrow{d}[swap]{\mu\boxtimes_{R}S(n-1)} & Z\boxtimes_{R}D(n) \arrow{d}{\varpi} \arrow[bend left]{ddr}{\mu\boxtimes_{R}D(n)} &
  \\ [1em]
  W\boxtimes_{R}S(n-1) \arrow{r} \arrow[bend right, swap]{drr}{W\boxtimes_{R}\iota^{n}} & \left(W\boxtimes_{R}S(n-1)\right)\bigsqcup_{\left(Z\boxtimes_{R}S(n-1)\right)}\left(Z\boxtimes_{R}D(n)\right) \arrow{dr}{\zeta'} &
  \\
  & & W\boxtimes_{R}D(n)
\end{tikzcd}
\end{equation*}

\noindent
In each degree $m\geq 0$, the above diagram is naturally isomorphic to the following diagram:
\begin{equation*}
  \begin{tikzcd}
   \bigoplus_{(f,g)\in \mathcal{S}_{m}''}Z_{k} \arrow{r}{\overline{\nu_{m}}} \arrow{d}[swap]{\bigoplus_{(f,g)\in \mathcal{S}_{m}''}\mu_{k}} & \bigoplus_{(f,g)\in \mathcal{S}_{m}'}Z_{k} \arrow{d}{p_{m}} \arrow[bend left]{ddr}{\bigoplus_{(f,g)\in \mathcal{S}_{m}'}\mu_{k}} &
  \\ [1em]
  \bigoplus_{(f,g)\in \mathcal{S}_{m}''}W_{k} \arrow{r} \arrow[bend right, swap]{drr}{\overline{\nu_{m}}} & \left(\bigoplus_{(f,g)\in \mathcal{S}_{m}''}W_{k}\right) \bigsqcup_{\left(\bigoplus_{(f,g)\in \mathcal{S}_{m}''}Z_{k}\right)} \left(\bigoplus_{(f,g)\in \mathcal{S}_{m}'}Z_{k}\right) \arrow{dr}{q_{m}} &
  \\
  & & \bigoplus_{(f,g)\in \mathcal{S}_{m}'}W_{k}
\end{tikzcd}
\end{equation*}

\noindent
Let $m\geq 0$. By definition, we have:
$$\left(\textstyle\bigoplus_{(f,g)\in \mathcal{S}_{m}''}W_{k}\right) \textstyle\bigsqcup_{\left(\textstyle\bigoplus_{(f,g)\in \mathcal{S}_{m}''}Z_{k}\right)} \left(\textstyle\bigoplus_{(f,g)\in \mathcal{S}_{m}'}Z_{k}\right)=\frac{\left(\textstyle\bigoplus_{(f,g)\in \mathcal{S}_{m}''}W_{k}\right) \oplus \left(\textstyle\bigoplus_{(f,g)\in \mathcal{S}_{m}'}Z_{k}\right)}{\left\{\left(\left(\textstyle\bigoplus_{(f,g)\in \mathcal{S}_{m}''}\mu_{k}\right)(z),-\overline{\nu_{m}}(z)\right) \suchthat z\in \bigoplus_{(f,g)\in \mathcal{S}_{m}''}Z_{k} \right\}}$$
Let $x=\left(x_{f,g}\right)_{(f,g)\in \mathcal{S}_{m}''}\in \bigoplus_{(f,g)\in \mathcal{S}_{m}''}W_{k}$ and $y=\left(y_{f,g}\right)_{(f,g)\in \mathcal{S}_{m}'}\in \bigoplus_{(f,g)\in \mathcal{S}_{m}'}Z_{k}$. For any $(f,g)\in \mathcal{S}_{m}'$, define:
\begin{equation*}
 \label{eqn:damage piecewise}
 x_{f,g}':=
 \begin{dcases}
  x_{f,g} & \textrm{if } (f,g)\in \mathcal{S}_{m}'' \\
  0 & \textrm{if } (f,g)\in \mathcal{S}_{m}' \backslash \mathcal{S}_{m}''
 \end{dcases}
\end{equation*}

\noindent
Then $\overline{\nu_{m}}(x)=\left(x_{f,g}'\right)_{(f,g)\in \mathcal{S}_{m}'}$. Thus we have:
$$q_{m}\left(\overline{(x,y)}\right) = \overline{\nu_{m}}(x) + \left(\textstyle\bigoplus_{(f,g)\in \mathcal{S}_{m}'}\mu_{k}\right)(y) = \left(x_{f,g}'+\mu_{k}\left(y_{f,g}\right)\right)_{(f,g)\in \mathcal{S}_{m}'}$$
Now if $q_{m}\left(\overline{(x,y)}\right)=0$, then $x_{f,g}'=-\mu_{k}\left(y_{f,g}\right)$ for every $(f,g)\in \mathcal{S}_{m}'$. Therefore, $\mu_{k}\left(y_{f,g}\right)=-x_{f,g}'=-x_{f,g}$ for every $(f,g)\in \mathcal{S}_{m}''$. Moreover, $\mu_{k}\left(y_{f,g}\right)=-x_{f,g}'=0$ for every $(f,g)\in \mathcal{S}_{m}'\backslash \mathcal{S}_{m}''$. But $\mu$ is a cofibration, so $\mu_{k}$ is injective, implying that $y_{f,g}=0$ for every $(f,g)\in \mathcal{S}_{m}'\backslash \mathcal{S}_{m}''$. Set $z=\left(-y_{f,g}\right)_{(f,g)\in \mathcal{S}_{m}''}\in \bigoplus_{(f,g)\in \mathcal{S}_{m}''}Z_{k}$. Then $\overline{\nu_{m}}(z)=\left(-y_{f,g}'\right)_{(f,g)\in \mathcal{S}_{m}'}= \left(-y_{f,g}\right)_{(f,g)\in \mathcal{S}_{m}'}$ where $y_{f,g}'$ is defined similar to $x_{f,g}'$. Thus we have $x=\left(x_{f,g}\right)_{(f,g)\in \mathcal{S}_{m}''} = \left(-\mu_{k}\left(y_{f,g}\right)\right)_{(f,g)\in \mathcal{S}_{m}''} = \left(\bigoplus_{(f,g)\in \mathcal{S}_{m}''}\mu_{k}\right)(z)$ and $y=\left(y_{f,g}\right)_{(f,g)\in \mathcal{S}_{m}'} = -\overline{\nu_{m}}(z)$. This shows that $\overline{(x,y)}=0$. Therefore, $q_{m}$ is injective. On the other hand, from the above diagram, we have $q_{m}p_{m}= \bigoplus_{(f,g)\in \mathcal{S}_{m}'}\mu_{k}$, so $\im \left(\bigoplus_{(f,g)\in \mathcal{S}_{m}'}\mu_{k}\right) \subseteq \im(q_{m})$. Therefore, we get an $R$-epimorphism $\psi_{m}:\Coker\left(\bigoplus_{(f,g)\in \mathcal{S}_{m}'}\mu_{k}\right) \rightarrow \Coker(q_{m})$ given by $\psi_{m}\left(x+ \im\left(\bigoplus_{(f,g)\in \mathcal{S}_{m}'}\mu_{k}\right)\right) = x+ \im(q_{m})$ for every $x\in \bigoplus_{(f,g)\in \mathcal{S}_{m}'}W_{k}$. We next define an $R$-homomorphism $\chi_{m}: \bigoplus_{(f,g)\in \mathcal{S}_{m}'}W_{k} \rightarrow \Coker\left(\bigoplus_{(f,g)\in \mathcal{S}_{m}'}\mu_{k}\right)$ as follows. Let $w=\left(w_{f,g}\right)_{(f,g)\in \mathcal{S}_{m}'}\in \bigoplus_{(f,g)\in \mathcal{S}_{m}'}W_{k}$. For any $(f,g)\in \mathcal{S}_{m}'$, define:
\begin{equation*}
 \label{eqn:damage piecewise}
 w_{f,g}'':=
 \begin{dcases}
  w_{f,g} & \textrm{if } (f,g)\in \mathcal{S}_{m}' \backslash \mathcal{S}_{m}'' \\
  0 & \textrm{if } (f,g)\in \mathcal{S}_{m}''
 \end{dcases}
\end{equation*}

\noindent
Set $\chi_{m}(w):=\left(w_{f,g}''\right)_{(f,g)\in \mathcal{S}_{m}'}+ \im\left(\bigoplus_{(f,g)\in \mathcal{S}_{m}'}\mu_{k}\right)$. If $w\in \im(q_{m})$, then there are elements $x=\left(x_{f,g}\right)_{(f,g)\in \mathcal{S}_{m}''} \in \bigoplus_{(f,g)\in \mathcal{S}_{m}''}W_{k}$ and $y=\left(y_{f,g}\right)_{(f,g)\in \mathcal{S}_{m}'} \in \bigoplus_{(f,g)\in \mathcal{S}_{m}'}Z_{k}$ such that $w=q_{m}\left(\overline{(x,y)}\right)=\left(x_{f,g}' + \mu_{k}\left(y_{f,g}\right)\right)_{(f,g)\in \mathcal{S}_{m}'}$. We note that $x_{f,g}'=0$ for every $(f,g)\in \mathcal{S}_{m}'\backslash \mathcal{S}_{m}''$, so by the above construction, $\left(x_{f,g}'\right)''=0$ for every $(f,g)\in \mathcal{S}_{m}'$. Therefore, we get:
\begin{equation*}
\begin{split}
 \chi_{m}(w) & = \left(\left(x_{f,g}'\right)''+\mu_{k}\left(y_{f,g}''\right)\right)_{(f,g)\in \mathcal{S}_{m}'} + \im\left(\textstyle\bigoplus_{(f,g)\in \mathcal{S}_{m}'}\mu_{k}\right) \\
 & = \left(\mu_{k}\left(y_{f,g}''\right)\right)_{(f,g)\in \mathcal{S}_{m}'} + \im\left(\textstyle\bigoplus_{(f,g)\in \mathcal{S}_{m}'}\mu_{k}\right)\\
 & = \left(\textstyle\bigoplus_{(f,g)\in \mathcal{S}_{m}'}\mu_{k}\right)\left(\left(y_{f,g}''\right)_{(f,g)\in \mathcal{S}_{m}'}\right) + \im\left(\textstyle\bigoplus_{(f,g)\in \mathcal{S}_{m}'}\mu_{k}\right) \\
 & = 0
\end{split}
\end{equation*}

\noindent
As a result, $\chi_{m}\left(\im(q_{m})\right)=0$, so $\chi_{m}$ induces an $R$-homomorphism $\overline{\chi_{m}}:\Coker(q_{m})\rightarrow \Coker\left(\bigoplus_{(f,g)\in\mathcal{S}_{m}'}\mu_{k}\right)$ given by $\overline{\chi_{m}}\left(w+\im(q_{m})\right)=\left(w_{f,g}''\right)_{(f,g)\in \mathcal{S}_{m}'}+ \im\left(\bigoplus_{(f,g)\in \mathcal{S}_{m}'}\mu_{k}\right)$. Now we have for every $w=\left(w_{f,g}\right)_{(f,g)\in \mathcal{S}_{m}'}\in \bigoplus_{(f,g)\in \mathcal{S}_{m}'}W_{k}$:
$$\psi_{m}\left(\overline{\chi_{m}}\left(w+\im(q_{m})\right)\right)= \psi_{m}\left(\left(w_{f,g}''\right)_{(f,g)\in \mathcal{S}_{m}'}+ \im\left(\textstyle\bigoplus_{(f,g)\in \mathcal{S}_{m}'}\mu_{k}\right)\right)= \left(w_{f,g}''\right)_{(f,g)\in \mathcal{S}_{m}'}+\im(q_{m})$$
Let $\widetilde{\nu_{m}}:\bigoplus_{(f,g)\in \mathcal{S}_{m}'}W_{k} \rightarrow \bigoplus_{(f,g)\in \mathcal{S}_{m}''}W_{k}$ be the induced $R$-homomorphism which projects a tuple onto its components from $\mathcal{S}_{m}''$. Setting $x= \widetilde{\nu_{m}}\left(\left(w_{f,g}-w_{f,g}''\right)_{(f,g)\in \mathcal{S}_{m}'}\right) \in \bigoplus_{(f,g)\in \mathcal{S}_{m}''}W_{k}$, we see that:
$$w-\left(w_{f,g}''\right)_{(f,g)\in \mathcal{S}_{m}'} = \left(w_{f,g}-w_{f,g}''\right)_{(f,g)\in \mathcal{S}_{m}'} = \overline{\nu_{m}}(x)=q_{m}\left(\overline{(x,0)}\right)$$
Therefore, $w-\left(w_{f,g}''\right)_{(f,g)\in \mathcal{S}_{m}'} \in \im(q_{m})$, so we get:
$$\psi_{m}\left(\overline{\chi_{m}}\left(w+\im(q_{m})\right)\right)=\left(w_{f,g}''\right)_{(f,g)\in \mathcal{S}_{m}'}+\im(q_{m}) = w+\im(q_{m})$$
This shows that $\psi_{m}\overline{\chi_{m}}=1^{\Coker(q_{m})}$. Thus the short exact sequence
$$0\rightarrow \Ker(\psi_{m})\rightarrow \Coker\left(\textstyle\bigoplus_{(f,g)\in \mathcal{S}_{m}'}\mu_{k}\right)\xrightarrow{\psi_{m}} \Coker(q_{m})\rightarrow 0$$
splits, so in particular, $\Coker(q_{m})$ is isomorphic to a direct summand of $\Coker\left(\bigoplus_{(f,g)\in \mathcal{S}_{m}'}\mu_{k}\right)$. But $\Coker\left(\bigoplus_{(f,g)\in \mathcal{S}_{m}'}\mu_{k}\right) \cong \bigoplus_{(f,g)\in \mathcal{S}_{m}'}\Coker(\mu_{k})$ is projective as $\mu$ is a cofibration. As a consequence, $\Coker(q_{m})$ is projective. This shows that $\zeta'$ is a cofibration.

If $\rho:M\rightarrow N$ is a trivial fibration in $\mathpzc{s}\mathcal{M}\mathpzc{od}(R)$, then we are done. If $R^{(\iota)}:R^{(U)}\rightarrow R^{(V)}$ is a trivial cofibration in $\mathpzc{s}\mathcal{M}\mathpzc{od}(R)$, i.e. $\mu:Z\rightarrow W$ is a trivial cofibration in $\mathcal{C}_{\geq 0}(R)$, then we need to further show that $\zeta'$ is a weak equivalence, i.e. a quasi-isomorphism. Since for any $n\geq 1$, $D(n)$ and $S(n-1)$ are connective $R$-complexes of flat modules, and $\mu$ is a weak equivalence i.e. a quasi-isomorphism, Corollary \ref{5.11} implies that $\mu\boxtimes_{R}S(n-1)$ and $\mu\boxtimes_{R}D(n)$ are quasi-isomorphisms. On the other hand, since $\mu$ is a cofibration, $\mu_{k}:Z_{k}\rightarrow W_{k}$ is injective with projective cokernel for every $k\geq 0$, so $\bigoplus_{(f,g)\in \mathcal{S}_{m}''}\mu_{k}:\bigoplus_{(f,g)\in \mathcal{S}_{m}''}Z_{k} \rightarrow \bigoplus_{(f,g)\in \mathcal{S}_{m}''}W_{k}$ is injective with projective cokernel. It follows that $\mu\boxtimes_{R}S(n-1)$ is a cofibration. Thus $\mu\boxtimes_{R}S(n-1)$ is a trivial cofibration, so by \cite[Proposition 7.2.12]{Hi}, we deduce that $\varpi$ is a trivial cofibration, whence $\varpi$ is a quasi-isomorphism. Now $\zeta'\varpi=\mu\boxtimes_{R}D(n)$, so we conclude that $\zeta'$ is a quasi-isomorphism.

Now let $\varrho:K\rightarrow L$ be the morphism $\DK(\lambda):\DK(0)\rightarrow \DK\left(S(0)\right)$. We show that
$$\zeta':\left(W\boxtimes_{R}0\right)\textstyle\bigsqcup_{\left(Z\boxtimes_{R}0\right)}\left(Z\boxtimes_{R}S(0)\right)\rightarrow W\boxtimes_{R}S(0)$$
is a cofibration. We have the following commutative diagram:
\begin{equation*}
\begin{tikzcd}
 \left(W\boxtimes_{R}0\right)\bigsqcup_{\left(Z\boxtimes_{R}0\right)}\left(Z\boxtimes_{R}S(0)\right) \arrow{r}{\zeta'} \arrow{d}[swap]{\cong}
  & W\boxtimes_{R}S(0) \ar[equal]{d}
  \\
   0\bigsqcup_{0}\left(Z\boxtimes_{R}S(0)\right) \arrow{d}[swap]{\cong}
  & W\boxtimes_{R}S(0) \ar[equal]{d}
  \\
  Z\boxtimes_{R}S(0) \arrow{r}{\mu\boxtimes_{R}S(0)}
  & W\boxtimes_{R}S(0)
\end{tikzcd}
\end{equation*}

\noindent
As $\mu$ is a cofibration, we have the short exact sequence $0\rightarrow Z\xrightarrow{\mu} W\rightarrow \Coker(\mu)\rightarrow 0$ of $R$-complexes in which $\Coker(\mu)$ is an $R$-complex of projective modules. In particular, for any $k\geq 0$, we have the short exact sequence $0\rightarrow Z_{k}\xrightarrow{\mu_{k}} W_{k}\rightarrow \Coker(\mu_{k})\rightarrow 0$ of $R$-modules in which $\Coker(\mu_{k})$ is projective. Now consider the following sequence:
$$0\rightarrow Z\boxtimes_{R}S(0)\xrightarrow{\mu\boxtimes_{R}S(0)} W\boxtimes_{R}S(0)\rightarrow \Coker(\mu)\boxtimes_{R}S(0)\rightarrow 0$$
For each $m\geq 0$, we have the following commutative diagram:
\begin{equation*}
\begin{tikzcd}
 0 \arrow{r} & \left(Z\boxtimes_{R}S(0)\right)_{m} \arrow{r}{\left(\mu\boxtimes_{R}S(0)\right)_{m}} \arrow{d}{\cong} & [3.5em]
 \left(W\boxtimes_{R}S(0)\right)_{m} \arrow{r} \arrow{d}{\cong} &
 \left(\Coker(\mu)\boxtimes_{R}S(0)\right)_{m} \arrow{r} \arrow{d}{\cong} & 0
  \\
 0 \arrow{r} & \bigoplus_{(f,g)\in \mathcal{S}_{m}''}Z_{k} \arrow{r}{\bigoplus_{(f,g)\in \mathcal{S}_{m}''}\mu_{k}} &
 \bigoplus_{(f,g)\in \mathcal{S}_{m}''}W_{k} \arrow{r} &
 \bigoplus_{(f,g)\in \mathcal{S}_{m}''}\Coker(\mu_{k}) \arrow{r} & 0
\end{tikzcd}
\end{equation*}

\noindent
Now the second row is exact and $\bigoplus_{(f,g)\in \mathcal{S}_{m}''}\Coker(\mu_{k})$ is projective, so it follows that the first row is exact and $\left(\Coker(\mu)\boxtimes_{R}S(0)\right)_{m}$ is projective. As a result, the previous sequence is exact and $\Coker(\mu)\boxtimes_{R}S(0)$ is an $R$-complex of projective modules. This means that $\mu\boxtimes_{R}S(0)$ is a cofibration, so $\zeta'$ is a cofibration.

If $\rho:M\rightarrow N$ is a trivial fibration in $\mathpzc{s}\mathcal{M}\mathpzc{od}(R)$, then we are done. If $R^{(\iota)}:R^{(U)}\rightarrow R^{(V)}$ is a trivial cofibration in $\mathpzc{s}\mathcal{M}\mathpzc{od}(R)$, i.e. $\mu:Z\rightarrow W$ is a trivial cofibration in $\mathcal{C}_{\geq 0}(R)$, then we need to further show that $\zeta'$ is a weak equivalence, i.e. a quasi-isomorphism. Using the fact $S(0)=\Sigma^{0}R=R$ and Corollary \ref{5.10}, we have the following commutative diagram for every $i\geq 0$:
\begin{equation*}
\begin{tikzcd}[column sep=6em,row sep=2em]
 H_{i}\left(Z\boxtimes_{R}S(0)\right) \arrow{r}{H_{i}\left(\mu\boxtimes_{R}S(0)\right)} \arrow{d}[swap]{\cong}
  & H_{i}\left(W\boxtimes_{R}S(0)\right) \arrow{d}{\cong}
  \\
 H_{i}\left(Z\otimes_{R}S(0)\right) \arrow{r}{H_{i}\left(\mu\otimes_{R}S(0)\right)} \ar[equal]{d}
  & H_{i}\left(W\otimes_{R}S(0)\right) \ar[equal]{d}
  \\
  H_{i}\left(Z\otimes_{R}R\right) \arrow{r}{H_{i}\left(\mu\otimes_{R}R\right)} \arrow{d}[swap]{\cong}
  & H_{i}\left(W\otimes_{R}R\right) \arrow{d}{\cong}
  \\
  H_{i}(Z) \arrow{r}{H_{i}(\mu)}
  & H_{i}(W)
\end{tikzcd}
\end{equation*}

\noindent
As $\mu$ is a weak equivalence, i.e. a quasi-isomorphism, $H_{i}(\mu):H_{i}(Z)\rightarrow H_{i}(W)$ is an isomorphism for every $i\geq 0$, so the above diagram shows that $H_{i}\left(\mu\boxtimes_{R}S(0)\right)$ is an isomorphism for every $i\geq 0$, whence $\mu\boxtimes_{R}S(0)$ is a quasi-isomorphism. Thus $\zeta'$ is a quasi-isomorphism.
\end{proof}

\begin{corollary} \label{6.2}
Let $R$ be a commutative ring, $M$ a simplicial $R$-module, and $\iota:U\rightarrow V$ a morphism of simplicial sets. If $R^{(\iota)}:R^{(U)}\rightarrow R^{(V)}$ is a (trivial) cofibration in $\mathpzc{s}\mathcal{M}\mathpzc{od}(R)$, then $\Map_{\mathpzc{s}\mathcal{S}\mathpzc{et}}(\iota,M):\Map_{\mathpzc{s}\mathcal{S}\mathpzc{et}}(V,M)\rightarrow \Map_{\mathpzc{s}\mathcal{S}\mathpzc{et}}(U,M)$ is a (trivial) fibration in $\mathpzc{s}\mathcal{M}\mathpzc{od}(R)$.
\end{corollary}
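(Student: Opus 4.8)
The plan is to deduce this as a special case of Lemma \ref{6.1}, taking $N$ to be the zero object $0$ of $\mathpzc{s}\mathcal{M}\mathpzc{od}(R)$ and $\rho:M\rightarrow 0$ the unique morphism. First I would check that $\rho$ is indeed a fibration in $\mathpzc{s}\mathcal{M}\mathpzc{od}(R)$: since $\Nor(0)=0$, the map $\Nor(\rho)_{i}:\Nor(M)_{i}\rightarrow 0$ is surjective for every $i\geq 1$, which is precisely the defining condition of Theorem \ref{4.15}(i). Thus $\rho$ satisfies the hypotheses of Lemma \ref{6.1}, and so does $\iota:U\rightarrow V$ whenever $R^{(\iota)}$ is a cofibration.

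Next I would identify the mapping objects and the pullback appearing in Lemma \ref{6.1}. For any simplicial set $T$, the copower $0^{(T)}$ is zero in each degree, because $0^{(T_{n})}=\bigoplus_{T_{n}}0=0$; in particular $0^{(\Delta^{n})}=0$, so $\Map_{\mathpzc{s}\mathcal{S}\mathpzc{et}}(T,0)_{n}=\Mor_{\mathpzc{s}\mathcal{M}\mathpzc{od}(R)}\!\left(0^{(\Delta^{n})},0\right)$ is a one-point set. With the pointwise $R$-module structure inherited from $0$ as in Remark \ref{4.4}, this makes $\Map_{\mathpzc{s}\mathcal{S}\mathpzc{et}}(T,0)$ the zero simplicial $R$-module. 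Applying this to $T=U$ and $T=V$, the pullback in Lemma \ref{6.1} collapses to
\[
\Map_{\mathpzc{s}\mathcal{S}\mathpzc{et}}(U,M)\bigsqcap_{\Map_{\mathpzc{s}\mathcal{S}\mathpzc{et}}(U,0)}\Map_{\mathpzc{s}\mathcal{S}\mathpzc{et}}(V,0)\;\cong\;\Map_{\mathpzc{s}\mathcal{S}\mathpzc{et}}(U,M),
\]
with the projection $s$ becoming the identity and $r$ the unique morphism to $0$. Under this identification the morphism $\eta$ produced by Lemma \ref{6.1} is forced by $s\eta=\Map_{\mathpzc{s}\mathcal{S}\mathpzc{et}}(\iota,M)$ to equal $\Map_{\mathpzc{s}\mathcal{S}\mathpzc{et}}(\iota,M)$.

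Finally I would simply invoke the conclusion of Lemma \ref{6.1}: if $R^{(\iota)}$ is a cofibration then $\eta=\Map_{\mathpzc{s}\mathcal{S}\mathpzc{et}}(\iota,M)$ is a fibration in $\mathpzc{s}\mathcal{M}\mathpzc{od}(R)$, and it is trivial if either $\rho$ or $R^{(\iota)}$ is; since $R^{(\iota)}$ being a trivial cofibration is exactly the hypothesis of the parenthetical case, both assertions follow simultaneously. There is no genuine obstacle here: the only points to verify are that $\Map_{\mathpzc{s}\mathcal{S}\mathpzc{et}}(U,0)$ and $\Map_{\mathpzc{s}\mathcal{S}\mathpzc{et}}(V,0)$ vanish and that the pullback against the terminal object returns the original object, after which all the substantive content is supplied by Lemma \ref{6.1}.
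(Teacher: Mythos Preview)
Your proposal is correct and follows essentially the same route as the paper: specialize Lemma \ref{6.1} to $N=0$ with $\rho:M\to 0$ the (automatically fibrant) terminal map, observe that $\Map_{\mathpzc{s}\mathcal{S}\mathpzc{et}}(U,0)=\Map_{\mathpzc{s}\mathcal{S}\mathpzc{et}}(V,0)=0$ so the pullback reduces to $\Map_{\mathpzc{s}\mathcal{S}\mathpzc{et}}(U,M)$, and identify $\eta$ with $\Map_{\mathpzc{s}\mathcal{S}\mathpzc{et}}(\iota,M)$. One small slip: by the definition in Remark \ref{4.4} one has $\Map_{\mathpzc{s}\mathcal{S}\mathpzc{et}}(T,0)_{n}=\Mor_{\mathpzc{s}\mathcal{S}\mathpzc{et}}\!\left(T^{(\Delta^{n})},0\right)$ rather than $\Mor_{\mathpzc{s}\mathcal{M}\mathpzc{od}(R)}\!\left(0^{(\Delta^{n})},0\right)$, but since $0$ is terminal in $\mathpzc{s}\mathcal{S}\mathpzc{et}$ the conclusion is unaffected.
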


\begin{proof}
It is clear from Theorem \ref{4.15} that the unique morphism $f:M\rightarrow 0$ is a fibration. Consider the commutative diagram
\begin{equation*}
\begin{tikzcd}[column sep=5.5em,row sep=3em]
  \Map_{\mathpzc{s}\mathcal{S}\mathpzc{et}}(V,M) \arrow{r}{\Map_{\mathpzc{s}\mathcal{S}\mathpzc{et}}(V,f)} \arrow{d}[swap]{\Map_{\mathpzc{s}\mathcal{S}\mathpzc{et}}(\iota,M)}
  & \Map_{\mathpzc{s}\mathcal{S}\mathpzc{et}}(V,0)=0 \arrow{d}{\Map_{\mathpzc{s}\mathcal{S}\mathpzc{et}}(\iota,0)=0}
  \\
  \Map_{\mathpzc{s}\mathcal{S}\mathpzc{et}}(U,M) \arrow{r}{\Map_{\mathpzc{s}\mathcal{S}\mathpzc{et}}(U,f)}
  & \Map_{\mathpzc{s}\mathcal{S}\mathpzc{et}}(U,0)=0
\end{tikzcd}
\end{equation*}

\noindent
and form the following pullback diagram:
\begin{equation*}
  \begin{tikzcd}
  \Map_{\mathpzc{s}\mathcal{S}\mathpzc{et}}(V,M) \arrow{dr}{\eta} \arrow[bend right, swap]{ddr}{\Map_{\mathpzc{s}\mathcal{S}\mathpzc{et}}(\iota,M)}
  \arrow[bend left]{drr}{\Map_{\mathpzc{s}\mathcal{S}\mathpzc{et}}(V,f)} & &
  \\
  & \Map_{\mathpzc{s}\mathcal{S}\mathpzc{et}}(U,M)\bigsqcap_{0} 0 \arrow{r} \arrow{d}[swap]{\cong} & 0 \arrow{d}
  \\
  & \Map_{\mathpzc{s}\mathcal{S}\mathpzc{et}}(U,M) \arrow{r} & 0
\end{tikzcd}
\end{equation*}

\noindent
If $R^{(\iota)}:R^{(U)}\rightarrow R^{(V)}$ is a (trivial) cofibration in $\mathpzc{s}\mathcal{M}\mathpzc{od}(R)$, then Lemma \ref{6.1} implies that $\eta$ is a (trivial) fibration in $\mathpzc{s}\mathcal{M}\mathpzc{od}(R)$, so by the above diagram, we deduce that $\Map_{\mathpzc{s}\mathcal{S}\mathpzc{et}}(\iota,M)$ is a (trivial) fibration in $\mathpzc{s}\mathcal{M}\mathpzc{od}(R)$.
\end{proof}

The following lemma is dual to Lemma \ref{6.1}:

\begin{lemma} \label{6.3}
Let $R$ be a commutative ring, $\varrho:M\rightarrow N$ a cofibration in $\mathpzc{s}\mathcal{M}\mathpzc{od}(R)$, and $\iota:U\rightarrow V$ a morphism of simplicial sets. Consider the commutative diagram
\begin{equation*}
\begin{tikzcd}
  M^{(U)} \arrow{r}{\varrho^{(U)}} \arrow{d}[swap]{M^{(\iota)}} & N^{(U)} \arrow{d}{N^{(\iota)}}
  \\
  M^{(V)} \arrow{r}{\varrho^{(V)}} & N^{(V)}
\end{tikzcd}
\end{equation*}

\noindent
and form the following pushout diagram:
\begin{equation*}
  \begin{tikzcd}
  M^{(U)} \arrow{r}{\varrho^{(U)}} \arrow{d}[swap]{M^{(\iota)}} & N^{(U)} \arrow{d}{r} \arrow[bend left]{ddr}{N^{(\iota)}} &
  \\
  M^{(V)} \arrow{r}{s} \arrow[bend right, swap]{drr}{\varrho^{(V)}} & M^{(V)}\bigsqcup_{M^{(U)}}N^{(U)} \arrow{dr}{\zeta} &
  \\
  & & N^{(V)}
\end{tikzcd}
\end{equation*}

\noindent
If $R^{(\iota)}:R^{(U)}\rightarrow R^{(V)}$ is a cofibration in $\mathpzc{s}\mathcal{M}\mathpzc{od}(R)$, then $\zeta$ is a cofibration in $\mathpzc{s}\mathcal{M}\mathpzc{od}(R)$ which is trivial if either $\varrho$ or $R^{(\iota)}$ is.
\end{lemma}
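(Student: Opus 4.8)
The plan is to deduce Lemma~\ref{6.3} from Lemma~\ref{6.1} by transposing the relevant lifting problems across the copower/mapping-object adjunction, rather than re-running the shuffle-product computation. Since $\mathpzc{s}\mathcal{M}\mathpzc{od}(R)$ is a model category by Theorem~\ref{4.15}, \cite[Proposition 7.2.3]{Hi} reduces the claim to showing: first, that $\zeta$ has the left lifting property against every trivial fibration $\rho$ in $\mathpzc{s}\mathcal{M}\mathpzc{od}(R)$ (so that $\zeta$ is a cofibration); and second, in the two cases where $\varrho$ or $R^{(\iota)}$ is trivial, that $\zeta$ has the left lifting property against every fibration $\rho$ (so that $\zeta$ is then a trivial cofibration). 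So fix a morphism $\rho:C\rightarrow D$ in $\mathpzc{s}\mathcal{M}\mathpzc{od}(R)$ and consider an arbitrary lifting problem of $\zeta$ against $\rho$.

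First I would transpose this lifting problem. Using the natural bijections
$$\Mor_{\mathpzc{s}\mathcal{M}\mathpzc{od}(R)}\left(P^{(T)},Q\right)\cong \Mor_{\mathpzc{s}\mathcal{M}\mathpzc{od}(R)}\left(P,\Map_{\mathpzc{s}\mathcal{S}\mathpzc{et}}(T,Q)\right)$$
of Remark~\ref{4.4} — the very same adjunction used in the proof of Lemma~\ref{6.1} — the pushout square defining $M^{(V)}\bigsqcup_{M^{(U)}}N^{(U)}$ transposes into the pullback square defining $\Map_{\mathpzc{s}\mathcal{S}\mathpzc{et}}(U,C)\bigsqcap_{\Map_{\mathpzc{s}\mathcal{S}\mathpzc{et}}(U,D)}\Map_{\mathpzc{s}\mathcal{S}\mathpzc{et}}(V,D)$, and the square
\begin{equation*}
\begin{tikzcd}
  M^{(V)}\bigsqcup_{M^{(U)}}N^{(U)} \arrow{r} \arrow{d}[swap]{\zeta}
  & C \arrow{d}{\rho}
  \\
  N^{(V)} \arrow{r}
  & D
\end{tikzcd}
\end{equation*}
becomes a square
\begin{equation*}
\begin{tikzcd}
  M \arrow{r} \arrow{d}[swap]{\varrho}
  & \Map_{\mathpzc{s}\mathcal{S}\mathpzc{et}}(V,C) \arrow{d}{\eta}
  \\
  N \arrow{r}
  & \Map_{\mathpzc{s}\mathcal{S}\mathpzc{et}}(U,C)\bigsqcap_{\Map_{\mathpzc{s}\mathcal{S}\mathpzc{et}}(U,D)} \Map_{\mathpzc{s}\mathcal{S}\mathpzc{et}}(V,D),
\end{tikzcd}
\end{equation*}
where $\eta$ is exactly the pullback-hom morphism attached to $\rho$ and $\iota$ in the statement of Lemma~\ref{6.1}, and a diagonal filler of one square transposes to a diagonal filler of the other. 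Hence $\zeta$ has the left lifting property against $\rho$ if and only if $\varrho$ has the left lifting property against $\eta$. In fact the biconditional ``$\rho\in\RLP(\{\zeta\})$ iff $\eta\in\RLP(\{\varrho\})$'' is precisely the chain of pushout/pullback manipulations carried out, in the opposite direction, in the proof of Lemma~\ref{6.1}, so I would simply cite that argument.

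Then I would invoke Lemma~\ref{6.1}: the hypothesis that $R^{(\iota)}$ is a cofibration is exactly what it requires, and it applies whenever $\rho$ is a fibration, yielding that $\eta$ is a fibration, which is trivial if $\rho$ or $R^{(\iota)}$ is. Feeding this into the lifting axiom of $\mathpzc{s}\mathcal{M}\mathpzc{od}(R)$: if $\rho$ is a trivial fibration then $\eta$ is a trivial fibration and the cofibration $\varrho$ lifts against it, so $\zeta$ lifts against $\rho$, whence $\zeta$ is a cofibration; if $\varrho$ is a trivial cofibration and $\rho$ is any fibration, then $\eta$ is a fibration and $\varrho$ lifts against it, so $\zeta$ lifts against $\rho$, whence $\zeta$ is a trivial cofibration; and if $R^{(\iota)}$ is a trivial cofibration and $\rho$ is any fibration, then $\eta$ is a trivial fibration and the cofibration $\varrho$ lifts against it, so again $\zeta$ is a trivial cofibration. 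This covers all three assertions. The hard part will be the transposition step itself — verifying carefully that the adjoint of the pushout-product $\zeta$ is the pullback-hom $\eta$ of Lemma~\ref{6.1}, with matching source and target — which requires unwinding the copower/mapping-object adjunction against the universal properties of the pushout of $\varrho^{(U)}$ along $M^{(\iota)}$ and the pullback of $\Map_{\mathpzc{s}\mathcal{S}\mathpzc{et}}(\iota,D)$ along $\Map_{\mathpzc{s}\mathcal{S}\mathpzc{et}}(U,\rho)$; this is the mirror image of the diagram chase already present in the proof of Lemma~\ref{6.1}, so no new identity is needed, but it must be written out once in the dual direction. Everything else is formal.
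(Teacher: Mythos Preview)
Your proposal is correct and is essentially the same argument the paper gives: the paper's proof of Lemma~\ref{6.3} transposes an arbitrary lifting problem for $\zeta$ against $\rho$ across the copower/mapping adjunction of Remark~\ref{4.4} into a lifting problem for $\varrho$ against the pullback-hom $\eta$, and then invokes Lemma~\ref{6.1} in exactly the three cases you describe. The detailed diagram chase you flag as the ``hard part'' is precisely what the paper writes out.
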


\begin{proof}
Let $\rho:K\rightarrow L$ be a morphism of simplicial $R$-modules. Consider a commutative diagram as follows:
\begin{equation*}
\begin{tikzcd}
  M^{(V)}\bigsqcup_{M^{(U)}}N^{(U)} \arrow{r}{p} \arrow{d}[swap]{\zeta} & K \arrow{d}{\rho}
  \\
  N^{(V)} \arrow{r}{q} & L
\end{tikzcd}
\end{equation*}

\noindent
Considering the commutative diagram
\begin{equation*}
\begin{tikzcd}[column sep=5.5em,row sep=3em]
  \Map_{\mathpzc{s}\mathcal{S}\mathpzc{et}}(V,K) \arrow{r}{\Map_{\mathpzc{s}\mathcal{S}\mathpzc{et}}(V,\rho)} \arrow{d}[swap]{\Map_{\mathpzc{s}\mathcal{S}\mathpzc{et}}(\iota,K)}
  & \Map_{\mathpzc{s}\mathcal{S}\mathpzc{et}}(V,L) \arrow{d}{\Map_{\mathpzc{s}\mathcal{S}\mathpzc{et}}(\iota,L)}
  \\
  \Map_{\mathpzc{s}\mathcal{S}\mathpzc{et}}(U,K) \arrow{r}{\Map_{\mathpzc{s}\mathcal{S}\mathpzc{et}}(U,\rho)}
  & \Map_{\mathpzc{s}\mathcal{S}\mathpzc{et}}(U,L)
\end{tikzcd}
\end{equation*}

\noindent
there is a unique morphism $\eta:\Map_{\mathpzc{s}\mathcal{S}\mathpzc{et}}(V,K)\rightarrow \Map_{\mathpzc{s}\mathcal{S}\mathpzc{et}}(U,K)\textstyle\bigsqcap_{\Map_{\mathpzc{s}\mathcal{S}\mathpzc{et}}(U,L)} \Map_{\mathpzc{s}\mathcal{S}\mathpzc{et}}(V,L)$ that makes the following pullback diagram commutative:
\begin{equation*}
  \begin{tikzcd}
  \Map_{\mathpzc{s}\mathcal{S}\mathpzc{et}}(V,K) \arrow{dr}{\eta} \arrow[bend right, swap]{ddr}{\Map_{\mathpzc{s}\mathcal{S}\mathpzc{et}}(\iota,K)}
  \arrow[bend left]{drr}{\Map_{\mathpzc{s}\mathcal{S}\mathpzc{et}}(V,\rho)} & &
  \\
  & \Map_{\mathpzc{s}\mathcal{S}\mathpzc{et}}(U,K)\bigsqcap_{\Map_{\mathpzc{s}\mathcal{S}\mathpzc{et}}(U,L)} \Map_{\mathpzc{s}\mathcal{S}\mathpzc{et}}(V,L) \arrow{r} \arrow{d} & \Map_{\mathpzc{s}\mathcal{S}\mathpzc{et}}(V,L) \arrow{d}{\Map_{\mathpzc{s}\mathcal{S}\mathpzc{et}}(\iota,L)}
  \\ [1em]
  & \Map_{\mathpzc{s}\mathcal{S}\mathpzc{et}}(U,K) \arrow{r}{\Map_{\mathpzc{s}\mathcal{S}\mathpzc{et}}(U,\rho)} & \Map_{\mathpzc{s}\mathcal{S}\mathpzc{et}}(U,L)
\end{tikzcd}
\end{equation*}

\noindent
On the other hand, considering the natural bijections
$$\phi_{NVL}: \Mor_{\mathpzc{s}\mathcal{M}\mathpzc{od}(R)}\left(N,\Map_{\mathpzc{s}\mathcal{S}\mathpzc{et}}(V,L)\right) \rightarrow \Mor_{\mathpzc{s}\mathcal{M}\mathpzc{od}(R)}\left(N^{(V)},L\right)$$
and
$$\phi_{NUK}: \Mor_{\mathpzc{s}\mathcal{M}\mathpzc{od}(R)}\left(N,\Map_{\mathpzc{s}\mathcal{S}\mathpzc{et}}(U,K)\right) \rightarrow \Mor_{\mathpzc{s}\mathcal{M}\mathpzc{od}(R)}\left(N^{(U)},K\right)$$
from Remark \ref{4.4}, we get the following commutative diagram:
\begin{equation*}
\begin{tikzcd}[column sep=5.5em,row sep=3em]
  N \arrow{r}{\phi_{NVL}^{-1}(q)} \arrow{d}[swap]{\phi_{NUK}^{-1}(pr)}
  & \Map_{\mathpzc{s}\mathcal{S}\mathpzc{et}}(V,L) \arrow{d}{\Map_{\mathpzc{s}\mathcal{S}\mathpzc{et}}(\iota,L)}
  \\
  \Map_{\mathpzc{s}\mathcal{S}\mathpzc{et}}(U,K) \arrow{r}{\Map_{\mathpzc{s}\mathcal{S}\mathpzc{et}}(U,\rho)}
  & \Map_{\mathpzc{s}\mathcal{S}\mathpzc{et}}(U,L)
\end{tikzcd}
\end{equation*}

\noindent
Hence there is a unique morphism $\theta:N \rightarrow \Map_{\mathpzc{s}\mathcal{S}\mathpzc{et}}(U,K)\bigsqcap_{\Map_{\mathpzc{s}\mathcal{S}\mathpzc{et}}(U,L)} \Map_{\mathpzc{s}\mathcal{S}\mathpzc{et}}(V,L)$ that makes the following diagram commutative:
\begin{equation*}
  \begin{tikzcd}
  N \arrow{dr}{\theta} \arrow[bend right, swap]{ddr}{\phi_{NUK}^{-1}(pr)} \arrow[bend left]{drr}{\phi_{NVL}^{-1}(q)} & &
  \\
  & \Map_{\mathpzc{s}\mathcal{S}\mathpzc{et}}(U,K)\bigsqcap_{\Map_{\mathpzc{s}\mathcal{S}\mathpzc{et}}(U,L)} \Map_{\mathpzc{s}\mathcal{S}\mathpzc{et}}(V,L) \arrow{r} \arrow{d} & \Map_{\mathpzc{s}\mathcal{S}\mathpzc{et}}(V,L) \arrow{d}{\Map_{\mathpzc{s}\mathcal{S}\mathpzc{et}}(\iota,L)}
  \\ [1em]
  & \Map_{\mathpzc{s}\mathcal{S}\mathpzc{et}}(U,K) \arrow{r}{\Map_{\mathpzc{s}\mathcal{S}\mathpzc{et}}(U,\rho)} & \Map_{\mathpzc{s}\mathcal{S}\mathpzc{et}}(U,L)
\end{tikzcd}
\end{equation*}

\noindent
Then considering the natural bijection
$$\phi_{MVK}: \Mor_{\mathpzc{s}\mathcal{M}\mathpzc{od}(R)}\left(M,\Map_{\mathpzc{s}\mathcal{S}\mathpzc{et}}(V,K)\right) \rightarrow \Mor_{\mathpzc{s}\mathcal{M}\mathpzc{od}(R)}\left(M^{(V)},K\right)$$
we get the following commutative diagram:
\begin{equation*}
\begin{tikzcd}
  M \arrow{r}{\phi_{MVK}^{-1}(ps)} \arrow{d}[swap]{\varrho}
  & \Map_{\mathpzc{s}\mathcal{S}\mathpzc{et}}(V,K) \arrow{d}{\eta}
  \\
  N \arrow{r}{\theta}
  & \Map_{\mathpzc{s}\mathcal{S}\mathpzc{et}}(U,K)\textstyle\bigsqcap_{\Map_{\mathpzc{s}\mathcal{S}\mathpzc{et}}(U,L)} \Map_{\mathpzc{s}\mathcal{S}\mathpzc{et}}(V,L)
\end{tikzcd}
\end{equation*}

\noindent
If $\tau:N\rightarrow \Map_{\mathpzc{s}\mathcal{S}\mathpzc{et}}(V,K)$ is a morphism of simplicial $R$-modules that makes the diagram
\begin{equation*}
\begin{tikzcd}[column sep=3em,row sep=3.5em]
  M \arrow{r}{\phi_{MVK}^{-1}(ps)} \arrow{d}[swap]{\varrho}
  & \Map_{\mathpzc{s}\mathcal{S}\mathpzc{et}}(V,K) \arrow{d}{\eta}
  \\
  N \arrow{r}{\theta} \arrow{ru}{\tau}
  & \Map_{\mathpzc{s}\mathcal{S}\mathpzc{et}}(U,K)\textstyle\bigsqcap_{\Map_{\mathpzc{s}\mathcal{S}\mathpzc{et}}(U,L)} \Map_{\mathpzc{s}\mathcal{S}\mathpzc{et}}(V,L)
\end{tikzcd}
\end{equation*}

\noindent
commutative, then considering the natural bijection
$$\phi_{NVK}:\Mor_{\mathpzc{s}\mathcal{M}\mathpzc{od}(R)}\left(N,\Map_{\mathpzc{s}\mathcal{S}\mathpzc{et}}(V,K)\right)\rightarrow \Mor_{\mathpzc{s}\mathcal{M}\mathpzc{od}(R)}\left(N^{(V)},K\right)$$
we see that the following diagram is commutative:
\begin{equation*}
\begin{tikzcd}[column sep=5em,row sep=3em]
  M^{(V)}\bigsqcup_{M^{(U)}}N^{(U)} \arrow{r}{p} \arrow{d}[swap]{\zeta} & K \arrow{d}{\rho}
  \\
  N^{(V)} \arrow{r}{q} \arrow{ru}{\phi_{NVK}(\tau)} & L
\end{tikzcd}
\end{equation*}

\noindent
Now if $\varrho$ and $R^{(\iota)}$ are cofibrations, and $\rho$ is a trivial fibration, then Lemma \ref{6.1} implies that $\eta$ is a trivial fibration, so $\tau$ exists, whence we conclude from the above diagram that $\zeta$ is a cofibration. If $\varrho$ is a trivial cofibration, $R^{(\iota)}$ is a cofibration, and $\rho$ is a fibration, then Lemma \ref{6.1} implies that $\eta$ is a fibration, so $\tau$ exists, whence we deduce from the above diagram that $\zeta$ is a trivial cofibration. If $\varrho$ is a cofibration, $R^{(\iota)}$ is a trivial cofibration, and $\rho$ is a fibration, then Lemma \ref{6.1} implies that $\eta$ is a trivial fibration, so $\tau$ exists, whence we infer from the above diagram that $\zeta$ is a trivial cofibration.
\end{proof}

\begin{corollary} \label{6.4}
Let $R$ be a commutative ring, $M$ a cofibrant simplicial $R$-module, and $\iota:U\rightarrow V$ a morphism of simplicial sets. If $R^{(\iota)}:R^{(U)}\rightarrow R^{(V)}$ is a (trivial) cofibration in $\mathpzc{s}\mathcal{M}\mathpzc{od}(R)$, then $M^{(\iota)}:M^{(U)}\rightarrow M^{(V)}$ is a (trivial) cofibration in $\mathpzc{s}\mathcal{M}\mathpzc{od}(R)$.
\end{corollary}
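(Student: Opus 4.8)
The plan is to invoke Lemma \ref{6.3} with the cofibration taken to be the unique morphism $\varrho:0\rightarrow M$ out of the initial object $0$ of $\mathpzc{s}\mathcal{M}\mathpzc{od}(R)$. Since $M$ is cofibrant, this morphism is a cofibration by definition, so it is a legitimate input to Lemma \ref{6.3} (playing the role of ``$\varrho:M\rightarrow N$'' there, with its ``$M$'' replaced by $0$ and its ``$N$'' replaced by the present $M$). The morphism of simplicial sets $\iota:U\rightarrow V$ and the hypothesis on $R^{(\iota)}$ carry over verbatim.

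Next I would record the elementary fact that the copower of the zero object is the zero object: for any simplicial set $T$, Remark \ref{4.4} gives $\left(0^{(T)}\right)_{n}=0^{(T_{n})}=0$ for every $n\geq 0$, hence $0^{(U)}=0^{(V)}=0$. Consequently the pushout occurring in Lemma \ref{6.3} degenerates: $0^{(V)}\bigsqcup_{0^{(U)}}M^{(U)}=0\bigsqcup_{0}M^{(U)}\cong M^{(U)}$, and the morphism $\varrho^{(V)}:0^{(V)}\rightarrow M^{(V)}$ is simply the unique map $0\rightarrow M^{(V)}$. Under the canonical isomorphism $0\bigsqcup_{0}M^{(U)}\cong M^{(U)}$, the leg $r:M^{(U)}\rightarrow 0^{(V)}\bigsqcup_{0^{(U)}}M^{(U)}$ becomes the identity, so the induced morphism $\zeta$ from Lemma \ref{6.3}, characterized by $\zeta r=N^{(\iota)}=M^{(\iota)}$, is identified with $M^{(\iota)}:M^{(U)}\rightarrow M^{(V)}$ itself.

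Finally I would read off the conclusion: Lemma \ref{6.3} states that $\zeta$ is a cofibration whenever $R^{(\iota)}$ is, and that $\zeta$ is trivial whenever either $\varrho$ or $R^{(\iota)}$ is trivial. Since $\zeta=M^{(\iota)}$, the first clause gives that $M^{(\iota)}$ is a cofibration when $R^{(\iota)}$ is a cofibration, and taking $R^{(\iota)}$ to be a trivial cofibration in the second clause gives that $M^{(\iota)}$ is then a trivial cofibration. There is essentially no obstacle in this argument; the only mild care needed is the bookkeeping of the pushout degeneration and the verification that $\zeta$ really does coincide with $M^{(\iota)}$ under the isomorphism $0\bigsqcup_{0}M^{(U)}\cong M^{(U)}$, both of which are immediate once one notes that the copower functor preserves the initial object.
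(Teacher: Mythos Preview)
Your proposal is correct and follows essentially the same approach as the paper: both apply Lemma \ref{6.3} to the cofibration $0\rightarrow M$, observe that $0^{(T)}=0$ so the pushout $0^{(V)}\bigsqcup_{0^{(U)}}M^{(U)}$ collapses to $M^{(U)}$, and identify the resulting $\zeta$ with $M^{(\iota)}$. Your writeup is slightly more explicit about why $\zeta$ agrees with $M^{(\iota)}$ under the canonical isomorphism, but the argument is the same.
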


\begin{proof}
Since $M$ is cofibrant, the unique morphism $f:0\rightarrow M$ is a cofibration. Consider the commutative diagram
\begin{equation*}
\begin{tikzcd}
  0=0^{(U)} \arrow{r}{f^{(U)}} \arrow{d}[swap]{0=0^{(\iota)}} & M^{(U)} \arrow{d}{M^{(\iota)}}
  \\
  0=0^{(V)} \arrow{r}{f^{(V)}} & M^{(V)}
\end{tikzcd}
\end{equation*}

\noindent
and form the following pushout diagram:
\begin{equation*}
  \begin{tikzcd}
  0 \arrow{r} \arrow{d} & M^{(U)} \arrow{d}{\cong} \arrow[bend left]{ddr}{M^{(\iota)}} &
  \\
  0 \arrow{r} \arrow[bend right, swap]{drr}{f^{(V)}} & 0\bigsqcup_{0}M^{(U)} \arrow{dr}{\zeta} &
  \\
  & & M^{(V)}
\end{tikzcd}
\end{equation*}

\noindent
If $R^{(\iota)}:R^{(U)}\rightarrow R^{(V)}$ is a (trivial) cofibration in $\mathpzc{s}\mathcal{M}\mathpzc{od}(R)$, then Lemma \ref{6.3} implies that $\zeta$ is a (trivial) cofibration in $\mathpzc{s}\mathcal{M}\mathpzc{od}(R)$, so by the above diagram, we deduce that $M^{(\iota)}$ is a (trivial) cofibration in $\mathpzc{s}\mathcal{M}\mathpzc{od}(R)$.
\end{proof}

Now we can construct specific cylinder and path objects in $\mathpzc{s}\mathcal{M}\mathpzc{od}(R)$. These objects are introduced in \cite[Corollary 4.14]{GS} but the proof outlined there relies heavily on the model structure of simplicial sets. Here we provide a proof based on Lemmas \ref{6.1} and \ref{6.3} whose proofs are independent of the model structure of simplicial sets.

\begin{proposition} \label{6.5}
Let $R$ be a commutative ring, and $M$ a simplicial $R$-module. Then the following assertions hold:
\begin{enumerate}
\item[(i)] If $M$ is cofibrant in $\mathpzc{s}\mathcal{M}\mathpzc{od}(R)$, then $M^{(\Delta^{1})}$ is a cylinder object for $M$.
\item[(ii)] $\Map_{\mathpzc{s}\mathcal{S}\mathpzc{et}}\left(\Delta^{1},M\right)$ is a path object for $M$.
\end{enumerate}
\end{proposition}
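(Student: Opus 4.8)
The plan is to realize both objects using the standard $1$-simplex $\Delta^{1}$: the cylinder as the copower $M^{(\Delta^{1})}$ and the path object as the mapping object $\Map_{\mathpzc{s}\mathcal{S}\mathpzc{et}}(\Delta^{1},M)$, factoring the codiagonal and diagonal along the evident maps between $\Delta^{1}$, $\Delta^{0}$, and $\partial\Delta^{1}$, and then reducing every verification to Corollaries \ref{6.2} and \ref{6.4} together with the two-out-of-three axiom of Definition \ref{2.1}(iii).

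First I would record the formal identifications. Since $\partial\Delta^{1}\cong \Delta^{0}\sqcup \Delta^{0}$ (Example \ref{4.3}), $M^{(\Delta^{0})}\cong M\cong \Map_{\mathpzc{s}\mathcal{S}\mathpzc{et}}(\Delta^{0},M)$, and finite coproducts and products coincide in the abelian category $\mathpzc{s}\mathcal{M}\mathpzc{od}(R)$, the copower and mapping constructions of Remark \ref{4.4} give $M^{(\partial\Delta^{1})}\cong M\sqcup M$, with the canonical injections $\iota_{1}^{M},\iota_{2}^{M}$ corresponding to the two vertex inclusions $\Delta^{0}\rightarrow \partial\Delta^{1}$, and $\Map_{\mathpzc{s}\mathcal{S}\mathpzc{et}}(\partial\Delta^{1},M)\cong M\sqcap M$. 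I will write $\iota:\partial\Delta^{1}\hookrightarrow \Delta^{1}$ for the inclusion, $\varepsilon:\Delta^{1}\rightarrow \Delta^{0}$ for the unique morphism, and $\zeta:\Delta^{0}\rightarrow \Delta^{1}$ for a vertex, so that $\varepsilon\zeta=1^{\Delta^{0}}$ and $\varepsilon\iota v=1^{\Delta^{0}}$ for each of the two vertices $v:\Delta^{0}\rightarrow \partial\Delta^{1}$.

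The one genuinely nonroutine step is to identify the homotopical type of $R^{(\iota)}$ and $R^{(\zeta)}$ in $\mathpzc{s}\mathcal{M}\mathpzc{od}(R)$, and this is where I would do the only real computation. Using the description from the proof of Lemma \ref{4.8} that $\Nor\!\left(R^{(\Delta^{n})}\right)_{m}$ is free on the non-degenerate $m$-cells of $\Delta^{n}$, one checks that $\Nor\!\left(R^{(\Delta^{0})}\right)$ is $R$ in degree $0$, that $\Nor\!\left(R^{(\Delta^{1})}\right)$ has $R^{2}$ in degree $0$ and $R$ in degree $1$, and that $\Nor\!\left(R^{(\partial\Delta^{1})}\right)$ is $R^{2}$ in degree $0$. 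By Theorem \ref{4.15} it then follows that $\Nor\!\left(R^{(\iota)}\right)$ is degreewise injective with free cokernel, so $R^{(\iota)}$ is a cofibration in $\mathpzc{s}\mathcal{M}\mathpzc{od}(R)$; and $\Nor\!\left(R^{(\zeta)}\right)$ is degreewise injective with projective cokernel and is moreover a homotopy equivalence by Lemma \ref{4.8} (applied to the poset $[1]$, which has a least element), so $R^{(\zeta)}$ is a \emph{trivial} cofibration in $\mathpzc{s}\mathcal{M}\mathpzc{od}(R)$.

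With these in hand both assertions follow formally. For (i), if $M$ is cofibrant then Corollary \ref{6.4} shows $\kappa_{M}:=M^{(\iota)}:M\sqcup M\cong M^{(\partial\Delta^{1})}\rightarrow M^{(\Delta^{1})}$ is a cofibration and $M^{(\zeta)}:M\cong M^{(\Delta^{0})}\rightarrow M^{(\Delta^{1})}$ is a trivial cofibration; setting $\xi_{M}:=M^{(\varepsilon)}:M^{(\Delta^{1})}\rightarrow M^{(\Delta^{0})}\cong M$, functoriality of $M^{(-)}$ and $\varepsilon\iota v=1^{\Delta^{0}}$ give $\xi_{M}\kappa_{M}=\nabla_{M}$ (checked on each summand), while $\xi_{M}M^{(\zeta)}=M^{(\varepsilon\zeta)}=1^{M}$ forces $\xi_{M}$ to be a weak equivalence by two-out-of-three, so $M^{(\Delta^{1})}$ is a cylinder object in the sense of Definition \ref{2.2}(i). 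For (ii), Corollary \ref{6.2} shows $\nu_{M}:=\Map_{\mathpzc{s}\mathcal{S}\mathpzc{et}}(\iota,M):\Map_{\mathpzc{s}\mathcal{S}\mathpzc{et}}(\Delta^{1},M)\rightarrow \Map_{\mathpzc{s}\mathcal{S}\mathpzc{et}}(\partial\Delta^{1},M)\cong M\sqcap M$ is a fibration and $\Map_{\mathpzc{s}\mathcal{S}\mathpzc{et}}(\zeta,M):\Map_{\mathpzc{s}\mathcal{S}\mathpzc{et}}(\Delta^{1},M)\rightarrow \Map_{\mathpzc{s}\mathcal{S}\mathpzc{et}}(\Delta^{0},M)\cong M$ is a trivial fibration; setting $\zeta_{M}:=\Map_{\mathpzc{s}\mathcal{S}\mathpzc{et}}(\varepsilon,M):M\rightarrow \Map_{\mathpzc{s}\mathcal{S}\mathpzc{et}}(\Delta^{1},M)$, contravariant functoriality gives $\nu_{M}\zeta_{M}=\Delta_{M}$ and $\Map_{\mathpzc{s}\mathcal{S}\mathpzc{et}}(\zeta,M)\,\zeta_{M}=\Map_{\mathpzc{s}\mathcal{S}\mathpzc{et}}(\varepsilon\zeta,M)=1^{M}$, so two-out-of-three makes $\zeta_{M}$ a weak equivalence and $\Map_{\mathpzc{s}\mathcal{S}\mathpzc{et}}(\Delta^{1},M)$ a path object in the sense of Definition \ref{2.2}(ii). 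Beyond the elementary computation of the three normalized complexes and the appeal to Lemma \ref{4.8}, the argument is purely diagrammatic, so I expect the bookkeeping of the various natural isomorphisms — rather than any homological subtlety — to be the only place demanding care.
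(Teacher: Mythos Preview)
Your proposal is correct and follows essentially the same route as the paper: both arguments compute $\Nor\!\left(R^{(\Delta^{0})}\right)$, $\Nor\!\left(R^{(\Delta^{1})}\right)$, and $\Nor\!\left(R^{(\partial\Delta^{1})}\right)$ to verify that $R^{(\iota)}$ is a cofibration and $R^{(\zeta)}$ is a trivial cofibration (the latter via Lemma~\ref{4.8}), then feed these into Corollaries~\ref{6.2} and~\ref{6.4} and finish with two-out-of-three. The paper carries out the degree-by-degree computation of the differentials in $\Nor\!\left(R^{(\Delta^{1})}\right)$ in more explicit detail, but the logical structure is identical to yours.
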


\begin{proof}
Let $\varepsilon:\Delta^{1}\rightarrow \Delta^{0}$ and $\delta:\partial\Delta^{1}\rightarrow \Delta^{0}$ be the unique morphisms, and $\iota:\partial\Delta^{1}\rightarrow \Delta^{1}$ the inclusion morphism. Then we have the following commutative diagram of simplicial sets:
\begin{equation} \label{eq:6.5.1}
\begin{tikzcd}
  \partial\Delta^{1} \arrow{r}{\delta} \arrow{d}[swap]{\iota} & \Delta^{0}
  \\
  \Delta^{1} \arrow{ru}[swap]{\varepsilon} &
\end{tikzcd}
\end{equation}

\noindent
We note that $\Delta_{n}^{0}=\{(\underbrace{0,...,0}_{n+1 \textrm{ times}})\}$ and $\Delta_{n}^{1}=\{(\alpha_{0},\alpha_{1},\ldots,\alpha_{n})\suchthat 0\leq \alpha_{0}\leq \alpha_{1}\leq \ldots \leq \alpha_{n}\leq 1\}$ for every $n\geq 0$. Define a morphism $\zeta=:\Delta^{0}\rightarrow \Delta^{1}$ of simplicial sets by setting $\zeta_{n}(0,...,0)=(0,...,0)$ for every $n\geq 0$. Then it is clear that $\varepsilon\zeta=1^{\Delta^{0}}$. We show that the induced morphism $R^{(\zeta)}:R^{(\Delta^{0})}\rightarrow R^{(\Delta^{1})}$ is a trivial cofibration in $\mathpzc{s}\mathcal{M}\mathpzc{od}(R)$, i.e. $\Nor\left(R^{(\zeta)}\right):\Nor\left(R^{(\Delta^{0})}\right) \rightarrow \Nor\left(R^{(\Delta^{1})}\right)$ is a trivial cofibration in $\mathcal{C}_{\geq 0}(R)$. To this end, we analyze this morphism more closely. We have observed in the proof of Lemma \ref{4.8} that $\Nor\left(R^{(\Delta^{0})}\right)$ is the $R$-complex
$$0\rightarrow R \rightarrow 0$$
with $R$ in degree $0$. On the other hand, we have $\Nor\left(R^{(\Delta^{1})}\right)_{0}=R^{(\Delta_{0}^{1})}=R^{(\{0,1\})}=R^{2}$ with a basis $\{e_{1},e_{2}\}$ corresponding to $\Delta_{0}^{1}=\{0,1\}$, and $R^{(\Delta_{1}^{1})}=R^{(\{(0,0),(0,1),(1,1)\})}=R^{3}$ with a basis $\{e_{1}',e_{2}',e_{3}'\}$ corresponding to $\Delta_{1}^{1}=\left\{(0,0),(0,1),(1,1)\right\}$. We describe the morphisms
$$R^{3}=R^{(\Delta_{1}^{1})} \xrightarrow{d_{1,0}^{R^{(\Delta^{1})}}=R^{\left(d_{1,0}^{\Delta^{1}}\right)}} R^{(\Delta_{0}^{1})}=R^{2}$$
and
$$R^{3}=R^{(\Delta_{1}^{1})} \xrightarrow{d_{1,1}^{R^{(\Delta^{1})}}=R^{\left(d_{1,1}^{\Delta^{1}}\right)}} R^{(\Delta_{0}^{1})}=R^{2}$$
as follows. We note that $e_{1}'$ corresponds to $(0,0)$, so since $d_{1,0}^{\Delta^{1}}(0,0)=0$ and $0$ corresponds to $e_{1}$, we get $d_{1,0}^{R^{(\Delta^{1})}}(e_{1}')=e_{1}$. Similarly, we have:
\[
d_{1,0}^{\Delta^{1}}:
 \begin{dcases}
  (0,0)\mapsto 0 \\
  (0,1)\mapsto 1 \\
  (1,1)\mapsto 1
 \end{dcases}
\quad \text{and} \quad
d_{1,1}^{\Delta^{1}}:
 \begin{dcases}
  (0,0)\mapsto 0 \\
  (0,1)\mapsto 0 \\
  (1,1)\mapsto 1
 \end{dcases}
\]

\noindent
Hence we get:
\[
d_{1,0}^{R^{(\Delta^{1})}}:
 \begin{dcases}
  e_{1}'\mapsto e_{1} \\
  e_{2}'\mapsto e_{2} \\
  e_{3}'\mapsto e_{2}
 \end{dcases}
\quad \text{and} \quad
d_{1,1}^{R^{(\Delta^{1})}}:
 \begin{dcases}
  e_{1}'\mapsto e_{1} \\
  e_{2}'\mapsto e_{1} \\
  e_{3}'\mapsto e_{2}
 \end{dcases}
\]

\noindent
It follows that $d_{1,0}^{R^{(\Delta^{1})}}(a,b,c)=(a,b+c)$ and $d_{1,1}^{R^{(\Delta^{1})}}(a,b,c)=(a+b,c)$ for every $(a,b,c)\in R^{3}$. Thus we have:
$$\Nor\left(R^{(\Delta^{1})}\right)_{1} = \Ker\left(d_{1,0}^{R^{(\Delta^{1})}}\right)= \left\{(a,b,c)\in R^{3} \suchthat a=0 \textrm{ and } b=-c\right\} = \left\{(0,-a,a)\suchthat a\in R\right\} \cong R$$
Moreover, in light of the above isomorphism, $\partial_{1}=\partial_{1}^{\Nor\left(R^{(\Delta^{1})}\right)}:R\rightarrow R^{2}$ is given by $\partial_{1}(a)=-d_{1,1}^{R^{(\Delta^{1})}}(0,-a,a)=(a,-a)$ for every $a\in R$. As we saw in the proof of Lemma \ref{4.8}, if $n\geq 2$, then $\Nor\left(R^{(\Delta^{1})}\right)_{n}=0$. As a consequence, $\Nor\left(R^{(\Delta^{1})}\right)$ is the $R$-complex
$$0\rightarrow R \xrightarrow{\partial_{1}} R^{2}\rightarrow 0$$
with $R$ in degree $1$ and $R^{2}$ in degree $0$. Now consider $\Nor\left(R^{(\zeta)}\right):\Nor\left(R^{(\Delta^{0})}\right)\rightarrow \Nor\left(R^{(\Delta^{1})}\right)$. We note that $\zeta_{0}:\Delta_{0}^{0}\rightarrow \Delta_{0}^{1}$ is given by $\zeta_{0}(0)=(0)$, so $\Nor\left(R^{(\zeta)}\right)_{0}=R^{(\zeta_{0})}:R\rightarrow R^{2}$ is given by $R^{(\zeta_{0})}(a)=(a,0)$ for every $a\in R$. Indeed, $\Nor\left(R^{(\zeta)}\right)$ is as follows:
\begin{equation*}
\begin{tikzcd}
  0 \arrow{r} & 0 \arrow{r} \arrow{d} & R \arrow{r} \arrow{d}{R^{(\zeta_{0})}} & 0
  \\
  0 \arrow{r} & R \arrow{r}{\partial_{1}} & R^{2} \arrow{r} & 0
\end{tikzcd}
\end{equation*}

\noindent
It is now clear that $\Nor\left(R^{(\zeta)}\right)$ is injective whose cokernel is an $R$-complex of projective modules, so $\Nor\left(R^{(\zeta)}\right)$ is a cofibration. Furthermore, by Lemma \ref{4.8}, $\Nor\left(R^{(\zeta)}\right)$ is a homotopy equivalence, hence a quasi-isomorphism, i.e. a weak equivalence. Therefore, $\Nor\left(R^{(\zeta)}\right)$ is a trivial cofibration in $\mathcal{C}_{\geq 0}(R)$.

We next show that $R^{(\iota)}:R^{(\partial\Delta^{1})}\rightarrow R^{(\Delta^{1})}$ is a cofibration in $\mathpzc{s}\mathcal{M}\mathpzc{od}(R)$, i.e. $\Nor\left(R^{(\iota)}\right):\Nor\left(R^{(\partial\Delta^{1})}\right)\rightarrow \Nor\left(R^{(\Delta^{1})}\right)$ is a cofibration in $\mathcal{C}_{\geq 0}(R)$. To this end, we analyze this morphism more closely. We have $\Nor\left(R^{(\partial\Delta^{1})}\right)_{0}=R^{(\partial\Delta_{0}^{1})}=R^{(\{0,1\})}=R^{2}$. If $n\geq 1$, then every element of $\partial\Delta_{n}^{1}$ is degenerate, so $\Nor\left(R^{(\partial\Delta^{1})}\right)_{n}=0$. As a consequence, $\Nor\left(R^{(\partial\Delta^{1})}\right)$ is the $R$-complex
$$0\rightarrow R^{2}\rightarrow 0$$
with $R^{2}$ in degree $0$. Now consider $\Nor\left(R^{(\iota)}\right):\Nor\left(R^{(\partial\Delta^{1})}\right)\rightarrow \Nor\left(R^{(\Delta^{1})}\right)$. We note that $\partial\Delta_{0}^{1}=\Delta_{0}^{1}=\{0,1\}$, so $\iota_{0}:\partial\Delta_{0}^{1}\rightarrow \Delta_{0}^{1}$ is the identity map, thereby $R^{(\iota_{0})}=1^{R^{2}}:R^{2}\rightarrow R^{2}$. Indeed, $\Nor\left(R^{(\iota)}\right)$ is as follows:
\begin{equation*}
\begin{tikzcd}
  0 \arrow{r} & 0 \arrow{r} \arrow{d} & R^{2} \arrow{r} \arrow{d}{R^{(\iota_{0})}} & 0
  \\
  0 \arrow{r} & R \arrow{r}{\partial_{1}} & R^{2} \arrow{r} & 0
\end{tikzcd}
\end{equation*}

\noindent
It is now clear that $\Nor\left(R^{(\iota)}\right)$ is injective whose cokernel is an $R$-complex of projective modules, so it is a cofibration. Now we have:

(i): The diagram \eqref{eq:6.5.1} yields the following commutative diagram of simplicial $R$-modules:
\begin{equation*}
\begin{tikzcd}
  M^{(\partial\Delta^{1})} \arrow{r}{M^{(\delta)}} \arrow{d}[swap]{M^{(\iota)}} & M^{(\Delta^{0})}
  \\
  M^{(\Delta^{1})} \arrow{ru}[swap]{M^{(\varepsilon)}} &
\end{tikzcd}
\end{equation*}

\noindent
From the relation $\varepsilon\zeta=1^{\Delta^{0}}$, we get $M^{(\varepsilon)}M^{(\zeta)}=1^{M^{(\Delta^{0})}}$. Since $R^{(\zeta)}:R^{(\Delta^{0})}\rightarrow R^{(\Delta^{1})}$ is a trivial cofibration in $\mathpzc{s}\mathcal{M}\mathpzc{od}(R)$, Corollary \ref{6.4} shows that $M^{(\zeta)}:M^{(\Delta^{0})}\rightarrow M^{(\Delta^{1})}$ is a trivial cofibration in $\mathpzc{s}\mathcal{M}\mathpzc{od}(R)$, hence in particular a weak equivalence. Thus $M^{(\varepsilon)}$ is a weak equivalence in $\mathpzc{s}\mathcal{M}\mathpzc{od}(R)$. On the other hand, $R^{(\iota)}:R^{(\partial\Delta^{1})}\rightarrow R^{(\Delta^{1})}$ is a cofibration in $\mathpzc{s}\mathcal{M}\mathpzc{od}(R)$, so Corollary \ref{6.4} implies that $M^{(\iota)}:M^{(\partial\Delta^{1})}\rightarrow M^{(\Delta^{1})}$ is a cofibration in $\mathpzc{s}\mathcal{M}\mathpzc{od}(R)$. We finally note that $M^{(\Delta^{0})}\cong M$, and also we have $M^{(\partial\Delta^{1})}\cong M^{(\Delta^{0}\sqcup \Delta^{0})}\cong M^{(\Delta^{0})}\oplus M^{(\Delta^{0})}\cong M\oplus M$ by Example \ref{4.3}. Taking these isomorphisms into account, the above diagram yields a commutative diagram
\begin{equation*}
\begin{tikzcd}
  M\oplus M \arrow{r}{\nabla^{M}} \arrow{d}[swap]{\kappa^{M}} & M
  \\
  M^{\left(\Delta^{1}\right)} \arrow{ru}[swap]{\xi^{M}} &
\end{tikzcd}
\end{equation*}

\noindent
in which $\kappa^{M}$ is a cofibration and $\xi^{M}$ is a weak equivalence. This means that $M^{(\Delta^{1})}$ is a cylinder object for $M$.

(ii): The diagram \eqref{eq:6.5.1} yields the following commutative diagram of simplicial $R$-modules:
\begin{equation*}
\begin{tikzcd}[column sep=6em,row sep=2em]
  \Map_{\mathpzc{s}\mathcal{S}\mathpzc{et}}\left(\Delta^{0},M\right) \arrow{r}{\Map_{\mathpzc{s}\mathcal{S}\mathpzc{et}}(\delta,M)} \arrow{d}[swap]{\Map_{\mathpzc{s}\mathcal{S}\mathpzc{et}}(\varepsilon,M)} & \Map_{\mathpzc{s}\mathcal{S}\mathpzc{et}}\left(\partial\Delta^{1},M\right)
  \\
  \Map_{\mathpzc{s}\mathcal{S}\mathpzc{et}}\left(\Delta^{1},M\right) \arrow{ru}[swap]{\Map_{\mathpzc{s}\mathcal{S}\mathpzc{et}}(\iota,M)} &
\end{tikzcd}
\end{equation*}

\noindent
From the relation $\varepsilon\zeta=1^{\Delta^{0}}$, we get $\Map_{\mathpzc{s}\mathcal{S}\mathpzc{et}}(\zeta,M)\Map_{\mathpzc{s}\mathcal{S}\mathpzc{et}}(\varepsilon,M)= 1^{\Map_{\mathpzc{s}\mathcal{S}\mathpzc{et}}\left(\Delta^{0},M\right)}$. Since $R^{(\zeta)}:R^{(\Delta^{0})}\rightarrow R^{(\Delta^{1})}$ is a trivial cofibration in $\mathpzc{s}\mathcal{M}\mathpzc{od}(R)$, the Corollary \ref{6.2} shows that $\Map_{\mathpzc{s}\mathcal{S}\mathpzc{et}}(\zeta,M):\Map_{\mathpzc{s}\mathcal{S}\mathpzc{et}}\left(\Delta^{1},M\right)\rightarrow \Map_{\mathpzc{s}\mathcal{S}\mathpzc{et}}\left(\Delta^{0},M\right)$ is a trivial fibration in $\mathpzc{s}\mathcal{M}\mathpzc{od}(R)$, hence in particular a weak equivalence. Thus $\Map_{\mathpzc{s}\mathcal{S}\mathpzc{et}}(\varepsilon,M)$ is a weak equivalence in $\mathpzc{s}\mathcal{M}\mathpzc{od}(R)$. On the other hand, $R^{(\iota)}:R^{(\partial\Delta^{1})}\rightarrow R^{(\Delta^{1})}$ is a cofibration in $\mathpzc{s}\mathcal{M}\mathpzc{od}(R)$, so Corollary \ref{6.2} implies that $\Map_{\mathpzc{s}\mathcal{S}\mathpzc{et}}(\iota,M):\Map_{\mathpzc{s}\mathcal{S}\mathpzc{et}}\left(\Delta^{1},M\right)\rightarrow \Map_{\mathpzc{s}\mathcal{S}\mathpzc{et}}\left(\partial\Delta^{1},M\right)$ is a fibration in $\mathpzc{s}\mathcal{M}\mathpzc{od}(R)$. We finally note that $\Map_{\mathpzc{s}\mathcal{S}\mathpzc{et}}\left(\Delta^{0},M\right)\cong M$, and also we have $\Map_{\mathpzc{s}\mathcal{S}\mathpzc{et}}\left(\partial\Delta^{1},M\right)\cong \Map_{\mathpzc{s}\mathcal{S}\mathpzc{et}}\left(\Delta^{0}\sqcup \Delta^{0},M\right)\cong \Map_{\mathpzc{s}\mathcal{S}\mathpzc{et}}\left(\Delta^{0},M\right)\times \Map_{\mathpzc{s}\mathcal{S}\mathpzc{et}}\left(\Delta^{0},M\right) \cong M\times M$. Taking these isomorphisms into account, the above diagram yields a commutative diagram
\begin{equation*}
\begin{tikzcd}
  M \arrow{r}{\Delta^{M}} \arrow{d}[swap]{\zeta^{M}} & M\times M
  \\
  \Map_{\mathpzc{s}\mathcal{S}\mathpzc{et}}\left(\Delta^{1},M\right) \arrow{ru}[swap]{\nu^{M}} &
\end{tikzcd}
\end{equation*}

\noindent
in which $\nu^{M}$ is a fibration and $\zeta^{M}$ is a weak equivalence. This means that $\Map_{\mathpzc{s}\mathcal{S}\mathpzc{et}}\left(\Delta^{1},M\right)$ is a path object for $M$.
\end{proof}

We are now ready to prove the main theorem.

\begin{theorem} \label{6.6}
Let $R$ be a commutative ring, and $\sharp:\mathpzc{s}\mathcal{C}\mathpzc{om}\mathcal{A}\mathpzc{lg}(R)\rightarrow \mathpzc{s}\mathcal{M}\mathpzc{od}(R)$ the forgetful functor. Then $\mathpzc{s}\mathcal{C}\mathpzc{om}\mathcal{A}\mathpzc{lg}(R)$ is a cofibrantly generated model category with the model structure given as follows:
\begin{enumerate}
\item[(i)] Fibrations are morphisms $f:A\rightarrow B$ in $\mathpzc{s}\mathcal{C}\mathpzc{om}\mathcal{A}\mathpzc{lg}(R)$ such that $\Nor\left(f^{\sharp}\right)_{i}:\Nor\left(A^{\sharp}\right)_{i}\rightarrow \Nor\left(B^{\sharp}\right)_{i}$ is surjective for every $i\geq 1$.
\item[(ii)] Weak equivalences are morphisms $f:A\rightarrow B$ in $\mathpzc{s}\mathcal{C}\mathpzc{om}\mathcal{A}\mathpzc{lg}(R)$ such that $\Nor\left(f^{\sharp}\right):\Nor\left(A^{\sharp}\right)\rightarrow \Nor\left(B^{\sharp}\right)$ is a quasi-isomorphism in $\mathcal{C}_{\geq 0}(R)$.
\item[(iii)] Cofibration are morphisms in $\mathpzc{s}\mathcal{C}\mathpzc{om}\mathcal{A}\mathpzc{lg}(R)$ that have the left lifting property against trivial fibrations defined in (i) and (ii).
\end{enumerate}
\end{theorem}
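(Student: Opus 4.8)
The plan is to apply the Quillen--Kan Transfer Machine (Theorem \ref{2.9}) to the adjoint pair $(\Sym,\sharp):\mathpzc{s}\mathcal{M}\mathpzc{od}(R)\leftrightarrows \mathpzc{s}\mathcal{C}\mathpzc{om}\mathcal{A}\mathpzc{lg}(R)$, where $\Sym$ is the degreewise free commutative $R$-algebra (symmetric algebra) functor, left adjoint to the forgetful functor $\sharp$. By Theorem \ref{4.15}, the source category $\mathpzc{s}\mathcal{M}\mathpzc{od}(R)$ is a cofibrantly generated model category with generating sets $\DK(\mathcal{X})$ and $\DK(\mathcal{Y})$ in the notation of Theorem \ref{3.1}, and the classes of fibrations, weak equivalences, and cofibrations produced by Theorem \ref{2.9} are precisely those described in (i), (ii), (iii) (using Theorem \ref{4.15} to unravel what it means for $f^{\sharp}$ to be a fibration or weak equivalence in $\mathpzc{s}\mathcal{M}\mathpzc{od}(R)$). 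First I would dispatch the easy hypotheses: $\mathpzc{s}\mathcal{C}\mathpzc{om}\mathcal{A}\mathpzc{lg}(R)$ is locally small and bicomplete (limits and filtered colimits are computed on underlying simplicial modules, finite coproducts are degreewise tensor products, and coequalizers exist by the standard construction for varieties of algebras); $\sharp$ preserves sequential direct limits; and the sources of the morphisms in $\DK(\mathcal{X})$ and $\DK(\mathcal{Y})$ are the objects $\DK(S(m))$ for $m\geq 0$ and $0$, which are small with respect to $\Mor(\mathpzc{s}\mathcal{M}\mathpzc{od}(R))$ because each $S(m)$ is small with respect to $\Mor(\mathcal{C}_{\geq 0}(R))$ (established inside the proof of Theorem \ref{3.1}) and $\DK$ is an equivalence (Theorem \ref{4.14}). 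Corollary \ref{2.11} then yields at once that $\Sym(\DK(\mathcal{X}))$ and $\Sym(\DK(\mathcal{Y}))$ permit the small object argument.

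The only substantive point is the acyclicity condition of Theorem \ref{2.9}: every cofibration $j:A\rightarrow B$ in $\mathpzc{s}\mathcal{C}\mathpzc{om}\mathcal{A}\mathpzc{lg}(R)$ having the left lifting property against all fibrations is a weak equivalence. I would verify it by the path-object argument, the crucial input being that Proposition \ref{6.5}(ii) already supplies the needed path objects at the algebra level. Indeed, for $B\in \mathpzc{s}\mathcal{C}\mathpzc{om}\mathcal{A}\mathpzc{lg}(R)$ the simplicial set $\Map_{\mathpzc{s}\mathcal{S}\mathpzc{et}}\left(\Delta^{1},B\right)$ is again a simplicial commutative $R$-algebra by Remark \ref{4.4}, the forgetful functor commutes with $\Map_{\mathpzc{s}\mathcal{S}\mathpzc{et}}\left(\Delta^{1},-\right)$ (the operations being pointwise), and the factorization of the diagonal $B\rightarrow \Map_{\mathpzc{s}\mathcal{S}\mathpzc{et}}\left(\Delta^{1},B\right)\rightarrow B\times B$ of simplicial commutative algebras becomes, after applying $\sharp$, exactly the good path object for $B^{\sharp}$ furnished by Proposition \ref{6.5}(ii); hence, by the very definition of the transferred classes, $B\rightarrow \Map_{\mathpzc{s}\mathcal{S}\mathpzc{et}}\left(\Delta^{1},B\right)$ is a weak equivalence and $\Map_{\mathpzc{s}\mathcal{S}\mathpzc{et}}\left(\Delta^{1},B\right)\rightarrow B\times B$ is a fibration in $\mathpzc{s}\mathcal{C}\mathpzc{om}\mathcal{A}\mathpzc{lg}(R)$. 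I would also note that every object $B$ is fibrant, since the terminal morphism $B\rightarrow 0$ is sent by $\Nor\circ\sharp$ to a degreewise surjection.

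With this in place the argument is purely formal. Lifting $1_{A}$ against the fibration $A\rightarrow 0$ along the cofibration $j$ produces a retraction $r:B\rightarrow A$ with $rj=1_{A}$. Writing $s:B\rightarrow \Map_{\mathpzc{s}\mathcal{S}\mathpzc{et}}\left(\Delta^{1},B\right)$ for the weak-equivalence section and $p_{1},p_{2}$ for the two structure maps down to $B$ (so $p_{1}s=p_{2}s=1_{B}$), the square with top $s\circ j$, left $j$, right the fibration $\left(p_{1},p_{2}\right)$, and bottom $\left(1_{B},jr\right):B\rightarrow B\times B$ commutes because $rj=1_{A}$; the lifting property of $j$ then gives $H:B\rightarrow \Map_{\mathpzc{s}\mathcal{S}\mathpzc{et}}\left(\Delta^{1},B\right)$ with $p_{1}H=1_{B}$ and $p_{2}H=jr$. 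Since $p_{1}s=p_{2}s=1_{B}$ and $s$ is a weak equivalence, both $p_{1}$ and $p_{2}$ are weak equivalences by Two-Out-of-Three; applying Two-Out-of-Three again to $p_{1}H=1_{B}$ shows $H$ is a weak equivalence, and then to $p_{2}H=jr$ shows $jr$ is a weak equivalence. Finally, passing to $\Nor\circ\sharp$: the chain map $\Nor(j^{\sharp})$ is split injective (because $\Nor(r^{\sharp})\Nor(j^{\sharp})$ is the identity), hence induces injections in homology, while $\Nor(j^{\sharp})\Nor(r^{\sharp})=\Nor((jr)^{\sharp})$ is a quasi-isomorphism, forcing $\Nor(j^{\sharp})$ to induce surjections in homology as well; therefore $\Nor(j^{\sharp})$ is a quasi-isomorphism, i.e. $j$ is a weak equivalence. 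All hypotheses of Theorem \ref{2.9} being satisfied, the conclusion follows. The main obstacle is, of course, the acyclicity condition; but its hard content --- that $\Map_{\mathpzc{s}\mathcal{S}\mathpzc{et}}\left(\Delta^{1},B\right)$ is a genuine path object --- has already been extracted from the shuffle product and the Eilenberg--Zilber-type results of Section 5 through Lemmas \ref{6.1} and \ref{6.3} and Proposition \ref{6.5}, so the remaining work here is the path-object argument above plus the observation that path objects of simplicial commutative algebras are again simplicial commutative algebras.
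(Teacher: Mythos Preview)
Your proof is correct and follows essentially the same approach as the paper: transfer along $(\Sym,\sharp)$, smallness via Corollary \ref{2.11}, and the path-object argument based on Proposition \ref{6.5}(ii) for the acyclicity condition. The only difference is in the final step, where you conclude that $j$ is a weak equivalence by Two-out-of-Three (valid since weak equivalences are created in $\mathpzc{s}\mathcal{M}\mathpzc{od}(R)$) together with a direct homology argument, whereas the paper transfers the right homotopy $gh\sim_{r}1^{B}$ to $\mathcal{C}_{\geq 0}(R)$ and then invokes the localization functor and Proposition \ref{2.7}; both endings are legitimate and your version is arguably more elementary.
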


\begin{proof}
By Theorems \ref{4.14} and \ref{4.15}, $(\DK,\Nor):\mathcal{C}_{\geq 0}(R) \leftrightarrows \mathpzc{s}\mathcal{M}\mathpzc{od}(R)$ is an adjoint equivalence of categories through which $\mathpzc{s}\mathcal{M}\mathpzc{od}(R)$ becomes a cofibrantly generated model category. If $\mathcal{X}$ and $\mathcal{Y}$ are the sets of morphisms in $\mathcal{C}_{\geq 0}(R)$ corresponding to its cofibrant generation, then we have observed that the domains of morphisms in $\mathcal{X}$ and $\mathcal{Y}$ are small with respect to $\Mor\left(\mathcal{C}_{\geq 0}(R)\right)$. It follows that the domains of morphisms in $\DK(\mathcal{X})$ and $\DK(\mathcal{Y})$ are small with respect to $\Mor\left(\mathpzc{s}\mathcal{M}\mathpzc{od}(R)\right)$. On the other hand, we have the adjoint pair of functors $\left(\Sym_{R}(-),\sharp\right):\mathcal{M}\mathpzc{od}(R)\leftrightarrows \mathcal{C}\mathpzc{om}\mathcal{A}\mathpzc{lg}(R)$ that extends degreewise to an adjoint pair of functors $\left(\Sym_{R}(-),\sharp\right):\mathpzc{s}\mathcal{M}\mathpzc{od}(R)\leftrightarrows \mathpzc{s}\mathcal{C}\mathpzc{om}\mathcal{A}\mathpzc{lg}(R)$. Also, $\mathcal{C}\mathpzc{om}\mathcal{A}\mathpzc{lg}(R)$ is locally small and bicomplete, so $\mathpzc{s}\mathcal{C}\mathpzc{om}\mathcal{A}\mathpzc{lg}(R)$ is locally small and bicomplete. By the construction of filtered direct limits, we notice that the forgetful functor $\sharp:\mathcal{C}\mathpzc{om}\mathcal{A}\mathpzc{lg}(R)\rightarrow \mathcal{M}\mathpzc{od}(R)$ preserves filtered direct limits, so its extension $\sharp:\mathpzc{s}\mathcal{C}\mathpzc{om}\mathcal{A}\mathpzc{lg}(R)\rightarrow \mathpzc{s}\mathcal{M}\mathpzc{od}(R)$ also preserves filtered direct limits, hence sequential direct limits in particular. Therefore, Corollary \ref{2.11} implies that the domains of morphisms in $\Sym_{R}\left(\DK(\mathcal{X})\right)$ and $\Sym_{R}\left(\DK(\mathcal{Y})\right)$ are small with respect to $\Mor\left(\mathpzc{s}\mathcal{C}\mathpzc{om}\mathcal{A}\mathpzc{lg}(R)\right)$. In particular, $\Sym_{R}\left(\DK(\mathcal{X})\right)$ and $\Sym_{R}\left(\DK(\mathcal{Y})\right)$ permit the small object argument. Now we let a morphism $f:A\rightarrow B$ in $\mathpzc{s}\mathcal{C}\mathpzc{om}\mathcal{A}\mathpzc{lg}(R)$ be a fibration or a weak equivalence if $f^{\sharp}:A^{\sharp}\rightarrow B^{\sharp}$ is a fibration or a weak equivalence in $\mathpzc{s}\mathcal{M}\mathpzc{od}(R)$, respectively. As a consequence, $f:A\rightarrow B$ is a fibration in $\mathpzc{s}\mathcal{C}\mathpzc{om}\mathcal{A}\mathpzc{lg}(R)$ if and only if $\Nor\left(f^{\sharp}\right)_{i}:\Nor\left(A^{\sharp}\right)_{i}\rightarrow \Nor\left(B^{\sharp}\right)_{i}$ is surjective for every $i\geq 1$. Also, $f:A\rightarrow B$ is a weak equivalence in $\mathpzc{s}\mathcal{C}\mathpzc{om}\mathcal{A}\mathpzc{lg}(R)$ if and only if $\Nor\left(f^{\sharp}\right):\Nor\left(A^{\sharp}\right)\rightarrow \Nor\left(B^{\sharp}\right)$ is a quasi-isomorphism. Finally, we let a morphism $f:A\rightarrow B$ in $\mathpzc{s}\mathcal{C}\mathpzc{om}\mathcal{A}\mathpzc{lg}(R)$ be a cofibration if it has the left lifting property against trivial fibrations.

In order to apply Theorem \ref{2.9}, we need to establish the acyclicity condition, i.e. we need to show that any cofibration in $\mathpzc{s}\mathcal{C}\mathpzc{om}\mathcal{A}\mathpzc{lg}(R)$ that has the left lifting property against fibrations is also a weak equivalence. Let $g:A\rightarrow B$ be a cofibration in $\mathpzc{s}\mathcal{C}\mathpzc{om}\mathcal{A}\mathpzc{lg}(R)$ that has the left lifting property against fibrations. If $f:A\rightarrow 0$ is the unique morphism in $\mathpzc{s}\mathcal{C}\mathpzc{om}\mathcal{A}\mathpzc{lg}(R)$, then $f^{\sharp}:A^{\sharp}\rightarrow 0^{\sharp}$ is a fibration in $\mathpzc{s}\mathcal{M}\mathpzc{od}(R)$, so $f$ is a fibration in $\mathpzc{s}\mathcal{C}\mathpzc{om}\mathcal{A}\mathpzc{lg}(R)$. Therefore, the commutative diagram on the left can be completed to the commutative diagram on the right:
\[
 \begin{tikzcd}
  A \arrow{r}{1^{A}} \arrow{d}[swap]{g} & A \arrow{d}{f}
  \\
  B \arrow{r} & 0
\end{tikzcd}
\quad \Longrightarrow \quad
 \begin{tikzcd}
 A \arrow{r}{1^{A}} \arrow{d}[swap]{g} & A \arrow{d}{f}
  \\
  B \arrow{r} \arrow{ru}{h} & 0
\end{tikzcd}
\]

\noindent
In particular, $hg=1^{A}$. On the other hand, by the universal property of product, there is a unique morphism $l:B\rightarrow B\sqcap B$ that makes the following diagrams commutative:
\begin{equation*}
\begin{tikzcd}[column sep=4em,row sep=2em]
  & B \arrow{ld}[swap]{1^{B}} \arrow{rd}{gh} \arrow{d}{l} &
  \\
  B & B\sqcap B \arrow{l}[swap]{\pi_{1}^{B}} \arrow{r}{\pi_{2}^{B}} & B
\end{tikzcd}
\end{equation*}

\noindent
Consider the path object $\Path\left(B^{\sharp}\right)=\Map_{\mathpzc{s}\mathcal{S}\mathpzc{et}}\left(\Delta^{1},B^{\sharp}\right)$ in $\mathpzc{s}\mathcal{M}\mathpzc{od}(R)$. Then there is a commutative diagram
\begin{equation*}
  \begin{tikzcd}
  B^{\sharp} \arrow{r}{\Delta_{B^{\sharp}}}\arrow{d}[swap]{\zeta_{B^{\sharp}}}
  & B^{\sharp}\sqcap B^{\sharp}
  \\
  \Path\left(B^{\sharp}\right) \arrow{ru}[swap]{\nu_{B^{\sharp}}}
\end{tikzcd}
\end{equation*}

\noindent
in which $\nu_{B^{\sharp}}$ is a fibration and $\zeta_{B^{\sharp}}$ is a weak equivalence in $\mathpzc{s}\mathcal{M}\mathpzc{od}(R)$. But we observed in the proof of Proposition \ref{6.5} that this diagram is isomorphic to a diagram in which morphisms could be either morphisms of simplicial modules or simplicial commutative algebras depending on what structure we consider; See also Remark \ref{4.4}. Therefore, we can also have the commutative diagram
\begin{equation}\label{eq:6.6.1}
  \begin{tikzcd}
  B \arrow{r}{\Delta_{B}} \arrow{d}[swap]{\zeta_{B}}
  & B\sqcap B
  \\
  \Path(B) \arrow{ru}[swap]{\nu_{B}}
\end{tikzcd}
\end{equation}

\noindent
in which $\nu_{B}$ is a fibration and $\zeta_{B}$ is a weak equivalence in $\mathpzc{s}\mathcal{C}\mathpzc{om}\mathcal{A}\mathpzc{lg}(R)$. Then the following diagram is commutative:
\begin{equation*}
  \begin{tikzcd}
  A \arrow{r}{\zeta_{B}g} \arrow{d}[swap]{g} & \Path(B) \arrow{d}{\nu_{B}}
  \\
  B \arrow{r}{l} & B\sqcap B
\end{tikzcd}
\end{equation*}

\noindent
Indeed, we have
$$\pi_{1}^{B}lg=1^{B}g=\pi_{1}^{B}\Delta_{B}g=\pi_{1}^{B}\nu_{B}\zeta_{B}g$$
and
$$\pi_{2}^{B}lg=ghg=g1^{A}=g=1^{B}g=\pi_{2}^{B}\Delta_{B}g=\pi_{2}^{B}\nu_{B}\zeta_{B}g,$$
so it follows that $lg=\nu_{B}\zeta_{B}g$. As $\nu_{B}$ is a fibration, the above diagram can be completed to the following commutative diagram:
\begin{equation*}
  \begin{tikzcd}
  A \arrow{r}{\zeta_{B}g} \arrow{d}[swap]{g} & \Path(B) \arrow{d}{\nu_{B}}
  \\
  B \arrow{r}{l} \arrow{ru}{\psi} & B\sqcap B
\end{tikzcd}
\end{equation*}

\noindent
Now the following diagram is commutative:
\begin{equation*}
  \begin{tikzcd}
  B & B \arrow{l}[swap]{1^{B}} \arrow{r}{gh} \arrow{d}{\psi} & B
  \\
  & \Path(B) \arrow{lu}{\pi_{1}^{B}\nu_{B}} \arrow{ru}[swap]{\pi_{2}^{B}\nu_{B}}
\end{tikzcd}
\end{equation*}

\noindent
Indeed, we have $\pi_{1}^{B}\nu_{B}\psi=\pi_{1}^{B}l=1^{B}$ and $\pi_{2}^{B}\nu_{B}\psi=\pi_{2}^{B}l=gh$. Let $\mathcal{G}=\Nor\left(-^{\sharp}\right):\mathpzc{s}\mathcal{C}\mathpzc{om}\mathcal{A}\mathpzc{lg}(R)\rightarrow \mathcal{C}_{\geq 0}(R)$. Applying the functor $\mathcal{G}$ to the diagram \eqref{eq:6.6.1} yields the commutative diagram
\begin{equation*}
  \begin{tikzcd}
  \mathcal{G}(B) \arrow{r}{\eta\mathcal{G}(\Delta_{B})=\Delta_{\mathcal{G}(B)}} \arrow{d}[swap]{\mathcal{G}(\zeta_{B})}
  & [2.5em] \mathcal{G}(B)\sqcap \mathcal{G}(B)
  \\
  \mathcal{G}\left(\Path(B)\right) \arrow{ru}[swap]{\eta\mathcal{G}(\nu_{B})}
\end{tikzcd}
\end{equation*}

\noindent
in $\mathcal{C}_{\geq 0}(R)$ in which $\eta:\mathcal{G}(B\sqcap B)\rightarrow \mathcal{G}(B)\sqcap \mathcal{G}(B)$ is the natural isomorphism. Since $\nu_{B}$ is a fibration and $\zeta_{B}$ is a weak equivalence in $\mathpzc{s}\mathcal{C}\mathpzc{om}\mathcal{A}\mathpzc{lg}(R)$, we see by definition that $\eta\mathcal{G}(\Delta_{B})$ is a fibration and $\mathcal{G}(\zeta_{B})$ is a weak equivalence in $\mathcal{C}_{\geq 0}(R)$. This means that $\mathcal{G}\left(\Path(B)\right)$ is a path object for $\mathcal{G}(B)$. Moreover, the following diagram is commutative:
\begin{equation*}
  \begin{tikzcd}
  \mathcal{G}(B) & \mathcal{G}(B) \arrow{l}[swap]{1^{\mathcal{G}(B)}} \arrow{r}{\mathcal{G}(g)\mathcal{G}(h)} \arrow{d}{\mathcal{G}(\psi)} & \mathcal{G}(B)
  \\
  & \mathcal{G}\left(\Path(B)\right) \arrow{lu}{\pi_{1}^{\mathcal{G}(B)}\eta\mathcal{G}(\nu_{B})} \arrow{ru}[swap]{\pi_{2}^{\mathcal{G}(B)}\eta\mathcal{G}(\nu_{B})}
\end{tikzcd}
\end{equation*}

\noindent
Since $\mathcal{C}_{\geq 0}(R)$ is a model category by Theorem \ref{3.1}, the above diagram shows that $\mathcal{G}(g)\mathcal{G}(h)\sim_{r}1^{\mathcal{G}(B)}$. Applying the localization functor $\mathcal{L}_{R}:=\mathcal{L}_{\mathcal{C}_{\geq 0}(R)}:\mathcal{C}_{\geq 0}(R)\rightarrow \Ho\left(\mathcal{C}_{\geq 0}(R)\right)$, noting that $\mathcal{L}_{R}$ maps weak equivalences to isomorphisms, and invoking \cite[Lemma 8.3.4]{Hi}, we get $\mathcal{L}_{R}\left(\mathcal{G}(g)\right)\mathcal{L}_{R}\left(\mathcal{G}(h)\right)=1^{\mathcal{G}(B)}$. On the other hand, from the relation $hg=1^{A}$, we get $\mathcal{L}_{R}\left(\mathcal{G}(h)\right)\mathcal{L}_{R}\left(\mathcal{G}(g)\right)=1^{\mathcal{G}(A)}$. Therefore, $\mathcal{L}_{R}\left(\mathcal{G}(g)\right)$ is an isomorphism, so by Proposition \ref{2.7}, $\mathcal{G}(g)$ is a weak equivalence in $\mathcal{C}_{\geq 0}(R)$, so by definition, $g$ is a weak equivalence in $\mathpzc{s}\mathcal{C}\mathpzc{om}\mathcal{A}\mathpzc{lg}(R)$.

It now follows from Theorem \ref{2.9} that $\mathpzc{s}\mathcal{C}\mathpzc{om}\mathcal{A}\mathpzc{lg}(R)$ is a cofibrantly generated model category.
\end{proof}



\begin{thebibliography}{99}

\bibitem[An]{An}{M. Andr\'{e}}, {\it Homologie des Alg\`{e}bres Commutatives}, Grundlehren Math. Wiss. 206, Springer-Verlag, Berlin, 1974.

\bibitem[Av]{Av}{L. L. Avramov}, {\it Locally Complete Intersection Homomorphisms and a Conjecture of Quillen on the Vanishing of Cotangent Homology}, Ann. Math. 150(2) (1999), 455-487.

\bibitem[AF]{AF}{L.L. Avramov and H. B. Foxby}, {\it Homological Dimensions of Unbounded Complexes}, J. Pure Appl. Algebra 71, 129-155, 1991.

\bibitem[BI]{BI}{B. Briggs and S. B. Iyengar}, {\it Rigidity Properties of the Cotangent Complex}, Journal of the American Mathematical Society, 2022.

\bibitem[BIK]{BIK}{S. Brochard, S. Iyengar, and C. B. Khare}, {\it A Freeness Criterion without Patching over Local Rings}, J. Inst. Math. Jussieu (2021), 1-13.

\bibitem[Ch]{Ch}{L. W. Christensen}, {\it Gorenstein Dimensions}, Lecture Notes in Mathematics 1747, Springer, Berlin, 2000.

\bibitem[Ci]{Ci}{D-C Cisinski}, {\it Higher Categories and Homotopical Algebra}, Cambridge Studies in Advanced Mathematics (180), Cambridge University Press, 2019.

\bibitem[CFH]{CFH}{L. W. Christesen, H-B Foxby, and H. Holm}, {\it Derived Category Methods in Commutative Algebra}, \url{https://www.math.ttu.edu/~lchriste/download/dcmca.pdf}.

\bibitem[CT]{CT}{L. W. Christensen and P. Thompson}, {\it Pure Minimal Chain Complexes}, Rend. Semin. Mat. Univ. Padova 142 (2019) 41-67.

\bibitem[DI]{DI}{T. Dumitrescu and C. Ionescu}, {\it Some Applications of Andr\'{e}-Quillen Homology to Classes of Arithmetic Rings}, Commun. Algebra, 36(1), 299-308 (2008).

\bibitem[DS]{DS}{W. G. Dwyer and J. Spalinski}, {\it Homotopy Theories and Model Categories}, Handbook of Algebraic Topology, 73-126, North-Holland, Amsterdam, 1995.

\bibitem[Fa]{Fa}{H. Faridian}, {\it Bounds on Injective Dimension and Exceptional Complete Intersection Maps}, arXiv:2307.13121.

\bibitem[GS]{GS}{P. G. Goerss and K. Schemmerhorn}, {\it Model Categories and Simplicial Methods}, Interactions between Homotopy Theory and Algebra, Contemporary Mathematics 436, AMS 2007.

\bibitem[GJ]{GJ}{P. G. Goerss and J. F. Jardine}, {\it Simplicial Homotopy Theory}, Progress in Mathematics, 174, Birkh\"{a}user Verlag, Basel, 1999.

\bibitem[Hi]{Hi}{P. S. Hirschhorn}, {\it Model Categories and Their Localizations}, American Mathematical Society, Mathematical Surveys and Monographs 99, 2002, ISBN 0-8218-3279-4.

\bibitem[Ho]{Ho}{M. Hovey}, {\it Model Categories}, Mathematical Surveys and Monographs, vol. 63, Amer. Math. Soc., Providence, RI, 1999.

\bibitem[Il]{Il}{L. Illusie}, {\it Complexe Cotangent et D\'{e}formations. I}, Lecture Notes in Mathematics, Vol. 239, Springer-Verlag, Berlin, 1971.

\bibitem[Iy]{Iy}{S. B. Iyengar}, {\it Andr\`{e}-Quillen Homology of Commutative Algebras}, in Interactions between Homotopy Theory and Algebra, Contemp. Math., Vol. 436 (American Mathematical Society, Providence, RI, 2007), 203-234.

\bibitem[Lu]{Lu}{J. Lurie}, {\it Keredon (Online Textbook on Categorical Homotopy Theory)}, \url{https://kerodon.net/}.

\bibitem[Mc1]{Mc1}{S. Mac Lane}, {\it Categories for the Working Mathematician}, Second Edition, Graduate Texts in Mathematics, vol. 5, Springer New York, NY, 1978.

\bibitem[Mc2]{Mc2}{S. Mac Lane}, {\it Homology}, Grundlehren Math. Wiss. 114, Springer, 1963.

\bibitem[MR]{MR}{J. Majadas and A. G. Rodicio}, {\it Smoothness, Regularity and Complete Intersection}, London Mathematical Society Lecture Note Series, vol. 373. Cambridge University Press, Cambridge, 2010.

\bibitem[Qu1]{Qu1}{D. G. Quillen}, {\it Homology of Commutative Rings}, mimeographed notes, MIT 1968.

\bibitem[Qu2]{Qu2}{D. G. Quillen}, {\it Homotopical Algebra}, Lecture Notes in Mathematics, vol. 43, Springer-Verlag, 1967.

\bibitem[Qu3]{Qu3}{D. G. Quillen}, {\it On the (Co-)homology of Commutative Rings}, in: Applications of Categorical Algebra; New York, 1968, Proc. Symp. Pure Math. 17, Amer. Math. Soc., Providence, RI, 1970; pp. 65-87.

\bibitem[Ri]{Ri}{E. Riehl}, {\it Categorical Homotopy Theory}, New Mathematical Monographs (24), Cambridge University Press, 2014.

\bibitem[Ro]{Ro}{J. J. Rotman}, {\it An Introduction to Homological Algebra}, Second Edition, Universitext, Springer-Verlag, 2009.

\bibitem[We]{We}{C. Weibel}, {\it An Introduction to Homological Algebra}, Cambr. Stud. Adv. Math. 38, Cambridge University Press, Cambridge, 1994.

\end{thebibliography}
\end{document}